\documentclass[11pt]{article}
\usepackage{cite}
\usepackage{hyperref}
\usepackage{appendix}
\usepackage{thmtools}  
\usepackage{amssymb,amsmath,amsfonts,mathrsfs,amsthm,epsfig,latexsym,color}
\usepackage{enumitem,geometry}
\usepackage{fourier}
\usepackage{tikz-cd}
\usepackage[all]{xy}

\makeatletter
\newcommand{\subsectionruninhead}{\@startsection{subsection}{2}{0mm}
	{-\baselineskip}{-0mm}{\bf\large}}
\newcommand{\subsubsectionruninhead}{\@startsection{subsubsection}{3}{0mm}
	{-\baselineskip}{-0mm}{\bf\normalsize}}
\makeatother

\newtheorem*{theorem*}{Theorem}
\newtheorem*{proof*}{Proof}
\newtheorem*{proposition*}{Proposition}
\newtheorem*{notation*}{Notation}
\newtheorem*{corollary*}{Corollary}
\newtheorem*{claim*}{Claim}
\newtheorem*{remark*}{Remark}

\newtheorem{proposition}{Proposition}[section]
\newtheorem{theorem}[proposition]{Theorem}
\newtheorem{corollary}[proposition]{Corollary}
\newtheorem{lemma}[proposition]{Lemma}
\newtheorem{claim}[proposition]{Claim}

\theoremstyle{definition}
\newtheorem{definition}[proposition]{Definition}
\theoremstyle{remark}
\newtheorem{remark}[proposition]{Remark}

\numberwithin{equation}{section}
\newtheorem{theoalph}{\bf  Theorem}

\newtheorem{coralph}[theoalph]{\bf Corollary}

\def\NN{\mathbb{N}}
\def\RR{\mathbb{R}}
\def\TT{\mathbb{T}}
\def\ZZ{\mathbb{Z}}
\def\e{{\varepsilon}}

\def\vphi{\varphi}
\def\cL{{\cal L}}
\def\cO{{\cal O}}
\def\cF{{\cal F}}

\begin{document}
\title{\bf   The Teichm\"uller Space of  a $3$-Dimensional  Anosov Flow}
\author{Ruihao Gu \quad and \quad Yi Shi\footnote{Y. Shi was partially supported by National Key R\&D Program of China (2021YFA1001900) and
NSFC (12526207, 12571203).}}
\date{\today }
\maketitle

\begin{abstract}

For a  transitive Anosov flow $\Phi$ on $3$-dimensional closed manifold $M$, we realize its  Teichm\"uller space in the sense of smooth orbit-equivalence classes as a product of two function spaces. As an application, we show  the path-connectedness of the orbit-equivalence space of $3$-dimensional transitive Anosov flows  which gives a positive answer of Potrie \cite[Question 1]{P2025} in dimension 3.  Further,  in the space of $C^r$-smooth ($r\geq 1$) $3$-dimensional Anosov flows on $M$, we show that $\mathcal{A}^r(\Phi)$ the path component containing $\Phi$  is homotopy equivalent to the identity component of the diffeomorphism group of the manifold, namely,
\[ \mathcal{A}^r(\Phi)\simeq {\rm Diff}^r_0(M). \]
Moreover, we show the rigidity of time-preserving conjugacy for $3$-dimensional transitive  Anosov flows admitting $C^1$-smooth strong stable foliations, which gives partial answer of Gogolev-Leguil-Rodriguez Hertz \cite[Question 2.8]{GLRH2025}. 
\end{abstract}

\section{Introduction}

Let $M$ be a $C^{\infty}$-smooth closed Riemannian manifold. A $C^r$-smooth flow $\Phi$ on $M$ is a $C^r$-smooth map $\Phi:M\times \RR\to M$ denoted by $\Phi^t(x)=\Phi(x,t)$ such that  $\Phi^0(x)=x$ and $\Phi^{t_2+t_1}(x)=\Phi^{t_2}\circ \Phi^{t_1}(x)$, for all $x\in M$ and $t_1,t_2\in \RR$.  We call two flows $\Phi$ and $\Psi$ on $M$  are  \emph{orbit-equivalent}, if there is a homeomorphism $H$ (called \emph{orbit-equivalence}) of $M$ such that
 \[H(\cO_\Phi(x))=\cO_\Psi(H(x)), \qquad \forall x\in M, \] 
where $\cO_\Phi$ and $\cO_\Psi$ are (oriented) orbits  of $\Phi$ and $\Psi$, respectively. 

We call $\Phi$ and $\Psi$ are \emph{conjugate}, if there is a homeomorphism $H$ (called \emph{conjugacy}) of $M$ such that 
\[H\circ \Phi^t(x)=\Psi^t\circ H(x),  \qquad \forall x\in M,~\forall t\in\RR.\] 
If the orbit-equivalence or the conjugacy $H$ is a $C^r$-smooth diffeomorphism, we call $\Phi$ and $\Psi$ are $C^r$-smoothly orbit-equivalent or $C^r$-smoothly conjugate, respectively.

A $C^1$-smooth flow $\Phi$ on  $M$ is called an \emph{Anosov flow}, if there exist a $D\Phi$-invariant splitting 
\[TM=E^{ss}_{\Phi}\oplus T\cO_{\Phi}\oplus E^{uu}_{\Phi}, \] 
and constants $C, \lambda>1$ such that
\[ \|D\Phi^t|_{E^{ss}_{\Phi}(x)} \| \leq C\lambda^{-t} \quad {\rm and} \quad  \|D\Phi^t|_{E^{uu}_{\Phi}(x)} \| \geq C\lambda^t,\quad \forall x\in M,\ \forall t>0. \]
We denote \[E^s_{\Phi}=E^{ss}_{\Phi}\oplus T\cO_{\Phi}\quad {\rm and} \quad  E^u_{\Phi}=E^{uu}_{\Phi}\oplus T\cO_{\Phi}.\]  We call the subbundles $E^s_{\Phi}/E^{ss}_{\Phi}/E^u_{\Phi}/E^{uu}_{\Phi}$, the \emph{stable/strong stable/unstable bundle/strong unstable} bundle of $\Phi$ which uniquely integrate to $\Phi$-invariant foliations called the \emph{stable/strong stable/unstable bundle/strong unstable} foliation of $\Phi$, respectively. We  also refer to  the strong stable foliation and the strong unstable foliation as the \emph{strong hyperbolic foliations}.

One of the main purposes of present paper is to understand the topology of the space of Anosov flows, for example, the  path-connectedness asked by Potrie \cite[Question 1]{P2025} and the homotopy type of the space. Particularly, we will give the characterization of the smooth equivalence classes  in the orbit-equivalence space of  a transitive $3$-dimensional Anosov flow  via representation by function spaces.

 The other main purpose is to give both rigidity and classification of the Anosov flows on $3$-manifold with $C^1$-smooth strong hyperbolic foliations,  which  follows from the question in the work of Gogolev-Leguil-Rodriguez Hertz, see \cite[Question 2.9]{GLRH2025}.

\subsection{Realization of 3-dimensional Anosov flows}

For a $3$-dimensional Anosov flow $\Phi$, we denote the Jacobians along the strong stable and the strong unstable bundles  by $J_\Phi^s(x,t)$ and $J_\Phi^u(x,t)$, respectively, namely,
\[J_\Phi^s(x,t)=\log \Big|{\rm det}\big(D\Phi^{t}|_{E^{ss}_\Phi(x)}\big)\Big|
\quad {\rm and}\quad J_\Phi^u(x,t)=\log \Big|{\rm det}\big(D\Phi^{t}|_{E^{uu}_\Phi(x)}\big)\Big|.\]
We denote by ${\rm Per}(\Phi)$ the set of periodic points of flow $\Phi$, and by  $\tau(p,\Phi)$ the minimal positive period of  $p\in {\rm Per}(\Phi)$.  For short, we denote the Jacobians of the first return map of $p\in {\rm Per}(\Phi)$ by 
\[J^s(p,\Phi):=J^s_\Phi\big(p,\tau(p,\Phi)\big)\quad {\rm and}\quad  J^u(p,\Phi):=J^u_\Phi\big(p,\tau(p,\Phi)\big).\]
Rigidity of smooth conjugacy via matching periodic data for hyperbolic systems is extensively studied. There are a lot of evidences show that the periodic data is the only obstruction for smooth conjugacy, see for example \cite{DG2024,dLMM1986,dL1992,G2017,GRH2023,GLRH2025,GRH2022}. The following result is on the flexibility of these stable and unstable Jacobians at periodic points. For short, we denote $C^{1+{\rm H\"older}}$ by $C^{1+}$, i.e., the $C^1$-smoothness  with  H\"older continuous derivative for some  H\"older exponent $0<\alpha<1$.

\begin{theoalph}\label{thm flow flexibility 0}
	\emph{Let $\Phi$ be a transitive $C^{1+}$-smooth Anosov flow on $3$-manifold $M$. Let $f_\sigma:M\to \RR\ (\sigma=s,u)$ be two H\"older continuous functions with topological pressures $P_{f_\sigma}(\Phi)=0$.  Then there exists a $C^{1+}$-smooth Anosov flow $\Psi$ of manifold $M$ orbit-equivalent to $\Phi$ via a H\"older continuous homeomorphism $H$ such that for all periodic point $p\in {\rm Per}(\Psi)$,
		\[ J^u(p,\Psi)=-\int_0^{\tau\big(H^{-1}(p),\Phi\big)} f_u\big( \Phi^t\circ H^{-1}(p)\big) dt \quad {\rm and} \quad J^s(p,\Psi)= \int_0^{\tau\big(H^{-1}(p),\Phi\big)} f_s \big( \Phi^t\circ H^{-1}(p)\big) dt.\]
	Moreover, such a $C^{1+}$-smooth Anosov flow $\Psi$ is unique, up to $C^{1+}$-smooth orbit-equivalence.}
\end{theoalph}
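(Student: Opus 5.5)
The construction proceeds in two stages. First we produce a topological model with the prescribed Jacobian data. Then we smooth it and check uniqueness via Livšic-type rigidity.

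\textbf{Existence.} Since $P_{f_u}(\Phi)=0$, the Gibbs measure $\mu_u$ of $f_u$ has conditional measures on strong unstable leaves of $\Phi$. These are given by a Margulis/Bowen--Ruelle type family of transverse measures $\{m^u_x\}$ on $\mathcal{F}^{uu}_\Phi$. They are nonatomic, have full support, are H\"older, and are quasi-invariant: $\Phi^t_*m^u_x = e^{-\int_0^t f_u\circ\Phi^s ds}\, m^u_{\Phi^t x}$, up to the normalization provided by $P_{f_u}=0$. The same holds for $f_s$ on $\mathcal{F}^{ss}_\Phi$, with the sign reversed.

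Using these measures we define new affine charts on each strong unstable and strong stable leaf: the coordinate of $y\in\mathcal{F}^{uu}_\Phi(x)$ is the signed $m^u_x$-measure of the segment $[x,y]$. We build a new $C^{1+}$ structure on $M$ by taking local product charts (strong stable coordinate) $\times$ (flow time) $\times$ (strong unstable coordinate). The holonomies between these charts are measure preserving along weak leaves. The transition maps are therefore H\"older-smooth in the leaf directions, and by Journé's lemma they are $C^{1+}$.

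In this new structure $\Phi$ becomes a $C^{1+}$ flow $\Psi_0$. Its derivative along $E^{uu}$ at a periodic orbit is the multiplier of $m^u$, namely $-\int f_u$, and similarly $\int f_s$ along $E^{ss}$. Finally, the new smooth structure on $M$ is identified with the original by a homeomorphism $H$, because $3$-manifolds have unique smooth structures. Pushing $\Psi_0$ forward gives $\Psi$, and $H$ is H\"older since the measures are H\"older.

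The main obstacle is making this smooth structure genuinely $C^{1+}$ in the flow-transverse mixed directions. The center (time) coordinate must be compatible with the stable and unstable coordinates. I would handle this by reparametrizing time as well, using the cohomology class of $f_u-f_s$. Alternatively, I would realize $\Psi$ through a Markov-partition and suspension gluing construction, following the approach for Anosov diffeomorphisms in the literature, and verify regularity with the bunching automatically available in dimension $3$.

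\textbf{Uniqueness.} Suppose $\Psi_1$ and $\Psi_2$ both satisfy the conclusion. Composing the orbit equivalences gives an orbit equivalence $H_{12}$ between them, and it matches stable and unstable periodic Jacobians along corresponding orbits. By the Livšic theorem, the unstable Jacobian cocycles are cohomologous after reparametrization, and so are the stable ones. Hence the SRB-type conditional measures on strong leaves correspond under $H_{12}$, possibly after adjusting $H_{12}$ along orbits by a H\"older time shift.

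Since these conditionals have smooth positive densities relative to leaf Lebesgue measure, $H_{12}$ is $C^{1+}$ along strong stable and strong unstable leaves. It is also smooth along orbits once the time change is chosen smoothly. Journé's lemma then gives a $C^{1+}$ orbit equivalence. Choosing the correct reparametrization, so that the conjugacy is smooth along the flow direction, is delicate but standard via the Livšic regularity theorem.
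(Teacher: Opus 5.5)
\textbf{The existence step has a genuine gap.} It fails at the claim that charts of the form (strong stable measure coordinate) $\times$ ($\Phi$-time) $\times$ (strong unstable measure coordinate) form a $C^{1+}$ atlas in which $\Phi$ itself is $C^{1+}$.

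Asaoka-type measures are not holonomy invariant. They are only quasi-invariant, with H\"older Radon--Nikodym derivatives, and this controls regularity inside single leaves only. The transition maps between two such charts, restricted to weak stable leaves, also contain the time-displacement function of the strong unstable holonomy between weak stable leaves. That function is in general only H\"older: its $C^1$-ness is essentially the $C^1$-ness of $\mathcal{F}^{uu}_\Phi$, which is rare. So Journ\'e's lemma cannot be applied: it needs $C^{1+}$ restrictions to two complementary foliations, e.g.\ $\mathcal{F}^{uu}_\Phi$ and $\mathcal{F}^{s}_\Phi$, and the second is exactly what fails.

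This cannot be repaired while keeping the time of $\Phi$. In your atlas every strong unstable leaf is a coordinate line, so $\Psi=H\circ\Phi\circ H^{-1}$ would be a $C^{1+}$ Anosov flow \emph{conjugate} to $\Phi$ with $C^{1+}$ strong unstable foliation. Take $\Phi$ the geodesic flow of a hyperbolic surface. Then Theorem \ref{thm flow Phi sC1 rigid} makes $H$ smooth along weak stable leaves, which forces $J^s(H(p),\Psi)=J^s(p,\Phi)$. Hence no $f_s$ that is not cohomologous to $J^s_\Phi$ could be realized this way.

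So a genuine time change is the heart of the matter, not a patch. After a time change the strong foliations move, and measures on the strong leaves of $\Phi$ no longer give coordinates for the new flow. Your fallbacks (``reparametrize using $f_u-f_s$'', Markov gluing with bunching) do not address this. The only regularity automatic in dimension $3$ is $C^{1+}$ of the \emph{weak} foliations.

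\textbf{The missing idea} is to build everything on the weak foliations and an auxiliary smooth foliation, never on the strong ones. The paper proceeds as follows.
\begin{enumerate}
\item It replaces $f_\sigma$ by a negative cohomologous $g_\sigma$ of zero pressure (Lemma \ref{lem function 0 pressure}); negativity is what later gives uniform hyperbolicity.
\item It transports Asaoka's measures, via stable holonomy, to a $C^{1+}$ foliation $\mathcal{L}\subset\mathcal{F}^u_\Phi$ transverse to $\mathcal{F}^s_\Phi$ (Lemma \ref{lem measure L}).
\item It builds a H\"older metric matching these measures on the boundaries of $s$-regular boxes (Proposition \ref{prop adapted metric}).
\item It slides points along $\mathcal{L}$ so that metric length equals measure.
\end{enumerate}
This keeps the smooth structure of $M$. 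It yields a bi-H\"older conjugacy, $C^{1+}$ along $\mathcal{F}^s$, to a merely continuous flow (an HA-flow). That flow still has $C^{1+}$ weak foliations, and its $\mathcal{F}^s$-holonomy cocycle is the prescribed one.

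The second side is done inside the HA class before any smoothing. After one step only a continuous flow remains, and smoothing it in between would destroy the zero-pressure normalization of the second potential (Remarks \ref{rmk Asaoka 2} and \ref{rmk HA-flow reason}). Only at the end is $\Psi$ produced, as the unit-speed flow along a $C^{1+}$ straightening of the $C^{1+}$ orbit foliation (Proposition \ref{prop flow regularity}). This H\"older time change is $C^{1+}$ and Anosov, and its periodic Jacobians are the weak-holonomy cocycles over a period. That is why the conclusion is an orbit equivalence rather than a conjugacy.

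\textbf{Uniqueness.} The paper simply invokes Proposition \ref{prop Jac smooth orbit-equiv}, and your sketch points the same way. However, an orbit equivalence does not carry strong leaves to strong leaves. So ``$H_{12}$ is $C^{1+}$ along strong stable and strong unstable leaves'' is not a valid intermediate step; cite that proposition instead.
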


In \cite{C1993}, Cawley gives the Teichm\"uller space of an Anosov diffeomorphism on $2$-torus through representing it by H\"older continuous functions on $\TT^2$, see Theorem \ref{thm Cawley} and  also see \cite{KQ2025}. The above Theorem \ref{thm flow flexibility 0} can be viewed as the case for transitive $3$-dimensional Anosov flows. In fact, we will also give the Teichm\"uller space of an Anosov $3$-flow in its orbit-equivalence class, see Corollary \ref{cor Teich space orbit-eq}. We refer to the works of Farrell-Gogolev \cite{FaG2014} and Gogolev-Leguil-Rodriguez Hertz \cite[Section 2.4]{GLRH2025}  for more discussion on the space of an Anosov diffeomorphism on $\TT^2$. 	 

\begin{remark}
	A main tool in Cawley's work  \cite{C1993} is  \emph{transverse measure} class of the stable and unstable foliations  of Anosov diffeomorphisms on $\TT^2$ associated with   H\"older functions, which is also called Radon-Nikodym realization, see \cite[Theorem]{C1993}. 
	This transverse  measure class  is a natural extension of the  Margulis measure \cite{M2004}, and has been formulated  in different settings, for example \cite{H1994,BM1977,Le2000,A2008,KQ2025}.  Our work also needs  the Radon-Nikodym realization for Anosov flows which is proved by Asaoka \cite{A2008}, see Theorem \ref{thm Asaoka 0}. We refer to  Haydn's work \cite{H1994} for an earlier version for Anosov flows, where the transverse measure is locally given as  conditional measure of  equilibrium state with respect to  strong hyperbolic foliations, which is different from  Theorem \ref{thm Asaoka 0}. Comparing with the works of Cawley  \cite{C1993} and Asaoka \cite{A2008,A2012}, we would like to explain more details on Theorem \ref{thm flow flexibility 0}:
	\begin{itemize}
		\item  Cawley deforms a toral Anosov automorphism  such that the new Anosov diffeomorphism has aimed stable and unstable Jacobians under the metric associated with the transverse measure. Her method needs $C^{1+}$-smooth stable and unstable foliations. Corresponding to our flow case, it needs the strong hyperbolic foliations being $C^{1+}$-smooth, however this happens rarely, see for example  \cite[Theorem G]{GLRH2025}.  This is the main reason that we cannot get the flexibility in the conjugacy class, as Cawley has done in \cite{C1993}.
		\item It is worth to point out that in both works  \cite{A2008, A2012},  Asaoka uses the  transverse  measure to deform Anosov flows. In particular, in \cite{A2012}, he starts from GA-actions with smooth hyperbolic splitting, hence there is no obstacle on the regularity of foliations as mentioned above. Our way to deform Anosov flows is inspired by the work of Farrell-Gogolev \cite{FaG2014} and is different from Asaoka's method in \cite{A2008,A2012}. We will discuss more details on the necessity of using our method and on the difference to Asaoka's method  in Remark \ref{rmk Asaoka 1}, Remark \ref{rmk Asaoka 2} and Remark \ref{rmk HA-flow reason}.
	\end{itemize} 
\end{remark}

By the above realization result, we can further show the path-connectedness of Anosov flow's orbit-equivalence space, which gives a positive answer of a question of Potrie \cite[Question 1]{P2025} in the category of transitive $3$-dimensional Anosov flows. 

\begin{theoalph}\label{thm connect}
	\emph{Any two  transitive $C^r\ (r\geq 1)$ Anosov flows on a $3$-manifold $M$, which are orbit-equivalent by an orbit-equivalence homotopic to identity, can be connected by  a path of\, $C^r$ Anosov flows.}
\end{theoalph}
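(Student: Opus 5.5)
Let $\Phi_0,\Phi_1$ be the two transitive $C^r$ Anosov flows and $H$ an orbit-equivalence from $\Phi_0$ to $\Phi_1$ homotopic to the identity. The plan is to build the path in three pieces. First, approximate each $\Phi_i$ by a $C^\infty$ Anosov flow $\Phi_i'$. Anosov flows form an open set and are structurally stable, so $\Phi_i'$ is orbit-equivalent to $\Phi_i$ by an orbit-equivalence close to the identity. The segment from $\Phi_i$ to $\Phi_i'$, taken inside a small $C^r$ neighborhood, consists of Anosov flows. Composing, $\Phi_0'$ and $\Phi_1'$ are orbit-equivalent through a map still homotopic to the identity. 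So it suffices to connect two smooth (in particular $C^{1+}$) flows.

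Second, fix the reference flow $\Phi:=\Phi_0'$ and use the realization Theorem \ref{thm flow flexibility 0}. Consider the geometric potentials of $\Phi_1'$ pulled back to $\Phi$ via the orbit-equivalence; after a reparametrization these become H\"older potentials $g_s,g_u$ on $M$ for the flow $\Phi$. The potentials $f_s,f_u$ of $\Phi$ itself, namely the derivatives of the Jacobians $J^s_\Phi,J^u_\Phi$ at $t=0$ (negated for $u$), satisfy $P_{f_\sigma}(\Phi)=0$ because the SRB-type pressures vanish. The corresponding potentials for $\Phi_1'$ also have zero pressure, and this is preserved under transfer by the orbit-equivalence once time changes are accounted for, i.e.\ via Livšic cohomology of $\Phi$-integrals over periodic orbits.

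Interpolate by $h^\theta_\sigma=(1-\theta)f_\sigma+\theta g_\sigma$. Pressure is not linear, so correct this to $h^\theta_\sigma-c_\sigma(\theta)$ with $c_\sigma(\theta)=P_{h^\theta_\sigma}(\Phi)$. This correction depends continuously, even analytically, on $\theta$, and $c_\sigma(0)=c_\sigma(1)=0$. Theorem \ref{thm flow flexibility 0} then yields, for each $\theta$, a $C^{1+}$ Anosov flow $\Psi_\theta$ orbit-equivalent to $\Phi$ with the prescribed periodic Jacobian data. At $\theta=1$, uniqueness up to smooth orbit-equivalence shows that $\Psi_1$ is smoothly orbit-equivalent to $\Phi_1'$, once one checks that the periodic data of $\Phi_1'$ are matched; this is how $g_\sigma$ was chosen. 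Similarly $\Psi_0$ is smoothly orbit-equivalent to $\Phi$.

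Third, upgrade the realization to a continuous path and connect across the smooth orbit-equivalences. The key point is that the construction behind Theorem \ref{thm flow flexibility 0} (a Farrell--Gogolev type deformation using Asaoka's transverse measures, Theorem \ref{thm Asaoka 0}) should depend continuously on the input functions. I would revisit that construction and verify that the transverse measure classes, the resulting metrics and the new flow vary continuously in $\theta$ in $C^1$ topology. Raising to $C^r$ requires smoothing each member of the family and then invoking openness. I expect this continuity/regularity to be the main obstacle, since the realized flows are only $C^{1+}$ and are defined via possibly non-smooth foliations.

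Finally, a smooth orbit-equivalence $G$ between $\Phi_1'$ and $\Psi_1$ can be decomposed into a time change (connected linearly through positive functions $\lambda\in(0,\infty)$, which keeps the flow Anosov) and a conjugacy by a diffeomorphism. The diffeomorphism is homotopic to the identity, because everything composes with $H$. To join $G_*\Phi$ to $\Phi$, one would need $G$ isotopic to the identity. In dimension 3 homotopy implies isotopy for the relevant manifolds: Seifert fibered and hyperbolic pieces via Gabai and related results, and Anosov flows live on irreducible manifolds. With such an isotopy $G_t$, the path $(G_t)_*\Phi$ of $C^r$ Anosov flows completes the connection.
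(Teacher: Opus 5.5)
\textbf{There is a genuine gap in the continuity step.} Your architecture matches the paper's: reduce to smooth flows, interpolate the Jacobian potentials linearly with a pressure correction, realize the interpolants, then close up with Proposition \ref{prop Jac smooth orbit-equiv}, the Isotopy Theorem and a linear time change. But the step that carries essentially all the content is the one you defer, and you do not supply it.

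Applying Theorem \ref{thm flow flexibility 0} separately at each $\theta$ determines $\Psi_\theta$ only up to $C^{1+}$ orbit-equivalence. Its proof also involves non-canonical choices:
\begin{itemize}
\item the foliation $\cL$ and the box cover;
\item the adapted metric;
\item above all, the diffeomorphism of Proposition \ref{prop flow regularity} that turns the $C^{1+}$ orbit foliation of the deformed flow into the orbit foliation of a $C^{1+}$ vector field.
\end{itemize}
The deformed objects are only H\"older (HA-)flows. So ``the new flow varies continuously in $C^1$'' has no meaning until after that smoothing step, and that is exactly where a continuous choice is nontrivial.

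The paper handles this in Proposition \ref{prop path-connect 1}. It fixes $\cL$ and the $s$-regular boxes once and for all. It then uses continuity of Asaoka's measures in the potential (via the Gibbs measures of the symbolic coding, Remark \ref{rmk measure path}) and of the adapted metric (Remark \ref{rmk metric path}) to get continuous families of conjugacies and HA-flows. Finally it proves a parametrized smoothing lemma, Lemma \ref{lem C1+ bundle}: an adaptation of Asaoka's Lemma A.1, applied inductively over a finite cover of adapted charts. This produces $C^{1+}$ diffeomorphisms $h_\kappa$, continuous in $\kappa$ with $h_0=\mathrm{Id}_M$, such that $Dh_\kappa(T\cO_{\Phi_\kappa})$ is a continuous path of $C^{1+}$ line fields. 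Without such a lemma you have a family of flows, not a path.

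\textbf{Simultaneous interpolation creates a two-parameter problem.} The two-sided realization is obtained by two successive one-sided deformations. If you interpolate stable and unstable data at once, then for each $\theta$ the second deformation acts on a $\theta$-dependent HA-flow. It would run along a foliation subfoliating a weak foliation that the first deformation has already moved (it is $\check H_\theta$ of the original one). So the foliation and boxes for the second step would have to vary with $\theta$.

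The paper avoids this with two one-parameter legs. First it interpolates only the stable data, starting from $\Phi$, to reach $\check\Phi_1$. Then it interpolates only the unstable data, starting from the fixed flow $\check\Phi_1$, with pressures renormalized with respect to $\check\Phi_1$. The two legs are concatenated using $\check Y_1=\hat Y_0$.

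\textbf{Minor point.} The Isotopy Theorem for manifolds carrying Anosov flows also needs Waldhausen's theorem in the Haken case, not just the Seifert and hyperbolic cases.
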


Following the path-connectedness, we also consider the homotopy type of a path component of the space of Anosov flows.  For a closed manifold $M$ and $r\geq 1$, we denote its  $C^r$-diffeomorphism group and the identity component by ${\rm Diff}^r(M)$  and ${\rm Diff}_0^r(M)$, respectively.    We denote by $\mathfrak{X}^r(M)$ the set of $C^r$-smooth vector fields on $M$, and $\mathcal{A}^r(M)$ the set of $C^r$-smooth Anosov vector fields on $M$. Recall that   $\mathfrak{X}^r(M)$ is a separable infinite-dimensional manifold (Banach Space or Fr\'echet Space when $r<\infty$ or $r=\infty$). 
It is known that $\mathcal{A}^r(M)$ is open in  $\mathfrak{X}^r(M)$ with respect to the $C^r$-topology (actually $C^1$-open).   For a given  Anosov flow $\Phi$ on $M$ which is $C^r$-smooth as a vector field, we denote the path component  of $\mathcal{A}^r(M)$ containing $\Phi$ by $\mathcal{A}^r(\Phi)$.

\begin{theoalph}\label{thm component}
\emph{Let $r\geq 1$ and $M$ be a  closed $3$-dimensional manifold supporting a transitive  Anosov flow $\Phi$. Then,  $\mathcal{A}^r(\Phi)$  is homotopy equivalent to ${\rm Diff}_0^r(M)$.}
\end{theoalph}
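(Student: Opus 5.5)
The plan is to realize $\mathcal{A}^r(\Phi)$, up to homotopy, as the total space of a fibration whose base is an orbit of ${\rm Diff}^r_0(M)$ and whose fiber is contractible. Consider the action of ${\rm Diff}^r_0(M)$ on $\mathcal{A}^r(\Phi)$ by pushforward, $h\cdot X=h_*X$. This preserves $\mathcal{A}^r(\Phi)$, since ${\rm Diff}^r_0(M)$ is path connected. Structural stability gives a continuous local section: for $Y$ near $X$ there is an orbit equivalence $H_Y$ close to identity, depending continuously on $Y$ after a normalization. In dimension $3$ we can fix the normalization by requiring $H_Y(x)$ to lie in a fixed transverse disc family.

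The first step is to reduce to a single orbit-equivalence class. Any $\Psi\in\mathcal{A}^r(\Phi)$ is transitive; this is open and closed along paths by structural stability and transitivity of orbit-equivalent flows. By Theorem \ref{thm connect} and its converse, $\Psi$ is orbit equivalent to $\Phi$ via a homeomorphism homotopic to identity. So $\mathcal{A}^r(\Phi)$ is the set of $C^r$ Anosov vector fields orbit equivalent to $\Phi$ by homeomorphisms isotopic to identity.

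Next, consider the map $\pi:\mathcal{A}^r(\Phi)\to\mathcal{T}$, where $\mathcal{T}$ is the space of smooth orbit-equivalence classes, i.e.\ the Teichm\"uller space. By Theorem \ref{thm flow flexibility 0} and Corollary \ref{cor Teich space orbit-eq}, $\mathcal{T}$ is identified with a product of two spaces of H\"older functions with zero pressure, modulo cohomology. Such a space is convex-like, hence contractible: the pressure-zero set is a graph over the affine space of functions modulo constants. The fiber of $\pi$ over the class of $\Phi$ is the set of vector fields $h_*(\rho X_\Phi)$, with $h\in{\rm Diff}^r(M)$ isotopic to identity and $\rho>0$ a positive function. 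Reparametrizations form a convex, hence contractible, factor. One needs that the stabilizer of the orbit foliation in ${\rm Diff}^r_0(M)$, acting on this set, contributes nothing homotopically. The stabilizer consists of diffeomorphisms preserving orbits and isotopic to identity. For a transitive Anosov flow, such a diffeomorphism fixes every orbit, since it acts trivially on the orbit space up to deck transformations, which are fixed by homotopy to identity. Hence it is a reparametrized time map $x\mapsto\Phi^{\theta(x)}(x)$, which forms a contractible group. Therefore the fiber is homotopy equivalent to ${\rm Diff}^r_0(M)$.

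Finally, I would show that $\pi$ is a (Serre) fibration with contractible base, so $\mathcal{A}^r(\Phi)\simeq$ fiber $\simeq{\rm Diff}^r_0(M)$. The homotopy-lifting step uses the explicit realization in Theorem \ref{thm flow flexibility 0}. That construction depends continuously on the functions $(f_s,f_u)$, which gives a continuous section $\mathcal{T}\to\mathcal{A}^r(\Phi)$, at least after smoothing to $C^r$. Combined with the ${\rm Diff}^r_0$ action and reparametrization, this trivializes $\pi$. The main obstacle is this continuity and regularity. The realization is only $C^{1+}$ and unique up to $C^{1+}$ orbit equivalence, whereas we need $C^r$ vector fields and continuity in the $C^r$ topology. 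My first attempt would avoid $\mathcal{T}$ and argue directly. I would show that the orbit map ${\rm Diff}^r_0(M)\to\mathcal{A}^r(\Phi)$, $h\mapsto h_*X_\Phi$, composed with fiberwise reparametrization, is a weak equivalence. To do this, I would contract any compact family in $\mathcal{A}^r(\Phi)$ into the orbit, using the local sections from structural stability glued by a partition of unity on the parameter space. Then I would apply Whitehead's theorem; both spaces are ANRs having the homotopy type of CW complexes as open subsets of Fr\'echet manifolds and Fr\'echet Lie groups.
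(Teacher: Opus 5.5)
\textbf{The proposal has a genuine gap.} In both of your routes, the step that carries the real content is deforming a compact family $\alpha:\mathbb{D}^k\to\mathcal{A}^r(\Phi)$ into the ${\rm Diff}^r_0(M)$-orbit of $\Phi$ (up to reparametrization). The mechanism you propose for this cannot work.

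Structural stability only gives topological orbit equivalences $H_Y$ near the identity. Gluing these, or gluing the vector fields, by a partition of unity on $\mathbb{D}^k$ never yields flows smoothly orbit equivalent to $\Phi$. A field $h_*(\rho X_\Phi)$ has the same periodic Jacobians $J^s(p,\cdot)$, $J^u(p,\cdot)$ as $\Phi$ at corresponding orbits, since these are the logarithms of the Poincar\'e multipliers and are invariant under $C^1$ conjugacy and time change. A general member $\alpha(x)$ has different ones, and a partition-of-unity argument has no means of changing these moduli.

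What is missing is a \emph{parametric} Jacobian flexibility. One deforms $\Phi$, continuously in $x$, to a flow with the same periodic data as $\alpha(x)$:
\begin{itemize}
\item pull back the Jacobians of $\alpha(x)$ to $\Phi$ with the time-change factor $\gamma'$, as in the proof of Theorem~\ref{thm path-connect C1+vector};
\item interpolate the potentials linearly;
\item run Proposition~\ref{prop path-connect 1} along $\cF^s$ and then along $\cF^u$, using the continuous dependence of the transverse measures and adapted metrics on the potential (Remarks~\ref{rmk measure path 1}, \ref{rmk metric path}) and the parametric straightening Lemma~\ref{lem C1+ bundle}.
\end{itemize}
The resulting flow is smoothly orbit equivalent to $\alpha(x)$. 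Transporting the deformation by these smooth orbit equivalences homotopes $\alpha$ into the orbit. This family version of the proof of Theorem~\ref{thm path-connect C1+vector} is how the paper obtains Lemma~\ref{lem homotopy group}.

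\textbf{Further problems with the fibration route.} It hides the same hole inside the words ``Serre fibration'', and has additional issues.
\begin{itemize}
\item The bijection of Corollary~\ref{cor Teich space orbit-eq} comes from Cantor--Schr\"oder--Bernstein and carries no topology. So ``contractible base'' and continuity of $\pi$ are not yet meaningful.
\item For $r=1$ the Jacobians are not H\"older, so $\pi$ is not even defined.
\item Theorem~\ref{thm flow flexibility 0} produces only $C^{1+}$ flows, unique up to $C^{1+}$ orbit equivalence. It therefore gives no continuous section into $C^r$ vector fields, and no homotopy lifting.
\end{itemize}

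\textbf{The stabilizer claim.} Your claim that an orbit-preserving diffeomorphism isotopic to the identity fixes every orbit is not justified by your reasoning. Commuting with deck transformations does not force trivial action on the orbit space. For skewed $\mathbb{R}$-covered flows, the one-step-up map commutes with $\pi_1(M)$, and outside the Seifert case it is not induced by a deck transformation (cf.\ Barthelm\'e--Gogolev). The paper sidesteps this by perturbing $\Phi$ to a $C^1$-generic flow with trivial centralizer and using the conjugacy orbit $\mathcal{C}^r_0(\Phi)\cong{\rm Diff}^r_0(M)/\{\Phi^\tau\}_{\tau\in\RR}$.

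\textbf{Injectivity.} The paper proves only surjectivity on $\pi_k$. It gets the rest from two facts: ${\rm Diff}^r_0(M)\simeq{\rm Isom}_0(M)$ is a point or $\mathbb{S}^1$, which kills $\pi_k$ for $k\ge2$; and the composite $\pi_1(\mathcal{C}^r_0(\Phi))\to\pi_1({\rm Homeo}_0(M))$ is injective, which gives $\pi_1$-injectivity. Your Whitehead argument would instead need the family deformation to be stationary on the part of a family already lying in the orbit, and you do not address this.
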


We note that there exists a closed 3-dimensional manifold supporting both  transitive and nontransitive Anosov flows \cite{BBY2017}.  As special cases, we consider  $3$-dimensional Seifert manifolds,  solvmanifolds and hyperbolic manifolds.  We denote $M_1$ being homotopy equivalent to $M_2$,  by $M_1\simeq M_2$

\begin{coralph}\label{cor seifert solvable hyperbolic}
	\emph{Let $M$ be a $3$-dimensional closed manifold admitting a transitive Anosov flow $\Phi$. 
		\begin{enumerate}
			\item If $M$ is a Seifert manifold, then the whole space $\mathcal{A}^r(M)$ is homotopy equivalent to a circle, i.e., 
			\[\mathcal{A}^r(M)\simeq \mathbb{S}^1.  \] 
			\item  If $M$ is a solvmanifold, then the whole space $\mathcal{A}^r(M)$ is homotopy equivalent to two points, i.e.,
				\[\mathcal{A}^r(M)\simeq \big\{p, q\big\}.  \] 
			\item  If $M$ is a hyperbolic manifold, then the path component $\mathcal{A}^r(\Phi)$ is contractible, i.e., 
				\[\mathcal{A}^r(\Phi)\simeq \big\{{\rm Id}_M \big\}.  \] 
		\end{enumerate}
		}
\end{coralph}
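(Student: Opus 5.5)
The plan is to deduce Corollary \ref{cor seifert solvable hyperbolic} from Theorem \ref{thm component} together with known facts about the homotopy type of ${\rm Diff}^r_0(M)$ and the classification of Anosov flows on these manifolds up to orbit-equivalence. Theorem \ref{thm component} gives $\mathcal{A}^r(\Phi)\simeq {\rm Diff}^r_0(M)$ for each path component containing a transitive flow. So in each case two things must be established. First, the homotopy type of ${\rm Diff}^r_0(M)$. Second, for items 1 and 2 (which concern the whole space $\mathcal{A}^r(M)$), the number of path components, and the fact that every component contains a transitive flow.

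For the homotopy type of ${\rm Diff}^r_0(M)$ I will invoke the generalized Smale conjecture and related results. For closed hyperbolic $3$-manifolds, Gabai's theorem gives ${\rm Diff}_0(M)\simeq\{{\rm Id}\}$, which is item 3 directly.

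A Seifert manifold carrying an Anosov flow is, by Ghys' theorem, a finite cover of the unit tangent bundle of a hyperbolic surface, i.e.\ has $\widetilde{SL}(2,\RR)$ geometry. By the work of McCullough--Soma on the Smale conjecture for Seifert fibered spaces, ${\rm Diff}_0(M)\simeq \mathbb{S}^1$, the circle action along the fibers.

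For the solvmanifold case, $M$ is a mapping torus of a hyperbolic toral automorphism (Sol geometry). Hatcher/Ivanov-type results for Haken manifolds then show ${\rm Diff}_0(M)$ is homotopy equivalent to the identity component of the isometry group. For Sol manifolds this component is trivial, since the center of $\pi_1$ is trivial, so ${\rm Diff}_0(M)\simeq\{pt\}$.

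One also needs to pass from ${\rm Diff}$ to ${\rm Diff}^r$, which is standard: the inclusion is a weak homotopy equivalence, and these are ANR/Fréchet manifolds, so it is a homotopy equivalence.

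Next, the components. By Ghys, every Anosov flow on such a Seifert $M$ is orbit-equivalent to a finite cover of a geodesic flow, and any two are orbit-equivalent. By Barbot/Plante, every Anosov flow on a Sol manifold is orbit-equivalent to the suspension of the toral automorphism $A$ or to that of its inverse, i.e.\ to the suspension flow or its time reversal. All these flows are transitive. To conclude that $\mathcal{A}^r(M)$ has one component (Seifert) resp.\ two components (Sol), I combine two things: Theorem \ref{thm connect}, which connects flows that are orbit-equivalent via maps homotopic to identity, and an analysis of the mapping class group.

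In the Seifert case, the orbit-equivalence may be nonisotopic to the identity. I will compose with a diffeomorphism $g$ realizing its mapping class. Then $g_*\Phi$ and $\Phi$ must be joined by a path. For unit tangent bundles, mapping classes come from surface diffeomorphisms, whose lifts act on geodesic flows of varying hyperbolic metrics. Teichmüller space is connected, so a path of metrics (geodesic flows remain Anosov) gives the needed path; for finite covers, the same holds by lifting.

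In the Sol case, I need two separate claims. The two flows $\Phi$ and $-\Phi$ lie in different components: any path of Anosov flows preserves orbit-equivalence class via an isotopically trivial map (structural stability), and the reversal is not orbit-equivalent to the suspension by a map homotopic to identity, since the orientation of the $\pi_1$-periodic classes differs. Conversely, each class forms a single component by the mapping class argument, using that the mapping class group of $M$ acts through elements commuting with the suspension up to isotopy.

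The main obstacle is this component count, i.e.\ the mapping class group analysis. In particular, one must show that every diffeomorphism sends $\Phi$ into $\mathcal{A}^r(\Phi)$ or into the reversed component, and that reversal genuinely separates components in the Sol case but not in the Seifert case. In the Seifert case, reversal is realized by the flip $v\mapsto -v$ on the unit tangent bundle, which is isotopic into the geodesic-flow family.
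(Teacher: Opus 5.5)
Your overall architecture (Theorem \ref{thm component} applied component by component, Gabai for the hyperbolic case, Smale-conjecture results for ${\rm Diff}_0(M)$, Plante plus the $\pi_1$-computation for Sol) is the paper's route (Remark \ref{rmk isom group}). Your Sol component count is fine, and more careful than the paper, which passes directly from ``$B$ is conjugate to $A^{\pm1}$'' to ``orbit-equivalent via a map homotopic to ${\rm Id}_M$''. To make your step precise, note that ${\rm Out}(\ZZ^2\rtimes_A\ZZ)$ is realized by three kinds of maps, each of which commutes with or reverses the suspension:
\begin{itemize}
\item $[x,s]\mapsto[Bx,s]$ with $BA=AB$;
\item $[x,s]\mapsto[Bx,1-s]$ with $BAB^{-1}=A^{-1}$;
\item translations of $\TT^2$ by fixed points of $A$, which realize the twists $t\mapsto tv$, $v\in\ZZ^2/(A-I)\ZZ^2$.
\end{itemize}

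The genuine gap is the Seifert component count, at the sentence ``for unit tangent bundles, mapping classes come from surface diffeomorphisms''. This is false. Let $S$ be a closed oriented hyperbolic surface, $z\in\pi_1(T^1S)$ the fibre class, and $w\mapsto\bar w$ the projection to $\pi_1(S)$. For any smooth $u:S\to\mathbb{S}^1$, the fibrewise rotation $h_u(v)=R_{u(\pi(v))}v$ is a diffeomorphism of $T^1S$. It acts on $\pi_1$ by $w\mapsto w\,z^{u_*(\bar w)}$, which is not inner when $u_*\neq0$ because $\pi_1(S)$ has trivial centre.

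This cannot be repaired. Let $\Phi_\rho$ be a geodesic flow and $\hat c$ the class of the tangent lift of the closed geodesic in the class $c$. The periodic orbits of $h_u\circ\Phi_\rho^t\circ h_u^{-1}$ lie in the classes $\hat c\,z^{u_*(c)}$. Since the centralizer of $c$ in $\pi_1(S)$ is cyclic, $\hat c\,z^k$ is conjugate to $\hat c$ only for $k=0$; homology does not suffice here, since $z$ has finite order in $H_1$. An orbit-equivalence homotopic to ${\rm Id}_M$ preserves free homotopy classes of periodic orbits, and structural stability along a path in $\mathcal{A}^r(M)$ produces such an orbit-equivalence. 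Hence this flow is not in $\mathcal{A}^r(\Phi_\rho)$ when $u_*\neq0$. So $\mathcal{A}^r(T^1S)$ has infinitely many path components, at least one per class in $H^1(S;\ZZ)$, each homotopy equivalent to $\mathbb{S}^1$ by Theorem \ref{thm component}.

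This is exactly where the paper's own argument is too quick. Remark \ref{rmk isom group} asserts, citing Ghys, that $\Phi$ is orbit-equivalent to ``the geodesic flow of $M$'' via a map homotopic to the identity, and then invokes Theorem \ref{thm connect}. But the covering $M\to T^1S$ in Ghys' theorem depends on $\Phi$, and the vertical twists above show it cannot always be placed in a fixed homotopy class. You correctly identified this normalization as the crux, but neither your mapping-class argument nor the paper's citation closes it. What can actually be proved in the Seifert case is $\mathcal{A}^r(\Phi)\simeq\mathbb{S}^1$ for each path component, not the stated claim for the whole space $\mathcal{A}^r(M)$.
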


\begin{remark}\label{rmk isom group}
	For closed $3$-dimensional manifold $M$, by the  resolution of the Generalized Smale conjecture, $\text{Diff}^r(M)$ is homotopy equivalent to  the isometry group $\text{Isom}(M)$, see for example \cite{B2023}. Explicitly, 
	\begin{enumerate}
		\item  For the case of Seifert manifolds, Ghys showed that $M$ is a finite cover of the unit tangent bundle of a hyperbolic surface (thus, Isom$_0(M)\cong\mathbb{S}^1$) and $\Phi$ is orbit-equivalent  to the geodesic flow of $M$ via an orbit-equivalence homotopic to identity \cite{G1984,B2005}. Thus,  Theorem \ref{thm connect} implies that the whole space $\mathcal{A}^r(M)$ is path-connected.  Then, the Seifert case of Corollary \ref{cor seifert solvable hyperbolic} follows from Theorem \ref{thm component}.
		\item  For the case  of solvmanifolds, Plante showed that $\Phi$ is orbit-equivalent to the suspension of an hyperbolic automorphism $A$  of $\TT^2$  \cite{P1981}. Hence,  $M=M_A$ is the mapping torus of $A$, $\pi_1(M_A)\cong \ZZ^2\rtimes_A\ZZ$ and Isom$_0(M_A)\cong \{{\rm Id}_M\}$.  In particular, let $\Psi$ be  any Anosov flow  on  $M_A$. By Plante's work again,   $\Psi$ is orbit-equivalent to  some suspension flow of hyperbolic automorphism $B\in {\rm GL}(2,\ZZ)$, then $\ZZ^2\rtimes_A\ZZ\cong \ZZ^2\rtimes_B\ZZ$. This implies that $B$ is conjugate to $A$ or $A^{-1}$ in ${\rm GL}(2,\ZZ)$. Thus, $\Psi$ is orbit-equivalent to  suspensions of $A$ or $A^{-1}$ via an orbit-equivalence homotopic to identity. By the path-connectedness result (Theorem \ref{thm connect}), the whole space $\mathcal{A}^r(M)$ has exactly two path components. Thus, $\mathcal{A}^r(M)\simeq \big\{p, q\big\}$.
		\item  For the case of hyperbolic manifolds, Gabai's work \cite{G2001} and the Mostow Rigidity Theorem  imply that  Diff$^r_0(M)$ is contractible \cite{G2001}. Then,  Corollary \ref{cor seifert solvable hyperbolic} follows from Theorem \ref{thm component}, directly.
	\end{enumerate}
	 We also note here that for a general $3$-manifold $M$ admitting Anosov flows, the identity component $\text{Isom}_0(M)$ is either trivial $\{{\rm Id}_M\}$ or  $\mathbb{S}^1$.  The cases of  Seifert manifolds and  hyperbolic manifolds follow  from the above first and third items.   In the case of Haken manifolds, the Hatcher-Ivanov Theorem \cite{H1976,I1979} shows that Diff$_0(M)$ is homotopy equivalent to $\text{Isom}_0(M)$ which  is trivial  or  $\mathbb{S}^1$, since $M$ admitting Anosov flow cannot be $\TT^3$. We refer to \cite[Section 1.3]{HKMR2012} for a detailed survey on  the Smale conjecture. 
\end{remark}

\begin{remark}
		In \cite{FaG2014}, Farrell-Gogolev shows that a homotopy class of  Anosov diffeomorphisms on $\TT^2$ is homotopy equivalent to $\TT^2$.   Instead of the homotopy class, we just consider the component of $\mathcal{A}^r(M)$ in Theorem \ref{thm component}.  The main reason of this difference is that any two homotopic toral Anosov diffeomorphisms are conjugate via a conjugacy homotopic to identity, then by the path-connectedness, a homotopic class of  the space of Anosov diffeomorphisms on $\TT^2$ coincides with a connected component. However, for Anosov flow $\Phi$, its homotopic class could have many orbit-equivalence classes, e.g. \cite{B1998,BBY2017}.  We will discuss the orbit-equivalence class of $\Phi$ is Subsection \ref{subsec intro teichmuller}.
\end{remark}

\subsection{3-dimensional Anosov flows with  $C^1$ strong hyperbolic foliations}

On the rigidity  of smooth conjugacy between Anosov flows on $3$-manifolds, Gogolev and Rodriguez Hertz surprisingly give remarkable results. In \cite{GRH2022}, they show that the conjugacy of two conservative $C^r\ (r>2)$ Anosov flows on $3$-manifold is automatically smooth, unless they are suspensions over Anosov diffeomorphisms on $2$-torus $\TT^2$ with constant roof-functions. 

In  \cite{GLRH2025}, Gogolev, Leguil and Rodriguez Hertz further show that 
the local rigidity of conjugacy being automatically smooth is $C^1$-open and $C^\infty$-dense in the class of $C^{\infty}$-smooth Anosov flows on $3$-manifolds \cite[Theorem C]{GLRH2025}. The main technical result in \cite{GLRH2025} classifies Anosov flows by admitting $C^{1}$-smooth strong hyperbolic foliations or not. In the case of flows without $C^1$-smooth strong hyperbolic foliation, they show the rigidity  (see for example \cite[Theorem E, Addendum F, Theorem G]{GLRH2025}). 

We give  rigidity results on the Anosov flows admitting $C^1$-smooth strong unstable foliations.

\begin{theoalph}\label{thm flow Phi sC1 rigid}
\emph{Let $\Phi$ and $\Psi$ be two conjugate $C^r$-smooth $(r>1)$ transitive Anosov flow on $3$-manifolds with $C^{1}$-smooth strong unstable foliations. 
	Then, at least one of the following  holds:
	\begin{itemize}
		\item  Either $\Phi$ and $\Psi$ are  suspensions with constant roof-functions,
		\item or the restriction of the conjugacy on each leaf of the stable foliation  is $C^{r_*}$-smooth, where $r_*=r-1+{\rm Lipschitz}$ when $r$ is an integer, otherwise $r_*=r$.
\end{itemize} }
\end{theoalph}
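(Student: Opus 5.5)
Let $H$ be the conjugacy, $H\circ\Phi^t=\Psi^t\circ H$. It maps weak stable leaves to weak stable leaves and strong unstable leaves to strong unstable leaves. The aim is to show that $H$ is smooth along the strong stable direction inside each weak stable leaf. Smoothness along the flow direction is automatic, since $H$ intertwines the flows. So smoothness along $E^{ss}$, together with regularity of the strong stable holonomies inside weak stable leaves, gives smoothness on each leaf of the stable foliation.

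\textbf{Step 1: Jacobians along $E^{ss}$.} Since $H$ is a conjugacy (not merely an orbit-equivalence), periods are preserved. For a periodic point $p$ we have
\[ J^s(p,\Phi)+J^u(p,\Phi)=\log\det D\Phi^{\tau}|_{E^s\oplus E^u}(p), \]
but this alone does not match the $s$-Jacobians. So the first step is to use the $C^1$ strong unstable foliation to transfer information. Because $W^{uu}_\Phi$ is $C^1$, its holonomy between weak stable leaves along strong unstable leaves is $C^1$. Following the mechanism in Gogolev--Leguil--Rodriguez Hertz, the $C^1$-ness of $W^{uu}$ forces a cohomological relation: a temporal distortion function of the form $\log\|D\Phi^t|_{E^{ss}}\|$ twisted by the derivative of the $uu$-holonomy. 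I expect to extract from this that either the strong stable and strong unstable bundles jointly integrate, or the stable Jacobian cocycle is determined by holonomy data preserved by $H$.

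\textbf{Step 2: the dichotomy.} If $E^{ss}\oplus E^{uu}$ is integrable, the flow admits a global cross-section with constant return time. Indeed, the transverse $C^1$ foliation tangent to $E^{ss}\oplus E^{uu}$ is invariant, and by transitivity and Plante-type arguments its leaves are compact fibers. In that case $\Phi$ is a suspension with constant roof, and since $H$ preserves the $uu$-structure, so is $\Psi$; this is the first alternative. Otherwise the joint nonintegrability, combined with $C^1$ $uu$-holonomies, lets me show that $H$ intertwines the $uu$-holonomy maps between weak stable leaves. These holonomies move points along $E^{ss}$ by a temporal shift. Since $H$ preserves time, the temporal function $\theta(x,y)$ (the flow time needed to close up an $su$-quadrilateral) is preserved by $H$. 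By nonintegrability $\theta$ is nonconstant and $C^1$ along strong stable leaves, with nonzero derivative on an open dense set. Then $\theta_\Psi\circ H=\theta_\Phi$ expresses $H|_{W^{ss}}$ as a composition of a $C^1$ function with a local inverse of another $C^1$ function. Hence $H$ is $C^1$ along $W^{ss}$ near such points, and by flow invariance and contraction it is $C^1$ on every leaf.

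\textbf{Step 3: bootstrap regularity.} Once $H$ is $C^1$ along $W^{ss}$, the stable Jacobians at periodic points coincide. Standard Livšic-plus-de la Llave arguments in dimension one along leaves then give $C^{r_*}$-regularity of $H|_{W^{ss}}$. Since $W^{ss}$ leaves are $C^r$ curves and the affine structures along them are $C^{r-1+\mathrm{Lip}}$ or $C^r$, this yields exactly the regularity $r_*$. Combining this with flow direction smoothness and the $C^{r_*}$ regularity of the strong stable foliation within weak stable leaves (a codimension-one foliation in a $2$-dimensional leaf, which is $C^{r_*}$ there), I get $C^{r_*}$ on stable leaves.

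\textbf{Main obstacle.} I expect the main obstacle to be Step 2: proving that the temporal function has nondegenerate derivative along $W^{ss}$ whenever the flow is not a constant-roof suspension, using only $C^1$ (not $C^{1+}$) regularity of $W^{uu}$. I would first try to show that vanishing of the derivative on an open set propagates by the dynamics to identically zero, which forces integrability of $E^{ss}\oplus E^{uu}$.
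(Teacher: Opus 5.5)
Your Step 2 is the paper's mechanism. By Plante's dichotomy, in the non-suspension case you can choose $y\in\cF^{uu}_\Phi(x)$ such that the $\cF^{uu}_\Phi$-holonomy image $J(y)$ of $\cF^{ss}_{\Phi,{\rm loc}}(x)$ inside $\cF^s_\Phi(y)$ is not a strong stable plaque. The flow time $\tau_\Phi$ from $I(y)=\cF^{ss}_{\Phi,{\rm loc}}(y)$ to $J(y)$ is your temporal function. It satisfies $\tau_\Phi\circ H^{-1}=\tau_\Psi$, because $H$ is time-preserving and carries $\cF^{ss},\cF^{uu},\cF^s$ to the corresponding foliations of $\Psi$. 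The implicit function theorem then gives regularity of $H^{-1}$ on a short strong stable arc, and at the end equal periodic stable Jacobians plus de la Llave give $C^{r_*}$ on stable leaves.

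Step 1 plays no role. What you call the main obstacle is not one either. The curve $J(y)=\{\Phi^{\tau_\Phi(z)}(z): z\in I(y)\}$ is tangent to $E^{ss}_\Phi$ exactly where $\tau_\Phi'=0$, so $\tau_\Phi'\equiv0$ would make $J(y)$ a strong stable plaque. Non-joint-integrability therefore gives a point with $\tau_\Phi'\neq0$, and one such point is all you need; your ``open dense'' claim is neither justified nor required.

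The step that fails as written is the globalization, ``by flow invariance and contraction it is $C^1$ on every leaf.'' Flow invariance only carries differentiability from an open set $U$ to $\bigcup_t\Psi^t(U)$. That set is open, dense and invariant, but its complement is closed, invariant and typically contains periodic orbits. Forward contraction along $E^{ss}$ shrinks the arc around a point but never moves that point's orbit into $U$. Yet Step 3 needs $DH|_{E^{ss}}$ to exist and be nonzero at \emph{every} periodic point, since Livschitz/de la Llave require all periodic stable Jacobians to match.

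The missing ingredient is transport along the \emph{weak unstable} foliation. Since $H$ maps $\cF^u_\Phi$ to $\cF^u_\Psi$, for $w\in\cF^u_\Psi(w_0)$ you have
\[
H^{-1}|_{\cF^{ss}_{\Psi,{\rm loc}}(w)}={\rm Hol}^{\cF^u_\Phi}_{H^{-1}(w_0),H^{-1}(w)}\circ H^{-1}|_{\cF^{ss}_{\Psi,{\rm loc}}(w_0)}\circ {\rm Hol}^{\cF^u_\Psi}_{w,w_0},
\]
with $C^{1+}$ holonomies (Proposition \ref{prop foliation C1+}), so existence and non-vanishing of the derivative pass from $w_0$ to $w$. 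Every leaf of $\cF^u_\Psi$ is dense, hence crosses the good arc, so this reaches every point of $M$.

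Non-vanishing also needs an argument, because your composition only shows $H^{-1}$ is $C^1$ and leaves open $\tau_\Psi'=0$. Suppose $DH^{-1}|_{E^{ss}_\Psi}$ vanished on the whole arc. The same transport would make it vanish on every strong stable arc, so $H^{-1}$ would collapse them, which is impossible. Hence it is nonzero on a subarc, and transport spreads that to every point, including all periodic ones.
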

\begin{remark}
We consider more rigidity on conjugacy preserving smooth one-dimensional foliations, where the foliations may be not flow-invariant, see Theorem \ref{thm C1 rigid 1} and Theorem \ref{thm C1 rigid 2}.
\end{remark}

On the opposite side of rigidity, we classify the  Anosov flows admitting $C^1$-smooth strong unstable foliations via considering the  flexibility of unstable Jacobians,  in the conjugacy class rather than only in the orbit-equivalence class as in Theorem \ref{thm flow flexibility 0}. We denote the stable/unstable Jacobian functions of Anosov flow $\Phi$ by $J^{s/u}_\Phi(x):=\frac{d}{dt}|_{t=0}J^{s/u}_\Phi(x,t)$. Two functions $f$ and $g$  is called \emph{cohomologous}, if $f-g$ is $\Phi$-coboundary, see Subsection \ref{subsec Cocycle} for precise definition.

\begin{theoalph}\label{thm flow Phi sC1 flex}
	\emph{Let $\Phi$ be a transitive $C^{1+}$-smooth Anosov flow on $3$-manifold $M$ with $C^1$-smooth strong unstable foliation. Let $f:M\to \RR\ $ be a H\"older continuous  function with topological pressures $P_{f}(\Phi)=0$.  Then there exists a $C^{1+}$-smooth Anosov flow $\Psi$ of manifold $M$ conjugate to $\Phi$ via a H\"older continuous homeomorphism $H$ such that 
		\begin{enumerate}
			\item The strong unstable foliation of\, $\Psi$ is $C^1$-smooth.
			\item  The stable Jacobian function satisfies that $J^s_\Psi\circ H^{-1}$ is cohomologous to $J^s_\Phi$.
			\item The unstable Jacobian function satisfies that  $J^u_\Psi\circ H^{-1}$ is cohomologous to $-f$.
		\end{enumerate}
		Moreover, such a $C^{1+}$-smooth Anosov flow $\Psi$ is unique, up to $C^{1+}$-smooth conjugacy.}
\end{theoalph}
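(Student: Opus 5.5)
We plan to deform $\Phi$ only along the strong unstable direction, keeping the flow direction, the weak-stable foliation and the time parametrization fixed. Only the transverse metric on the strong unstable leaves will change, and it will be rescaled by the Radon--Nikodym realization of $f$.

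\emph{Step 1: a transverse measure for $f$.} By Asaoka's Radon--Nikodym realization (Theorem \ref{thm Asaoka 0}) applied to $f$ with $P_f(\Phi)=0$, we obtain a family $\{\mu^{u}_x\}$ of measures on the strong unstable leaves. The family is holonomy invariant along the weak stable foliation, and it transforms under the flow by
\[
\Phi^t_*\mu^u_x=e^{\int_0^t f(\Phi^s x)\,ds}\,\mu^u_{\Phi^t x}
\]
(with the sign convention matching item 3). Each $\mu^u_x$ is fully supported, non-atomic, and has H\"older continuous local distribution function. We use it to define a new leafwise metric $d^u_{\rm new}$ on $\mathcal F^{uu}_\Phi$, setting $d^u_{\rm new}(y,z)=\mu^u(\,[y,z]^{uu}\,)$.

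\emph{Step 2: build $\Psi$ via a new smooth structure.} Since $\mathcal F^{uu}_\Phi$ is $C^1$, we can choose $C^1$ (in fact $C^{1+}$, after checking) foliation charts $(w,s,u)$ adapted to the splitting, where $w$ is the weak-stable leaf coordinate and $u$ is the strong unstable leaf coordinate. Following Farrell--Gogolev, we define a homeomorphism $h$ of $M$ that preserves every weak-stable leaf and every $\mathcal F^{uu}$ leaf. On each strong unstable leaf, $h$ sends Riemannian arclength to $\mu^u$-length from a reference transversal. Because of $\mathcal F^{s}$-holonomy invariance, this choice is independent of the reference transversal up to translation, and $h$ commutes with $\Phi^t$ up to the scaling. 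We then set $\Psi^t=h\circ\Phi^t\circ h^{-1}$ with $H=h$. By construction $H$ is a Hölder conjugacy.

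The key point is that $\Psi$ is $C^{1+}$. Along each weak-stable leaf $h$ is the identity in the $(w,s)$ variables. In the $u$-variable, $\Psi^t$ acts by an affine map with derivative $e^{\int f}$, because $\mu^u$ is scaled exactly by this factor. Hence $J^u_\Psi\circ H^{-1}=-f$ exactly, up to the sign convention; we only need it up to a coboundary, which leaves room for the smoothing used in the argument. The stable data is untouched, since $h$ preserves the smooth weak-stable structure. This gives item 2, and the new strong unstable foliation $h(\mathcal F^{uu}_\Phi)$ is $C^1$ in the new structure, which gives item 1.

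\emph{Main obstacle.} The hard part is showing that the pulled-back structure is genuinely $C^{1+}$ and compatible with $M$. The transition maps mix the $u$-coordinate with the weak-stable holonomies, which are only $C^1$, and the density of $\mu^u$ is only Hölder. We would first verify that the $\mathcal F^s$-holonomy between $\mathcal F^{uu}$ leaves is $C^{1+}$; this follows from the $C^1$ strong unstable foliation combined with the dimension-$3$ bunching. That regularity ensures the $\mu^u$-parametrization varies $C^{1+}$ transversally. We would then use the Livšic theorem to absorb the Hölder error into a coboundary.

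\emph{Uniqueness.} Suppose $\Psi_1$ and $\Psi_2$ satisfy items 1--3. Their conjugacy $H_{12}$ then matches periodic stable and unstable Jacobians. Applying Theorem \ref{thm flow Phi sC1 rigid}, or the suspension case handled separately, $H_{12}$ is $C^{1+}$ along the stable leaves. The symmetric Journé-type argument along the $C^1$ unstable leaves, using the matching unstable data, gives $C^{1+}$ regularity there. Journé's lemma then shows that $H_{12}$ is $C^{1+}$.
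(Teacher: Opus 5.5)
Your strategy is the paper's. By the unstable version of Proposition~\ref{prop foliation C1 to C1+}, $\cF^{uu}_\Phi$ is $C^{1+}$, so it can play the role of $\cL$ in Proposition~\ref{prop ami jacobian 1}. Straightening Asaoka's measure to arclength along it preserves $\cF^{uu}_\Phi$ and $\cF^u_\Phi$, and flow-invariance of $\cF^{uu}_\Phi$ is what makes the outcome a genuine $C^{1+}$ flow. However, the existence part (defining $h$ globally and proving the new flow is $C^{1+}$) has a genuine gap, and it rests on false assertions.

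First, $\mu^u$ is \emph{not} invariant under $\cF^s_\Phi$-holonomy. That holonomy contains the flow itself: from $\cF^{uu}(x)$ to $\cF^{uu}(\Phi^tx)$ it is $\Phi^t$, which by your own formula rescales $\mu^u$. Theorem~\ref{thm Asaoka 0} and Lemma~\ref{lem Asaoka} only give a H\"older Radon--Nikodym derivative $e^{u(\cdot,\cdot)}$.

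Second, ``independence up to translation'' does not define $h$; the leafwise translations must be chosen coherently. For a fixed Riemannian metric $b$ and $h$ moving points a bounded distance along leaves, $h_*\mu^u={\rm Leb}_b$ forces $|\mu^u(S)-l_b(S)|$ to be bounded over all $\cF^{uu}$-segments $S$. This already fails for the geodesic flow of a hyperbolic surface with $\mu^u=c\,{\rm Leb}$, $c\neq1$ (the Margulis family, defined only up to a scalar). Writing $h(z)=n_{\phi(z)}(z)$, with $n_s$ the unstable horocycle flow, gives $\phi(n_sz)=\phi(z)+(c-1)s$. Nor can you use a metric whose length is $\mu^u$, since $\mu^u$ need not be absolutely continuous with respect to arclength.

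The missing device is Proposition~\ref{prop adapted metric}. Cover $M$ by an $s$-regular family of $\cF^{uu}$-foliation boxes whose top and bottom faces are weak-stable plaques. Build a H\"older metric $\tilde a$ whose length of each $\cF^{uu}$-segment joining faces of the same or of overlapping boxes equals its $\mu$-mass. Then define $h$ box by box by $l_{\tilde a}([x,h(z)])=\mu_x([x,z])$, with $x$ on the bottom face. This $h$ fixes all faces, and the pieces agree on overlaps (Claim~\ref{claim H well-def}). Accordingly, the new unstable Jacobian equals $-f$ exactly only for $\tilde a$. For a smooth metric it is cohomologous to $-f$ by Lemma~\ref{lem metric to change bundle}, not by a Livschitz argument.

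Third, your assertion that $h$ preserves every weak-stable leaf, so that ``the stable data is untouched'', is false. Together with preserving every $\cF^{uu}$-leaf, it would force $h={\rm Id}$, since $\cF^{uu}_{\rm loc}(z)\cap\cF^s_{\rm loc}(z)=\{z\}$. The foliation $h(\cF^s_\Phi)$ genuinely differs from $\cF^s_\Phi$, and proving it $C^{1+}$ is the heart of the proof:
\begin{itemize}
\item its holonomy between $\cF^{uu}$-plaques, read in $\tilde a$-arclength (a $C^{1+}$ parametrization, as $\tilde a$ is H\"older), has derivative equal to Asaoka's H\"older Radon--Nikodym derivative;
\item its leaves are $C^{1+}$ graphs over the fixed face $V^-$ (Claim~\ref{claim C1+ foliation 1});
\item on $\cF^s_\Phi$-plaques, $h={\rm Hol}^{\cL}\circ{\rm Id}_{V^-}\circ{\rm Hol}^{\cL}$ is $C^{1+}$ (Claim~\ref{claim H cs smooth}).
\end{itemize}
This yields item 2: the stable cocycle is the $\cF^u$-holonomy cocycle, preserved up to coboundary because $h$ preserves $\cF^u_\Phi$ and is $C^{1+}$ along $\cF^s_\Phi$. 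Lemma~\ref{lem journe} then makes the new flow $C^{1+}$, since it is $C^{1+}$ along its stable leaves and along the $C^{1+}$ foliations by orbits and by $\cF^{uu}_\Phi$ of each unstable leaf.

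Your ``main obstacle'' paragraph aims at the wrong target. $C^{1+}$-regularity of $\Phi$'s own $\cF^s$-holonomy holds for every $C^{1+}$ Anosov $3$-flow (Proposition~\ref{prop foliation C1+}). In a new-atlas approach, the real obstruction is that the translation terms $\mu_x([x,{\rm Hol}^{\cL}(x)])$ in the transition maps are only H\"older in $x$ (Lemma~\ref{lem measure L}). The paper avoids this by keeping the smooth structure and changing the metric instead. Also, $\Psi^t$ is not affine along $\cF^{uu}$: the factor $e^{\int_0^t f(\Phi^s x')ds}$ depends on $x'$.

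Finally, uniqueness needs neither Theorem~\ref{thm flow Phi sC1 rigid} nor a separate suspension case. Items 2--3 give equal periodic stable and unstable Jacobians, so Theorem~\ref{thm delaLlave} applied in both directions plus Journ\'e's lemma suffices.
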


\begin{remark}
	We note that if $\Phi$ is not a suspension with constant roof-function, then the first two items of Theorem \ref{thm flow Phi sC1 flex} are equivalence. The necessary part is provided by  Theorem \ref{thm flow Phi sC1 rigid}, and the sufficient part follows from the main result in de la Llave's work \cite{dL1992}, see Theorem \ref{thm delaLlave}.
\end{remark}

Combining with Theorem \ref{thm flow Phi sC1 rigid} and Theorem \ref{thm flow Phi sC1 flex}, we get the classification of Anosov flows admitting $C^{1+}$-smooth strong hyperbolic foliations in the conjugacy class. This gives partial answer to \cite[Question 2.9]{GLRH2025} of Gogolev-Leguil-Rodriguez Hertz. 

In the rest part of this subsection, we state some corollaries of Theorem \ref{thm flow Phi sC1 rigid} and Theorem \ref{thm flow Phi sC1 flex}. 
Applying Theorem \ref{thm flow Phi sC1 flex} to  the function $f\equiv-h_{\rm top}(\Phi)$, we get the next one, directly (see Remark \ref{rmk proof of SRB=MME}). 
\begin{coralph}\label{cor mme=srb}
	\emph{Let $\Phi$ be a transitive $C^{1+}$-smooth Anosov flow on $3$-manifold $M$ with $C^1$-smooth strong unstable foliation. Then there is a $C^{1+}$-smooth Anosov flow $\Psi$ conjugate to $\Phi$ such that the measure of maximal entropy coincides with the Sinai-Ruelle-Bowen measure.}
\end{coralph}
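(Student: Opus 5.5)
We apply Theorem \ref{thm flow Phi sC1 flex} to the constant function $f\equiv -h_{\rm top}(\Phi)$. First we check its hypotheses. A constant $c$ has pressure $P_c(\Phi)=h_{\rm top}(\Phi)+c$, since pressure shifts by the constant and $P_0(\Phi)=h_{\rm top}(\Phi)$. Hence $P_f(\Phi)=0$, and $f$ is trivially H\"older. Theorem \ref{thm flow Phi sC1 flex} then gives a $C^{1+}$ Anosov flow $\Psi$ and a H\"older conjugacy $H$ with $H\circ\Phi^t=\Psi^t\circ H$. By item 3, $J^u_\Psi\circ H^{-1}$ is $\Phi$-cohomologous to $h_{\rm top}(\Phi)$. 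Since $H$ is a conjugacy, this is equivalent to $J^u_\Psi$ being $\Psi$-cohomologous to the constant $h_{\rm top}(\Phi)=h_{\rm top}(\Psi)$. Topological entropy is a conjugacy invariant, because a time-preserving conjugacy maps orbits to orbits with equal times.

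Next we identify the two measures for $\Psi$. The SRB measure of the transitive $C^{1+}$ Anosov flow $\Psi$ is the unique equilibrium state of the potential $-J^u_\Psi$, and this potential has pressure zero. The measure of maximal entropy is the unique equilibrium state of the constant potential $0$, or equivalently of the constant $-h_{\rm top}(\Psi)$. Two H\"older potentials that are cohomologous up to a constant have the same equilibrium state, by the Livšic theorem together with uniqueness of equilibrium states for H\"older potentials on transitive Anosov flows (Bowen--Ruelle). The function $-J^u_\Psi$ is cohomologous to $-h_{\rm top}(\Psi)$, so the two equilibrium states coincide. Hence the SRB measure equals the measure of maximal entropy.

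The only delicate point is the sign and normalisation convention: $J^u_\Psi$ is the infinitesimal unstable Jacobian, the SRB potential is $-J^u_\Psi$, and item 3 uses $-f$. With $f=-h_{\rm top}$ these combine consistently. As a sanity check, the equality $P_{-J^u_\Psi}(\Psi)=0=P_{-h_{\rm top}}(\Psi)$ confirms the normalisation. The $C^1$ regularity of the strong unstable foliation of $\Psi$ (item 1) is not needed for this corollary.
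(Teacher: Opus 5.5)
Your proposal is correct and matches the paper's argument. You apply Theorem~\ref{thm flow Phi sC1 flex} to the zero-pressure constant $f\equiv -h_{\rm top}(\Phi)$, so that $J^u_\Psi$ is cohomologous to a constant, and then conclude through Proposition~\ref{prop pressure}, as in Remark~\ref{rmk proof of SRB=MME}. One small point: the Livschitz theorem is not needed for the direction you use. Potentials that differ by a constant plus a coboundary have integrals differing by that same constant against every invariant measure, and hence have the same equilibrium state.
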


By the Sinai-Ruelle-Bowen  property \cite{BR1975}, the pressures of Anosov flow $\Phi$ with respect to its stable and unstable Jacobians, i.e., $J^s_\Phi$ and $-J^u_\Phi$, are zero. That's the reason that we consider the $0$-pressure functions in Theorem \ref{thm flow flexibility 0} and Theorem \ref{thm flow Phi sC1 flex}. We also consider general H\"older functions in Section \ref{sec Space}.

\begin{remark}
	Here, we give some notes on some questions and works about the Sinai-Ruelle-Bowen (SRB) measure equal to the measure of maximal entropy (MME) for Anosov flows.  
	\begin{itemize}
		\item  Parry  shows that  a transitive $C^2$ Anosov flow  with $C^1$ strong unstable bundle is orbit-equivalent to a $C^1$ Anosov flow with SRB=MME, by adjusting the speed of flow \cite{Pa1986}. Corollary \ref{cor mme=srb} considers this in the conjugacy class, but we essentially need one-dimensional strong unstable foliation.
		\item On Katok's Entropy Conjecture for Anosov flow on $3$-manifold,  the work \cite{DLVY2020} shows that a $C^r$-smooth $(r\geq 5)$  Anosov flow with MME=volume is smooth conjugate to an algebraic flow.  Then the authors ask in  \cite[Question 2]{DLVY2020}  that for a smooth transitive Anosov $3$-flow with SRB=MME,  is it smoothly conjugate to an algebraic flow? This is a natural question extending the Katok's Entropy Conjecture, since a conservative Anosov flow's volume measure is the unique SRB measure \cite{BR1975}.  It is clear that Corollary \ref{cor mme=srb} gives a class of counterexamples for above question in $C^{1+}$-regularity, since the contact Anosov $3$-flows always admit $C^{1}$-smooth hyperbolic splittings and form a large class \cite{FoH2013}. We also note that  \cite{A2012} shows that in the class of $C^{\infty}$-smooth locally free GA-action,  the space of Anosov  flows conjugate to an algebraic one with SRB = MME can be represented as an open subset of the set of $C^{\infty}$-smooth closed one-forms quotient  cohomology. 
	\end{itemize}
\end{remark}

Recall that a  contact Anosov flow on $3$-manifold has $C^1$-smooth hyperbolic splitting and is transitive. Hence, as a corollary of Theorem \ref{thm flow Phi sC1 rigid}, we give a new proof of the rigidity result of Feldman-Ornstein for $3$-dimensional contact Anosov flows \cite{FO1987}. Such a rigidity for contact Anosov flow is extended by Gogolev-Rodriguez Hertz  \cite{GRH2024} to higher dimensions, also see other works of Gogolev-Rodriguez Hertz  such as \cite{GRH2025} for  rigidity of Anosov diffeomorphisms with $C^1$-smooth unstable foliations.

\begin{coralph}\label{cor rigid C1su}
	\emph{Let $\Phi$ and $\Psi$ be two $C^r$-smooth $(r>1)$ contact Anosov flows on $3$-manifolds.   If $\Phi$ and $\Psi$ are conjugate, then they are $C^{r_*}$-smoothly conjugate.}
\end{coralph}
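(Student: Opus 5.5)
The plan is to apply Theorem \ref{thm flow Phi sC1 rigid} twice: once to $\Phi,\Psi$ and once to the time-reversed flows $\Phi^{-1},\Psi^{-1}$. The contact structure supplies the hypotheses, and the two leafwise smoothness statements are then combined by a Journé-type argument.

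First, the hypotheses. A contact Anosov $3$-flow preserves the contact form $\alpha$, with $\ker\alpha=E^{ss}_\Phi\oplus E^{uu}_\Phi$. Hence $E^{ss}_\Phi$ and $E^{uu}_\Phi$ are $C^1$ (in fact the splitting is $C^1$, as recalled before the corollary), and $\Phi$ is transitive since it preserves the smooth volume $\alpha\wedge d\alpha$. The same holds for $\Psi$. So both flows are transitive, conjugate, $C^r$, and have $C^1$ strong unstable foliations.

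Second, exclude the first alternative of Theorem \ref{thm flow Phi sC1 rigid}. A suspension with constant roof function has jointly integrable $E^{ss}\oplus E^{uu}$, tangent to the fibers of the suspension. A contact structure is nowhere integrable, since $\alpha\wedge d\alpha\neq0$. Hence neither $\Phi$ nor $\Psi$ is such a suspension. Theorem \ref{thm flow Phi sC1 rigid} then says the conjugacy $H$ restricted to each leaf of the stable foliation $\cF^s_\Phi$ is $C^{r_*}$.

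Third, reverse time. $\Phi^{-t}$ and $\Psi^{-t}$ are again contact Anosov flows, conjugate by the same $H$. Their strong unstable foliations are the strong stable foliations of $\Phi,\Psi$, which are $C^1$ by the contact structure. Applying the theorem again shows that $H$ is $C^{r_*}$ along each leaf of the unstable foliation $\cF^u_\Phi$. The leafwise regularity should be uniform in the leaf, since the proof of Theorem \ref{thm flow Phi sC1 rigid} produces the leafwise derivatives through continuous cocycle data. Under that assumption, $H$ is uniformly $C^{r_*}$ along the two transverse continuous foliations $\cF^s_\Phi$ and $\cF^u_\Phi$, whose leaves are smooth and which together span $TM$.

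Finally, apply Journé's lemma. A map that is uniformly $C^{r_*}$ along two transverse foliations with uniformly smooth leaves is $C^{r_*}$, and this also covers the Lipschitz-derivative endpoint case. The result is that $H$ is $C^{r_*}$. The same argument for $H^{-1}$, or the inverse function theorem once $DH$ is shown invertible, makes $H$ a $C^{r_*}$ diffeomorphism.

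The main obstacle I expect is the uniformity needed for Journé's lemma, together with the endpoint case $r_*=r-1+{\rm Lipschitz}$. I would first try to extract uniform leafwise $C^{r_*}$ bounds from the construction in Theorem \ref{thm flow Phi sC1 rigid}. If that fails, a fallback is to use smoothness along $\cF^s$ and $\cF^u$ to show directly that $H$ maps strong stable and strong unstable leaves with matching Jacobians, and then invoke de la Llave's result (Theorem \ref{thm delaLlave}).
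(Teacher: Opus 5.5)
Your proposal is correct and is essentially the paper's argument. The contact condition makes $E^{ss}_\Phi\oplus E^{uu}_\Phi$ non-integrable, which rules out constant-roof suspensions; Theorem \ref{thm flow Phi sC1 rigid}, applied to the strong unstable foliations and (via time reversal) to the strong stable ones, then gives leafwise $C^{r_*}$-smoothness along $\cF^s_\Phi$ and $\cF^u_\Phi$, and the Journ\'e Lemma concludes. Your uniformity worry is not a real obstacle: the leafwise regularity in that theorem comes from globally H\"older leafwise derivatives upgraded by de la Llave's Theorem \ref{thm delaLlave}, which is exactly the fallback you name.
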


\begin{remark}\label{rmk contact 1}
	We still do not know if it is possible to conjugate a contact Anosov flow $\Phi$ to an Anosov flow $\Psi$ admitting given stable and unstable Jacobians, simultaneously.  By Theorem  \ref{thm flow Phi sC1 flex}, we  can get an Anosov flow $\Phi_1$ with desired unstable Jacobian and conjugate to $\Phi$. However, by Theorem \ref{thm flow Phi sC1 rigid}, the strong stable foliation of $\Phi_1$ is not $C^1$-smooth any more, if the unstable Jacobian is really changed. Then, we cannot apply Theorem  \ref{thm flow Phi sC1 flex} again to the strong stable case of $\Phi_1$.
\end{remark}

In particular, we  get the following dichotomy.

\begin{coralph}\label{cor dicho}
\emph{	Let $\Phi$ and $\Psi$ be two conjugate $C^r$-smooth $(r>1)$ Anosov flows on $3$-manifolds with $C^{1}$-smooth strong hyperbolic  foliations. 	Then,  at least one of the following  holds:
	\begin{itemize}
		\item  Either $\Phi$ and $\Psi$ are  suspensions with constant roof-functions,
		\item  or $\Phi$ is $C^{r_*}$-smoothly conjugate to $\Psi$.
	\end{itemize}}
\end{coralph}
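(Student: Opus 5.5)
The plan is to apply Theorem \ref{thm flow Phi sC1 rigid} twice, once to the strong unstable foliations and once to the strong stable foliations (the latter via time reversal), and then glue the two leafwise regularity statements with Journé's lemma. Let $H$ be the conjugacy, so $H\circ\Phi^t=\Psi^t\circ H$. Suppose we are not in the first alternative, i.e.\ $\Phi$ and $\Psi$ are not both suspensions with constant roof-functions. Since a conjugacy preserves the property of being a suspension with constant roof (the roof is recovered from the period spectrum and the cross-section pulls back), we may assume neither is such a suspension.

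First, transitivity. Theorem \ref{thm flow Phi sC1 rigid} assumes transitive flows, which is not among the hypotheses of Corollary \ref{cor dicho}. I would argue that an Anosov $3$-flow whose strong stable and strong unstable foliations are both $C^1$ has a $C^1$ splitting. Then the standard Plante/Hirsch--Pugh type argument applies: $E^{ss}\oplus E^{uu}$ is $C^1$, and either it is integrable, which gives a suspension via a $C^1$ cross-section, or it is a contact-type plane field whose induced invariant volume gives transitivity. In the integrable case the flow is a suspension, and with $C^1$ foliations the roof-function should be cohomologous to a constant. This is the step I am least certain of. If it turns out to be delicate, the alternative is simply to assume transitivity, which is implicit in the context of the paper.

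Second, with transitivity in hand, apply Theorem \ref{thm flow Phi sC1 rigid} to $\Phi,\Psi$ using their $C^1$ strong unstable foliations. This gives that $H$ is $C^{r_*}$ along each leaf of the stable foliation $\mathcal{F}^s_\Phi$. Now apply the same theorem to the time-reversed flows $\Phi^{-t},\Psi^{-t}$. These are still conjugate by $H$, still transitive, and still not suspensions with constant roof. Their strong unstable foliations are the original strong stable foliations, which are $C^1$ by hypothesis. Hence $H$ is $C^{r_*}$ along each leaf of the unstable foliation $\mathcal{F}^u_\Phi$, with estimates uniform in the leaf. I expect uniformity of the leafwise derivatives, which Journé's lemma needs, to follow from the proof of Theorem \ref{thm flow Phi sC1 rigid} and compactness; this is the step requiring the most care.

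Finally, $\mathcal{F}^s_\Phi$ and $\mathcal{F}^u_\Phi$ are transverse continuous foliations with uniformly $C^{r_*}$ leaves. By Journé's lemma, $H$ is $C^{r_*}$ on $M$, since it is uniformly $C^{r_*}$ along both foliations. The inverse $H^{-1}$ is handled symmetrically, so $H$ is a $C^{r_*}$ diffeomorphism, using that its derivative is invertible along $E^s$ and $E^u$. This yields the second alternative.
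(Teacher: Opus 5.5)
Your proposal is correct and follows the paper's own proof: apply Theorem~\ref{thm flow Phi sC1 rigid} to the strong unstable foliations, then apply it to the strong stable ones (by time reversal, or symmetry), and glue the two leafwise regularity statements with Journ\'e's lemma. Two small remarks on the details.
\begin{itemize}
\item Journ\'e's lemma as stated (Lemma~\ref{lem journe}) requires complementary dimensions. So pair $\mathcal{F}^{ss}_\Phi$ with $\mathcal{F}^u_\Phi$, rather than $\mathcal{F}^s_\Phi$ with $\mathcal{F}^u_\Phi$.
\item The paper also tacitly assumes transitivity, which Theorem~\ref{thm flow Phi sC1 rigid} requires. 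Your sketch deriving it from a $C^1$ splitting is not justified, because a $C^1$ foliation in general has only a continuous tangent bundle. Assuming transitivity, as the paper implicitly does, is the safe route.
\end{itemize}
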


Corollary \ref{cor dicho} can be viewed as an adaption  of the main result of \cite{GRH2022} which we mentioned at the beginning of this subsection,  in lower regularity assumption, without conservative condition, but under additional hypothesis on strong hyperbolic foliations. 

Our proof is totally different from \cite{GRH2022}. In particular, we do not use the adapted transverse coordinates introduced by Hurder-Katok \cite{HK1990} or the adapted chart given by Tsujii \cite{T2018}  for $3$-dimensional conservative Anosov flow which is   developed by  Tsujii-Zhang \cite{TZ2023}  for dissipative $3$-dimensional Anosov flows and by Eskin-Potrie-Zhang \cite{EPZ2023} for  $3$-dimensional partially hyperbolic diffeomorphisms. 
See also Gogolev-Rodriguez Hertz \cite{GRH2022} and Gogolev-Leguil-Rodriguez Hertz \cite{GLRH2025} where the regularity of Anosov flows need to be no less than $2$.

\subsection{The Teichm\"uller space of Anosov $3$-flow}\label{subsec intro teichmuller}

As a summary of the previous two subsections, we give the Teichm\"uller space of an Anosov flow on $3$-manifold in its orbit-equivalence class, or  in its conjugacy class if we assume $C^1$-smoothness of strong hyperbolic foliations. 

Let  $\Phi$ be a $C^r$-smooth ($r\geq 1$) transitive Anosov flow of $3$-manifold $M$. We denote the orbit-equivalence class of $\Phi$ by $\mathcal{O}^{r}(\Phi)$, namely, 
\[ \mathcal{O}^{r}(\Phi):=\Big\{ \Psi\ | \ C^{r}\  \text{Anosov flow on}\  M\  \text{which is orbit-equivalent to  }\Phi  \Big\}.\]
For two flows $\Psi_1$ and $\Psi_2$ in $\cO^{r}(\Phi)$, we denote the equivalence relation $\Psi_1$ being $C^{r_*}$-smooth orbit-equivalent  to $\Psi_2$  by \[\Psi_1\sim^o \Psi_2.\]

Let $\mathbb{F}^{\rm H}(M)$ be the set of  H\"older continuous  functions on $M$. For functions $f_1, f_2\in \mathbb{F}^{\rm H}(M)$, denote by \[f_1\sim_\Phi f_2,\] for   flow $\Phi$ on $M$, if $f_1-f_2$ is almost $\Phi$-coboundary, namely, there is a constant $P$ and a H\"older continuous function $\beta$ such that
\[ \int_0^t (f_1-f_2-P)\circ \Phi^\tau(x)d\tau =\beta(x)-\beta\circ \Phi^t(x),\qquad \forall x\in M, \  \forall t\in\RR.  \]

By Theorem \ref{thm flow flexibility 0}, we will get the Teichm\"uller space of an Anosov $3$-flow in the orbit-equivalence class.
\begin{coralph}\label{cor Teich space orbit-eq}
	\emph{Let $\Phi$ be a $C^{1+}$ transitive Anosov flow on $3$-manifold $M$. Then there is a bijection 
	\begin{align*}
		\mathcal{O}^{1+}(\Phi)/_{\sim^o} \to \mathbb{F}^{\rm H}(M)/_{\sim_\Phi}\times  \mathbb{F}^{\rm H}(M)/_{\sim_\Phi}.
	\end{align*}}
\end{coralph}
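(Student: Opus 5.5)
The bijection is defined by sending $\Psi\in\mathcal{O}^{1+}(\Phi)$ to the pair of classes of its stable and unstable Jacobians, pulled back to $\Phi$. Fix an orbit-equivalence $H:M\to M$ from $\Phi$ to $\Psi$; it is Hölder continuous by structural-stability type arguments. Each periodic orbit $\cO_\Phi(q)$ is carried by $H$ to a periodic orbit of $\Psi$, so we obtain periodic data
\[ q\mapsto \big(J^s(H(q),\Psi),\, J^u(H(q),\Psi)\big). \]
We must then find Hölder functions $f_s,f_u$ whose integrals along $\Phi$-periodic orbits reproduce these numbers. Set
\[ f_u:=-J^u_\Psi\circ H\cdot\rho,\qquad f_s:=J^s_\Psi\circ H\cdot\rho, \]
where $\rho$ is a Hölder time-change density. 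It is obtained by first perturbing $H$ (via a cohomological/time-change argument) so that $H\circ\Phi^t=\Psi^{\alpha(x,t)}\circ H$ with $\alpha$ a Hölder cocycle that is differentiable in $t$, and then putting $\rho(x)=\partial_t\alpha(x,0)$. This is standard: smooth along orbits first. By the SRB property \cite{BR1975}, $P_{-J^u_\Psi}(\Psi)=P_{J^s_\Psi}(\Psi)=0$, and since pressure zero transfers under time change, $P_{f_\sigma}(\Phi)=0$. We define the map by $\Psi\mapsto([f_s],[f_u])$.

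\emph{Well-definedness.} Replacing $H$ by another orbit-equivalence, or replacing $\Psi$ by a $C^{1+}$-orbit-equivalent flow, changes the periodic integrals only through the choice of which orbits correspond. To handle this, I fix $H$ in a fixed isotopy class, say homotopic to a chosen one. Then the Jacobians over periodic orbits are invariants of $C^1$ orbit-equivalence: the first return maps are $C^1$-conjugate, so their eigenvalues agree. By Livšic's theorem for flows, two Hölder functions with equal periodic integrals are cohomologous. Since $\sim_\Phi$ allows an additive constant $P$, the pressure normalization can be dropped in the target.

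\emph{Surjectivity.} Given classes $[g_s],[g_u]$, we normalize representatives by subtracting their pressures, $f_\sigma=g_\sigma-P_{g_\sigma}(\Phi)$. This gives $P_{f_\sigma}(\Phi)=0$, and it is the unique pressure-zero choice within the class, since pressure shifts by $P$. Theorem \ref{thm flow flexibility 0} then yields $\Psi$ realizing these classes. \emph{Injectivity.} If two flows $\Psi_1,\Psi_2$ have the same pair of classes, then their pressure-zero representatives are genuinely cohomologous, because the constant $P$ is forced to be zero by equal pressure. Hence their periodic Jacobian data coincide along corresponding orbits, and the uniqueness clause of Theorem \ref{thm flow flexibility 0} gives $\Psi_1\sim^o\Psi_2$.

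\emph{Main obstacle.} The delicate step is well-definedness with respect to the choice of orbit-equivalence. Orbit-equivalences differing by a nontrivial self-orbit-equivalence of $\Phi$, for example one not homotopic to identity, would permute periodic orbits and could change the classes. I would first try to restrict to orbit-equivalences isotopic along orbits to a fixed one, noting that those homotopic to identity act trivially on free homotopy classes of periodic orbits up to finitely many ambiguities. If that fails, I would instead interpret $\mathcal{O}^{1+}(\Phi)$ as marked flows, i.e. pairs $(\Psi,[H])$, which is what the paper's uniqueness statement implicitly uses.
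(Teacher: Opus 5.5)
The gap is exactly the one you flag, and neither of your proposed fixes closes it. The assignment $\Psi\mapsto\big([J^s_\Psi\circ H\cdot\rho],[-J^u_\Psi\circ H\cdot\rho]\big)$ depends on $H$. Once you make it a map on $\mathcal{O}^{1+}(\Phi)/_{\sim^o}$ by choosing one pair $(\Psi,H)$ per class, your surjectivity step fails.

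Here is a concrete obstruction. Take a homeomorphism $\theta$ commuting with every $\Phi^t$, with $\theta^2={\rm Id}_M$ and $\theta(\gamma)\neq\gamma$ for some periodic orbit $\gamma$. For the constant-roof suspension of a hyperbolic $A\in{\rm GL}(2,\ZZ)$, the map induced by $-{\rm Id}_{\TT^2}$ is such a $\theta$.

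If $\Psi$ realizes $(f_s,f_u)$ via $H$ as in Theorem \ref{thm flow flexibility 0}, then the same $\Psi$ realizes $(f_s\circ\theta,f_u\circ\theta)$ via $H\circ\theta$, because $\theta$ preserves periods and pressures. But $f_s\not\sim_\Phi f_s\circ\theta$ as soon as $\int_\gamma f_s\neq\int_{\theta(\gamma)}f_s$. Indeed, integrating a relation ``$f_s-f_s\circ\theta-P$ is a coboundary'' over $\gamma$ and over $\theta(\gamma)$ forces $P=0$, and then forces the two integrals to be equal. So two distinct points of the target belong to the single class $[\Psi]$, and a map defined on classes can hit only one of them.

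Your proposed fixes do not remove this. Restricting the homotopy class of $H$ does not help: the flow $\Psi'$ that Theorem \ref{thm flow flexibility 0} produces for $(f_s\circ\theta,f_u\circ\theta)$ comes with an orbit-equivalence homotopic to the identity, and still satisfies $\Psi'\sim^o\Psi$. Even if you also restricted $\sim^o$, skewed $\RR$-covered flows on hyperbolic $3$-manifolds carry the one-step-up map of Barthelm\'e--Gogolev. It is a self orbit-equivalence homotopic to the identity that moves every periodic orbit. Passing to marked flows proves a different statement from the one asked.

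The paper avoids needing a canonical map. The corollary only asserts that \emph{some} bijection exists, so the paper builds injections in both directions and applies the Cantor--Schr\"oder--Bernstein theorem. Your injectivity argument is exactly the paper's injectivity of the Jacobian map $\mathcal{T}$, and it does not depend on the choices made. If chosen pairs $(\Psi_1,H_1)$ and $(\Psi_2,H_2)$ give the same classes, the pressure-zero representatives differ by a coboundary. Hence the periodic Jacobians agree along the orbit-equivalence $H_2\circ H_1^{-1}$ from $\Psi_1$ to $\Psi_2$, and Proposition \ref{prop Jac smooth orbit-equiv} gives $\Psi_1\sim^o\Psi_2$.

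The reverse injection in the paper is the realization map $\mathcal{K}$ from Theorem \ref{thm flow flexibility 0}. Be aware that the paper's argument for the injectivity of $\mathcal{K}$ compares Jacobians along $H'\circ H^{-1}$, so it meets the same ambiguity: the example above gives $\mathcal{K}([f_s],[f_u])=\mathcal{K}([f_s\circ\theta],[f_u\circ\theta])$. What that direction really needs is only that $\mathcal{O}^{1+}(\Phi)/_{\sim^o}$ has at least the cardinality of the continuum, since the target has at most that cardinality. This holds, but it has to be shown with a labelling-free $\sim^o$-invariant. One option is the set $\{(J^s(p,\Psi),J^u(p,\Psi)):p\in{\rm Per}(\Psi)\}$, tracked along a one-parameter family produced by Theorem \ref{thm flow flexibility 0}.
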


\begin{remark}
	Let $\cO^r_0(\Phi)$ be the identity component of $\cO^r(\Phi)$, i.e., the set of Anosov flows orbit-equivalent to $\Phi$ via an orbit-equivalence homotopic to Id$_M$.   By the path-connectedness  (Theorem \ref{thm connect}) and the structural stability,  $\cO^r_0(\Phi)$  coincides with the path component $\mathcal{A}^r(\Phi)$, if we just consider  $\cO^r_0(\Phi)$ consisting of $C^r$-smooth vector fields. Hence, $\cO^r_0(\Phi)$ is also homotopy equivalent to Diff$^r_0(M)$. 
\end{remark}

We consider the conjugacy class of Anosov flows with $C^1$-smooth strong hyperbolic foliations, 
\begin{align*}
	\mathcal{H}^{1+}_{s,u}(\Phi):=\Big\{ \Psi \ | \ C^{1+}\ \text{smooth} &\text{ Anosov flow on} \ M\  \text{conjugate to } \Phi   \\ & \text{with}\ 
	C^1 \ \text{strong stable and strong unstable  foliations}  \Big\}.  
\end{align*}
We denote  $\Psi_1$ being smoothly  conjugate to $\Psi_2$  by \[\Psi_1\sim \Psi_2.\]

Similar to the flow case,  for a  map $A:M\to M$, we define
\[f_1\sim_A f_2,\] 
if $f_1-f_2$ is almost $A$-coboundary, namely, there is a constant $P$ and a H\"older continuous function $\beta$ such that $f_1(x)-f_2(x)-P=\beta(x)-\beta\circ A(x)$,  for all $x\in M$.

Corollary \ref{cor dicho} and the work  of Cawley \cite{C1993} imply  the following characterization  of the space $\mathcal{H}^{1+}_{s,u}(\Phi)$.

\begin{coralph}\label{cor Techmuller space suC1}
	\emph{Let $\Phi$ be a $C^{1+}$-smooth transitive Anosov flow on $3$-manifold $M$ with $C^1$-smooth strong stable and strong unstable foliations. Then there is a dichotomy:
	\begin{itemize}
		\item either $\Phi$ is a constant-roof suspension over $A:\TT^2\to \TT^2$ and  there is a nature bijection: \[\mathcal{H}^{1+}_{s,u}(\Phi)/_{\sim} \to \mathbb{F}^{\rm H}(\TT^2)/_{\sim_A}\times  \mathbb{F}^{\rm H}(\TT^2)/_{\sim_A}, \]
		\item or the space $\mathcal{H}^{1+}_{s,u}(\Phi)/_{\sim}$ is trivial, namely, it has just an element.
	\end{itemize}}
\end{coralph}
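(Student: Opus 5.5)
The plan is to split according to Corollary \ref{cor dicho}, which applies because every $\Psi\in\mathcal{H}^{1+}_{s,u}(\Phi)$ is conjugate to $\Phi$ and, like $\Phi$, has $C^1$ strong stable and strong unstable foliations.

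\textbf{Non-suspension case.} Suppose $\Phi$ is not a suspension with constant roof-function. Being a constant-roof suspension is preserved under conjugacy: a conjugacy carries global cross sections to global cross sections and preserves return times. So no $\Psi\in\mathcal{H}^{1+}_{s,u}(\Phi)$ is a constant-roof suspension either. Corollary \ref{cor dicho}, applied to the pair $(\Phi,\Psi)$, then makes $\Psi$ smoothly conjugate to $\Phi$. Hence $\mathcal{H}^{1+}_{s,u}(\Phi)/_{\sim}$ has exactly one element.

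\textbf{Constant-roof suspension case.} Let $\Phi$ be the suspension of $A:\TT^2\to\TT^2$ with roof $c$. Since $\Phi$ is $C^{1+}$, the section $\TT^2\times\{0\}$ and $A$ are $C^{1+}$, and $A$ is an Anosov diffeomorphism. Take $\Psi\in\mathcal{H}^{1+}_{s,u}(\Phi)$ and a conjugacy $H$.
\begin{itemize}
\item \emph{Constructing the section.} The image $\Sigma=H(\TT^2\times\{0\})$ is a global section of $\Psi$ with constant return time $c$, and $\Psi^c$ fixes $\Sigma$ setwise. The point is to show $\Sigma$ can be chosen $C^{1+}$. I would take $\Sigma$ to be a leaf of the foliation integrating $E^{ss}_\Psi\oplus E^{uu}_\Psi$. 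This bundle is $C^1$, hence integrable; this is the step where both strong foliations must be $C^1$.
\item \emph{The invariant.} The return map $B_\Psi=\Psi^c|_\Sigma$ is then a $C^{1+}$ Anosov diffeomorphism topologically conjugate to $A$. Send $\Psi$ to Cawley's invariant of $B_\Psi$ (Theorem \ref{thm Cawley}): the pair of classes of its stable and unstable Jacobians, transported to $\TT^2$ via the conjugacy and taken modulo $\sim_A$.
\end{itemize}

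\textbf{Well-definedness.} If $\Psi_1\sim\Psi_2$, the smooth conjugacy intertwines the time-$c$ maps. Projecting along the flow (a $C^{1+}$ holonomy) gives a smooth conjugacy of the return maps, so the invariant is unchanged.

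\textbf{Injectivity.} If $\Psi_1,\Psi_2$ have the same invariant, Cawley's theorem gives a $C^{1+}$ conjugacy between the return maps. Since both flows are constant-$c$ suspensions of these maps over $C^{1+}$ sections, the conjugacy extends along orbits to a smooth conjugacy of the flows.

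\textbf{Surjectivity.} Given any pair of classes, Cawley's theorem produces a $C^{1+}$ Anosov diffeomorphism $B$ conjugate to $A$ realizing it. Its constant-$c$ suspension is conjugate to $\Phi$ and lies in $\mathcal{H}^{1+}_{s,u}(\Phi)$. Its strong foliations are $C^1$ because, for a constant-roof suspension, they are just the stable and unstable foliations of $B$ on the slices. These are $C^{1+}$ in dimension two, which is what Cawley's setting guarantees.

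The normalization of the pressure constant $P$ in $\sim_A$ should match Cawley's convention, and I would check that $\sim_A$ is exactly his relation.

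\textbf{Main obstacle.} The hard step is the identification of $\mathcal{H}^{1+}_{s,u}$ with suspensions over $C^{1+}$ sections. I would build the section from the $C^1$ integrable bundle $E^{ss}\oplus E^{uu}$. The difficulty is that $E^{ss}\oplus E^{uu}$ may be only $C^1$, not $C^{1+}$. In that case I would instead use the closed $1$-form $dt$ pushed forward by $H$. It is $\Psi$-invariant and vanishes on $E^{ss}\oplus E^{uu}$, so it is as smooth as that bundle. I would then bootstrap its regularity from $C^1$ to $C^{1+}$ using hyperbolicity, via Journé's lemma, which needs smoothness along the two transverse $C^{1+}$ foliations $W^s_\Psi$ and $W^u_\Psi$.
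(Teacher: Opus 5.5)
Your overall route matches the paper's. In the non-suspension case, Corollary \ref{cor dicho} collapses $\mathcal{H}^{1+}_{s,u}(\Phi)/_{\sim}$ to a point. In the suspension case, you pass to return maps on $\TT^2$. Injectivity there should come from de la Llave's rigidity for $\TT^2$ Anosov diffeomorphisms, as the paper uses, not from Theorem \ref{thm Cawley}, which as stated only gives realization. Surjectivity comes from suspending Cawley's diffeomorphism with constant roof. The paper simply asserts that every $\Psi\in\mathcal{H}^{1+}_{s,u}(\Phi)$ is itself a constant-roof suspension of a $C^{1+}$ Anosov diffeomorphism. You try to justify this, and that justification is where your proposal goes wrong.

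The inference ``$E^{ss}_\Psi\oplus E^{uu}_\Psi$ is $C^1$, hence integrable'' is false, because regularity never gives integrability. The $3$-dimensional contact Anosov flows discussed in the paper are a counterexample: their strong foliations are $C^1$, yet $E^{ss}\oplus E^{uu}$ is a contact plane field. (Also, a $C^1$ foliation need not have a $C^1$ tangent bundle.)

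Integrability here comes from the conjugacy instead. $H$ maps $\cF^{ss}_\Phi$ and $\cF^{uu}_\Phi$ onto $\cF^{ss}_\Psi$ and $\cF^{uu}_\Psi$, so the $su$-saturated slices of $\Phi$ go to $su$-saturated global sections of $\Psi$ with return time $c$. Equivalently, $\Psi$ is conjugate to a non-mixing flow, so Plante's alternative applied to $\Psi$ makes it a constant-roof suspension over a $C^1$ section. That section is necessarily tangent to the invariant plane field $E^{ss}_\Psi\oplus E^{uu}_\Psi$.

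Your ``main obstacle'' then disappears. A $C^1$ surface tangent to a H\"older plane field is automatically $C^{1+}$: locally it is a graph whose derivative is a smooth function of the tangent plane, evaluated along a Lipschitz parametrization. And $E^{ss}_\Psi\oplus E^{uu}_\Psi$ is H\"older for every $C^{1+}$ Anosov flow, so no bootstrap is needed. Separately, a $1$-form cannot be pushed forward by a mere homeomorphism; the meaningful object would be the function $t\circ H^{-1}$.

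In particular, the $C^1$ hypothesis on the strong foliations plays no role in the section step. It enters only in the non-suspension case, and in checking that Cawley's suspensions lie in $\mathcal{H}^{1+}_{s,u}(\Phi)$, which you do correctly.
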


We will also discuss the space of transitive Anosov $3$-flows just admitting smooth strong unstable foliations, see Corollary \ref{cor teichmulle uC1}.

\vskip 0.5 \baselineskip

\noindent {\bf Organization of this paper:}
In section \ref{sec Preliminaries}, we recall some general properties of Anosov flows on $3$-manifolds including the Radon-Nikodym realization. In section \ref{sec HA-flow}, we introduce H\"older  flows admitting $C^{1+}$-smooth weak stable and weak unstable foliations which we called HA-flows. We will show the flexibility of the Jacobians induced by these foliations for HA-flows. 
In section \ref{sec Space}, we smoothing the  above HA-flows to get  Theorem \ref{thm flow flexibility 0} the Jacobian flexibility result of transitive Anosov $3$-flows and  Corollary \ref{cor Teich space orbit-eq} the Teichm\"uller space in the sense of orbit-equivalence. Then we  prove  the path-connectedness of the the space of Anosov flows (Theorem \ref{thm connect})  and the homotopy type of the space (Theorem \ref{thm component}).   In section \ref{sec C1 Foliation}, we consider Anosov $3$-flows admitting $C^1$-smooth strong hyperbolic foliations, including the classification Theorem \ref{thm flow Phi sC1 flex}, Corollary \ref{cor Techmuller space suC1} and the rigidity Theorem \ref{thm flow Phi sC1 rigid}.

\vskip 0.5 \baselineskip

\section{Preliminaries}\label{sec Preliminaries}

In this section, we discuss the regularity of foliations and flows, and recall basic properties of Anosov flows including the Radon-Nikodym realization, i.e., the family of transversal invariant measures for Anosov-flows.

\subsection{Foliations and flows}

Recall that a partition $\cF$ of a closed Riemannian $d$-manifold $M$ is a $C^r$-smooth $l$-dimensional foliation, if there exists a local chart $\{ (\vphi_i,U_i) \}_{1\leq i\leq k}$, called \emph{foliation chart}, such that 
\begin{itemize}
	\item The map $\vphi_i:D^l\times D^{d-l}\to U_i$ has $\vphi_i\big(D^l\times \{y\} \big)\subset \cF\big(\vphi_i(0,y)\big)$, for all $y\in D^{d-l}$, where $D^l$ and $D^{d-l}$ are open $l$-disk and $(d-l)$-disk of $\RR^d$, respectively.
	\item For $U_i\cap U_j\neq \emptyset$, the map $(\vphi_j^{-1}\circ \vphi_i)|_{\vphi_i^{-1}(U_i\cap U_j)}$ is $C^r$-smooth.
\end{itemize}
Particularly, for  each $p\in M$, the local leaf $\cF_{\rm loc}(p)$ is a $C^r$-smooth embedded submanifold of $M$ and  the whole leaf $\cF(p)$ is a $C^r$-smooth immersed submanifold of $M$.  For a  subset $U$ of $M$ and a point $p\in U$, we denote by 
\[ \cF(p, U):=\  \text{the component of}\  \cF(p)\cap U \ \text{containing the point} \ p. \] 
We call $U$ an $\cF$-\emph{foliation box}, if \[U=\bigcup_{x\in \Sigma}\cF_{\rm loc}(x),\] where $\Sigma\subset U$ is $(d-l)$-dimensional submanifold of $M$ transverse to $\cF$.

Let $\Sigma_1$ and $\Sigma_2$ be two smooth transversals of foliation $\cF$ close enough. One may define the holonomy map induced by $\cF$  as 
\[{\rm Hol}^{\cF}:\Sigma_1\to \Sigma_2,\quad  {\rm Hol}^{\cF}(x)= \cF_{\rm loc}(x)\cap \Sigma_2, \]
which is actually a homeomorphism from $\Sigma_1$ to $\Sigma_2$. 

We denote the homeomorphism $H$ mapping foliation $\cF$ to be foliation $\cL$ by $\cL=H(\cF)$,  specifically, $\cL(x)=H(\cF(H^{-1}(x)))$. The following Journ\'e Lemma is  useful.

\begin{lemma}[Journ\'e Lemma \cite{J1988}]\label{lem journe}
	Let $\cF_1$ and $\cF_2$ be transverse foliations of $d$-manifold $M$ with dimension $d_1$ and $d_2$, respectively, where $d_1+d_2=d$. Let $\cL_1$ and $\cL_2$ be also transverse foliations of $M$ with dimension $d_1$ and $d_2$, respectively.  Assume that  a homeomorphism $H:M\to M$ satisfies that for each $x\in M$ and $i=1,2$, $H(\cF_i)=\cL_i$ and the restrictions $H|_{\cF_i(x)}:\cF_i(x)\to \cL_i(H(x))$ are uniformly (with respect to $x$) $C^r$-smooth, then $H$ is a $C^{r_*}$-smooth diffeomorphism, where $r_*=r$ if $r\notin \NN$, and $r_*=r-1+{\rm Lipschtiz}$  if $r\in \NN$.  	
\end{lemma}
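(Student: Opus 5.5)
The plan is to reduce the statement to local coordinates, where it becomes the classical Journé regularity lemma for functions that are uniformly smooth along two transverse families of leaves. Fix $x\in M$. Choose a foliation chart near $x$ in which $\cF_1$ and $\cF_2$ are simultaneously straightened topologically: a homeomorphism $\vphi: D^{d_1}\times D^{d_2}\to U$ sending horizontal plaques into $\cF_1$-leaves and vertical plaques into $\cF_2$-leaves. Such a chart exists by local product structure of transverse foliations. Do the same for $\cL_1,\cL_2$ near $H(x)$.

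The difficulty is that these product charts are only continuous in general. So I would not work in them directly. Instead, I would use smooth charts of $M$ and invoke Journé's original formulation, which is stated precisely for this situation. Journé's theorem says the following. Let $\mathcal{W}_1,\mathcal{W}_2$ be transverse continuous foliations with uniformly $C^{r}$ leaves, and let $g$ be a real function. If $g$ is uniformly $C^r$ along the leaves of both foliations, then $g$ is $C^{r_*}$. Here $r_*=r$ for non-integer $r$, and $r-1+{\rm Lipschitz}$ for integer $r$. The loss at integer $r$ comes from the Hölder–Zygmund/Campanato characterization used in the proof.

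The steps are as follows.

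\begin{enumerate}
\item In a smooth coordinate chart around $H(x)$, write $H=(h_1,\dots,h_d)$ locally. Apply the scalar theorem to each coordinate function $h_j\circ$ (restricted to a small neighborhood of $x$), with $\mathcal{W}_i=\cF_i$. The hypothesis that $H|_{\cF_i(y)}$ is uniformly $C^r$ as a map into the leaf $\cL_i(H(y))$ gives uniform $C^r$ bounds on $h_j$ along the $\cF_i$-leaves. This uses only that $\cL_i$ leaves are $C^r$ immersed submanifolds. We obtain that $H$ is $C^{r_*}$.
\item Show that $H$ is a diffeomorphism. Its derivative at $y$ maps $T\cF_1(y)\oplus T\cF_2(y)$ onto $T\cL_1(H(y))\oplus T\cL_2(H(y))$. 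The leafwise restrictions are diffeomorphisms: $H^{-1}$ restricted to the leaves is the inverse, and it is a homeomorphism that is $C^r$ with nonsingular derivative. This nonsingularity would need to be assumed or argued, and I expect "uniformly $C^r$-smooth" to be meant as leafwise diffeomorphisms. Transversality of $\cL_1,\cL_2$ then makes $DH(y)$ invertible, and the inverse function theorem gives the regularity of $H^{-1}$.
\end{enumerate}

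The main obstacle is the scalar Journé statement itself, if one does not cite it. Its proof proceeds by approximating $g$ near a point by Taylor polynomials along $\cF_1$ plaques, composed with holonomy along $\cF_2$. One then controls the error by $C\,\mathrm{dist}^{r}$ using only the Hölder continuity of the foliations. Finally, a Campanato-type criterion converts polynomial approximability of order $r$ into $C^{r_*}$ regularity. Since the paper cites \cite{J1988}, I would simply quote it and carry out the reduction above.
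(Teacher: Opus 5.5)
This is essentially the paper's route: the paper gives no argument beyond citing Journ\'e, and your coordinatewise reduction to the scalar lemma plus the inverse-function-theorem step is the standard reading of that citation. Your nondegeneracy caveat is also genuinely needed, since a chart map like $(x,y)\mapsto(x^3,y)$ satisfies the literal hypotheses without being a diffeomorphism; the paper assumes this implicitly, e.g.\ in the proof of Theorem~\ref{thm C1 rigid 1} it separately verifies $\|DH^{-1}|_{E^{ss}_\Psi}\|\neq 0$.

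One caution, which applies equally to the paper's statement: Journ\'e's theorem concerns $C^{n,\alpha}$ with $0<\alpha<1$, so for integer $r$ it only gives $C^{r-1+\alpha}$ for every $\alpha<1$, and the Lipschitz version really fails already for $r=2$. For instance, $f(x,y)=xy\log\log\frac{1}{x^2+y^2}$ near the origin is uniformly $C^2$ along horizontal and vertical lines while $\partial_x\partial_y f\to\infty$, and $H(x,y)=(x,y+\varepsilon f(x,y))$ then violates the map version, so your Campanato explanation of the integer case should be dropped.
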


Recall that $\Phi$ is a $C^r$-smooth flow on $M$, if we view it as a  $C^r$-smooth map 
\[ \Phi: M\times \RR \to M,\quad (x,t)\mapsto\Phi^t(x).\]  
The orbit foliation of $\Phi$ is denoted by $\cO_\Phi$, namely, each leaf $\cO_\Phi(x)$ is $\big\{\Phi^t(x)\big\}_{t\in\RR}$, the orbit of $x\in M$.  In this paper, we always assume that the flow has no singularity. Then, every orbit $\cO_\Phi(x)$ is homeomorphic to $\RR^1$ or $\mathbb{S}^1$ (periodic orbit).  It is clear that  $\cO_\Phi$ is a $C^r$-smooth foliation.

\subsubsection{Regularity of foliations and flows}\label{subsec regularity}

In this paper, we will consider three types of regularity of foliation given by  foliation chart, by leaves and holonomy maps, and by tangent  plane field. It is clear that a $C^r$-foliation has $C^r$-smooth leaves and holonomy maps. Applying the Journ\'e Lemma, the regularity of foliation is almost decided  by ones of its leaves and holonomy maps, see for example  \cite[Section 6]{PSW1997}.  A foliation generated by $C^r$ plane field is  automatically $C^r$-smooth. The converse is false, in general. However, a $C^r$-smooth foliation can be $C^r$-smoothly diffeomorphic  to a foliation generated by a $C^r$ plane field \cite{H1983}. Here we collect the above facts and properties as the following proposition. 

\begin{proposition}[\cite{H1983,PSW1997} ]\label{prop foliation regularity}
	
	Let $\cF$ be a foliation of $M$. Then 
	\begin{itemize}
		\item If $\cF$ is generated by a $C^r$-smooth plane field, then $\cF$ is a $C^r$-smooth foliation.
		\item If $\cF$ is a $C^r$-smooth foliation, then there exists a $C^r$-diffeomorphism $H:M\to M$ such that the foliation $H(\cF)$ is generated by a $C^r$-smooth plane field and $H$ can be chosen $C^r$-close to  ${\rm Id}_M$.
		\item If $\cF$ is a $C^r$-smooth foliation, then the leaves and the holonomy maps are uniformly $C^r$-smooth.
		\item If the leaves of $\cF$ and the holonomy maps induced by $\cF$ are uniformly $C^r$-smooth, then the foliation $\cF$ is $C^{r_*}$-smooth, where $r_*$ is defined as Lemma \ref{lem journe}.
	\end{itemize}
\end{proposition}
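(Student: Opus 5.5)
This proposition collects standard facts about foliation regularity, so the plan is to verify each item from the definitions, with Journé's Lemma \ref{lem journe} doing the real work in the last item.

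\emph{First item.} Let $\cF$ be tangent to a $C^r$ plane field $E$ of dimension $l$. Near any point $p$, choose a $C^\infty$ transversal disk $\Sigma\cong D^{d-l}$ through $p$. Choose $C^r$ vector fields $X_1,\dots,X_l$ spanning $E$ locally; the involutivity of $E$ lets us take them commuting, as in the Frobenius theorem. Define
\[
\vphi(t_1,\dots,t_l,y)=X_1^{t_1}\circ\cdots\circ X_l^{t_l}(y), \qquad y\in\Sigma .
\]
This map is $C^r$ by smooth dependence of flows on initial conditions. Its differential at $(0,p)$ is invertible because $E\oplus T\Sigma=TM$. Its plaques $\vphi(D^l\times\{y\})$ lie in leaves of $\cF$. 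So $\vphi$ is a $C^r$ foliation chart. Transition maps between two such charts are compositions of $C^r$ maps, hence $C^r$.

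\emph{Third item.} In a foliation chart, each leaf is locally the image of a plaque $D^l\times\{y\}$, so leaves are $C^r$ immersed submanifolds. Uniformity comes from covering $M$ by finitely many charts with compact closures. For holonomy, note that a smooth transversal $\Sigma_i$ read in the chart $\vphi^{-1}$ is a $C^r$ graph over the $D^{d-l}$ factor. The holonomy $\Sigma_1\to\Sigma_2$ is therefore the composition of the $C^r$ projection $\Sigma_1\to D^{d-l}$ with the inverse of the $C^r$ projection $\Sigma_2\to D^{d-l}$. For long holonomies one concatenates finitely many charts. Uniform bounds again follow from compactness.

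\emph{Fourth item.} Fix a point and a smooth transversal $\Sigma$. Use the ambient smooth structure to pick a $C^r$ family of $l$-disks, parametrized by $\Sigma$, along which to measure leaves. Define the candidate chart by sending $(u,y)\in D^l\times\Sigma$ to the point of the local leaf $\cF_{\rm loc}(y)$ with coordinate $u$. The plaque coordinate $u$ can be taken as orthogonal projection of the leaf to a fixed $l$-plane. Along each plaque $D^l\times\{y\}$ this map is uniformly $C^r$, because leaves are uniformly $C^r$. Along each transversal slice $\{u\}\times\Sigma$ it is a holonomy map from $\Sigma$ to a parallel transversal, hence uniformly $C^r$. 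The two families of slices form transverse foliations on both sides. Journé's Lemma \ref{lem journe} then gives that the chart is $C^{r_*}$, and so is its inverse. Hence $\cF$ is a $C^{r_*}$ foliation. The loss $r\to r_*$ is exactly the loss in Journé's lemma.

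\emph{Second item.} This is the main obstacle, since a $C^r$ foliation need not have a $C^r$ tangent field; I would follow Hector \cite{H1983}. Start from a finite $C^r$ foliation atlas, and in each chart the foliation is the pullback of the horizontal foliation, which has a smooth tangent field. The task is to find a diffeomorphism $H$, $C^r$-close to the identity, making the tangent field of $H(\cF)$ $C^r$. The strategy is to smooth the atlas inductively over charts. At each step, replace the chart map by a nearby $C^\infty$ map on a slightly shrunk domain, and use a partition of unity together with the $C^r$ transition maps to correct the overlaps. The result is a $C^\infty$ manifold structure on $M$ in which $\cF$ is a $C^\infty$ foliation. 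This structure is $C^r$-compatible with the original one and, by uniqueness of smoothings, $C^r$-diffeomorphic to it through a map close to the identity.

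That map is $H$. The tangent field of $H(\cF)$ is then $C^\infty$ in the new structure and $C^r$ in the original one. The delicate point is controlling the closeness of $H$ to ${\rm Id}_M$ in $C^r$. I would handle this by making each smoothing step small in $C^r$, which is possible because $C^\infty$ maps are dense in $C^r$.
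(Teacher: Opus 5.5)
Your first, third and fourth items are fine. The fourth is the standard Journé argument: the map $(u,y)\mapsto$ the point of $\cF_{\rm loc}(y)$ over $u$ is uniformly $C^r$ along plaques and along the transversals $\{u\}\times\Sigma$, and its derivative is invertible because inverses of holonomies are holonomies.

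The gap is in the second item. You aim for a $C^\infty$ structure on $M$, $C^r$-compatible with the given one, in which $\cF$ is a $C^\infty$ foliation. Such a structure does not exist in general. By uniqueness of smoothings it would give a diffeomorphism $H$ that is $C^\infty$ from the new structure to the old one, so $H(\cF)$ would be a $C^\infty$ foliation of $M$. (This also shows that your last sentence confuses $\cF$ with $H(\cF)$.)

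Here is a counterexample. Let $\cF$ be the suspension foliation on $\TT^2$ of a $C^{1+\alpha}$ Denjoy counterexample, i.e. a circle diffeomorphism with irrational rotation number and a wandering interval.
\begin{itemize}
\item $\cF$ is a $C^{1+\alpha}$ foliation with no compact leaf and with an exceptional minimal set.
\item Both properties are preserved by any leaf-preserving homeomorphism.
\item By Denjoy's theorem, every $C^2$ foliation of $\TT^2$ without compact leaves is minimal.
\end{itemize}
So no homeomorphism takes $\cF$ to a $C^2$ foliation, let alone a $C^\infty$ one. The step that breaks is ``use a partition of unity together with the $C^r$ transition maps to correct the overlaps''. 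The transverse parts $g_{ij}$ of the foliated transition maps $(x,y)\mapsto (f_{ij}(x,y),g_{ij}(y))$ encode the holonomy pseudogroup, which is invariant under $C^r$ conjugacy. Averaging charts destroys their foliated form; it does not smooth the $g_{ij}$.

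The second item actually asserts something weaker. The tangent field of a $C^r$ foliation is a priori only $C^{r-1}$, and you must recover that one derivative, not smooth the foliation. The way to do this is to deform the foliation chart by chart, as in Lemma \ref{lem C1+ bundle} (following \cite[Lemma A.1]{A2008} and \cite{H1983}):
\begin{itemize}
\item In a $C^\infty$ adapted chart $\phi$, write the plaques as images of horizontal planes under $G_0(z)=(z_1,F(z))$, with $F$ of class $C^r$.
\item Take a $C^\infty$ map $w$ close to $F$ and a bump function $\lambda$. Put $G(z)=(z_1,\lambda w+(1-\lambda)F)$ and $h=\phi^{-1}\circ G\circ G_0^{-1}\circ\phi$.
\item The slope of $h(\cF)$ at $G(z)$ is $\partial_{z_1}\lambda\,(w-F)+\lambda\,\partial_{z_1}w+(1-\lambda)\,\partial_{z_1}F$. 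It is $C^\infty$ where $\lambda=1$. It stays $C^r$ on any compact set where $T\cF$ was already $C^r$, because $\partial_{z_1}F$ is $C^r$ there.
\item Induct over a finite cover of charts, each time preserving the region already treated.
\end{itemize}
The result is a $C^r$ tangent field, and in general nothing better. The map $h$ is close to ${\rm Id}_M$ because $w$ is close to $F$.
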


Applying the second item of  Proposition \ref{prop foliation regularity} to a flow $\Phi$ with $C^r$-smooth orbit foliation on $M$,   there is a $C^r$-smooth diffeomorphism $H:M \to M$ such that  $DH(T\cF)$ is a $C^r$-smooth subbundle of $TM$. Obviously,  the unit bundle with respect to a $C^r$-smooth metric $a(\cdot,\cdot)$, of $DH(T\cF)$  induces a $C^r$-flow $\Psi$ on $M$, and $H$ is a $C^r$-smooth orbit-equivalence between $\Phi$ and $\Psi$. Moreover, Proposition \ref{prop foliation regularity} also implies that 
a $C^r$-smooth flow can be $C^r$-smoothly conjugate to a flow generated by $C^r$-smooth vector field. For convenience, we state these as follow.

\begin{proposition}[\cite{A2008,H1983}]\label{prop flow regularity}
	Let $\Phi$ be a flow of $M$ without singularity. Then
	\begin{itemize}
		\item If the orbit foliation  $\cO_\Phi$ is $C^r$-smooth, then there exist a $C^r$-diffeomorphism $H:M\to M$  and a flow $\Psi$ of $M$ generated by $C^r$-vector field such that $\Phi$ is orbit-equivalent to $\Psi$ via $H$.
		\item If $\Phi$ is a $C^r$-smooth flow, then there exists a $C^r$-smooth diffeomorphism $H:M\to M$ such that the flow $\Psi^t(x):=H\circ \Phi^t\circ H^{-1}(x)$ is generated by $C^r$-vector field.
	\end{itemize}
	In both cases, $H$ can be arbitrarily $C^r$-close to  the identity map ${\rm Id}_M$.
\end{proposition}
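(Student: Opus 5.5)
The plan is to derive both items from the second item of Proposition \ref{prop foliation regularity} (the theorem of \cite{H1983}), applied to the orbit foliation $\cO_\Phi$. For the first item, assume $\cO_\Phi$ is a $C^r$-smooth foliation. Proposition \ref{prop foliation regularity} gives a $C^r$-diffeomorphism $H$, $C^r$-close to ${\rm Id}_M$, such that $H(\cO_\Phi)$ is tangent to a $C^r$ line field $L=DH(T\cO_\Phi)\subset TM$.

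Since $\Phi$ has no singularity, $T\cO_\Phi$ carries the orientation given by the flow direction. Pushing it forward by $DH$ orients $L$, so $L$ is an oriented $C^r$ line bundle. We fix a $C^\infty$ Riemannian metric and let $X$ be the positively oriented unit section of $L$. Then $X$ is a $C^r$ nonvanishing vector field, and its flow $\Psi$ is $C^r$. The orbits of $\Psi$ are the leaves of $H(\cO_\Phi)$, with matching orientation. Hence
\[H(\cO_\Phi(x))=\cO_\Psi(H(x)),\]
and $H$ is the required orbit-equivalence.

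For the second item, assume $\Phi$ is a $C^r$ map $M\times\RR\to M$. The orbit foliation is then $C^r$: local flow boxes $(s,y)\mapsto \Phi^s(y)$ on small transversals are $C^r$ foliation charts. The first item therefore gives $H$ and a $C^r$ vector field $X$ tangent to $H(\cO_\Phi)$.

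Define $\Psi^t:=H\circ\Phi^t\circ H^{-1}$. Its generator is $Y(x)=\frac{d}{dt}\big|_{t=0}H\circ\Phi^t\circ H^{-1}(x)=DH\big(V(H^{-1}x)\big)$, where $V=\partial_t\Phi^t|_{t=0}$. The main obstacle is regularity: $V$ is in general only $C^{r-1}$, being a derivative of the $C^r$ map $\Phi$, so the naive bound gives only $Y\in C^{r-1}$.

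To recover one derivative I would write $Y=\rho\, X$, where $\rho=|Y|$ is the speed of $\Psi$ along the $C^r$ field $X$. The positive function $\rho$ is then determined by the time-change relation between $\Psi$ and the flow $\Theta$ of $X$, which share orbits. In a $\Theta$-flow box $(s,z)$ over a $C^r$ transversal $\Sigma$, the flow $\Psi$ reads $(s,z)\mapsto(\sigma(s,t,z),z)$ with $\sigma$ being $C^r$. Then $\rho=\partial_t\sigma|_{t=0}$ is a priori only $C^{r-1}$.

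This suggests that the statement "conjugate to a $C^r$ vector field" requires choosing $H$ more cleverly, not just straightening the foliation. The approach I would take is to pick the transversal $\Sigma$ and reparametrize. In the box, $\Phi$ becomes translation $(s,z)\mapsto(s+t,z)$ under the chart $(s,z)\mapsto\Phi^s(z)$. If we use these $C^r$ flow-box charts themselves as charts for $M$, the generator becomes $\partial_s$, which is $C^\infty$.

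The remaining task is to glue these charts. Transition maps between flow boxes have the form $(s,z)\mapsto(s+\theta(z),h(z))$, where $h$ is a $C^r$ holonomy and $\theta$ a $C^r$ return time. So $M$ carries a $C^r$ atlas in which $\Phi$ is generated by a $C^r$ field, namely $\partial_s$ in every chart. This atlas is $C^r$-compatible with the original one.

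By Whitney-type smoothing of this $C^r$ structure, exactly as in \cite{H1983}, there is a $C^r$ diffeomorphism $H$, $C^r$-close to ${\rm Id}_M$, carrying it to the standard structure. Since $H\circ\Phi^t\circ H^{-1}$ is generated by $\partial_s$ in charts, its generator is a $C^r$ vector field. The closeness of $H$ to ${\rm Id}_M$ comes from the approximation step in both items.
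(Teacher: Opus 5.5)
\textbf{Gap in the second item.} Your first item is exactly the paper's argument: straighten $\cO_\Phi$ using the second item of Proposition \ref{prop foliation regularity}, then take the positively oriented unit section of the resulting $C^r$ line field. For the second item the paper only appeals to \cite{H1983}. You are right that the generator $V$ of a $C^r$ flow is only $C^{r-1}$, so this item is not a formal consequence of the first. However, the argument you put in its place does not close that gap.

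Your flow-box atlas is $C^r$-compatible with the standard atlas. So the identity map already ``carries it to the standard structure'' by a $C^r$ diffeomorphism, and yet the generator of $\Phi$ is only $C^{r-1}$. In a standard chart $\psi$, $H_*V$ reads $(DF\,\partial_s)\circ F^{-1}$ with $F=\psi\circ H\circ\phi_i^{-1}$. If $F$ is merely $C^r$, this is in general only $C^{r-1}$. Being ``$\partial_s$ in charts'' gains nothing unless those charts are $C^{r+1}$-related to the standard ones.

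A one-dimensional example makes this concrete. On $\RR/\ZZ$ take $\Phi^t(x)=h^{-1}(h(x)+t)$, with $h$ a $C^r$ but not $C^{r+1}$ circle diffeomorphism. The flow-box chart is $h$, in which $\Phi$ is translation, but the generator $1/h'$ is not $C^r$. The right conjugacy here is $g^{-1}\circ h$, with $g$ a $C^\infty$ diffeomorphism $C^r$-close to $h$. This map is $C^\infty$ \emph{relative to the flow-box chart}, which is exactly the extra regularity your $H$ lacks.

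\textbf{What is missing.} You need to upgrade the flow-box atlas to a $C^\infty$ atlas whose transition maps keep the translation form $(s,z)\mapsto(s+\tilde\theta(z),\tilde h(z))$ with $\tilde\theta,\tilde h$ smooth. That means smoothing the holonomy pseudogroup and the return-time cocycle $\theta_{ij}$ simultaneously, compatibly on triple overlaps. Generic Whitney smoothing perturbs charts arbitrarily and destroys this form. Only after such an upgrade does uniqueness of smooth structures give a diffeomorphism to the standard structure that is $C^\infty$ relative to the new charts. This is the real content of Hart's theorem.

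\textbf{Alternative fix.} You can instead work chart by chart, as in Lemma \ref{lem C1+ bundle}. In a flow box with smooth chart $\phi$, set $G_0(s,z)=\phi\circ\Phi^s\circ\phi^{-1}(0,z)$. Approximate it in $C^r$ by a $C^\infty$ map $G$, put $G^\lambda=\lambda G+(1-\lambda)G_0$ for a bump function $\lambda$, and define $h=\phi^{-1}\circ G^\lambda\circ G_0^{-1}\circ\phi$.
In the chart, $h\circ\Phi^t\circ h^{-1}$ then has generator $(\partial_sG^\lambda)\circ(G^\lambda)^{-1}$. This is $C^\infty$ where $\lambda=1$. It stays $C^r$ wherever $\Phi$'s generator was already $C^r$, because there $\partial_sG_0=(\phi_*V)\circ G_0$ is $C^r$. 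Since $G^\lambda-G_0=\lambda(G-G_0)$ is $C^r$-small, $h$ is $C^r$-close to the identity. An induction over a finite cover then finishes the proof.

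Either this construction or a direct citation of Hart's theorem for flows must replace your smoothing step.
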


\subsubsection{Cocycles over flows}\label{subsec Cocycle}

Let $\Phi:M\to M$ be a $C^r$-smooth $(r\geq0)$  flow. A $C^k$-smooth $(0\leq k\leq r)$ function $\alpha:M\times \RR \to \RR$ is called a $C^k$-smooth \emph{cocycle} over flow $\Phi$, if $\alpha(x,0)=0$ and \[\alpha(x,t_1+t_2)=\alpha(x,t_1)+\alpha(\Phi^{t_1}(x),t_2),\quad \forall x\in M \ {\rm and}\ \forall t_1,t_2\in \RR. \]
We call two cocycles $\alpha_1$ and $\alpha_2$ over $\Phi$ are $C^k$-smoothly \emph{cohomologous}, if there is a $C^k$-smooth function $\beta:M\to \RR$ such that 
\[ \alpha_1(x,t)=\alpha_2(x,t)+\beta(x)-\beta\circ \Phi^t(x), \quad \forall x\in M \ {\rm and}\ \forall t\in \RR.  \]
In particular, for a $C^k$ function $f:M\to \RR$, the function
\[\alpha_\Phi(x,t,f):=\int_0^tf\circ \Phi^\tau(x)d\tau \]
is a $C^k$-smooth cocycle over $\Phi$. 
We call two function $f_1, f_2:M\to \RR$ are $C^k$-smoothly \emph{cohomologous} (with respect to flow $\Phi$), if the cocycles $\alpha_{\Phi}(x,t,f_1)$  and $\alpha_{\Phi}(x,t,f_2)$ are  $C^k$-smoothly cohomologous.

 Let  $\cF$ be a $ C^{k+1}$-smooth $(k\geq 0)$ codimension-one foliation of $M$ with  tangent bundle $T\cF$.  Assume that $\cF$ is  subfoliated by $\cO_\Phi$ the orbit foliation  of $\Phi$.  Let \[TM=T\cF\oplus E\] be a direct sum splitting, where $E$ is a one-dimensional bundle transverse to $T\cF$. Since $\cF$ is smooth, its holonomy ${\rm Hol}^{\cF}$ naturally induces the map $D{\rm Hol}^{\cF}_{x,y}:E(x)\to T_yM$. Let $\pi:TM=T\cF\oplus E\to E$ be the natural projection, and
\[D{\rm Hol}^{\cF,E}_{x,y}: E(x)\to E(y),\]
by $D{\rm Hol}^{\cF,E}_{x,y}(v)=\pi\circ D{\rm Hol}^{\cF,E}_{x,y}(v)$.
Particularly, the holonomy of $\cF$ induces a flow $\Phi_\cF$ of $E$, given by 
\[\Phi^t_\cF(x,v)=\big(\Phi^t(x), D{\rm Hol}^{\cF,E}_{x,\Phi^t(x)}(v) \big), \quad \forall (x,v)\in E, \forall t\in \RR.  \]
For  a Riemannian metric $a(\cdot,\cdot)$ of $M$, we denote \[\alpha_\Phi(x,t,\cF,E,a):=\log \|D{\rm Hol}^{\cF,E}_{x,\Phi^t(p)}\|_a.\]
It is clear that the map $\alpha_\Phi(\cdot,\cdot,\cF,E,a):M\times \RR\to \RR$ is a cocycle over the flow $\Phi$, namely, 
\[ \alpha_\Phi(x, t_1+t_2, \cF,E,a)=\alpha_\Phi(x, t_1, \cF,E,a)+\alpha_\Phi(\Phi^{t_1}(x), t_2, \cF, E,a). \]

In the following of this paper, we will actually focus on the cohomologous class of the cocycle $\alpha_\Phi(x,t,\cF,E,a)$.  The next lemma claims that different metric $a'(\cdot,\cdot)$ and transversal $E'$ give a cocycle $\alpha_\Phi(x,t,\cF,E',a')$ cohomologous to $\alpha_\Phi(x,t,\cF,E,a)$.

\begin{lemma}\label{lem metric to change bundle}
	Let $M$ be a Riemannian manifold with $C^{k}$-smooth $(k\geq0)$ metrics $a_1(\cdot,\cdot)$ and $a_2(\cdot,\cdot)$. Let $\cF$ be a codimension-one  foliation generated by $C^k$-smooth plane field and  subfoliated by the orbit foliation of a flow $\Phi$ on $M$.   Let $E_1, E_2$ be one-dimensional $C^k$-smooth  bundles transverse to $T\cF$.  Then, 
	\begin{enumerate}
		\item The cocycles $\alpha_\Phi(x,t,\cF,E_1,a_1)$ and $\alpha_\Phi(x,t,\cF,E_2,a_2)$ are $C^k$-smoothly cohomologous;
		\item If $\alpha_\Phi(x,t,\cF,E_1,a_1)$ is $C^k$-smoothly cohomologous  to cocycle $\alpha(x,t)$ over flow $\Phi$.  Then, there is a $C^k$-smooth metric $a(\cdot,\cdot)$ such that $(E_1)^\perp_{a_1}=(E_1)^\perp_{a}$, $a|_{(E_1)^\perp_{a}}=a_1|_{(E_1)^\perp_{a_1}}$  and 
		\[ \alpha_\Phi(x, t, \cF,E_1,a_1) = \alpha(x,t),\]
		where $(E_1)^\perp_a$ is the orthogonal complement of $E_1$ with respect to metric $a$.
	\end{enumerate}
\end{lemma}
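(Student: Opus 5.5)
The approach is to reduce the cocycle to a one-dimensional computation along the line bundle, and then handle both items by simple scaling. Since $E_i$ is one-dimensional and transverse to $T\cF$, the map $D{\rm Hol}^{\cF,E_i}_{x,\Phi^t(x)}$ is a linear map between lines. Its $a_i$-norm is just the factor by which the holonomy, projected to $E_i$ along $T\cF$, scales $a_i$-lengths.

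For item 1, I will compare two changes separately: changing the metric with $E$ fixed, and changing the bundle with the metric fixed.

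\emph{Metric change.} Fix $E$ and set $\rho(x)=\log\big(\|v\|_{a_2}/\|v\|_{a_1}\big)$ for any nonzero $v\in E(x)$. This is well-defined because $E(x)$ is a line, and it is $C^k$ since $E$, $a_1$ and $a_2$ are $C^k$. The $a_2$-norm of a linear map $E(x)\to E(y)$ equals its $a_1$-norm multiplied by $e^{\rho(y)-\rho(x)}$. Hence
\[
\alpha_\Phi(x,t,\cF,E,a_2)=\alpha_\Phi(x,t,\cF,E,a_1)+\beta(x)-\beta\circ\Phi^t(x)
\]
with $\beta=-\rho$.

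\emph{Bundle change.} Fix the metric $a$. The projection along $T\cF$ gives a linear isomorphism $\pi_{12}(x):E_1(x)\to E_2(x)$, which is $C^k$ in $x$. Both $D{\rm Hol}^{\cF,E_1}$ and $D{\rm Hol}^{\cF,E_2}$ are the map induced by the holonomy on the quotient $T_xM/T_x\cF$, represented in the two complements. The point to check is that the projected derivative depends only on this quotient action, which holds because the holonomy derivative preserves $T\cF$. This gives
\[
D{\rm Hol}^{\cF,E_2}_{x,y}=\pi_{12}(y)\circ D{\rm Hol}^{\cF,E_1}_{x,y}\circ\pi_{12}(x)^{-1}.
\]
Taking logarithms of norms, the cocycles differ by $\gamma(x)-\gamma(\Phi^t x)$, where $\gamma=-\log\|\pi_{12}\|_a$ is $C^k$.

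Item 1 follows by composing these two steps.

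For item 2, suppose
\[
\alpha(x,t)=\alpha_\Phi(x,t,\cF,E_1,a_1)+\beta(x)-\beta\circ\Phi^t(x)
\]
with $\beta\in C^k$. I define the new metric $a$ as follows. Let $F=(E_1)^\perp_{a_1}$. Put $a|_F=a_1|_F$, declare $E_1\perp_a F$, and set $a|_{E_1}=e^{2c\beta}a_1|_{E_1}$, where the sign $c=\pm1$ is fixed by the computation in the metric-change step. Concretely, by that step the rescaling changes the cocycle by $c\beta(\Phi^t x)-c\beta(x)$, so I take $c=-1$ to recover exactly $\alpha$. This $a$ is a $C^k$ metric, it satisfies $(E_1)^\perp_a=F$ and $a|_F=a_1|_F$, and it yields
\[
\alpha_\Phi(x,t,\cF,E_1,a)=\alpha(x,t).
\]

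The main obstacle is a subtlety in the bundle-change step. The holonomy derivative is only defined transversally, and it must be checked that it acts on $TM/T\cF$ independently of the chosen complement. This requires $D{\rm Hol}$ to map $T\cF$ into $T\cF$, which is where the $C^{k+1}$-smoothness of $\cF$ (or $C^1$ at least) is used. Everything else is bookkeeping of signs.
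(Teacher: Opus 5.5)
Your proof is correct and follows essentially the same route as the paper. You compare metrics with the bundle fixed via the scalar $\log(\|v\|_{a_2}/\|v\|_{a_1})$ on the line $E(x)$, compare bundles with the metric fixed via projection along $T\cF$, and for item 2 rescale $a_1$ only along $E_1$; the only cosmetic difference is that the paper passes through the $a_1$-orthogonal complement of $T\cF$ with coboundary $\log\cos\theta_1$, whereas you project $E_1\to E_2$ directly and use $\log\|\pi_{12}\|_a$ (your choice $a|_{E_1}=e^{-2\beta}a_1|_{E_1}$ correctly gives $\alpha_\Phi(x,t,\cF,E_1,a)=\alpha(x,t)$ for the new metric $a$, which is what the statement intends).
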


\begin{proof}
	
	It is clear that given a $C^k$-smooth transversal $E_0$ of $T\cF$,  different metrics $a_i(\cdot,\cdot)\ (i=1,2)$ give the same $C^k$-cohomologous class of cocycles. Indeed, let $v_x\in E_0$ such that $\|v_x\|_{a_1}=1$, and $\gamma(x)=\log\|v_x\|_{a_2}$. Then, for all $x\in M$ and $t\in \RR$,
	\begin{align}
			 \alpha_{\Phi}(x,t,\cF,E_0,a_1)- \alpha_{\Phi}(x,t,\cF,E_0,a_2)=\gamma(x)-\gamma\circ\Phi^t(x).  \label{eq. coho metric}
	\end{align}

	Let $E$  be the bundle orthogonal to $F$ with respect to $a_1(\cdot,\cdot)$.  Since $F$ and $a_1(\cdot,\cdot)$ are $C^k$-smooth, so is  $E$.  Let $\theta_1(x):=\angle_{a_1}\big(E(x),E_1(x)\big)$.  Then by direct computation,
		\[ \alpha_{\Phi}(x,t,\cF,E_1,a_1)= \alpha_{\Phi}(x,t,\cF,E,a_1)+\log \big(\cos \theta_1(x)\big)- \log \big(\cos \theta_1(\Phi^t(x))\big).\]
	The above two cohomologous equations implies that the following $C^k$-smoothly cohomologous cocycles chain: the cocycles given by $(E_2, a_2)$, $(E_2,a_1)$,  $(E,a_1)$ and  $(E_1,a_1)$.

  Conversely, 	if $\alpha_\Phi(x,t,\cF,E_1,a_1)=\alpha(x,t)+\beta(x)-\beta\circ \Phi^t(x)$, for some $C^k$-smooth function $\beta:M\to \RR$.  
  Let $a(\cdot,\cdot)$ be the $C^k$-smooth metric such that $E_1$ and $(E_1)^\perp_{a_1}$ are still orthogonal  and 
	\[a|_{(E_1)^\perp_{a}}=a_1|_{(E_1)^\perp_{a_1}} \quad {\rm and} \quad a|_{E}=e^{\beta}\cdot a_1|_{E}.\]
	The by \eqref{eq. coho metric}, $\alpha_{\Phi}(x,t,\cF,E_1,a) =\alpha(x,t)$.
\end{proof}

\begin{remark}\label{rmk Holder metric}
	In this paper, we will focus on  $C^{1+}$-smooth foliation $\cF$,  H\"older continuous bundle $E$ transverse to $T\cF$, H\"older continuous metric $a(\cdot,\cdot)$ and the H\"older-cohomologous class of the cocycle given by $(\cF, E, a)$.  Hence, by Lemma \ref{lem metric to change bundle}, one can define the H\"older-cohomologous class,
	\[<\alpha_\Phi(x,t,\cF)>:=<\alpha_\Phi(x,t,\cF,E,a)>.\]
	When we just concern the metric, we denote 
	\begin{align}
		\alpha_\Phi(x,t,\cF,a):=\alpha_\Phi(x,t,\cF,(T\cF)_a^\perp ,a), \label{eq. cocycle no E}
	\end{align}
	where $(T\cF)_a^\perp$ is the orthogonal complement of the tangent bundle $T\cF$ of $\cF$ with respect to metric $a$. 
\end{remark}

\subsection{Anosov flows}

Let $\Phi$ be a $C^r$-smooth $(r\geq 1)$ Anosov flow on Riemannian manifold $(M,a)$.
It is known that the subbundles $E^\sigma_{\Phi} \ (\sigma=s/ss/u/uu)$ are all H\"older continuous. 
Recall that these bundles uniquely integrate to $\Phi$-invariant foliations, called the stable/strong stable/unstable bundle/strong unstable foliation of $\Phi$ denoted by  $\cF^s_{\Phi}/\cF^{ss}_{\Phi}/\cF^u_{\Phi}/\cF^{uu}_{\Phi}$, respectively. 
It is clear that the foliations $\cF^{ss/uu}_{\Phi}$ and $\cO_{\Phi}$ subfoliate each leaf of $\cF^{s/u}_{\Phi}$. 
We define the Jacobians on the strong stable/unstable bundle of a $C^{1+}$-smooth Anosov flow $\Phi$ respectively  by
\[J^s_\Phi(x,t)=\log \Big|{\rm det}_a\big(D\Phi^t|_{E^{ss}_\Phi(x)}\big)\Big|
\quad {\rm and}\quad J^u_\Phi(x,t)=\log \Big|{\rm det}_a\big(D\Phi^t|_{E^{uu}_\Phi(x)}\big)\Big|.\]
Let 
\[J^u_{\Phi}(x):=\frac{d}{dt}|_{t=0}\log \Big|{\rm det}_a\big(D\Phi^t|_{E^{uu}_\Phi(x)}\big)\Big|\quad  {\rm and}\quad  J^s_{\Phi}(x):=\frac{d}{dt}|_{t=0}\log \Big|{\rm det}_a\big(D\Phi^t|_{E^{ss}_\Phi(x)}\big)\Big|. \]
Then
\[ J^s_\Phi(x,t)=\alpha_{\Phi}(x,t,J^s_\Phi) \quad {\rm and} \quad  J^u_\Phi(x,t)=\alpha_{\Phi}(x,t,J^u_\Phi), \]
are cocycles over flow $\Phi$. Moreover, if $\Phi$ is $C^{1+}$-smooth, the cocycles $J^s_\Phi(x,t)$ and $J^u_\Phi(x,t)$ are H\"older continuous.
For short, for a periodic point $p\in {\rm Per}(\Phi)$, we denote 
\[ J^{u/s}(p,\Phi):=J^{u/s}_\Phi\big(p,\tau(p,\Phi)\big)=\int_0^{\tau(p,\Phi)}J^{u/s}_\Phi\circ \Phi^\tau(x)d\tau. \] 
In the following of this paper, these notations are easy to distinguish according to the text content.

In the rest part of present paper, we focus on Anosov flows on $3$-dimensional manifold $M$. A series of works show that the homotopy on such a manifold $M$ implies isotopy. Indeed, it is known that $M$ is irreducible and covered by $\RR^3$, see for example \cite[Corollary 5]{Pal1978}.  Then, by the works of Gabai-Meyerhoff-Thurston for hyperbolic manifolds case\cite{GMT2003}, Boileau-Otal for Seifert manifolds case \cite{BO1991} and Waldhausen for Haken manifolds case \cite{W1968}, we have the following  isotopy theorem.

\begin{theorem}[Isotopy Theorem]\label{thm Isotopy}
	Let $M$ be a smooth closed $3$-manifold supported an Anosov flow. Let $H:M\to M$ be a diffeomorphism homotopic to ${\rm Id}_M$ the Identity map of $M$. Then $H$ is isotopic to ${\rm Id}_M$ .
\end{theorem}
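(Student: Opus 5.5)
The plan is to reduce the statement to the classical rigidity theorems for $3$-manifolds: first establish that $M$ is irreducible and aspherical, then split into cases according to geometrization, and finally upgrade topological isotopy to smooth isotopy.

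\textbf{Step 1 (topology of $M$).} The weak stable foliation $\cF^s_\Phi$ of an Anosov flow is a codimension-one foliation without Reeb components, since every leaf is a plane or a cylinder/M\"obius band. By Novikov's theorem together with Palmeira's theorem \cite[Corollary 5]{Pal1978}, the universal cover $\widetilde M$ is homeomorphic to $\RR^3$. Hence $M$ is aspherical, $\pi_1(M)$ is infinite and torsion free, and $M$ is irreducible. Since $\widetilde M\cong\RR^3$, $M$ is a $K(\pi_1(M),1)$. It follows that the homotopy class of $H$ is determined by the induced outer automorphism of $\pi_1(M)$, and that $H\simeq{\rm Id}_M$ means this outer automorphism is trivial.

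\textbf{Step 2 (case analysis).} By geometrization, an irreducible closed $3$-manifold with infinite $\pi_1$ is either Haken, or a small Seifert fibered manifold, or hyperbolic. I will treat the three cases separately and obtain a topological isotopy in each.
\begin{itemize}
\item If $M$ is Haken, Waldhausen's theorem \cite{W1968} applies: homotopic homeomorphisms of Haken manifolds are isotopic.
\item If $M$ is Seifert fibered but not Haken, I invoke Boileau--Otal \cite{BO1991}. They show that homotopic diffeomorphisms of such Seifert manifolds are isotopic. In practice, Seifert manifolds carrying Anosov flows are finite covers of unit tangent bundles of hyperbolic surfaces \cite{G1984}, so most of them are already Haken.
\item If $M$ is hyperbolic, Gabai--Meyerhoff--Thurston \cite{GMT2003} give the needed result. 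Together with Mostow rigidity, they show that a diffeomorphism homotopic to the identity is isotopic to it.
\end{itemize}

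\textbf{Step 3 (smooth upgrade).} Several of the cited results are stated in the PL or topological category. To pass to smooth isotopy, I use that in dimension $3$ the inclusions ${\rm Diff}(M)\hookrightarrow{\rm PL}(M)\hookrightarrow{\rm Homeo}(M)$ are weak homotopy equivalences, by Cerf and Hatcher. Consequently two diffeomorphisms that are topologically isotopic are smoothly isotopic.

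\textbf{Main obstacle.} I expect the main difficulty to be bookkeeping rather than a new argument. It lies in checking that every $3$-manifold carrying an Anosov flow falls under the hypotheses of one of the three cited theorems, which requires Step 1 and geometrization. It also lies in making the category change of Step 3 precise. If a cited result is only stated for homeomorphisms, I would first isotope $H$ to a diffeomorphism homotopic to the identity, which is possible by Moise and smoothing in dimension $3$.
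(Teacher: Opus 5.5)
Your proposal is correct and follows the paper's route. The paper likewise uses Palmeira's theorem to get that $M$ is irreducible and covered by $\RR^3$, then cites Waldhausen for the Haken case, Boileau--Otal for the Seifert case and Gabai--Meyerhoff--Thurston for the hyperbolic case. Your geometrization case split and smoothing step only make explicit what the paper leaves implicit.

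One small wording fix: ${\rm Diff}(M)$ is not literally a subgroup of ${\rm PL}(M)$. State Step 3 directly as the fact that ${\rm Diff}(M)\to{\rm Homeo}(M)$ induces a bijection on isotopy classes in dimension $3$, which is all you use.
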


\subsubsection{The foliations of Anosov flows}
Let $\Phi$ be a $C^{1+}$-smooth Anosov flow of $3$-manifold $(M,a)$.  We discuss some useful  properties on  $C^{1+}$-smooth foliations in this subsection. Firstly, we collect some well-known results as follow.

\begin{proposition}\label{prop foliation C1+}
	Let $\Phi$ be a $C^{1+}$-smooth Anosov flow on $3$-manifold. Then
	\begin{enumerate}
		\item {\rm \cite[Corollary 19.1.11]{KH1995} and \cite[Corollary 9.4.11]{FiH2019}}  The foliations $\cF^u_{\Phi}$ and $\cF^s_{\Phi}$ are $C^{1+}$-smooth. 
		\item {\em \cite[Theorem 7.1]{P2004}} The foliation $\cF^{u}_{\Phi}$ is $C^{1+}$-smoothly subfoliated by $\cF^{uu}_{\Phi}$. Similarly, it holds for $\cF^s_{\Phi}$ and $\cF^{ss}_{\Phi}$.
	\end{enumerate}  	
\end{proposition}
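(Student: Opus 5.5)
The plan is to treat the two items separately. Both follow from the $C^r$-section theorem of Hirsch--Pugh--Shub combined with the low dimension: the bundles $E^s_\Phi$, $E^u_\Phi$ have codimension one, and $E^{uu}_\Phi$ has dimension one inside the two-dimensional leaves of $\cF^u_\Phi$. I will argue for the unstable objects; the stable ones follow by reversing time.

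For item 1, I would show that $E^u_\Phi$ is a $C^{1+}$ plane field, and then apply the first bullet of Proposition \ref{prop foliation regularity}.
\begin{enumerate}
\item View $E^u_\Phi$ as the unique invariant section of the bundle over $M$ whose fibre at $x$ is the space of planes containing $X(x)$ near $E^u_\Phi(x)$. Equivalently, use the quotient bundle $TM/T\cO_\Phi$, on which $D\Phi^t$ induces a linear cocycle with a one-dimensional contracting direction and a one-dimensional expanding direction.
\item The fibre contraction of the graph transform is governed by the ratio of the stable rate to the unstable rate.
\item The base dynamics only distorts by the flow direction. Since $T\cO_\Phi$ has neither contraction nor expansion, the bunching condition ``fibre contraction $\times$ (base Lipschitz constant)$^{\alpha}<1$'' holds for $\alpha=1$. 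This is exactly where codimension one, i.e. $\dim E^{ss}_\Phi=\dim E^{uu}_\Phi=1$ in dimension 3, is used.
\item The $C^1$-section theorem then gives that $E^u_\Phi$ is $C^1$.
\item To get $C^{1+}$, I would run the same argument in the bundle of $1$-jets, with the derivative of $D\Phi$ H\"older. A small H\"older exponent $\alpha'$ survives the strict inequality, so the derivative of the section is $\alpha'$-H\"older.
\end{enumerate}

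For item 2, I would use the fourth bullet of Proposition \ref{prop foliation regularity} applied leafwise, uniformly over the leaves of $\cF^u_\Phi$.
\begin{enumerate}
\item Leaves of $\cF^{uu}_\Phi$ are uniformly $C^{1+}$; this is standard from the unstable manifold theorem for $C^{1+}$ flows.
\item It therefore suffices to show that, inside each leaf $L=\cF^u_\Phi(x)$, the holonomy of $\cF^{uu}_\Phi$ between two local orbit segments is uniformly $C^{1+}$.
\item Write $L$ in local coordinates $(s,t)$, with $s$ the strong unstable arclength and $t$ the flow time. The holonomy along $\cF^{uu}_\Phi$ from the orbit of $x$ to the orbit of $y\in\cF^{uu}_{\rm loc}(x)$ is the time-shift $t\mapsto t+\theta(\Phi^t x)$, where $\theta$ is the temporal displacement function.
\item Flowing backward, $\Phi^{-n}$ contracts the $s$-direction exponentially while acting isometrically on $t$. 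I would therefore express $\theta$, and its $t$-derivative, as a telescoping series over backward iterates whose terms decay exponentially. The terms are controlled by the H\"older continuity of $D\Phi$ restricted to $E^u_\Phi$.
\item The same estimate, applied with the derivative along the orbit, gives H\"older continuity of the derivative. Alternatively, item 2 can be obtained as the $C^{1+}$-section theorem for the line field $E^{uu}_\Phi$ inside $TL$, with fibre contraction rate $\lambda^{-1}$ against a neutral base direction.
\end{enumerate}

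The main obstacle is the uniformity of these leafwise estimates transversally. One needs the constants in the section theorem, and in the H\"older exponent, to be independent of the leaf. One also needs the leafwise $C^{1+}$ statement to assemble into a $C^{1+}$ subfoliation in the sense of foliation charts. For the latter I would invoke Journ\'e's Lemma \ref{lem journe} inside each $\cF^u_\Phi$-chart, with the two transverse foliations being the orbit foliation and $\cF^{uu}_\Phi$. Item 1 supplies the $C^{1+}$ structure of the ambient $\cF^u_\Phi$-charts.
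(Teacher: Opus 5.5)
There is a genuine gap in the regularity bookkeeping. The $C^1$ section theorem needs the graph transform induced by $D\Phi^t$ to be $C^1$ in the base point, i.e.\ $\Phi\in C^2$. Your step 5 goes further and differentiates $D\Phi$, i.e.\ needs $\Phi\in C^{2+}$. Under the paper's hypothesis ($\Phi\colon M\times\RR\to M$ is only $C^{1+}$), the graph transform is merely H\"older in the base point. Even the generator $X=\partial_t\Phi^t|_{t=0}$ is only H\"older, so the bundle of planes containing $X$ is not a $C^1$ bundle.

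More seriously, the intermediate claim you aim for is false at this regularity. If $E^u_\Phi$ were a $C^1$ plane field, the leaves of $\cF^u_\Phi$ would be $C^2$. Take $\Phi^t=h\circ\Phi_0^t\circ h^{-1}$ with $\Phi_0$ a smooth Anosov flow and $h$ a $C^{1+\alpha}$ diffeomorphism that bends one weak unstable leaf into a non-$C^2$ surface. Then $\Phi$ is a $C^{1+\alpha}$ Anosov flow, and $\cF^u_\Phi=h(\cF^u_{\Phi_0})$ is a $C^{1+\alpha}$ foliation in the sense of charts. But $E^u_\Phi=Dh(E^u_{\Phi_0})$ is only H\"older. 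This is exactly the distinction made after Proposition \ref{prop foliation regularity}: item 1 concerns foliation charts, not the tangent plane field.

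So your route, the classical Hirsch--Pugh argument, only covers $C^2$ or $C^{2+}$ flows. The paper uses the proposition for flows that are merely $C^{1+}$, e.g.\ in Theorem \ref{thm flow flexibility 0} and Proposition \ref{prop foliation C1 to C1+}. The same objection rules out your section-theorem alternative for $E^{uu}_\Phi\subset TL$ in item 2. Separately, step 3 misidentifies the mechanism: the base map $\Phi^{-t}$ is not neutral, since it expands $E^{ss}_\Phi$ exponentially. Bunching holds because $E^{ss}_\Phi$ is one-dimensional, so its contributions to the fibre contraction and to the base expansion cancel exactly, leaving $\|(D\Phi^t|_{E^{uu}_\Phi})^{-1}\|<1$.

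To work at the stated regularity, use the fourth item of Proposition \ref{prop foliation regularity} directly, with leaves and holonomies rather than plane fields. Leaves of $\cF^u_\Phi$ are uniformly $C^{1+}$, being locally the images of $(s,t)\mapsto\Phi^t(\gamma(s))$ with $\gamma$ a $C^{1+}$ parametrization of a local strong unstable curve. For the holonomy of $\cF^u_\Phi$ between one-dimensional transversals, flow backwards. Time-synchronized backward orbits of corresponding points converge exponentially while the transversals are stretched. Because the transversals are one-dimensional, the derivative of the holonomy is expressed through a uniformly convergent limit of ratios of $\|D\Phi^{-t}|_{E^{ss}_\Phi}\|$ along these orbits. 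This limit is H\"older since $\log\|D\Phi^{1}|_{E^{ss}_\Phi}\|$ is H\"older. The argument needs only $\Phi\in C^{1+}$, and it is where dimension three enters.

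For item 2, the leafwise statement is immediate from the same map. By $\Phi$-invariance of $\cF^{uu}_\Phi$, $(s,t)\mapsto\Phi^t(\gamma(s))$ is a uniformly $C^{1+}$ chart of the leaf that straightens both $\cO_\Phi$ and $\cF^{uu}_\Phi$. In particular, the $\cF^{uu}_\Phi$-holonomy between orbit segments is just $\Phi^t(x)\mapsto\Phi^t(y)$. So your temporal displacement $\theta$ is identically zero and the telescoping series is unnecessary. Finally, keep the conclusion leafwise: $\cF^{uu}_\Phi$ is in general not even a $C^1$ foliation of $M$ (this is the subject of Section \ref{sec C1 Foliation}). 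Any assembly must therefore happen inside leaves of $\cF^u_\Phi$, not into charts of $M$.
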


By Proposition \ref{prop foliation C1+}, it is clear that \[ J^{s/u}_\Phi(x,t)=\alpha_\Phi(x,t,\cF^{u/s}_\Phi,E^{ss/uu}_\Phi,a),\]
are H\"older cocycles over $\Phi$. In the rest,  for a foliation $\cF^{u/s}_\Phi$, we will only consider the induced cocycles over flow $\Phi$ rather than other flows. Thus,  for short, we omit the subscripts of the cocycles 
\begin{align}
	\alpha(x,t,\cF^{u/s}_\Phi,E,a):=\alpha_\Phi(x,t,\cF^{u/s}_\Phi,E,a).\label{eq. cocycle no flow}
\end{align}

Although our main rigidity theorems are under the assumption of $C^1$-smooth strong hyperbolic foliations, we will actually deal with the $C^{1+}$-regularity case. Indeed, $C^1$-smooth strong hyperbolic foliations is automatically $C^{1+}$-smooth.

 \begin{proposition}\label{prop foliation C1 to C1+}
	Let $\Phi$ be a $C^{1+}$-smooth Anosov flow on $3$-manifold $M$. If the strong stable foliation $\cF_\Phi^{ss}$ is $C^1$-smooth, then it is $C^{1+}$-smooth.
\end{proposition}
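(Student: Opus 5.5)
The plan is to reduce the claim to the Hölder regularity of the holonomies of $\cF^{ss}_\Phi$ inside the leaves of $\cF^s_\Phi$, and to get that Hölder bound from the invariance of the derivative cocycle under the flow, via a Livšic-type argument. Proposition \ref{prop foliation regularity} then converts leafwise and holonomy regularity into regularity of the foliation.

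\textbf{Reduction.} The leaves of $\cF^{ss}_\Phi$ are uniformly $C^{1+}$, being strong stable manifolds of a $C^{1+}$ flow. By Proposition \ref{prop foliation C1+}, $\cF^s_\Phi$ is $C^{1+}$, and $\cF^{ss}_\Phi$ subfoliates it.

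\begin{itemize}
\item Consider a holonomy of $\cF^{ss}_\Phi$ between two $\cF^s_\Phi$-transversals. It factors as a holonomy of $\cF^s_\Phi$, which is $C^{1+}$, composed with a holonomy of $\cF^{ss}_\Phi$ inside a single $\cF^s_\Phi$-leaf.
\item So it suffices to show that the holonomies of $\cF^{ss}_\Phi$ inside each $\cF^s_\Phi$-leaf, between local orbit segments, have uniformly Hölder derivatives.
\item Each such leaf holonomy $h_{x,y}$ goes from a piece of $\cO_\Phi(x)$ to a piece of $\cO_\Phi(y)$ with $y\in\cF^{ss}_{\rm loc}(x)$. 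It commutes with the flow: $h_{x,y}\circ\Phi^t=\Phi^t\circ h_{\Phi^{-t}x,\Phi^{-t}y}$ wherever defined.
\end{itemize}

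\textbf{Key step.} By hypothesis, $\cF^{ss}_\Phi$ is $C^1$, so the holonomies $h_{x,y}$ are $C^1$ with derivative $\rho(x,y)=Dh_{x,y}(x)$ continuous in $(x,y)$.

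\begin{itemize}
\item Differentiate the commutation relation along the orbit. Since $D\Phi^t$ acts on the flow direction with the constant normalization $\|X\|$ (fixed by the vector field $X$), $\rho$ can be expressed through the ratio $\|X(y)\|/\|X(x)\|$ corrected by the flow direction's component relative to a fixed transversal $E$.
\item Use the expansion under backward iteration: pulling back by $\Phi^{-t}$ sends the points apart along the strong stable direction. Conversely, pushing forward contracts $d(\Phi^t x,\Phi^t y)$ exponentially, at rate $\lambda^{-t}$.
\item Write $\rho(x,y)$ as a limit. Set $\rho(x,y)=\lim_{t\to\infty}\rho(\Phi^tx,\Phi^ty)\cdot(\text{telescoping product of ratios of } C^{\alpha}\text{ quantities along the two orbits})$. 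Because $\rho(\Phi^tx,\Phi^ty)\to 1$ by continuity and $\rho(z,z)=1$, this gives a series representation $\log\rho(x,y)=\sum_n \big(g(\Phi^n x)-g(\Phi^n y)\big)$ for a Hölder function $g$ built from the $C^{\alpha}$ derivative of the flow.
\item The series converges exponentially with Hölder dependence. Varying the base point along the orbit, the same argument gives Hölder regularity of $Dh_{x,y}$ at every point, not just at $x$.
\end{itemize}

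\textbf{Conclusion.} By the fourth item of Proposition \ref{prop foliation regularity}, uniformly $C^{1+}$ leaves together with uniformly $C^{1+}$ holonomies give that $\cF^{ss}_\Phi$ is $C^{1+}$. For non-integer exponents the item applies with $r_*=r$, so a slightly smaller Hölder exponent suffices.

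\textbf{Main obstacle.} The hard part is justifying the limit representation of $\rho$. It needs the $C^1$ hypothesis precisely to guarantee that $\rho$ exists and that $\rho(\Phi^tx,\Phi^ty)\to1$. One must also check that the flow direction's normalization is Hölder with respect to the reference transversal $E$. If that fails, I would replace $E$ by the $C^{1+}$ leafwise normal of the orbit foliation inside $\cF^s_\Phi$ and invoke Lemma \ref{lem metric to change bundle} to stay within the same Hölder cohomology class.
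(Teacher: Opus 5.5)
\textbf{There is a genuine gap: your reduction drops exactly the part of the problem that the $C^1$ hypothesis is about.}

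\emph{Your Key step is automatic.} Take the in-leaf holonomy of $\cF^{ss}_\Phi$ between local orbit segments, for $y\in\cF^{ss}_{\Phi,{\rm loc}}(x)$. It is simply $h_{x,y}(\Phi^t(x))=\Phi^t(y)$, because $\Phi^t(\cF^{ss}_\Phi(x))=\cF^{ss}_\Phi(\Phi^t(x))$. In flow-time parameters it is the identity. Its arclength derivative is $\|X(\Phi^t y)\|/\|X(\Phi^t x)\|$, where $X$ is the generator, and this is H\"older for every $C^{1+}$ flow. So no limit or series is needed, and the $C^1$ hypothesis plays no role. Consequently, if your reduction were valid, every $C^{1+}$ Anosov $3$-flow would have a $C^{1+}$ strong stable foliation. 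That is false; the paper's introduction stresses that $C^1$ strong foliations are rare.

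\emph{Where the factorization breaks.} Holonomies of $\cF^{ss}_\Phi$ act between $2$-dimensional transversals, not $\cF^s_\Phi$-transversals; for example ${\rm Hol}^{ss}_{y,x}:\cF^u_{\Phi,{\rm loc}}(y)\to\cF^u_{\Phi,{\rm loc}}(x)$. Use the $C^{1+}$ coordinates $(u,t)\mapsto\Phi^t(\gamma(u))$, with $\gamma$ a unit-speed parametrization of a local strong unstable arc. In these coordinates the holonomy reads
\[(u,t)\longmapsto\big(\pi(u),\,t+\eta(u)\big).\]
Here $\pi$ is the $C^{1+}$ holonomy of $\cF^s_\Phi$ between strong unstable arcs, and $\eta$ is the time shift. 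Your two ingredients control $\pi$ and, for each fixed $u$, the fibre map $t\mapsto t$. They say nothing about how $\eta$ varies with $u$.

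The $C^1$ hypothesis gives $\eta\in C^1$. The whole content of the proposition is that $\eta'$ is uniformly H\"older. Equivalently, the flow-direction component $b(y,x)$ of $D{\rm Hol}^{ss}_{y,x}$ applied to a unit vector of $E^{uu}_\Phi(y)$ must be H\"older as $y$ moves along the unstable leaf; in fact $b=\eta'$. Without this, your final appeal to Proposition~\ref{prop foliation regularity} has nothing to work with.

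\emph{How the paper handles this direction.} The paper's proof targets exactly this transverse direction. Lemma~\ref{lem C1 to C1+} asserts that ${\rm Hol}^{ss}$-images of strong unstable arcs are $C^{1+}$ curves, which is precisely the statement that $\eta'$ is H\"older. The restriction of ${\rm Hol}^{ss}$ to such an arc then coincides with the $C^{1+}$ holonomy of $\cF^s_\Phi$, it is $C^{1+}$ along orbits, and Journ\'e's lemma finishes.

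\emph{A caution about that lemma.} The angle estimate in its proof runs the wrong way. Inside $E^u_\Phi$, $D\Phi^{-t^*}$ contracts $E^{uu}_\Phi$ and fixes the flow direction, so it \emph{increases} angles to $E^{uu}_\Phi$. Moreover, the tangent of $\cF(x_1)$ at $x_1$ does not depend on $d$. So the claimed bound $\le C_3 d^{\alpha_1}$ would force $D{\rm Hol}^{ss}(E^{uu}_\Phi)=E^{uu}_\Phi$, that is, joint integrability of $E^{ss}_\Phi\oplus E^{uu}_\Phi$. This fails for geodesic flows of hyperbolic surfaces, whose strong foliations are nevertheless $C^\infty$.

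So the step missing from your proposal is the genuinely hard one. Any correct proof must show that $b$ (equivalently $\eta'$) is H\"older, and this is where the $C^1$ hypothesis has to enter. A natural starting point is the invariance relation $b(\Phi^t y,\Phi^t x)=e^{-J^u_\Phi(y,t)}\,b(y,x)$ together with $b(y,y)=0$. Neither your proposal nor the paper's lemma, as written, supplies this step.
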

\begin{proof}
	
	Let $x,y\in M$ close enough. Since $\cF^{ss}_\Phi$ has $C^1$-smooth holonomy map ${\rm Hol}^{ss}_{y,x}:\cF^u_\Phi(y)\to \cF^u_\Phi(x)$, for each curve $[z_1,z_2]$ lying on $\cF^{uu}_\Phi(z_1)$ with endpoints $z_1,z_2\in \cF^u_\Phi(y)$, one has that ${\rm Hol}^{ss}_{y,x}\big( [z_1,z_2] \big)$ is a $C^1$-smooth curve lying on $\cF^u_\Phi(x)$, generally it is not a local strong unstable manifold. We denote the curve by $\cF(w_1):={\rm Hol}^{ss}_{p,q}\big( [z_1,z_2] \big)$, where $w_i={\rm Hol}^{ss}_{y,x}(z_i)$ for $i=1,2$. 
	\begin{lemma}\label{lem C1 to C1+}
		The curve $\cF(w_1)$ is actually a $C^{1+}$-smooth submanifold.
	\end{lemma}
	\begin{proof}[Proof of Lemma \ref{lem C1 to C1+}]
		
		By the $C^1$-smooth holonomy map ${\rm Hol}^s_{y,x}$, there exist constants $\e_0,\theta_0>0$ and $C_1>1$ depending on $\Phi$ only,  such that 	if $d^{uu}(z_1,z_2)\leq \e_0$ and $d^{ss}(z_1,w_1)\leq \e_0$, one has
		\[ \angle \big( E^{uu}_\Phi(w), T_w \cF(w_1)  \big) \leq \theta_0,\quad \forall w\in \cF(w_1),\]
		and by the $C^1$-smooth holonomy map, \[C_1^{-1}\cdot d^{uu}(z_1,z_2) \leq d(w_1,w_2)\leq C_1\cdot d^{uu}(z_1,z_2),\]
		where $d^{uu}(\cdot,\cdot)$ and $d^{ss}(\cdot,\cdot)$ are the distance induced by the metric restricted on the leaves of $\cF^{uu}_\Phi$ and $\cF^{ss}_\Phi$, respectively. We consider points $y_1\in \cF^u_\Phi(y)$, $y_2\in\cF^{uu}_\Phi(y_1)$ and points $x_i={\rm Hol}^{ss}_{y,x}(y_i)$ for $i=1,2$ such that $d^{uu}(y_1,y_2)\ll\e_0$ and $d^{ss}(y_1,x_1)\leq \e_0$. We denote $d=d^{uu}(y_1,y_2)$ and
		\[\lambda_+:=\sup_{z\in M} \frac{d}{dt}\Big|_{t=0} \log \| D\Phi^t|_{E^{uu}_\Phi(z)} \|\quad {\rm and} \quad \lambda_-:=\inf_{z\in M} \frac{d}{dt}\Big|_{t=0} \log \| D\Phi^t|_{E^{uu}_\Phi(z)} \|. \] 
		Then the time $t^*>0$ such that $d^{uu}\big(\Phi^{t^*}(y_1),\Phi^{t^*}(y_2)\big)=\e_0$ satisfies that 
		\[ t^*\geq \lambda_+^{-1}\cdot \ln \frac{\e_0}{d}.  \] 
		Let $z_i=\Phi^{t^*}(y_i)$ and $w_i={\rm Hol}^{ss}_{p,q}(z_i)$, for $i=1,2$. Note that $w_i=\Phi^{t^*}(x_i)$, for $i=1,2$, and
		\begin{align*}
			\angle\big( E^{uu}_\Phi(x_i), T_{x_i}\cF(x_1)  \big) &= \angle\Big( D\Phi^{t^*}\big(E^{uu}_\Phi(w_i)\big), D\Phi^{t^*}  \big(T_{w_i}\cF(w_1) \big) \Big)\\&\leq C_2 \cdot e^{-\lambda_- \cdot t^*} \angle \big( E^{uu}_\Phi(w), T_w \cF(w_1)  \big)\\
			&\leq C_2 \cdot e^{-\lambda_- \cdot t^*} \cdot \theta_0.
		\end{align*}
		for some constant $C_2$ depending only on $\theta_0$ and $\Phi$. Combining the last two formulas, we have
		\[ \angle\big( E^{uu}_\Phi(x_i), T_{x_i}\cF(x_1)  \big) \leq C_2\cdot \theta_0\cdot \e_0^{-\frac{\lambda_-}{\lambda_+}} \cdot d^{\frac{\lambda_-}{\lambda_+}}\leq  C_2\cdot \theta_0\cdot \e_0^{-\frac{\lambda_-}{\lambda_+}} \cdot C_1^{\frac{\lambda_-}{\lambda_+}}\cdot d^{\frac{\lambda_-}{\lambda_+}}(x_1,x_2).   \]
		Let $C_3=C_2\cdot \theta_0\cdot \e_0^{-\frac{\lambda_-}{\lambda_+}} \cdot C_1^{\frac{\lambda_-}{\lambda_+}}$ and $0<\alpha_1=\frac{\lambda_-}{\lambda_+}\leq 1$. We have
		\[ \angle\big( E^{uu}_\Phi(x_i), T_{x_i}\cF(x_1)  \big) \leq C_3\cdot d^{\alpha_1}(x_1,x_2),\quad \forall i=1,2.\]
		Since the subbundle $E^{uu}_\Phi$ is H\"older continuous, there exist constants $C_4>1$ and $0<\alpha_2<1$ such that 
		\[ \angle\big( E^{uu}_\Phi(x_1), E^{uu}_\Phi(x_2)  \big) \leq C_4\cdot d^{\alpha_2}(x_1,x_2).\]
		Thus, let $C=3\cdot\max\{C_3,C_4\}$ and $\alpha=\min\{\alpha_1, \alpha_2\}$, we have
		\[ \angle\big( T_{x_1}\cF(x_1), T_{x_2}\cF(x_1)  \big) \leq C\cdot d^{\alpha}(x_1,x_2). \]	
		This shows that the curve $\cF(x_1)$ is $C^{1+}$-smooth.
	\end{proof}
	
	We continue the proof of Proposition \ref{prop foliation C1 to C1+}. Since the restriction ${\rm Hol}^{ss}_{y,x}|_{\cF^{uu}_\Phi(z_1)}:\cF^{uu}_\Phi(z_1)\to \cF(w_1)$ coincides with the holonomy map ${\rm Hol}^s_{y,x}:\cF^{uu}_\Phi(z_1)\to \cF(w_1)$ induced by foliation $\cF^s_\Phi$, and since the curves $\cF^{uu}_\Phi(z_1)$ and $\cF(w_1)$ are both $C^{1+}$-smooth, the $C^{1+}$-regularity of ${\rm Hol}^{ss}_{y,x}|_{\cF^{uu}_\Phi(z_1)}:\cF^{uu}_\Phi(z_1)\to \cF(w_1)$ follows from one of ${\rm Hol}^s_{y,x}$.  On the other hand, by Proposition \ref{prop foliation C1+}, the restriction ${\rm Hol}^{ss}_{y,x}|_{\cO_{\Phi}(z)}: \cO_{\Phi}(z_1) \to \cO_\Phi(w_1)$ is $C^{1+}$-smooth. Note that $\cF^u_\Phi(y)$ is subfoliated by $\cF^{uu}_\Phi$ and $\cO_\Phi$, and locally,  $\cF^u_{\Phi,{\rm loc}}(x)$ is subfoliated  by foliations $\cO_\Phi$ and ${\rm Hol}^{ss}_{y,x}(\cF^{uu}_{\Phi,{\rm loc}})$. By the Journ\'e Lemma (see Lemma \ref{lem journe}), the holonomy map ${\rm Hol}^{ss}_{y,x}$ is $C^{1+}$-smooth and by Proposition \ref{prop foliation regularity}, the foliation $\cF^{ss}_\Phi$ is $C^{1+}$-smooth.
\end{proof}

\subsubsection{The Livschitz theorem}

Recall that a flow $\Phi$ of $M$ is called \emph{transitive}, if there is a point $x\in M$ such that $\cO_{\Phi}(x)$ is dense in $M$.

\begin{proposition}[Livschitz Theorem\cite{L1971}]
	Let $\Phi$ be a transitive Anosov flow on $M$ and $\alpha$ be a H\"older cocycle over $\Phi$. If $\alpha(p,\tau(p,\Phi))=0$ for all $p\in {\rm Per}(\Phi)$, then $\alpha$ is H\"older cohomologous to function $0$, namely, there is a H\"older function $\beta:M\to \RR$ such that 
	\[\alpha(x,t)=\beta(x)-\beta\circ \Phi^t(x),  \quad \forall x\in M, \  \forall t\in \RR. \]
	Moreover, the function $\beta$ is unique up to an additive constant.
\end{proposition}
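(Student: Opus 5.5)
The plan is the standard Livschitz construction: build $\beta$ along a dense orbit, show it is uniformly H\"older there, and extend by continuity.

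\textbf{Definition on a dense orbit.} By transitivity, fix $x_0\in M$ whose orbit $\cO_\Phi(x_0)$ is dense. Define $\beta$ on this orbit by
\[\beta\big(\Phi^t(x_0)\big):=-\alpha(x_0,t).\]
This is well defined. If $x_0$ were periodic, the orbit would be a closed circle equal to $M$, which is impossible for an Anosov flow on $M$. Hence $t\mapsto\Phi^t(x_0)$ is injective. The cocycle identity then gives, for $x=\Phi^s(x_0)$,
\[\beta(x)-\beta(\Phi^t x)=\alpha(x_0,s+t)-\alpha(x_0,s)=\alpha(x,t).\]
So the coboundary equation holds on the orbit.

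\textbf{Uniform H\"older estimate (the main step).} The key claim is that $\beta$ is uniformly H\"older on the orbit. Take $x=\Phi^{s_1}(x_0)$ and $y=\Phi^{s_2}(x_0)$ with $d(x,y)<\e$.

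First suppose $s_2-s_1=T$ is large. Then $\Phi^{T}(x)$ is close to $x$, and the Anosov closing lemma produces a periodic point $p$ with period $\tau\approx T$ whose orbit shadows the segment $\Phi^{[0,T]}(x)$. The shadowing error is at most $Ce^{-\lambda\min(t,T-t)}d(x,y)$ after a time reparametrization.

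By hypothesis $\alpha(p,\tau)=0$. Comparing $\alpha(x,T)$ with $\alpha(p,\tau)$ uses H\"older continuity of the infinitesimal generator (or of $\alpha$ over short times) together with the exponential shadowing. Summing the geometric series gives
\[|\beta(x)-\beta(y)|=|\alpha(x,T)|\le C\,d(x,y)^{\theta}.\]
The time-shift discrepancy $|\tau-T|$ is also $O(d(x,y)^\theta)$, and $\alpha$ over short times is Lipschitz in $t$, so this term is controlled too.

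If instead $|s_2-s_1|$ is small, $y$ lies on the local orbit through $x$. In that case $|\alpha(x,s_2-s_1)|\le C|s_2-s_1|\le C'd(x,y)$.

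The main obstacle is the intermediate case, where $y$ is close to $x$ but not along the flow direction and the return time is not small. There I would use the local product structure to find $z=\cF^{ss}_{\rm loc}(x)\cap\cF^{u}_{\rm loc}(y)$ and handle the two sides separately with the forward and backward exponential contraction. Alternatively, and more cleanly, the closing lemma already covers every case once we use flow boxes: any orbit return to within $\e$ produces $T$ bounded below by a fixed $T_0$ unless the two points lie in the same local orbit segment.

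\textbf{Extension and uniqueness.} Since $\beta$ is uniformly H\"older on a dense set, it extends uniquely to a H\"older function on $M$. The coboundary equation passes to the closure by continuity of $\alpha$ and $\Phi$.

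For uniqueness, suppose $\beta_1$ and $\beta_2$ are two solutions. Their difference is $\Phi$-invariant and continuous, hence constant on the closure of the dense orbit, which is $M$. So $\beta$ is unique up to an additive constant.
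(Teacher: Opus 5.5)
The paper gives no proof of this proposition and only cites Livschitz; your argument (define $\beta$ along a dense orbit, get a uniform H\"older bound from the Anosov closing lemma with exponential shadowing, extend by density, and get uniqueness because a continuous $\Phi$-invariant function is constant on a dense orbit) is the classical proof behind that citation, so it is correct in substance. Two small fixes are needed: first, a general H\"older cocycle is only H\"older, not Lipschitz, in $t$ near $0$ (for example $\beta(x)-\beta\circ\Phi^t(x)$ with a non-smooth H\"older $\beta$), so in the short-time and time-shift estimates you should use $|\alpha(x,t)|\le C|t|^{\theta}$ together with the flow-box bound $|T|\le C\,d(x,\Phi^T(x))$; second, there is no separate ``intermediate case'', because every near-return with $T\ge t_0$ is handled directly by the closing lemma.
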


\begin{remark}\label{rmk half-Livsic}
	An inequality version of  Livschitz's type theorem due to Ma\~n\'e-Conze-Guivarc'h (see for example \cite{LT2005}) shows that if $\alpha(p,\tau(p,\Phi))\leq0$ for all $p\in {\rm Per}(\Phi)$, then there is a H\"older function $\beta:M\to \RR$ such that $\alpha(x,t)\leq \beta(x)-\beta\circ \Phi^t(x)$,   for all$x\in M$ and $t\in \RR$. Moreover, the function $\beta$ is smooth along the flow direction.
\end{remark}

Livschitz Theorem is useful in rigidity issue on periodic data of Anosov systems. It is well known that two conjugate Anosov $3$-flows admitting same stable and unstable Jacobians at corresponding periodic points are smoothly conjugacy, see independent works of de La Llave \cite{dL1992} and  Pollicott \cite{Po1990}. For convenience, we state the  following case. 

\begin{theorem}\cite{dL1992,Po1990}\label{thm delaLlave}
	Let $\Phi,\Psi$ be two  $C^{r}$-smooth $(r>1)$ Anosov flow on $3$-manifold $M$ conjugate via $H$. If for all periodic point $p$ of\, $\Phi$,
	\[J^u(p,\Phi)=J^u(H(p),\Psi)\]
	then $H$ is $C^{r_*}$-smooth  along each leaf of $\cF^u_\Phi$. 
\end{theorem}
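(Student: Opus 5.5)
The plan is to reduce the statement to the regularity of $H$ along strong unstable leaves, then glue with the flow direction using the Journ\'e Lemma (Lemma \ref{lem journe}) inside each weak unstable leaf. Since $H$ is a conjugacy (time-preserving), two points on the same $\cF^{uu}_\Phi$-leaf are backward asymptotic with zero time shift, and this property is preserved by $H$. Hence $H(\cF^{uu}_\Phi)=\cF^{uu}_\Psi$ and $H(\cO_\Phi)=\cO_\Psi$. Along orbits $H$ is an isometry in the time parameter, so it is $C^\infty$ there, uniformly. It therefore suffices to show that $H|_{\cF^{uu}_\Phi(x)}$ is uniformly $C^r$ (or $C^{r_*}$). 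Lemma \ref{lem journe} applied in $\cF^u_\Phi(x)$, with the pair $(\cO_\Phi,\cF^{uu}_\Phi)$ mapped to $(\cO_\Psi,\cF^{uu}_\Psi)$, then gives the conclusion.

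\textbf{Step 1 (Livschitz).} The cocycles $J^u_\Phi(x,t)$ and $J^u_\Psi(H(x),t)$ are H\"older cocycles over $\Phi$. Here we use that $H$ is H\"older and that $\Psi^t\circ H=H\circ\Phi^t$. By hypothesis they agree on every periodic orbit. By the Livschitz Theorem there is a H\"older $\beta$ with
\[ J^u_\Psi(H(x),t)=J^u_\Phi(x,t)+\beta(x)-\beta\circ\Phi^t(x). \]

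\textbf{Step 2 (affine parameters).} Since the leaves are one-dimensional, nonstationary linearization applies. For each $x$ there is a parametrization $\phi_x:\RR\to\cF^{uu}_\Phi(x)$ with $\phi_x(0)=x$ and unit speed at $0$. Its derivative density is
\[ \rho_x(y)=\prod_{n\ge1}\frac{\|D\Phi^{-1}|_{E^{uu}}(\Phi^{-n+1}y)\|}{\|D\Phi^{-1}|_{E^{uu}}(\Phi^{-n+1}x)\|}, \]
a product which converges. In this parametrization
\[ \Phi^t\circ\phi_x(s)=\phi_{\Phi^tx}\big(e^{J^u_\Phi(x,t)}s\big). \]
For $C^r$ flows these charts are uniformly $C^r$ along leaves (the same regularity as the leaves, with $r-1$ derivatives on $\rho$), and they depend continuously on $x$. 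Build the same charts $\psi_{H x}$ for $\Psi$.

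\textbf{Step 3 (linearity of $H$ in affine charts) — the main obstacle.} Set $h_x:=\psi_{Hx}^{-1}\circ H\circ\phi_x:\RR\to\RR$. This is an increasing homeomorphism with $h_x(0)=0$, and $x\mapsto h_x$ is continuous in the compact-open topology. The conjugacy equation together with Step 1 gives the renormalization identity
\[ h_{\Phi^t x}(s)=e^{J^u_\Phi(x,t)+\beta(x)-\beta(\Phi^tx)}\,h_x\big(e^{-J^u_\Phi(x,t)}s\big). \]
First I would show that $h_x$ is bi-Lipschitz on $[-1,1]$, uniformly in $x$. This follows from the identity, because $\beta$ is bounded and $\{h_y\}$ is compact, so each $h_y$ is bounded above and below on a fixed scale. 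Then Lebesgue differentiability gives a point where $h_{x_0}$ is differentiable. Renormalizing at that point along the backward orbit shows that $h_{\Phi^{-t}x_0}$ converges, along the scaling, to a linear map. Then I would use transitivity, taking a dense orbit through a point of differentiability, together with continuity of $x\mapsto h_x$ and of $\beta$. This should give $h_x(s)=e^{\gamma(x)}s$ for every $x$, with $\gamma$ continuous; in fact $\gamma=\beta+\mathrm{const}$. I expect this limiting argument to be the delicate part. One must check that differentiability propagates to the whole leaf and not merely to the origin. For that, apply the identity based at every point $y$ of the leaf, using that the chart at $y$ is an affine reparametrization of the chart at $x$.

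\textbf{Step 4 (conclusion).} Since $h_x$ is linear, $H|_{\cF^{uu}_\Phi(x)}=\psi_{Hx}\circ(e^{\gamma(x)}\cdot)\circ\phi_x^{-1}$. This is uniformly $C^r$ because the affine charts are. Combining with smoothness along $\cO_\Phi$ and applying Lemma \ref{lem journe} inside the leaves of $\cF^u_\Phi$ yields that $H$ is $C^{r_*}$ along each leaf of $\cF^u_\Phi$.
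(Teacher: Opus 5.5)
Your reduction is the same as the paper's. The paper does not reprove this theorem: it imports $C^r$-smoothness of $H$ along $\cF^{uu}_\Phi$ from de la Llave and Pollicott, and then, as in the remark after the statement, glues this with smoothness along orbits via the Journ\'e Lemma inside $\cF^u_\Phi$-leaves. For the imported part you substitute a renormalization in affine parameters, a legitimate self-contained alternative to the classical SRB-measure argument. In that argument $H$ carries SRB conditionals to SRB conditionals, so its leafwise derivative is a ratio of canonical densities. Steps 1, 2, 4 and your renormalization identity are correct. The uniform bi-Lipschitz bound also works, provided you recenter at every point of the leaf and not only at the base point. However, the limiting argument in Step 3 fails as written.

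\textbf{The renormalization runs in the wrong direction.} Your identity with $t>0$ and $J=J^u_\Phi(x_0,t)\to+\infty$ reads
\[ h_{\Phi^{t}x_0}(s)=e^{\beta(x_0)-\beta(\Phi^{t}x_0)}\,e^{J}\,h_{x_0}\big(e^{-J}s\big), \]
which zooms \emph{into} $h_{x_0}$ at $0$. Along the backward orbit you get instead $h_{\Phi^{-t}x_0}(s)=e^{\beta(x_0)-\beta(\Phi^{-t}x_0)}e^{-J'}h_{x_0}(e^{J'}s)$ with $J'=J^u_\Phi(\Phi^{-t}x_0,t)\to+\infty$. That is a zoom \emph{out}, about which differentiability at $0$ says nothing. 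So you must renormalize forward. This yields $h_y(s)=d\,e^{\beta(x_0)-\beta(y)}s$, with $d=h_{x_0}'(0)>0$, but only for $y$ in the forward limit set $\omega(x_0)$; incidentally this gives $\gamma=-\beta+\mathrm{const}$.

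\textbf{Choosing the point does not give $\omega(x_0)=M$.} You propose taking a dense orbit through a point of differentiability. Differentiability points form a Lebesgue-full subset of each leaf, while points with dense forward orbit form a residual set, and there is no a priori reason these meet. Moreover, a dense orbit can have its forward half converging to a periodic orbit.

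\textbf{A fix in the spirit of the paper's proof of Theorem \ref{thm C1 rigid 1}.} Let $\Lambda=\{y\in M:\ h_y \text{ is linear}\}$. This set is:
\begin{itemize}
\item closed, by compact-open continuity of $y\mapsto h_y$ evaluated at $s=1$;
\item $\Phi$-invariant, by the identity;
\item $\cF^{uu}_\Phi$-saturated, since $h_{y'}=A_2^{-1}\circ h_y\circ A_1$ with $A_1,A_2$ affine for $y'\in\cF^{uu}_\Phi(y)$.
\end{itemize}
Hence $\Lambda$ is $\cF^u_\Phi$-saturated. It contains $\omega(x_0)\neq\emptyset$, so minimality of $\cF^u_\Phi$ for a transitive Anosov flow gives $\Lambda=M$. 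Alternatively, pick $x_0$ generic for the SRB measure, using absolute continuity of its unstable conditionals to meet the differentiability set.

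With this repair the rest of your argument goes through. Note that transitivity is used here and in Step 1 (the paper's Livschitz theorem requires it), although the statement omits it; every application of the theorem in the paper is to transitive flows.
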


\begin{remark}
	The $C^{r_*}$-regularity of $H$ in Theorem \ref{thm delaLlave} follows from $C^r$-smoothness along the strong unstable foliation and along the orbit, and the Journ\'e Lemma \ref{lem journe}.   Similarly, if $J^s(p,\Phi)=J^s(H(p),\Psi)$, then $H$ is $C^{r_*}$-smooth  along each leaf of $\cF^s_\Phi$. By the Journ\'e Lemma again,  when  the stable and unstable  Jacobians of $\Phi$ and $\Psi$ on corresponding period points coincide respectively, $H$ is  $C^{r_*}$-smooth.
\end{remark}

By a similar argument as \cite{dL1992}, one can get the following smooth rigidity of orbit-equivalence. We refer to \cite[Appendix A]{GLRH2025} for a proof.

\begin{proposition}[\cite{dL1992,GLRH2025}]\label{prop Jac smooth orbit-equiv}
	Let\, $\Phi$ and $\Psi$ be two $C^{r}$-smooth $(r>1)$ Anosov flows on $3$-manifold $M$ orbit-equivalent via $H$. Assume that for all periodic point $p\in M$ of\, $\Phi$,
	\[J^u(p,\Phi)= J^u(H(p),\Psi) \quad {\rm and} \quad J^s(p,\Phi)= J^s(H(p),\Psi).\]
	Then $\Phi$ and $\Psi$ are $C^{r_*}$-smoothly orbit-equivalent, here the smooth orbit-equivalence may be not $H$.
\end{proposition}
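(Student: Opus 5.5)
The plan is to reduce the orbit-equivalence to a conjugacy by a time change, run a de la Llave type argument \cite{dL1992} on the transverse structure, and then repair $H$ along orbits. The key observation is that the periodic data $J^{u}(p,\Phi)$ and $J^{s}(p,\Phi)$ are invariants of the transverse (holonomy) dynamics, not of the parametrization. Indeed, $J^u(p,\Phi)=\alpha\big(p,\tau(p,\Phi),\cF^s_\Phi,E,a\big)$ is the log-derivative of the weak-stable holonomy around the closed orbit acting on a transversal inside $\cF^u_\Phi(p)$. This quantity is unchanged if $\Phi$ is reparametrized, or if $E^{uu}_\Phi$ is replaced by any transversal $E$ to $T\cF^s_\Phi$ (Lemma \ref{lem metric to change bundle}).

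\textbf{Step 1 (reduction to a conjugacy).} The orbit-equivalence $H$ can be taken H\"older, and $H\circ\Phi^t(x)=\Psi^{\theta(x,t)}(H(x))$ for a H\"older cocycle $\theta$. Let $\tilde\Psi$ be the (H\"older) time change of $\Psi$ defined by $\tilde\Psi^{t}(H(x))=\Psi^{\theta(x,t)}(H(x))$, so that $H$ conjugates $\Phi$ to $\tilde\Psi$. The weak foliations of $\tilde\Psi$ coincide with $\cF^{s/u}_\Psi$, which are $C^{1+}$ by Proposition \ref{prop foliation C1+}. Hence the cocycles $\alpha_{\tilde\Psi}(\cdot,\cdot,\cF^{s/u}_\Psi)$ are defined, and their periodic values are $J^{u/s}(H(p),\Psi)$. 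By hypothesis, $\alpha(\cdot,\cdot,\cF^s_\Phi)$ and $\alpha_{\tilde\Psi}(H(\cdot),\cdot,\cF^s_\Psi)$ have the same periods. By the Livschitz Theorem they are H\"older cohomologous, and the same holds for the $\cF^u$ pair.

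\textbf{Step 2 (transverse smoothness).} Fix a local unstable leaf and consider its quotient by orbits, i.e.\ a one-dimensional $C^{1+}$ transversal $\gamma\subset\cF^u_\Phi(x)$ to $\cO_\Phi$. The map $H$ induces a homeomorphism $h$ from $\gamma$ onto a transversal $H$-related to $\gamma$ inside $\cF^u_\Psi(H(x))$. Following de la Llave, the cohomology from Step 1 yields H\"older densities $\rho_\Phi$ on $\gamma$ and $\rho_\Psi$ on its image. These are built as infinite products of the holonomy derivatives along backward orbits (the transfer-function terms cancel), and $h$ maps $\rho_\Phi\,d\ell$ to $\rho_\Psi\,d\ell$. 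Consequently $h$ is $C^{1+}$. Bootstrapping with the $C^r$-smoothness of the holonomies of $\Phi$ and $\Psi$ then gives $C^{r_*}$, exactly as in Theorem \ref{thm delaLlave}. The symmetric argument applies in stable leaves. The conclusion is that $H$ is $C^{r_*}$ transversally to the orbit foliation inside each weak leaf. By the Journ\'e Lemma \ref{lem journe}, applied to the pair of transversal one-dimensional "quotient" directions, the induced map between the local orbit spaces (smooth two-dimensional transversals to $\cO_\Phi$ and $\cO_\Psi$) is $C^{r_*}$.

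\textbf{Step 3 (fixing the orbit direction).} $H$ itself need not be smooth along orbits, since $\theta$ is only H\"older. I would replace it by $H_1(x)=\Psi^{u(x)}(H(x))$ with a sliding function $u$ chosen so that $H_1$ becomes smooth along orbits. A first try is flow-averaging of $\theta$, taking $u(x)=-\theta(x,0)+\int_0^1\big(\text{mollified }\theta\big)$, so that $s\mapsto u(\Phi^s x)+\theta(x,s)$ is $C^r$ in $s$. Alternatively, I would choose a $C^r$ family of local cross-sections transverse to $\cO_\Phi$ and define $H_1$ by sending the time-$t$ point over a section point to the time-$t$ point (for a fixed smooth parametrization of $\Psi$) over the $H$-image on the corresponding smooth section of $\Psi$, then glue with a partition of unity along the flow direction. $H_1$ is still an orbit-equivalence (homotopic to $H$ along orbits), is $C^r$ along orbits, and remains transversally $C^{r_*}$ by Step 2. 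The Journ\'e Lemma then shows that $H_1$ is a $C^{r_*}$ diffeomorphism; this is why the smooth orbit-equivalence may differ from $H$.

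The main obstacle is Step 2. One must check carefully that the de la Llave density construction works with a merely H\"older time-changed flow $\tilde\Psi$ and with transversals to orbits rather than strong unstable leaves, since the strong foliations of $\tilde\Psi$ are not those of $\Psi$. I would handle this by working throughout with the $C^{1+}$ foliation $\cF^s_\Psi$ and its holonomy cocycle, which depend only on orbits.
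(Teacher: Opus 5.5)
The paper gives no proof of its own here; it only refers to de la Llave and to \cite[Appendix A]{GLRH2025}. Your overall strategy is the expected one: reparametrize $\Psi$ so that $H$ becomes a conjugacy, apply Livschitz to the weak-foliation holonomy cocycles, run a de la Llave argument on the transverse dynamics inside weak leaves, and then rebuild the map along orbits. Two steps, however, do not work as written.

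\textbf{Step 3.} Your first construction fails, and the closing appeal to Journ\'e rests on a false implication. Suppose a homeomorphism sends $\Phi$-orbits to $\Psi$-orbits, is $C^r$ along orbits, and induces a $C^{r_*}$ map of local orbit spaces. It need not be differentiable. Take $\Psi=\Phi$ and $x\mapsto\Phi^{u(x)}(x)$, with $u$ small, smooth along orbits but only H\"older across them. This map induces the identity on orbit space and is still not $C^1$.

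Lemma \ref{lem journe} does not apply to this notion of transverse smoothness. It needs the map \emph{itself} to be uniformly smooth along the leaves of two transverse foliations of complementary dimension, for instance $\cF^s_\Phi$ and $\cF^{uu}_\Phi$. For $H_1=\Psi^{u}\circ H$, the restriction to $\cF^{uu}_{\Phi,{\rm loc}}(x)$ is $y\mapsto\Psi^{\sigma(y)}(h(y))$. Here $h$ is your $C^{r_*}$ map onto a smooth $\Psi$-transversal, and $\sigma(y)$ is $u(y)$ plus the $\Psi$-time from $h(y)$ to $H(y)$. Flow-averaging improves $\sigma$ only in the orbit direction, so $\sigma$ stays H\"older and $H_1$ is in general not $C^1$.

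The time offset has to be smooth in all directions, which means $H_1$ must be built from the transverse map, not from $H$. Your second alternative does exactly this. In a $C^r$ flow box over a section $\Sigma_\Phi$, set $H_1(\Phi^t z)=\Psi^t(\bar h(z))$, where $\bar h:\Sigma_\Phi\to\Sigma_\Psi$ is the $C^{r_*}$ map from the end of Step 2. This is $C^{r_*}$ directly, with no Journ\'e argument needed. What you still owe is consistent gluing on overlaps, where the local pieces differ by time shifts that are constant along orbits, while keeping the glued map monotone along orbits.

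\textbf{Step 2.} The core of this step is asserted, not proved. Telescoping the transfer function $\phi$ only gives a pointwise identity $\rho_\Psi\circ h=c\,e^{\pm\phi}\rho_\Phi$ between the densities at $H$-related points. That identity says nothing about the derivative of $h$. So it does not give $h_*(\rho_\Phi\,d\ell)=\rho_\Psi\,d\ell$, which is precisely the content of de la Llave's theorem. You need an absolute-continuity argument. Two routes work:
\begin{enumerate}
\item \emph{Direct route.} Push a small interval $I\subset\gamma$ forward by orbit holonomy until its length is about $\varepsilon$. Bounded distortion and the Step 1 cohomology then bound $\ell(h(I))/\ell(I)$ uniformly above and below, so $h$ is bi-Lipschitz. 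Its a.e.\ derivative satisfies $\log h'=\pm\phi$ plus a flow-invariant function. That function is a.e.\ constant by ergodicity of the Lebesgue class. Since a bi-Lipschitz $h$ is the integral of its derivative, $h'=c\,e^{\pm\phi}$ everywhere, and $h$ is $C^{1+}$.
\item \emph{SRB route.} Show that $H$ carries $\mu^{\rm SRB}_\Phi$ to a measure equivalent to $\mu^{\rm SRB}_\Psi$, despite the reparametrization. The key input is that the zero-pressure potential $-J^u_\Psi$ for $\Psi$ corresponds to $-(J^u_\Psi\circ H)\,\theta'$ for $\Phi$, and Step 1 makes the latter cohomologous to $-J^u_\Phi$. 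Then disintegrate along the weak leaves.
\end{enumerate}
The issue you flagged yourself (H\"older time change, transversals instead of strong leaves) is secondary to this missing step.

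Finally, Step 1 uses the Livschitz theorem, which as stated in the paper requires transitivity. The proposition as written does not assume transitivity.
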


 \subsubsection{Equilibrium states of Anosov flows}

Denote the set of $\Phi$-invariant probability measures on $M$ by $\mathcal{M}_{\Phi}(M)$.

\begin{proposition}[\cite{B2001,BR1975,C2002}]\label{prop pressure}
	Let $\Phi$ be a $C^{1+}$-smooth transitive Anosov flow and $f:M\to \RR$ be a H\"older continuous function. Then there exists a unique $\mu_f\in \mathcal{M}_{\Phi}(M)$ such that 
	\[ P_f(\Phi)=\sup_{\mu\in \mathcal{M}_{\Phi}(M)}\big\{h_\mu(\Phi)+\int_M fd\mu   \big\} =  h_{\mu_f}(\Phi)+\int_M fd\mu_f.    \]
	We call $\mu_f$  the \emph{equilibrium state} of the potential $f$ and $P_f(\Phi)$ is the pressure.	Moreover, for two H\"older functions $f_1$ and $f_2$, their  equilibrium states $\mu_{f_1}$ and $\mu_{f_2}$ coincide, if and only if $f_1-P_{f_1}(\Phi)$ and $f_2-P_{f_2}(\Phi)$ are H\"older cohomologous, namely, their corresponding cocycles
	\[\alpha_{i} (x,t):= \int_0^t \big( f_i\circ \Phi^\tau(x)-P_{f_i}(\Phi) \big)d\tau, \ (i=1,2)  \]  are H\"older continuously cohomologous.
\end{proposition}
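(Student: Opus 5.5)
We will argue in three stages: existence, uniqueness together with a Gibbs property, and then the characterization of when two equilibrium states coincide, which is reduced to the Livschitz Theorem.

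\textbf{Existence.} Since $\Phi$ is Anosov it is expansive, so the entropy map $\mu\mapsto h_\mu(\Phi)$ is upper semicontinuous on the weak$^*$-compact set $\mathcal{M}_\Phi(M)$. The map $\mu\mapsto\int_M f\,d\mu$ is continuous. Hence the supremum in the variational principle is attained, and the variational principle identifies it with the topological pressure $P_f(\Phi)$.

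\textbf{Uniqueness and the Gibbs property.} We follow Bowen's approach for flows. A transitive Anosov flow satisfies expansivity and Bowen's specification property: by the spectral decomposition it is either topologically mixing, or a constant-roof suspension, which can be handled on the time-$\tau$ section. A H\"older potential $f$ satisfies the Bowen regularity condition along Bowen balls, because orbit segments that stay $\epsilon$-close are exponentially close in the middle by hyperbolicity. Bowen's construction then produces a measure $\mu_f$ as a limit of weighted periodic orbit measures, with the flow Gibbs property
\[
C^{-1}\le \frac{\mu_f\big(B(x,T,\epsilon)\big)}{\exp\Big(\int_0^T \big(f\circ\Phi^\tau(x)-P_f(\Phi)\big)\,d\tau\Big)}\le C,\qquad \forall x\in M,\ T>0.
\]
Here $B(x,T,\epsilon)$ is the Bowen ball. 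The Gibbs property makes $\mu_f$ ergodic. Any other equilibrium state $\nu$ with $\nu\neq\mu_f$ is then mutually singular to $\mu_f$, and a counting argument over $(T,\epsilon)$-separated sets contradicts $h_\nu+\int f\,d\nu=P_f$. An alternative route is to code $\Phi$ by a suspension over a mixing subshift of finite type via Bowen--Ratner Markov sections and invoke Ruelle--Perron--Frobenius theory. We expect the main obstacle to be obtaining the Gibbs bounds with constants uniform in $T$ for the flow, rather than for a map, which requires controlling the time reparametrizations inherent in specification. We would handle this by using specification with bounded time shifts and absorbing those shifts into the constant $C$, since $f$ is bounded.

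\textbf{Characterization of coincidence.} Suppose first that $f_1-P_{f_1}$ and $f_2-P_{f_2}$ are cohomologous. Then $\int(f_1-P_{f_1})\,d\mu=\int(f_2-P_{f_2})\,d\mu$ for every $\mu\in\mathcal{M}_\Phi(M)$, since the coboundary integrates to zero. So the two variational functionals differ by the constant $P_{f_1}-P_{f_2}$, their maximizers agree, and uniqueness gives $\mu_{f_1}=\mu_{f_2}$.

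Conversely, suppose $\mu_{f_1}=\mu_{f_2}$, and let $p$ be periodic with period $\tau=\tau(p,\Phi)$. Apply the Gibbs bounds for both potentials to $B(p,n\tau,\epsilon)$ and divide the two estimates. This gives
\[
\Big|\,n\int_0^{\tau}\Big(\big(f_1-P_{f_1}\big)-\big(f_2-P_{f_2}\big)\Big)\circ\Phi^s(p)\,ds\Big|\le 2\log C,
\]
for all $n$. Letting $n\to\infty$ shows that the cocycle $\alpha_1-\alpha_2$ vanishes on every periodic orbit. The Livschitz Theorem then yields a H\"older $\beta$ with $\alpha_1-\alpha_2=\beta-\beta\circ\Phi^t$, which is the claimed cohomology.
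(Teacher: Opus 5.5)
The paper gives no proof of this proposition; it quotes it from Bowen, Bowen--Ruelle and Chernov. In those sources the standard route is symbolic. Bowen--Ratner Markov sections code $\Phi$ as a suspension over a transitive subshift of finite type with H\"older roof. Equilibrium states of $f$ then correspond to Gibbs measures of an induced potential on the shift, obtained from Ruelle--Perron--Frobenius theory, and uniqueness and the cohomological criterion are transported back from the shift.

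Your main line instead follows Bowen's intrinsic specification argument as extended to flows (Franco), and it is correct in outline. Your proof of the coincidence criterion is the usual one in either framework: compare the two Gibbs bounds on $B(p,n\tau,\epsilon)$ and then apply Livschitz.

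Your route avoids Markov partitions and works for any expansive flow with specification and a potential with the Bowen property. Its cost is a separate treatment of the non-mixing constant-roof case. The symbolic route handles transitive flows uniformly. It also gives the finer information the paper actually relies on later, e.g.\ continuity of the Gibbs measure in the potential in Remark \ref{rmk measure path}.

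Three places in your sketch should be tightened, though none is a real gap.

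First, mutual singularity of distinct equilibrium states requires $\nu$ to be ergodic. So first pass to an ergodic component: almost every ergodic component of an equilibrium state is again one, since $h_\mu$ and $\int f\,d\mu$ are affine under ergodic decomposition.

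Second, in a flow Bowen ball the orbits are not exponentially close in the flow direction. They differ by a bounded time shift of size $O(\epsilon)$, which costs only $O(\epsilon\|f\|_\infty)$ in the Bowen property.

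Third, ergodicity of $\mu_f$ comes from the Gibbs bounds combined with specification, not from the Gibbs bounds alone.
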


In particular, if $f$ is constant, then $\mu_f$ is the measure of maximal entropy for the flow $\Phi$. 
We call $\mu$ is the \emph{SRB measure} (respectively  the \emph{inverse SRB measure}), if it is the unique equilibrium state of the potential $-J^u_\Phi(x)$ (respectively $J^s_\Phi(x)$).
It is well known that \cite{BR1975} a $C^{1+}$-smooth transitive Anosov flow $\Phi$ satisfies that 
\[ P_{-J^u_{\Phi}}(\Phi)=0 \quad {\rm and}\quad P_{J^s_{\Phi}}(\Phi)=0. \]
Hence, the SRB measure $\mu$ has 
\[ h_\mu(\Phi)=\int_MJ^u_\Phi(x)d\mu.\]

\begin{remark}\label{rmk proof of SRB=MME}
	Let $\Phi$ be a $C^{1+}$-smooth transitive Anosov flow on $M$. Proposition \ref{prop pressure} and the Livschitz Theorem implies that  the SRB measure coincides with the MME, if and only if  for each periodic point $p$ with period $\tau(p,\Phi)$, one has 
	$J^u(p,\Phi)= h_{\rm top}(\Phi)\cdot \tau(p,\Phi)$. Thus, Corollary \ref{cor mme=srb} follows from Theorem \ref{thm flow Phi sC1 flex}.
\end{remark}

Applying the inequality type of Livschitz Theorem (Remark \ref{rmk half-Livsic}) and the uniqueness of equilibrium state (Proposition \ref{prop pressure}), we will show that any H\"older potential is cohomologous (up to a constant) to a negative function, so that this new function can be the target Jacobians. 

\begin{lemma}\label{lem function 0 pressure}
	Let $\Phi$ be a $C^{1+}$-smooth transitive Anosov flow on $M$ and $f:M\to \RR$ be a H\"older function. Let $P=P_{f}(\Phi)$. Then,
	\begin{itemize}
		\item  there are constants $T_0>0$ and $\e_0>0$ such that 
		\begin{align}
			\frac{1}{T}\int^T_0(f-P)\circ \Phi^t(x)dt<-\e_0, \quad \forall x\in M,\  \forall T>T_0,\label{eq. final negative}
			 \end{align} 
		 \item there is H\"older function $g:M\to \RR$ such that $g<0$ and the cocycles 
		 \[\alpha_{f-P}(x,t):=\int_0^t (f-P)\circ \Phi^\tau(x)d\tau \quad {\rm and} \quad  \alpha_{g}(x,t):=\int_0^t g\circ \Phi^\tau(x)d\tau \]
		 are H\"older cohomologous. In particular, $P_g(\Phi)=0$.
	\end{itemize} 
\end{lemma}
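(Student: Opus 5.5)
The first bullet reduces to a uniform negativity of invariant integrals. Set $\varphi=f-P$. By Proposition \ref{prop pressure}, $P_\varphi(\Phi)=0$. Hence for every $\mu\in\mathcal{M}_\Phi(M)$ we have $\int\varphi\,d\mu\le -h_\mu(\Phi)\le 0$.

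We claim that in fact $\int\varphi\,d\mu<0$ for every invariant $\mu$. If $\int\varphi\,d\mu=0$ for some $\mu$, then $h_\mu(\Phi)=0$ and $h_\mu+\int\varphi\,d\mu=0=P_\varphi$. So $\mu$ is an equilibrium state of $\varphi$, and by uniqueness $\mu=\mu_\varphi$. But the equilibrium state of a Hölder potential for a transitive Anosov flow has positive entropy: it is fully supported, or alternatively it has the Gibbs property. Concretely, if $h_{\mu_\varphi}=0$ then $\int\varphi\,d\mu_\varphi=0=\max_\nu\int\varphi\,d\nu$. Then $\mu_\varphi$ would also have to be the equilibrium state of $t\varphi$ for all large $t$, contradicting that $\frac{d}{dt}P_{t\varphi}=\int\varphi\,d\mu_{t\varphi}$ and the strict convexity/positivity of $h_{\rm top}$ at $t=0$. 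A cleaner route: $P_{t\varphi}$ is convex with $P_{0}=h_{\rm top}>0$ and $P_{1}=0$. If $\max_\nu\int\varphi\,d\nu=0$, then $P_{t\varphi}\ge 0$ for all $t$ with asymptotic slope $0$, and convexity together with $P_0>0=P_1$ forces negative slope somewhere and hence eventually, a contradiction.

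So the map $\mu\mapsto\int\varphi\,d\mu$ is continuous and negative on the weak$^*$-compact set $\mathcal{M}_\Phi(M)$, and therefore $\max_\mu\int\varphi\,d\mu=-2\e_0<0$. A standard compactness argument now gives \eqref{eq. final negative}. Suppose there were $x_n$ and $T_n\to\infty$ with $\frac1{T_n}\int_0^{T_n}\varphi\circ\Phi^t(x_n)\,dt\ge-\e_0$. The empirical measures $\frac1{T_n}\int_0^{T_n}\delta_{\Phi^t x_n}\,dt$ have weak$^*$ limits that are $\Phi$-invariant, and any such limit has integral $\ge-\e_0$, which is a contradiction.

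For the second bullet, apply the inequality-type Livschitz result (Remark \ref{rmk half-Livsic}) to the cocycle $\alpha(x,t)=\int_0^t(\varphi+\e_0)\circ\Phi^\tau(x)\,d\tau$. At each periodic point, $\alpha(p,\tau(p,\Phi))=\tau(p,\Phi)\int(\varphi+\e_0)\,d\mu_p\le \tau(p,\Phi)(-2\e_0+\e_0)<0$, where $\mu_p$ is the periodic orbit measure. The remark then provides a Hölder $\beta$, smooth along the flow, with $\alpha(x,t)\le\beta(x)-\beta\circ\Phi^t(x)$ for all $t>0$. Differentiating at $t=0$ gives $\varphi+\e_0\le -X\beta$, where $X\beta=\frac{d}{dt}|_{t=0}\beta\circ\Phi^t$.

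Define $g:=\varphi+X\beta$. Then $g\le-\e_0<0$, and $\int_0^t g\circ\Phi^\tau\,d\tau=\alpha_{f-P}(x,t)+\beta\circ\Phi^t(x)-\beta(x)$, so the two cocycles are cohomologous. Since pressure is a cohomology invariant, $P_g(\Phi)=P_\varphi(\Phi)=0$.

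The main obstacle is the regularity of $X\beta$: it must be Hölder for $g$ to be a Hölder function. If the Mañé–Conze–Guivarc'h output only gives $\beta$ Hölder with a flow derivative that is merely continuous, I would instead avoid differentiating. One option is to replace $\varphi$ by a time-average. Take $\tilde g(x)=\frac1{T}\int_0^T\varphi\circ\Phi^s(x)\,ds$ with $T>T_0$. This is Hölder, cohomologous to $\varphi$ via $\beta_T(x)=\frac1T\int_0^T\int_0^s\varphi\circ\Phi^u(x)\,du\,ds$, and strictly less than $-\e_0$ directly by \eqref{eq. final negative}. The cohomology check is the identity $\frac{d}{dt}\big|_{0}\beta_T\circ\Phi^t=\tilde g-\varphi$. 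This averaging route in fact bypasses Remark \ref{rmk half-Livsic} entirely, and I would adopt it as the primary construction.
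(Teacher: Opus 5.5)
Your proof is correct. It differs from the paper's at each of the three steps, and each time it is softer and more self-contained.

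\begin{itemize}
\item \emph{Strict negativity.} The paper first proves $\int(f-P)\,d\mu_p<0$ for periodic orbit measures, using full support of the equilibrium state. It then uses shadowing and a limit of periodic measures to get $P>C_f:=\sup_{\mu}\int f\,d\mu$. You argue directly on all of $\mathcal{M}_\Phi(M)$: $\int\varphi\,d\mu=0$ would make $\mu$ a zero-entropy equilibrium state. Both routes rest on the same input, that equilibrium states of H\"older potentials have positive entropy (Gibbs property; the paper cites Ruelle and Bowen).
\item \emph{Uniform $T_0$.} The paper only sketches a closing/shadowing argument via transitivity. Your weak$^*$-compactness argument with empirical measures is complete and uses no hyperbolicity.
\item \emph{Construction of $g$.} The paper applies the Ma\~n\'e--Conze--Guivarc'h sub-action to $f-C_f$ and sets $g=f-P+L_X\beta$. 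This needs $L_X\beta$ to be H\"older, which the paper supports only with ``$\beta$ is smooth along the flow direction.'' Your average $g=\frac1T\int_0^T(f-P)\circ\Phi^s\,ds$ with $T>T_0$ avoids this.
  \begin{itemize}
  \item It is visibly H\"older, since the maps $\Phi^s$, $s\in[0,T]$, are uniformly Lipschitz.
  \item It is cohomologous to $f-P$ through the explicit H\"older function $\beta_T$. Your identity $\frac{d}{dt}\big|_{t=0}\beta_T\circ\Phi^t=\tilde g-\varphi$, applied along orbits, integrates to the cocycle identity.
  \item It is $<-\e_0$ directly by the first bullet.
  \end{itemize}
  So the second bullet becomes a corollary of the first, with no sub-action theory. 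The sub-action route additionally gives the sharp bound $\sup g=C_f-P$, which the lemma does not need.
\end{itemize}

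One correction to a side remark: your ``cleaner route'' via convexity is wrong. A convex function has non-decreasing slopes. So the conditions $P_{t\varphi}(\Phi)\ge0$ for all $t$, asymptotic slope $0$, value $h_{\rm top}(\Phi)>0$ at $t=0$ and value $0$ at $t=1$ are all compatible with convexity. Think of $\max\{h_{\rm top}(\Phi)(1-t),0\}$, a phase transition: a negative slope somewhere does not force a negative slope eventually. What rules this out is real-analyticity of $t\mapsto P_{t\varphi}(\Phi)$, or its strict convexity when $\varphi$ is not cohomologous to a constant, or directly the Gibbs property. Likewise, full support of $\mu_\varphi$ is not by itself a reason for positive entropy, since there are fully supported zero-entropy invariant measures. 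Drop these remarks and cite positivity of entropy of equilibrium states, as the paper does; the argument is then complete.
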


\begin{proof}
	
	For every periodic measure $p\in {\rm Per}(\Phi)$ with period $\tau(p,\Phi)$,  we denote by $\mu_p$ the periodic measure supported on $\cO_\Phi(p)$, i.e., the Lebesgue measure on the single periodic orbit $\cO_\Phi(p)$. It is clear that $\mu_p$ is $\Phi$-invariant with $0$-entropy, $h_{\mu_p}(\Phi)=0$. Thus, by the definition of pressure, 
	$\int_M(f-P)d\mu_p\leq 0$.
	Moreover,  for all $p\in{\rm Per}(\Phi)$,
	\begin{align}
		\int_M(f-P)d\mu_p< 0.\label{eq. lem coho <0}
	\end{align}
	Indeed, if there is $p\in{\rm Per}(\Phi)$ such that $\int_M(f-P)d\mu_p= 0$, then $P=h_{\mu_p}(\Phi)+\int_M\vphi d\mu_p$. It contradicts with the uniqueness of equilibrium state and the fact that $\mu_\vphi$ given by  Proposition \ref{prop pressure} is full supported. We claim that 
	\begin{align}
		P>\sup_{\mu\in \mathcal{M}_{\Phi}(M)}\big\{\int_M f d\mu  \big\}:=C_f. \label{eq. lem coho <0, C}
	\end{align}
	Indeed, by the Shadowing Lemma and \eqref{eq. lem coho <0}, we already have $P\geq C_\vphi$. If the equality $P=C_\vphi$ holds, then by shadowing again, there is a sequence of periodic points $\{p_n\}$ such that 
	\[ \int_M\vphi d\mu_{p_n}\to P,\quad n\to +\infty. \]
	Since $\mathcal{M}_\Phi(M)$ is compact, up to take a subsequence we can assume that $\mu_{p_n}\to \mu \in\mathcal{M}_\Phi(M)$ with respect to the weak-star topology. Hence, $\int_M\vphi d\mu=P$. By the definition of pressure, $\mu$ is the unique equilibrium state and
	\[h_\mu(\Phi)=0.\]  This contradicts with the well known fact that equilibrium state of a transitive Anosov flow and a H\"older potential has positive entropy \cite{R1976}, see also \cite[Theorem 1.25]{B1975}. Hence, \eqref{eq. lem coho <0, C} holds.
	
	By \eqref{eq. lem coho <0, C},  let $\e_0=(P-C_f)/2>0$.  Then, the uniform time $T_0$ of formula \eqref{eq. final negative} follows from shadowing property and transitivity. The proof is standard and we just give a quick overview here.
	For any point $x\in M$ and time $T>0$, by transitivity of $\Phi$, one can find a orbit starts from a point $y$ in a  neighborhood of $\Phi^T(x)$ and ends at  a point $\Phi^{t_y}(y)$ neighborhood of $x$. Connecting these two local orbits, we get a compact pseudo orbit of $\Phi$. Then by Shadowing lemma, one can approach the pseudo orbit by a periodic orbit of $\Phi$. The key point is that for a fixed size of shadowing, hence a fixed size of neighborhood for applying transitivity, the time $t_y$ is uniformly upper bounded for all $x$ and $T$. Hence, when $T_0$ is big enough, the piece of orbit from $x$ to $\Phi^T(x)$  constitutes a extremely large proportion of the pseudo orbit. Then one can control the error among the time average of the integral of $f-P$ along the orbit of $x$, along the the pseudo orbit and along the periodic orbit, and get \eqref{eq. final negative}.

	Now, we prove the existence of function $g$. By the definition of $C_f$,  
	\[ \alpha_{f-C_f}\big(p,\tau(p,\Phi)\big)\leq 0, \quad \forall p\in{\rm Per}(\Phi).\]
	By Livschitz's type theorem (see Remark \ref{rmk half-Livsic}) there is a H\"older function $\beta:M\to \RR$ such that 
	\[\int_0^t \big(f\circ \Phi^\tau(x) -C_f\big)d\tau\leq \beta(x)-\beta\circ \Phi^t(x),\quad \forall x\in M,  \forall t\in \RR.\] 
	Hence, 
	\begin{align}
		\alpha_{f-P}(x,t) \leq (C_f-P)\cdot t+ \beta(x)-\beta\circ \Phi^t(x). \label{eq. lem coho <0, P}
	\end{align}
	Recall that $\beta$ is smooth along the flow direction, let $L_X\beta(x)=\frac{d}{dt}\big|_{t=0} \beta\circ \Phi^t(x)$. Then the formula \eqref{eq. lem coho <0, P} is equivalent to 
	\[ \int_0^t \big(f-P+L_X\beta \big)\circ \Phi^\tau(x)d\tau\leq (C_\vphi-P)\cdot t, \quad \forall x\in M, \forall t\in \RR. \]
	By the last formula and \eqref{eq. lem coho <0, C},  $g=f-P+L_X\beta$ is  the H\"older  function we desired.
\end{proof}

\subsection {The Radon-Nikodym realization}\label{subsec measure}

In this subsection, we recall a  main tool of this paper, the \emph{transverse  measures.} Roughly speaking, it is a family of measures supported on the transversals of foliations and is coherent with  the dynamics of the systems and holonomies. For example, the Margulis measures  \cite{M2004} of an Anosov flow supported on the strong unstable leaves such that the flow is conformal with respect to this family of measures.  One can naturally consider  extending the Margulis measures to the case that the corresponding dynamic is matching with a H\"older potential and the measures are supported on not only the strong unstable foliations but also any transversals of stable foliations, see for example \cite{C1993,H1994,Le2000,A2008}. 

\begin{definition}
	We call  a family of measures $\{\mu_x\}_{x\in M}$  \textit{subordinated} to foliation $\cF$, if it satisfies
	\begin{enumerate}
		\item $\mu_x$ is a non-atomic and locally finite Borel measure on $\cF(x)$ and positive on  every non-empty open set of $\cF(x)$.
		\item $\mu_x=\mu_y$,  if $\cF(x)=\cF(y)$.
	\end{enumerate}
\end{definition}

We will apply the following  family of  transverse  measures which is called the Radon-Nikodym realization theorem due to Asaoka \cite{A2008}. 

\begin{theorem}[\cite{A2008}]\label{thm Asaoka 0}
	Let $\Phi$ be a $C^{1+}$-smooth transitive Anosov flow on $3$-manifold $M$ and $g:M\to \RR$ be  a H\"older function with $P_g(\Phi)=0$.  Then there is  a family  $\{\nu^{uu}_x\}_{x\in M}$ subordinated to $\cF^{uu}_{\Phi}$ such that 
	\begin{enumerate}
		\item For any $x\in M$, $t\in \RR$  and $x'\in \cF^{uu}_{\Phi}(x)$,
		$$ 
		\log \frac{d (\Phi^t)_*(\nu^{uu}_{\Phi^t(x)})}{d \nu^{uu}_x}(x')=\alpha_\Phi(x,t,-g)=-\int_0^tg\circ \Phi^\tau(x')d\tau.
		$$ 
		\item For any $x, y\in M$ close enough and $x'\in \cF^{uu}_{\Phi}(x)$, the Radon-Nikodym derivative $\frac{d({\rm Hol}^{\Phi,s}_{x,y})_*(\nu^{uu}_y)}{d\nu^{uu}_x}(x')$ is H\"older continuous with respect to $x, y$ and $x'$.
		\item For any $x\in M$ and $y\in \cF^{uu}_\Phi(x)$, the function $y\mapsto \nu^{uu}_x\big([x,y] \big)$ is H\"older continuous with respect to $y$, where $[x,y]$ is the curve lying on $\cF^{uu}_\Phi(x)$ with endpoints $x,y$.
	\end{enumerate}
\end{theorem}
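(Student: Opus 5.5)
We will construct the family $\{\nu^{uu}_x\}$ as a flow-twisted Margulis-type family, via leafwise conditional measures of an equilibrium state. We then use the $C^{1+}$ structure of $\cF^s_\Phi$ to control holonomy. Throughout, the key input is that $P_g(\Phi)=0$; by Lemma \ref{lem function 0 pressure} we may first replace $g$ by a cohomologous negative H\"older function. The conclusions are stable under coboundaries: if $g'=g+L_X\beta$, set $\nu'_x=e^{\beta}\nu_x$. Hence we may assume $g<0$, and then the target relation $(\Phi^t)_*$-expansion by $e^{-\int g}$ is a genuine expansion along $\cF^{uu}_\Phi$.

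\textbf{Step 1 (construction on strong unstable arcs).} Take the equilibrium state $\mu_g$ from Proposition \ref{prop pressure}, and its Gibbs property at pressure $0$. For a point $x'$ and an arc $I\subset\cF^{uu}_\Phi(x)$, define
\[
\nu^{uu}_x(I)=\lim_{T\to\infty}\sum \exp\Big(\int_0^T g\circ\Phi^{-\tau}(z)\,d\tau\Big).
\]
Equivalently, we can obtain $\nu^{uu}_x$ by pushing forward with $\Phi^{-T}$ and normalizing against a fixed reference, in the style of the Margulis/Bowen--Marcus construction. The sum runs over a maximal separated set of an $\epsilon$-partition of $\Phi^{-T}$-preimage pieces. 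A cleaner route is to disintegrate $\mu_g$ along local strong unstable plaques inside unstable leaves. In that route we quotient out the flow direction by integrating the conditional measures on $\cF^u$-plaques along orbit segments. The Gibbs property then gives leafwise conditionals whose Radon--Nikodym cocycle under $\Phi^t$ is exactly $e^{-\int_0^t g}$. Uniqueness of the conditional system up to scaling lets us glue local plaque measures into a measure on the whole leaf, with $\nu_x=\nu_y$ when the leaves coincide. This yields item 1. Non-atomicity and full support follow from the Gibbs bounds and the full support of $\mu_g$.

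\textbf{Step 2 (holonomy regularity, item 2).} The stable holonomy between nearby $\cF^{uu}$-leaves is the composition of $\cF^{ss}$-holonomy with a short flow-time shift; the shift is H\"older in the base point. Define the candidate density
\[
\rho(x',y')=\exp\Big(\int_0^\infty\big(g\circ\Phi^{\tau}(y')-g\circ\Phi^{\tau}(x')\big)d\tau+\int_0^{s(x')} g\circ\Phi^{\tau}(\cdot)\,d\tau\Big),
\]
where $y'={\rm Hol}^s(x')$ and $s$ is the time shift. The integral converges exponentially because $d(\Phi^\tau x',\Phi^\tau y')$ decays along the strong stable direction and $g$ is H\"older; hence $\rho$ is H\"older in $(x,y,x')$. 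We must check that $({\rm Hol}^s)_*\nu_y=\rho\,\nu_x$. To do this, push both sides forward by $\Phi^T$ using item 1. As $T\to\infty$ the images become exponentially close along stable leaves, so the local product structure of $\mu_g$ forces the ratio to tend to $1$. This argument also verifies that the construction is consistent with the time shift. I expect this step to be the main obstacle. It requires uniform control of the conditional measures under nearby plaques, which comes from bounded-distortion/Gibbs estimates; the standard H\"older regularity of the local product structure from Proposition \ref{prop foliation C1+} supplies the needed transversal control.

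\textbf{Step 3 (item 3).} Using item 1, $\nu_x([x,y])$ satisfies a scaling law. If $\Phi^{-t}$ contracts $[x,y]$ to size about $\epsilon$, then
\[
\nu_x([x,y])=e^{\int_{-t}^{0}(-g)\circ\Phi^{\tau}}\;\nu_{\Phi^{-t}x}\big(\Phi^{-t}[x,y]\big),
\]
and the last factor is uniformly bounded above and below by compactness and full support. Since $g<0$ is bounded, $t\approx \lambda^{-1}\log(\epsilon/d^{uu}(x,y))$. This gives $\nu_x([x,y])\le C\,d^{uu}(x,y)^{\min(-g)/\lambda_+}$, which is H\"older continuity in $y$. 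Here the reduction to $g<0$ from Lemma \ref{lem function 0 pressure} is used essentially.
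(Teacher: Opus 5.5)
Your reduction to $g<0$ is exactly the paper's. Remark~\ref{rmk function final negative} replaces $g$ by a cohomologous negative function via Lemma~\ref{lem function 0 pressure}, and then simply invokes Asaoka's Theorem~3.1 and Lemma~4.2. Your transfer of items 1--3 along a coboundary is fine, up to the sign of the twist $e^{\pm\beta}$.

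\textbf{Step 1 is where the proof breaks.} There you try to re-derive Asaoka's family, and the ``cleaner route'' cannot work. The leafwise conditional measures $m_x$ of the $\Phi$-invariant measure $\mu_g$ along the $\Phi$-invariant foliation $\cF^{uu}_\Phi$ are only projectively equivariant: $(\Phi^t)_*m_x\propto m_{\Phi^t x}$. So the Radon--Nikodym derivative in item~1 would be \emph{constant along each leaf}. Item~1, however, requires the density $e^{-\int_0^t g\circ\Phi^\tau(x')d\tau}$, which varies with $x'$.

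For example, take $g=-J^u_\Phi$. The family of item~1 is then leafwise arc length, while the SRB conditionals carry the density $\exp\big(\int_0^\infty (J^u_\Phi\circ\Phi^{-\tau}(x)-J^u_\Phi\circ\Phi^{-\tau}(w))\,d\tau\big)$ with respect to it, which is nontrivial in general. The Gibbs property only gives two-sided bounds up to multiplicative constants, never an exact cocycle.

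Repairing this would require three things:
\begin{enumerate}
\item dividing out the backward distortion density;
\item showing the remaining leaf-constant multiplicative cocycle $c(x,t)$ can be normalized away, which is where $P_g=0$ genuinely enters;
\item defining the family on \emph{every} leaf, not just on a $\mu_g$-full set.
\end{enumerate}
That is the actual content of the cited theorem. The paper itself remarks that Haydn's conditional-measure family differs from the one in Theorem~\ref{thm Asaoka 0}.

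Your alternative Bowen--Marcus limit is only asserted, and existence of that limit is the whole difficulty. The exact law of item~1 is a conformality property (an eigenmeasure of the dual transfer operator of a symbolic coding), not a property of the invariant equilibrium state.

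\textbf{Step 2 then has nothing to stand on.} The claim that ``the local product structure of $\mu_g$ forces the ratio to tend to $1$'' is precisely uniform continuity of $x\mapsto\nu^{uu}_x$ under stable holonomy at a fixed scale. That is item~2 itself, so the argument is circular unless the construction already provides it.

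Your density also needs the time shift inside the infinite integral, as in $u(x,y)$ of Lemma~\ref{lem Asaoka}, which compares $\Phi^{\eta+t}(x)$ with $\Phi^t(y)$. As written, $\int_0^\infty\big(g\circ\Phi^\tau(y')-g\circ\Phi^\tau(x')\big)d\tau$ need not converge, because $y'$ lies only on the weak stable leaf of $x'$.

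Asaoka's construction, as recalled in Remark~\ref{rmk measure path}, avoids the circularity by building item~2 in. One projects the Gibbs measure of the symbolic coding onto one local strong unstable plaque per Markov rectangle. One then transports it everywhere by stable holonomy with density $e^{u}$ as in \eqref{eq. measure 2}, and the identity \eqref{eq. measure 1} makes this consistent.

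\textbf{Step 3} is essentially right once items~1 and~2 are available, but the time direction is reversed. Expand $[x,y]$ by $\Phi^{t}$ with $t\ge\lambda_+^{-1}\log(\e/d^{uu}(x,y))$, where $\lambda_+$ is the maximal expansion rate along $E^{uu}_\Phi$. This gives $\nu^{uu}_x([x,y])\le e^{-t\min(-g)}\,\nu^{uu}_{\Phi^tx}(\Phi^t[x,y])$. The uniform bound on $\e$-arcs that you then need again relies on item~2 and compactness.
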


\begin{remark}
	If the function $g$ in Theorem \ref{thm Asaoka 0} is the constant $-h_{\rm top}(\Phi)$, then the family $\{\nu^{uu}_x\}_{x\in M}$ is so called the Margulis measure. 	One can also construct a family of measure $\{\nu^{ss}_x\}_{x\in M}$ subordinated to $\cF^{ss}_\Phi$ associate with function $g$ such that the Radon-Nikodym derivative under $\cF^u_\Phi$-holonomy is H\"older continuous, $v_x^{ss}\big([x,\cdot]\big)$ varies H\"older continuously, and the logarithm of the flow action is $\alpha_{\Phi}(x,t,g)$.
\end{remark}

\begin{remark}\label{rmk function final negative}
	Theorem \ref{thm Asaoka 0} follows from \cite[Theorem 3.1 and Lemma 4.2]{A2008}. We notice that \cite[Theorem 3.1]{A2008} further assumes that $g$ is negative. By Lemma \ref{lem function 0 pressure}, up to a H\"older cohomologous, we can assume that the function $g$ in Theorem \ref{thm Asaoka 0} is negative. Moreover, by the proof of Theorem \ref{thm Asaoka 0} (see \cite[Lemma 3.6 and Formula (3.12)]{A2008}), the negativity of function $g$ is only used to provide that there are constants $T_0>0$ and $\e_0>0$ such that 
	\begin{align}
		\frac{1}{T}\int^T_0g\circ \Phi^t(x)dt<-\e_0, \quad \forall x\in M,\  \forall T>T_0,\label{eq. final negative 0}
	\end{align} 
 By Lemma \ref{lem function 0 pressure}, \eqref{eq. final negative 0} holds for any   H\"older potential $g$ with $P_g(\Phi)=0$. Hence, Theorem \ref{thm Asaoka 0} holds. 
\end{remark}

It is helpful to recall the construction of the family $\{\nu_p^{uu}\}_{p\in M}$ in \cite{A2008}. In particular, we restate a key lemma given by  \cite[Formula (3.5) and (3.21) ]{A2008}
\begin{lemma}[\cite{A2008}]\label{lem Asaoka}
	Let $\Phi$ be a $C^{1+}$-smooth transitive Anosov flow on $3$-manifold. Let $g:M\to \RR$ be a H\"older continuous function such that $P_{g}(\Phi)=0$.  Let function $u:M\times M\to \RR$ be given by 
	\[ u(x,y)=\int_{0}^{+\infty} \Big( g\circ \Phi^{\eta(x,y)+t}(x) -g\circ \Phi^t(y) \Big)dt+\int_{0}^{\eta(x,y)}g\circ \Phi^t(x)dt, \]
	where $x\in M, y\in \cF^s_{\Phi,{\rm loc}}(x)$ close to $x$ and $\Phi^{\eta(x,y)}(x)\in \cF^{ss}_{\Phi}(y)$. Then 
	\begin{align}
		u(x,y)+u(y,w)=u(x,w),\quad \forall y,w\in \cF^s_{\Phi,{\rm loc}}(x) \label{eq. measure 1}
	\end{align}
	and 
	\begin{align}
		\log \frac{d({\rm Hol}^{\Phi,s}_{x,y})_*(\nu^{uu}_y)}{d \nu^{uu}_x} (x')= u\big(  {\rm Hol}^{\Phi,s}_{x,y}(x'),x'  \big),\quad  \forall x'\in \cF^{uu}_{\Phi,{\rm loc}}(x).  \label{eq. measure 2}
	\end{align}
\end{lemma}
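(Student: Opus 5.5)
The plan is to prove the two identities separately: \eqref{eq. measure 1} by a direct computation with the time shifts $\eta$, and \eqref{eq. measure 2} by transporting the holonomy far into the future with the flow and using item 1 of Theorem \ref{thm Asaoka 0}.

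\textbf{Well-definedness and additivity of $u$.} First we check that $u$ makes sense. For $y\in\cF^s_{\Phi,{\rm loc}}(x)$ the points $\Phi^{\eta(x,y)+t}(x)$ and $\Phi^t(y)$ lie on a common strong stable leaf, so their distance decays like $C\lambda^{-t}$. Since $g$ is H\"older with some exponent $\theta$, the integrand of the improper integral is $O(\lambda^{-\theta t})$. Hence $u$ is well defined and H\"older in $(x,y)$, and $\eta$ is continuous and small.

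Next we show that $\eta$ is additive, $\eta(x,w)=\eta(x,y)+\eta(y,w)$. Indeed $\Phi^{\eta(y,w)}$ maps $\cF^{ss}_\Phi(y)$ onto $\cF^{ss}_\Phi(\Phi^{\eta(y,w)}(y))=\cF^{ss}_\Phi(w)$. Therefore $\Phi^{\eta(x,y)+\eta(y,w)}(x)\in\cF^{ss}_\Phi(w)$, and by local uniqueness of the time shift this sum equals $\eta(x,w)$.

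With this, \eqref{eq. measure 1} is a bookkeeping computation. Write $\eta_1=\eta(x,y)$ and $\eta_2=\eta(y,w)$, and truncate every improper integral at a large time $T$. In $u(x,y)+u(y,w)$ the term $-\int_0^T g\circ\Phi^t(y)\,dt$ from $u(x,y)$ cancels against $\int_0^{\eta_2}g\circ\Phi^t(y)\,dt+\int_0^{T}g\circ\Phi^{\eta_2+t}(y)\,dt$ from $u(y,w)$, leaving a boundary term $\int_T^{T+\eta_2}g\circ\Phi^t(y)\,dt$. We then replace $\Phi^t(y)$ by $\Phi^{t+\eta_1}(x)$ inside this window, at cost $O(\lambda^{-\theta T})$. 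After reindexing, the remaining terms assemble into the truncation of $u(x,w)$ with shift $\eta_1+\eta_2$. Letting $T\to\infty$ gives \eqref{eq. measure 1}.

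\textbf{The Radon--Nikodym formula.} For \eqref{eq. measure 2}, fix $x'\in\cF^{uu}_{\Phi,{\rm loc}}(x)$ and set $y'={\rm Hol}^{\Phi,s}_{x,y}(x')$, so that $\Phi^{\eta(x',y')}(x')\in\cF^{ss}_\Phi(y')$. The holonomy along $\cF^s_\Phi$ conjugates the flow: $\Phi^T\circ{\rm Hol}^{\Phi,s}_{x,y}={\rm Hol}^{\Phi,s}_{\Phi^T x,\Phi^T y}\circ\Phi^T$. We apply the chain rule for Radon--Nikodym derivatives to
\[{\rm Hol}^{\Phi,s}_{x,y}=\Phi^{-T}\circ{\rm Hol}^{\Phi,s}_{\Phi^Tx,\Phi^Ty}\circ\Phi^T\]
and use item 1 of Theorem \ref{thm Asaoka 0} twice. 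This expresses the log-derivative at $x'$ as
\[
\int_0^T g\circ\Phi^t(x')\,dt-\int_0^T g\circ\Phi^t(y')\,dt+\log\frac{d({\rm Hol}^{\Phi,s}_{\Phi^Tx,\Phi^Ty})_*\nu^{uu}_{\Phi^Ty}}{d\nu^{uu}_{\Phi^Tx}}(\Phi^Tx').
\]
The signs have to be tracked carefully, since $\nu^{uu}$ scales by $-\int g$. The first two integrals, with $\Phi^{\eta}$ inserted to align the strong stable leaves, converge to $u(y',x')$ as $T\to\infty$; the $\int_0^{\eta}$ piece in the definition of $u$ accounts for the fact that the holonomy image is only $\Phi^{\eta}$-shifted from the strong stable partner.

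\textbf{Main obstacle.} The delicate step is showing that the last term tends to $0$. For this I would use item 2 of Theorem \ref{thm Asaoka 0}. As $T\to\infty$, the points $\Phi^Tx$ and $\Phi^Ty$ become close along the strong stable direction up to a time shift, and the time shift contributes only a flow step already accounted for by $\eta$. So the holonomy tends to the pure flow-translation along $\cF^s$, whose Radon--Nikodym derivative is identically $1$ after the item-1 correction. H\"older continuity in $(x,y,x')$, with a uniform modulus, then yields convergence of the log-derivative. Carrying out this ``holonomy between nearly coincident unstable leaves has derivative near $1$'' argument uniformly is the step I expect to require the most care.
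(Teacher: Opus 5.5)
Your proof of \eqref{eq. measure 1} is correct: additivity of the small, locally unique time shift $\eta$, plus truncation at a common time, is all that is needed.

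For \eqref{eq. measure 2} the strategy also works: conjugate the holonomy by the flow, use item 1 of Theorem \ref{thm Asaoka 0} for the flow pieces, and use item 2 for the remainder. However, the bookkeeping you display is wrong, and as written both of your asserted limits fail. Let $\rho(a,b,a')$ denote the left side of \eqref{eq. measure 2} at $(a,b,a')$.

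First, rebase at the evaluation point. Holonomies and the measures $\nu^{uu}$ depend only on leaves, so $\rho(x,y,x')=\rho(x',y',x')$ with $y'={\rm Hol}^{\Phi,s}_{x,y}(x')\in\cF^s_{\Phi,{\rm loc}}(x')$. This matters because if $y\notin\cF^s_\Phi(x)$, the points $\Phi^Tx$ and $\Phi^Ty$ drift apart and item 2 does not apply to them.

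Second, flow the two leaves for different times. With $\eta=\eta(y',x')$,
\[
{\rm Hol}^{\Phi,s}_{x',y'}=\Phi^{-(T+\eta)}\circ{\rm Hol}^{\Phi,s}_{\Phi^Tx',\,\Phi^{T+\eta}y'}\circ\Phi^T .
\]
The chain rule together with item 1, applied at times $T$ and $-(T+\eta)$, gives the exact identity
\[
\rho(x',y',x')=\int_0^{T+\eta}g\circ\Phi^t(y')\,dt-\int_0^{T}g\circ\Phi^t(x')\,dt+\rho\big(\Phi^Tx',\Phi^{T+\eta}y',\Phi^Tx'\big).
\]
Note two things about this identity:
\begin{itemize}
\item The $y'$-integral enters with a plus sign. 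Your sign would give $u(x',y')=-u(y',x')$.
\item The integral difference is literally the truncation of $u(y',x')$, so it converges to $u(y',x')$ by definition.
\end{itemize}
With a common time $T$ on both leaves, as in your display, the integral part does not converge. The residual log-derivative is then $\int_T^{T+\eta}g\circ\Phi^t(y')\,dt+o(1)$ rather than $o(1)$. For the Margulis family ($g\equiv-h_{\rm top}(\Phi)$) it equals $-h_{\rm top}(\Phi)\,\eta$ for every $T$.

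Finally, the step you call the main obstacle is a one-liner. Since ${\rm Hol}^{\Phi,s}_{a,a}={\rm Id}$, we have $\rho(a,a,\cdot)\equiv0$. Because $\Phi^{T+\eta}y'\in\cF^{ss}_\Phi(\Phi^Tx')$ lies at distance $O(\lambda^{-T})$, the uniform H\"older continuity in item 2 gives $\rho(\Phi^Tx',\Phi^{T+\eta}y',\Phi^Tx')=O(\lambda^{-\theta T})$, with $\theta$ the H\"older exponent. No separate estimate on nearly coincident leaves is required.

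As for the comparison: the paper gives no argument of its own here; it quotes both identities from Asaoka. As Remark \ref{rmk measure path} indicates, there the family $\{\nu^{uu}_x\}$ is obtained by putting a Gibbs-type measure on one strong unstable plaque and transporting it along $\cF^s_\Phi$-holonomy with density $e^{u}$. In that construction:
\begin{itemize}
\item \eqref{eq. measure 2} holds by construction;
\item \eqref{eq. measure 1} is exactly the cocycle condition that makes the transport consistent;
\item item 2 essentially comes from the H\"older continuity of $u$.
\end{itemize}
Your argument runs the logic the other way: taking Theorem \ref{thm Asaoka 0} as a black box, items 1 and 2 force the holonomy density to be $e^{u}$. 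This is a legitimate proof of the lemma relative to the paper, which also treats that theorem as given. It also yields a uniqueness statement for the holonomy cocycle of any family satisfying items 1--2. It is not, however, an independent route to the realization itself, since item 2 rests on the very formula you derive.
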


\begin{remark}[\bf{[Continuous dependence of measure]}]\label{rmk measure path}
	We note that the family of measures $\{\nu^{uu}_p\}_{p\in M}$ given in Theorem \ref{thm Asaoka 0} varies continuously in weak-star topology with respect to the H\"older function $g\in C^0(M)$ equipped with the $C^0$-norm. Indeed, by the proof of Theorem \ref{thm Asaoka 0}, the family  $\{\nu^{uu}_p\}_{p\in M}$ can be \emph{a posteriori} given by first projecting the Gibbs measure (corresponding to function $g$) of the subshift of finite type $\sigma$ coding $\Phi$ to one local strong unstable  leaf in a rectangle of the Markov partition, then  sending the projection to everywhere via the stable holonomy maps and by \eqref{eq. measure 2}. Since the Gibbs measure of $\sigma$ is continuous with respect to the H\"older potential, e.g.,  see\cite[Theorem 4.2.11]{K1998} and the stable holonomy is uniformly continuous, the family  $\{\nu^{uu}_p\}_{p\in M}$  is also continuous with respect to $g$.
\end{remark}

Recall that \cite{C1993} Cawley proves an original version of the  previous Radon-Nikodym realization for Anosov diffeomorphisms on $2$-torus. Using this family of transverse  measure, she further considers the Teichm\"uller space of Anosov diffeomorphism on $\TT^2$. For convenience, combining Lemma \ref{lem function 0 pressure} for Anosov diffeomorphism case, we restate her main result as follow.

\begin{theorem}[\cite{C1993}]\label{thm Cawley}
	Let $A:\TT^2\to\TT^2$ be a $C^{1+}$-smooth Anosov diffeomorphism and $\phi_1, \phi_2:\TT^2\to \RR$ be H\"older continuous functions. Then there are negative functions $\psi_i: \TT^2\to \RR_-\ (i=1,2)$ and a $C^{1+}$-smooth Anosov diffeomorphism $A':\TT^2\to \TT^2$ conjugate to $A$ via homeomorphism $h:\TT^2\to \TT^2$ such that 
	\begin{itemize}
		\item For $i=1, 2$, $\psi_i$ is H\"older cohomologous to $\phi_i-P_{\phi_i}(A)$ where $P_{\phi_i}(A)$ is the topological pressure, namely, there is H\"older function $u_i:\TT^2\to \RR$ such that 
		\[ \psi_i(x)=\phi_i(x)-P_{\phi_i}(A)+u_i(x)-u_i\circ A(x),\quad \forall x\in \TT^2.\]
		\item The stable and unstable  Jacobians $J^s_{A'}=\log \|DA'|_{E^s_{A'}}\|$ and $J^u_{A'}=\log \|DA'|_{E^u_{A'}}\|$ have
		\[J^s_{A'}=\psi_1\circ h^{-1}\quad {\rm and}\quad J^u_{A'}=-\psi_2\circ h^{-1}. \]
	\end{itemize}
\end{theorem}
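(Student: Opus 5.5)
This is Cawley's theorem, cited from \cite{C1993}, so the plan is to reconstruct her argument in the two-dimensional discrete setting. The proof has three stages: normalize the potentials, build transverse measures, and use those measures as new smooth coordinates.

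\textbf{Normalization.} Apply the diffeomorphism analogue of Lemma \ref{lem function 0 pressure} to each $\phi_i$. A periodic-measure argument, together with the half Livschitz theorem of Ma\~n\'e--Conze--Guivarc'h, produces H\"older functions $u_i$. With these, $\psi_i:=\phi_i-P_{\phi_i}(A)+u_i-u_i\circ A$ is strictly negative and has zero pressure. This disposes of the first item, so it remains to realize $\psi_1$ and $-\psi_2$ as Jacobians.

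\textbf{Transverse measures.} Use the discrete Radon--Nikodym realization (the analogue of Theorem \ref{thm Asaoka 0}). It gives a family $\{\nu^u_x\}$ subordinated to the unstable foliation $\cF^u_A$, with
\[
\log \frac{d A^{-1}_*\nu^u_{Ax}}{d\nu^u_x}=-\psi_2,
\]
and with H\"older Radon--Nikodym derivatives under stable holonomy. The same construction gives a family $\{\nu^s_x\}$ on stable leaves, with the roles reversed and scaling $\psi_1$. The measures are non-atomic, fully supported and H\"older in the sense of item 3. Consequently, the maps $y\mapsto \nu^u_x([x,y])$ are homeomorphisms from leaves onto intervals.

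\textbf{New smooth structure.} Define new coordinates on $\TT^2$ as follows. In a local product chart with base point $x$, send $z$ to
\[
\bigl(\pm\nu^u_x([x,[x,z]]),\ \pm\nu^s_x([x,[z,x]])\bigr),
\]
where $[\cdot,\cdot]$ is the local product. The point is that the transition maps between such charts are $C^{1+}$. Along each factor, the change of base point multiplies the measure by the stable-holonomy Radon--Nikodym derivative, which is H\"older. Hence each coordinate transition is $C^{1+}$ in its own variable and merely H\"older in the other. Journ\'e's Lemma \ref{lem journe} then upgrades these transitions to $C^{1+}$ diffeomorphisms.

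This gives a $C^{1+}$ atlas on a surface homeomorphic to $\TT^2$. In this atlas, $A$ acts affinely along leaves, with derivative given by $e^{-\psi_2}$ and $e^{\psi_1}$ by the scaling property. So $A$ is $C^{1+}$ in the new structure, by Journ\'e once more. Both foliations are $C^{1+}$ there, and $A$ is Anosov because $\psi_i<0$. Smooth structures on $\TT^2$ are unique, so there is a homeomorphism $h$ from the new structure to the standard one that is a $C^{1+}$ diffeomorphism. Set $A'=hAh^{-1}$.

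Equip $\TT^2$ with the metric pushed forward from the chart coordinates, in which the stable and unstable directions are orthogonal and unit measure lengths are unit lengths. In this metric, $J^s_{A'}=\psi_1\circ h^{-1}$ and $J^u_{A'}=-\psi_2\circ h^{-1}$ hold exactly. The Jacobians are metric-dependent; if a fixed background metric is required, Lemma \ref{lem metric to change bundle} absorbs the discrepancy into the coboundaries $u_i$.

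\textbf{Main obstacle.} I expect the hard step to be the $C^{1+}$ compatibility of the charts. This needs H\"older continuity of the holonomy Radon--Nikodym derivatives jointly in all variables (item 2), and also that the product coordinates genuinely separate the two foliations. I would handle it exactly via the cocycle identity \eqref{eq. measure 1} and \eqref{eq. measure 2} adapted to diffeomorphisms, followed by Journ\'e's Lemma.
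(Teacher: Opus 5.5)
Your normalization step is exactly what the paper does. The paper gives no proof of this statement: it restates Cawley's theorem \cite{C1993} after combining it with the diffeomorphism analogue of Lemma \ref{lem function 0 pressure}.

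The rest of your proposal is therefore a reconstruction of the cited result, and the route you chose is sound: build a new $C^{1+}$ atlas from the transverse measures, then identify it with the standard structure on $\TT^2$. This is the change-of-smooth-structure idea the paper attributes to Asaoka (Remark \ref{rmk Asaoka 1}). It is not the paper's own technique from Section \ref{sec HA-flow}, which keeps the smooth structure and deforms along one fixed $C^{1+}$ foliation at a time, by a map like $\check H$ in Proposition \ref{prop ami jacobian 1}, using the adapted metric of Proposition \ref{prop adapted metric}.

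In dimension two your route is simpler than you present it. The first chart coordinate is constant on stable plaques and the second on unstable plaques. So every transition map and every local expression of $A$ has split form $(s_1,s_2)\mapsto (T_1(s_1),T_2(s_2))$. Journ\'e's Lemma \ref{lem journe} is therefore not needed, and no smoothness of the original foliations enters.

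What you give up:
\begin{itemize}
\item You have no control on $h$, whereas the paper's deformations are $C^0$-close to the identity (Remark \ref{rmk H close to id}).
\item You must invoke uniqueness of smooth structures on $\TT^2$.
\item The method does not transfer to $3$-flows, where $\cF^{ss}$ and $\cF^{uu}$ are not jointly integrable. This is why the paper detours through HA-flows.
\end{itemize}

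Three details to tighten.
\begin{itemize}
\item For $T_1'$ to be H\"older in the new coordinate, the inverse of $y\mapsto\nu^u_x([x,y])$ must be H\"older. That is, you need a lower bound $\nu^u_x([x,y])\geq c\, d(x,y)^{\beta}$. The H\"older property you quote gives only the opposite inequality. The lower bound does follow from the scaling identity, uniform expansion, and a uniform positive lower bound for the measure of leaf segments of fixed length.
\item The Euclidean metrics of different charts do not patch together, since the transitions have $T_i'\neq 1$. Also, off the base leaves the leaf measures are $\rho\, ds_1$ rather than $ds_1$. The metric you want is the H\"older metric defined by these leafwise densities, which transform correctly under the split transitions. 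The identities $J^s_{A'}=\psi_1\circ h^{-1}$ and $J^u_{A'}=-\psi_2\circ h^{-1}$ hold exactly only for that H\"older metric. This is consistent with the paper's phrase ``under the metric associated with the transverse measure''.
\item Drop the remark about a fixed background metric. Passing to a smooth metric preserves the Jacobians only up to coboundaries, and the resulting representatives need not be negative.
\end{itemize}
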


\section{Jacobian Flexibility for HA-Flows}\label{sec HA-flow}

This section provides technical preliminaries for Theorem \ref{thm connect}, Theorem \ref{thm flow flexibility 0} and Theorem \ref{thm flow Phi sC1 flex}.  Instead of Anosov flows considered in the above three theorems, we will focus on a kind of H\"older continuous flows derived from Anosov flows which we called the \emph{HA-flow}. 

Let  $\Phi$ be a continuous flow on $M$ conjugate to an Anosov flow $\Phi_0$ on $M$ via  conjugacy $H_0$, i.e., $\Phi^t\circ H_0=H_0\circ \Phi^t_0$. One can define
the (strong) stable/unstable foliations of $\Phi$ by 
\[ \cF^\sigma_\Phi(x):=H_0\big( \cF^\sigma_{\Phi_0}(H_0^{-1}(x))\big), \quad \forall x\in M,\   \sigma=s,u,ss,uu. \] 
It is clear that
\[\cO_\Phi=H_0(\cO_{\Phi_0}) = H_0(\cF^u_{\Phi_0}\cap \cF^s_{\Phi_0})=\cF^u_\Phi\cap \cF^s_\Phi.  \]
Note that $\cF^\sigma_\Phi$  and $\cO_\Phi$ are continuous foliations and independent with the choice of $(\Phi_0,H_0)$. 

\begin{definition}\label{def HA} 
	A continuous flow $\Phi$  of $M$ is called \emph{HA-flow}, if it satisfies the following two items,
	\begin{enumerate}
		\item There is a $C^{1+}$-smooth transitive Anosov flow $\Phi_0$ and a bi-H\"older  homeomorphism $H_0:M\to M$ such that  
		\[\Phi^t\circ H_0=H_0\circ \Phi^t_0, \quad \forall t\in \RR,\]
		\item The stable/unstable foliations $\cF^{s/u}_\Phi$ are $C^{1+}$-smooth. Particularly,  $\cO_\Phi$ is $C^{1+}$-smooth and one can define the stable and unstable bundles by $E^{s/u}_\Phi:=T\cF^{s/u}_\Phi$. Recall that the foliations $\cF^{s/u}_\Phi$ induce H\"older cocycles over flow $\Phi$, 
		\[\alpha(x,t,\cF^{s/u}_\Phi,a):=\alpha_{\Phi}(x,t,\cF^{s/u}_\Phi,a), \]
		for some H\"older continuous metric $a(\cdot,\cdot)$, we refer to \eqref{eq. cocycle no E} and \eqref{eq. cocycle no flow} for notation conventions.
	\end{enumerate}
\end{definition}

\begin{remark}
	A natural way to get  HA-flow on $3$-manifold $M$ is  time change.  Let $\Phi_0$ be a transitive $C^{1+}$-smooth Anosov flow  generated by vector-field $Y$ on $M$. Let $u$ be a positive H\"older continuous function on $M$ cohomologous to $1$, i.e., $\alpha_{\Phi_0}(x,t,u)=t+\beta(x)-\beta\circ \Phi_0^t(x)$ for  some H\"older function $\beta$. Let flow $\Phi$ be generated by $\frac{1}{u}\cdot Y$, particularly, $\Phi_0^t(x)=\Phi^{\alpha_{\Phi_0}(x,t,u)}(x)$.  Then $\Phi$ is an HA-flow, since $H_0(x):=\Phi^{\beta(x)}(x)$ is a H\"older conjugacy from $\Phi_0$ to $\Phi$ and the foliations $\cF^{s/u}_\Phi=\cF^{s/u}_{\Phi_0}$ are $C^{1+}$-smooth. However, for an HA-flow,  there may be no smooth flow as a time change in its conjugacy class. That is why we will consider within  the orbit-equivalence class to find a smooth time change in Section \ref{sec Space}.
\end{remark}

In  this section, we deform a HA-flow  on $3$-manifold to get another HA-flow whose stable  and unstable holonomy maps  associated with objective functions.

\begin{theorem}\label{thm HA-flow flexibility}
	Let $\Phi$ be a HA-flow on $3$-manifold $(M,a)$. Let $f_\sigma:M\to \RR\ (\sigma=s,u)$ be two H\"older functions with topological pressure $P_\sigma=P_{f_\sigma}(\Phi)$.  Then there exist an HA-flow $\Phi_*$ on $M$, a H\"older continuous metric $a_*(\cdot,\cdot)$ and a bi-H\"older continuous homeomorphism $H$ of $M$ such that 
	\begin{enumerate}
		\item $\Phi_*$ is conjugate to $\Phi$ via $H$, i.e., $H\circ \Phi^t=\Phi_*^t\circ H$.
		\item The $C^{1+}$-smooth foliations $\cF^{\sigma}_{\Phi_*}:=H(\cF^{\sigma}_{\Phi})\ (\sigma=s, u)$  induce cocycles such that for all $x\in M$, $t\in \RR$,
		\[\alpha\big(x,t, \cF^u_{\Phi_*},a_*\big)= \alpha_{\Phi}\big(H^{-1}(x),t, f_s-P_s \big) \quad {\rm and} \quad \alpha\big(x,t, \cF^s_{\Phi_*},a_*\big)=-\alpha_{\Phi}\big(H^{-1}(x),t, f_u-P_u \big).\]
	\end{enumerate}
\end{theorem}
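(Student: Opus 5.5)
The plan is to build $\Phi_*$ by changing the smooth structure of $M$, without moving the flow, so that the stable and unstable holonomies become ``measure-preserving up to a H\"older density'' for suitable transverse measures. This follows the idea of Cawley \cite{C1993} and Farrell--Gogolev \cite{FaG2014}, with the Radon--Nikodym realization (Theorem \ref{thm Asaoka 0}) as the main input.

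\textbf{Step 1: normalized potentials and transverse measures.} Set $g_u:=f_u-P_u$ and $g_s:=f_s-P_s$. Both have zero pressure, and by Lemma \ref{lem function 0 pressure} we may replace them by H\"older-cohomologous functions. This changes the target cocycles only by H\"older coboundaries, which can be absorbed into the metric using Lemma \ref{lem metric to change bundle}(2).

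Next pull everything back to the Anosov flow $\Phi_0$ through the bi-H\"older conjugacy $H_0$; the functions $g_\sigma\circ H_0$ are still H\"older with zero pressure. Theorem \ref{thm Asaoka 0} and the remark after it then give:
\begin{itemize}
\item a family $\{\nu^{uu}_x\}$ subordinate to $\cF^{uu}$ with flow derivative $e^{-\int g_u}$;
\item a family $\{\nu^{ss}_x\}$ subordinate to $\cF^{ss}$ with flow derivative $e^{\int g_s}$.
\end{itemize}
Pushing forward by $H_0$ gives such families for $\Phi$. By Theorem \ref{thm Asaoka 0}(2) and Lemma \ref{lem Asaoka}, the $\cF^s$-holonomy between $\cF^{uu}$-leaves has H\"older Radon--Nikodym derivative $e^{u(\cdot,\cdot)}$ with respect to $\nu^{uu}$, and symmetrically for $\cF^u$-holonomy and $\nu^{ss}$.

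\textbf{Step 2: a new $C^{1+}$ atlas.} Cover $M$ by small product boxes. Fix a center $p$ of each box and local transversals $\cF^{ss}_{\rm loc}(p)$ and $\cF^{uu}_{\rm loc}(p)$. Give a point $x$ of the box the coordinates
\[
\Big(\pm\nu^{ss}_p\big([p,x^s]\big),\ \theta(x),\ \pm\nu^{uu}_p\big([p,x^u]\big)\Big),
\]
where $x^s,x^u$ are the product-structure projections of $x$ and $\theta$ is a flow-direction coordinate. For $\theta$ I would take the $C^{1+}$ coordinate transverse to the strong foliations inside the $C^{1+}$ orbit foliation of the HA-flow; only the orbit foliation needs to be smooth, not the time. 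By Theorem \ref{thm Asaoka 0}(3) these are homeomorphic charts.

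On overlaps, the change of the $\nu^{uu}$-coordinate is a composition of $\cF^s$-holonomy and flow maps. Its derivative is a product of $e^{u}$ and $e^{-\int g_u}$, hence H\"older, so this coordinate change is $C^{1+}$; the same holds for the $\nu^{ss}$-coordinate. Along orbits the transitions are $C^{1+}$ by the HA hypothesis. Journ\'e's Lemma \ref{lem journe} then upgrades leafwise regularity to $C^{1+}$ transition maps. In this atlas $\cF^s,\cF^u$ are $C^{1+}$ foliations, since their holonomies are coordinate translations up to H\"older densities.

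\textbf{Step 3: return to the original structure and fix the metric.} The new $C^{1+}$ structure is homeomorphic to the old one through the identity, and in dimension three smooth structures are unique up to diffeomorphism. Hence there is a homeomorphism $H$ which is a $C^{1+}$ diffeomorphism from the new structure to the old one; it is bi-H\"older because the charts of Step 2 are. Set $\Phi_*:=H\circ\Phi\circ H^{-1}$ and $\cF^\sigma_{\Phi_*}:=H(\cF^\sigma_\Phi)$, which are $C^{1+}$. Then $\Phi_*$ is again an HA-flow, conjugate to $\Phi_0$ via $H\circ H_0$.

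For the metric $a_*$, declare the coordinate vectors of Step 2 to be unit and orthonormal, and glue by a partition of unity; different choices only change the cocycles by coboundaries (Lemma \ref{lem metric to change bundle}). Transverse to $\cF^u_{\Phi_*}$, the direction along $\nu^{ss}$, lengths are $\nu^{ss}$-measures, so the induced cocycle is $\log\frac{d\Phi^t_*\nu^{ss}}{d\nu^{ss}}=\alpha_\Phi(H^{-1}x,t,f_s-P_s)$. The analogous computation with $\nu^{uu}$ gives $-\alpha_\Phi(H^{-1}x,t,f_u-P_u)$ for $\cF^s_{\Phi_*}$.

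\textbf{Main obstacle.} The delicate point is to get \emph{exact} equality of cocycles, rather than equality up to coboundary, together with the $C^{1+}$ compatibility of the charts. The gluing of box coordinates introduces holonomy densities $e^{u}$, which must be killed by a transverse-coordinate-dependent H\"older rescaling of the metric. I would first try to define $a_*$ intrinsically, as the metric whose transverse length element along $\cF^{u}$-transversals is exactly the holonomy-invariant class of $\nu^{ss}$ (normalized by $e^{u}$ to a fixed reference leaf), and then invoke Lemma \ref{lem metric to change bundle}(2) for the final exact correction.
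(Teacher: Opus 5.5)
The gap is in Step 2: the atlas you describe is not shown to be $C^{1+}$, and with the natural choices of $\theta$ it is not $C^{1+}$. Write $m^s_p=\pm\nu^{ss}_p([p,x^s])$ and $m^u_p=\pm\nu^{uu}_p([p,x^u])$.

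The two measure coordinates are harmless. $m^s_p$ is constant on $\cF^u$-leaves and $m^u_p$ is constant on $\cF^s$-leaves. So their transitions are maps of one-dimensional leaf spaces, and these are $C^{1+}$ by the H\"older Radon--Nikodym property.

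The flow coordinate is the problem. Suppose $\theta_p$ is an old $C^{1+}$ function. Take an old $C^{1+}$ chart $(y_1,y_2,y_3)$ with $y_1,y_3$ first integrals of $\cF^u_\Phi,\cF^s_\Phi$ and $y_2=\theta_p$. Then $m^s_p=h_1(y_1)$ and $m^u_p=h_3(y_3)$, where $h_1,h_3$ are only bi-H\"older. The middle component of your new transition map is
\[(a,b,c)\longmapsto \tau_2\big(h_1^{-1}(a),\,b,\,h_3^{-1}(c)\big),\]
where $\tau_2$ is the old $C^{1+}$ transition of the flow coordinate. This is not differentiable in $a$ or $c$ unless the relative position of the level sets of $\theta_p$ and $\theta_q$ happens to be a $C^{1+}$ function of the new transverse coordinates. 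Nothing in your construction ensures that.

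If instead $\theta$ is flow time measured from local product surfaces built from $\cF^{ss},\cF^{uu}$, the transitions are temporal-distance-type functions, which are again only H\"older in general. Moreover, an HA-flow is only assumed continuous, so its time is not even known to be $C^1$ along orbits.

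Journ\'e's Lemma \ref{lem journe} does not rescue this. It needs the whole transition map to be uniformly $C^{1+}$ along a complementary two-dimensional foliation, and that is exactly what fails for the middle component. To complete your route you would need a further, nontrivial step. You would have to choose transversals and a leafwise parametrization whose time-transition functions are $C^{1+}$ in the new transverse coordinates. That is a cocycle-smoothing problem over a recurrent holonomy pseudogroup, and the proposal does not address it.

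By contrast, the point you flag as the main obstacle is routine. Once the induced cocycles are H\"older-cohomologous to the targets, rescaling the metric along two fixed transverse line fields gives exact equality. This is Lemma \ref{lem metric to change bundle}(2), used exactly as in Claim \ref{claim change metric}.

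The paper avoids the atlas problem by never changing the smooth structure. It proves a one-sided statement, Proposition \ref{prop ami jacobian 1}:
\begin{enumerate}
\item It slides points along a fixed $C^{1+}$ foliation $\cL\subset\cF^u_\Phi$ so that, in each $s$-regular box, the $\tilde a$-arclength from the lower boundary equals the transverse measure.
\item The H\"older metric of Proposition \ref{prop adapted metric} is what makes these box-wise maps agree on overlaps.
\item The holonomy of the new stable foliation between $\cL$-leaves is $C^{1+}$, because its derivative is the Radon--Nikodym derivative.
\item Since the conjugacy is $C^{1+}$ along $\cF^s_\Phi$, the other cocycle keeps its cohomology class.
\end{enumerate}
Applying this twice gives the theorem: first with $f_u$ along a foliation in $\cF^u_\Phi$, then with $f_s\circ\check H^{-1}$ along a foliation in $\cF^s_{\check\Phi}$. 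This route needs no uniqueness of smooth structures in dimension three. Your Step 3 use of that fact is legitimate via Moise and Whitney smoothing, but you would still need to check that the resulting $H$ is bi-H\"older.
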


\begin{remark}\label{rmk HA-flow unique 1}
	We note that  such a continuous flow $\Phi_*$ in Theorem \ref{thm HA-flow flexibility} is unique, up to $C^{1+}$-smooth orbit-equivalence. However, on the one hand this uniqueness follows from the uniqueness of Theorem \ref{thm flow flexibility 0}, on the other hand the existence part of Theorem \ref{thm flow flexibility 0} follows from Theorem \ref{thm HA-flow flexibility}. To avoid confusion, we do not state it in the above theorem, and we
	will prove this uniqueness after the proof of Theorem \ref{thm flow flexibility 0} , see Remark \ref{rmk HA-flow unique 2}.
\end{remark}

Theorem \ref{thm HA-flow flexibility} will be deduced by the following  one-side flexibility case.
\begin{proposition}\label{prop ami jacobian 1}
	Let $\Phi$ be an HA-flow on $3$-manifold $(M,a)$ and $f:M\to \RR$ be a H\"older  function  with topological pressure $P=P_f(\Phi)$.
	Then there exist an HA-flow $\check{\Phi}$ on $M$, a H\"older continuous metric $\check{a}(\cdot,\cdot)$ and a  bi-H\"older  continuous homeomorphism $\check{H}:M\to M$ such that
	\begin{enumerate}
		\item The flow $\check{\Phi}$ is conjugate to $\Phi$ via $\check{H}$, i.e., $\check{H}\circ \Phi^t=\check{\Phi}^t\circ \check{H}$.
		\item The restriction $\check{H}|_{\cF^s_\Phi(x)}:\cF^s_\Phi(x)\to \cF^s_{\check{\Phi}}(H(x))$ is $C^{1+}$-smooth, for all $x\in M$.
		\item The cocycles induced by $C^{1+}$-smooth foliations $\cF^{u/s}_{\check{\Phi}}:=\check{H}(\cF^{s/u}_\Phi)$ satisfy that for all  $x\in M$ and $t\in \RR$,
		\[ 
		\alpha\big(x,t, \cF^u_{\check{\Phi}},\check{a}\big)=\alpha\big(\check{H}^{-1}(x),t, \cF^u_\Phi,a\big)\quad {\rm and} \quad \alpha\big(x,t, \cF^s_{\check{\Phi}},\check{a}\big)=-\alpha_\Phi\big(\check{H}^{-1}(x),t,f-P\big).\]
	\end{enumerate}
\end{proposition}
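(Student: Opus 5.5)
Since the foliations are only conjugated by a homeomorphism, the natural plan is to rebuild the $C^{1+}$ structure \emph{transverse} to $\cF^s_\Phi$ from a transverse measure, in the spirit of Cawley and Farrell--Gogolev. Keep the structure \emph{along} $\cF^s_\Phi$ unchanged.

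\textbf{Step 1: transverse measure.} Transport everything to the Anosov model $\Phi_0$ via $H_0$; cohomology classes and pressures are preserved under the conjugacy. Replace $f-P$ by a cohomologous negative function $g$ with $P_g(\Phi)=0$, using Lemma \ref{lem function 0 pressure}. Apply Theorem \ref{thm Asaoka 0} to get a family $\{\nu^{uu}_x\}$ subordinated to $\cF^{uu}_\Phi=H_0(\cF^{uu}_{\Phi_0})$. Under the flow its logarithmic Radon--Nikodym cocycle is $-\int_0^t g$. Its $\cF^s$-holonomy Jacobians are H\"older, given by $u$ in Lemma \ref{lem Asaoka}, and its cumulative functions $\nu^{uu}_x([x,\cdot])$ are H\"older homeomorphisms onto intervals. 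Via the $\cF^s$-holonomy, $\nu^{uu}$ thus defines a H\"older-varying transverse measure to $\cF^s_\Phi$.

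\textbf{Step 2: the new smooth structure.} Cover $M$ by $\cF^s_\Phi$-foliation boxes $U_i\cong D^2\times I$, where the $D^2$-factor is a $C^{1+}$ chart of the stable leaves (available because $\cF^s_\Phi$ is $C^{1+}$). Define new charts $\psi_i(p)=(\text{stable coordinate of }p,\ \nu^{uu}_{x_i}([x_i,{\rm Hol}^s(p)]))$, i.e.\ replace the transverse coordinate by the measure coordinate. On overlaps the transverse transition maps are, by \eqref{eq. measure 2}, maps whose derivative is $e^{u}$ with $u$ H\"older. They also depend H\"older-continuously and $C^{1+}$-ly on the leaf coordinate. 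Hence the transitions are $C^{1+}$ after a Journ\'e-type argument (Lemma \ref{lem journe}), and this defines a new $C^{1+}$ structure $\mathcal{S}$ on $M$. Since $M$ is a $3$-manifold, smooth structures are unique up to diffeomorphism isotopic to a homeomorphism close to identity. So there is a bi-H\"older homeomorphism $\check H:M\to M$ that carries $\mathcal{S}$ to the standard one, and we set $\check\Phi^t=\check H\Phi^t\check H^{-1}$. By construction, $\check H$ is $C^{1+}$ on $\cF^s_\Phi$-leaves, since those coordinates were not changed. This gives item 2.

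\textbf{Step 3: checking the conditions.}
\begin{itemize}
\item \emph{$\cF^s_{\check\Phi}$ is $C^{1+}$.} Its leaves are coordinate slices, and its holonomies are measure-preserving in the new coordinate up to the H\"older density $e^u$.
\item \emph{$\cF^u_{\check\Phi}$ is $C^{1+}$.} In the new coordinates its leaves are graphs over the transverse direction. Their regularity needs the $\cF^u$-holonomy between stable leaves, which is $C^{1+}$ in old coordinates, to remain $C^{1+}$. This holds because $\check H$ is unchanged along $\cF^s$, together with Proposition \ref{prop foliation regularity}.
\item \emph{The cocycle $\alpha(\cdot,\cdot,\cF^s_{\check\Phi},\check a)$.} Choose $\check a$ to be the metric whose transverse length is $\nu^{uu}$. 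Then the transverse derivative of $\check\Phi^t$ is exactly $d(\Phi^t)_*\nu/d\nu=-\int_0^t g$, which is cohomologous to $-\alpha_\Phi(\cdot,t,f-P)$. Adjust $\check a$ by Lemma \ref{lem metric to change bundle} to get equality.
\item \emph{The cocycle $\alpha(\cdot,\cdot,\cF^u_{\check\Phi},\check a)$.} This is the transverse derivative to $\cF^u$, measured inside stable leaves. Since $\check H$ is $C^{1+}$ there, it equals the old cocycle up to a H\"older coboundary, which can again be absorbed into $\check a$ along $E^s$.
\end{itemize}

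\textbf{Main obstacle.} The hard step is Step 2: showing that the measure charts glue into a genuine $C^{1+}$ atlas that is compatible with the leafwise structure, and that $\check\Phi$ is still an HA-flow with $C^{1+}$ $\cF^u$. I would first try to prove joint regularity of the transition maps via the H\"older dependence in Theorem \ref{thm Asaoka 0}(2), and then apply Journ\'e's lemma.
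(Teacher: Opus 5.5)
Your Step 2 breaks down at the choice of the ``stable coordinate''. Suppose the $D^2$-factor comes from arbitrary $C^{1+}$ foliation charts $\varphi_i$ of $\cF^s_\Phi$. Their transitions have the form $\varphi_j\circ\varphi_i^{-1}(z,y)=(a_{ij}(z,y),b_{ij}(y))$. Your new charts are $\psi_i=(\mathrm{id}\times F_i)\circ\varphi_i$, where $F_i$ is the cumulative function of the transverse measure. For a singular measure, $F_i$ is only a H\"older homeomorphism. Then
\[
\psi_j\circ\psi_i^{-1}(z,m)=\big(a_{ij}(z,F_i^{-1}(m)),\,F_j\circ b_{ij}\circ F_i^{-1}(m)\big).
\]
Theorem \ref{thm Asaoka 0}(2) and \eqref{eq. measure 2} make the second component $C^{1+}$. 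The first component, however, is in general only H\"older in $m$ whenever $\partial_y a_{ij}\neq 0$, and nothing in the Radon--Nikodym data controls it. Journ\'e's lemma needs $C^{1+}$ regularity along \emph{both} transverse foliations, so it cannot repair this.

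The same defect breaks your claim that $\cF^u_{\check\Phi}$ is $C^{1+}$. An $\cF^u$-plaque $\{(c(s,y),y)\}$, with $c(\cdot,y)$ the orbit arc in the plaque at height $y$, becomes $\{(c(s,F_i^{-1}(m)),m)\}$. This is a graph that is only H\"older in $m$, so the leaves are not $C^1$. The $C^{1+}$ holonomies you check inside stable leaves do not suffice for Proposition \ref{prop foliation regularity}.

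The missing ingredient is a single global $C^{1+}$ one-dimensional foliation $\cL$ that \emph{subfoliates $\cF^u_\Phi$} and is transverse to $\cF^s_\Phi$ (Lemma \ref{lem C1+ L}). Note that $\cF^{uu}_\Phi$ cannot play this role for an HA-flow, since it is only H\"older. Carry the measure on $\cL$-leaves (Lemma \ref{lem measure L}) and use the charts $p\mapsto(\pi^{\cL}_{D_i}(p),F_i(\pi^{s}_{T_i}(p)))$. Here $D_i$ is a stable plaque, $T_i$ is an $\cL$-plaque, and $\pi^{\cL},\pi^{s}$ are the projections along $\cL$ and $\cF^s_\Phi$. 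The transitions are then products of the $C^{1+}$ $\cL$-holonomy $D_i\to D_j$ and the measure-coordinate $\cF^s$-holonomy $T_i\to T_j$. Hence they are $C^{1+}$ with no Journ\'e argument. Moreover, each $\cF^u$-plaque is $\gamma\times I$ in the new chart, with $\gamma$ an orbit arc in $D_i$, hence $C^{1+}$.

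With this repair, your route is essentially Asaoka's method from Remark \ref{rmk Asaoka 1}: change the $C^{1+}$ structure, then return to the original one using uniqueness of smooth structures on $3$-manifolds. You should cite Moise--Munkres for this, together with a Whitney-type smoothing of a $C^{1+\alpha}$ structure. Bi-H\"older continuity of $\check H$ also needs $F_i^{-1}$ to be H\"older, i.e.\ lower bounds on the measure of $\cL$-arcs, not just Theorem \ref{thm Asaoka 0}(3).

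The paper keeps the smooth structure instead. It slides points along $\cL$-leaves in each $s$-regular box so that $l_{\tilde a}([x,\check H(z)])=\mu_x([x,z])$ for $x$ on the lower boundary. The H\"older metric $\tilde a$ of Proposition \ref{prop adapted metric} matches $\mu$ exactly on box boundaries, so the local maps glue (Claim \ref{claim H well-def}). The foliation $\cF^u$ is unchanged because $\cL\subset\cF^u_\Phi$, and $\cF^s_{\check\Phi}$ is $C^{1+}$ by Lemma \ref{lem measure L}(1). This costs the appendix construction but avoids the structure-uniqueness theorem. It also yields an explicit $\check H$ that is $C^0$-close to the identity (Remark \ref{rmk H close to id}) and depends continuously on $f$, which is what Proposition \ref{prop path-connect 1} needs. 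Your route gives no such control in families.
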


\begin{remark}\label{rmk Asaoka 1}
	We mention that, if the flow $\Phi$ in Proposition \ref{prop ami jacobian 1}  is a $C^{1+}$-smooth Anosov flow,  one may apply the method of Asaoka in \cite[Section 4.2]{A2008} or in \cite[Section 2.2]{A2012} to get a similar result. Specifically, in both \cite{A2008,A2012}, observing the $C^{1+}$-smooth flow under the $C^{1+}$-smooth local chart,  Asaoka deforms $\Phi$ via changing the $C^{1+}$-smooth structure of the manifold  to match the new $C^{1+}$-smooth structure with the transverse  measures given by Theorem \ref{thm Asaoka 0}.   Then he gets a conjugacy which is an identity as a set map, and a continuous flow  satisfying a similar conclusion of  Proposition \ref{prop ami jacobian 1}.  Our method is to adjust the metric and flow synchronously such that 
	\begin{itemize}
		\item the new metric ``discretely" matches the transverse  measures (see Proposition \ref{prop adapted metric}),
		\item under the new metric, the new flow's dynamics "continuously" matches the measure family.
	\end{itemize}
	This way will keep the smooth structure of the manifold. Hence, our conjugacy could not be an identity, but it can be arbitrarily $C^0$-close to the identity map, see Remark \ref{rmk H close to id}.
\end{remark}

\begin{remark}\label{rmk Asaoka 2}
  To get Theorem \ref{thm HA-flow flexibility}, also Theorem \ref{thm connect} and Theorem \ref{thm flow flexibility 0},  we need apply Proposition \ref{prop ami jacobian 1} twice. Even if we start from a $C^{1+}$-smooth Anosov flow $\Phi$ in Proposition \ref{prop ami jacobian 1}, both our and Asaoka's method can only get a continuous flow satisfying the desired cocycles.   It is worth to point out that both Asaoka and Proposition \ref{prop ami jacobian 1}  essentially deform $\Phi$ along  a $C^{1+}$-smooth one-dimensional  foliation close to $\cF^{uu}_\Phi$.  Moreover, we will show that  this kind of deformation of a $C^{1+}$-smooth Anosov flow  cannot be a  $C^{1+}$-smooth Anosov flow anymore, if one really changes the cocycles induced by stable holonomy, see  Remark \ref{rmk no flex}. This is a reason that we introduce the HA-flow, see Remark \ref{rmk HA-flow reason} for the other reason.
\end{remark}

Finally, we will improve Proposition \ref{prop ami jacobian 1} such that it also holds for a path of HA-flows matching a path of functions. It will be useful for proving the path-connectedness of the orbit-equivalence  space of an Anosov flow on $3$-manifold, i.e., Theorem \ref{thm connect}.  

\begin{proposition}\label{prop path-connect 1}
	Let $\Phi$ be an HA-flow on $3$-manifold $(M,a)$ generated by a H\"older vector-field $v_\Phi\cdot Y_\Phi$, where $Y_\Phi$ is  a $C^{1+}$-smooth vector-field and $v_\Phi$ is positive H\"older function.  Let $f_\kappa:M\to \RR,\ (\kappa\in[0,1])$ be a path of H\"older  functions with $\alpha(x,t,\cF^u_\Phi,a)=\alpha_\Phi(x,t,f_0)$ and the topological pressure $P_{f_\kappa}(\Phi)=P_\kappa$, for all $\kappa\in[0,1]$. 
	Then there exist a path of  HA-flows $\check{\Phi}_\kappa$ on $M$, a path of  H\"older continuous metrics $\check{a}_\kappa(\cdot,\cdot)$ and a path of bi-H\"older continuous homeomorphisms $\check{H}_\kappa:M\to M$ such that
	\begin{enumerate}
		\item $\check{\Phi}_0=\Phi$ and $\check{H}_0={\rm Id}_M$.
		\item The flow $\check{\Phi}_\kappa$ is conjugate to $\Phi$ via $\check{H}_\kappa$, i.e., $\check{H}_\kappa\circ \Phi^t=\check{\Phi}_\kappa^t\circ \check{H}_\kappa$.
		\item The cocycles induced by foliations $\cF^{u/s}_{\check{\Phi}_\kappa}$ satisfy that for all  $x\in M$ and $t\in \RR$,
		\[ 
		\alpha\big(x,t, \cF^u_{\check{\Phi}_\kappa},\check{a}_\kappa\big)=\alpha\big(\check{H}^{-1}_{\kappa}(x),t, \cF^u_\Phi,a\big)\quad {\rm and} \quad \alpha\big(x,t, \cF^s_{\check{\Phi}_\kappa},\check{a}_\kappa\big)=-\alpha_\Phi\big(\check{H}_\kappa^{-1}(x),t,f_\kappa-P_{\kappa}\big).\]
		\item There are a path of $C^{1+}$-smooth vector-field $\check{Y}_\kappa$ and a path of positive H\"older function $v_\kappa$ such that $\check{Y}_0=Y_\Phi$, $v_0=v_\Phi$ and  $\check{\Phi}_\kappa$ is generated by vector field $v_\kappa\cdot\check{Y}_\kappa$.
	\end{enumerate}
\end{proposition}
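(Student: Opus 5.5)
The plan is to run the construction behind Proposition \ref{prop ami jacobian 1} with the function $f$ replaced by the path $f_\kappa$, and to check that every object it produces depends continuously on $\kappa$ and reduces to the original data at $\kappa=0$.

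\textbf{Step 1 (normalized potentials).} By Lemma \ref{lem function 0 pressure}, each $f_\kappa-P_\kappa$ is cohomologous to a negative H\"older $g_\kappa$ with $P_{g_\kappa}(\Phi)=0$. The construction there is $g=f-P+L_X\beta$, and $\beta$ is not canonical. So I will instead take convex combinations of finitely many locally valid choices, using a partition of unity on $[0,1]$; negativity and the cohomology class are preserved under such combinations. This makes $\kappa\mapsto g_\kappa$ continuous in $C^0$ with uniformly controlled H\"older data.

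At $\kappa=0$ the hypothesis $\alpha(x,t,\cF^u_\Phi,a)=\alpha_\Phi(x,t,f_0)$ says $f_0$ already realizes the stable-holonomy cocycle. Hence $P_0=0$, via the SRB-type pressure identity transported through the conjugacy $H_0$, and I will arrange $g_0=f_0$ up to the coboundary already absorbed into the metric $a$.

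Applying Theorem \ref{thm Asaoka 0}, pulled back through $H_0$ to the HA-flow, gives transverse measure families $\{\nu^{uu}_{x,\kappa}\}$. By Remark \ref{rmk measure path} these vary continuously in $\kappa$ in the weak-star sense. Via Lemma \ref{lem Asaoka}, their holonomy Radon–Nikodym derivatives $u_\kappa$ vary continuously as well.

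\textbf{Step 2 (deformation along a smooth transversal foliation).} Fix one $C^{1+}$ one-dimensional foliation $\cL$ inside $\cF^u_\Phi$, transverse to $\cO_\Phi$ and close to $\cF^{uu}_\Phi$; this is independent of $\kappa$. Following the proof of Proposition \ref{prop ami jacobian 1}, define:
\begin{itemize}
\item $\check{H}_\kappa$ as the homeomorphism sliding points along $\cO_\Phi$ so that the $\cL$-arc length under $a$ matches the $\nu_{\kappa}$-measure of the corresponding $\cF^{uu}$-arcs;
\item $\check{a}_\kappa$ as the metric rescaled in the $E$-direction by the density of $\nu_\kappa$, as in Lemma \ref{lem metric to change bundle}(2);
\item $\check{\Phi}_\kappa:=\check{H}_\kappa\circ\Phi\circ\check{H}_\kappa^{-1}$.
\end{itemize}
The identities in item 3 then follow exactly as in Proposition \ref{prop ami jacobian 1}, for each fixed $\kappa$. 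The unstable cocycle is untouched because $\check{H}_\kappa$ is $C^{1+}$ along $\cF^s$-leaves. The stable cocycle equals $-\alpha_\Phi(\cdot,t,g_\kappa)$ by item 1 of Theorem \ref{thm Asaoka 0}, and $g_\kappa$ is cohomologous to $f_\kappa-P_\kappa$.

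At $\kappa=0$ the measure $\nu_0$ is already the $a$-length measure. The sliding map is then the identity, so $\check{H}_0={\rm Id}_M$ and $\check{\Phi}_0=\Phi$, which gives item 1.

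\textbf{Step 3 (the generating vector fields).} Write $\check{\Phi}_\kappa$ as generated by $(D\check{H}_\kappa v_\Phi Y_\Phi)\circ\check{H}_\kappa^{-1}$. Since $\check{H}_\kappa$ is only H\"older transversally but slides along orbits, the direction field $T\cO_{\check{\Phi}_\kappa}=\check{H}_\kappa(\cO_\Phi)$ is $C^{1+}$. Choose $\check{Y}_\kappa$ as a unit field of this line field with respect to a fixed smooth metric, after a $\kappa$-continuous application of Proposition \ref{prop flow regularity} to straighten it. Then define $v_\kappa$ as the ratio of the speeds, which is positive and H\"older. At $\kappa=0$, normalize so that $\check{Y}_0=Y_\Phi$ and $v_0=v_\Phi$.

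\textbf{Main obstacle.} The hard part is continuity in $\kappa$ at the right regularity. Remark \ref{rmk measure path} only gives weak-star continuity of the measures. What is needed is continuity of $\check{H}_\kappa$ in $C^0$, of $\check{a}_\kappa$ among H\"older metrics, and of $\check{Y}_\kappa$ in $C^{1+}$.

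I would first upgrade the continuity to $C^0$ of the Radon–Nikodym data $u_\kappa$. This should follow from the explicit formula in Lemma \ref{lem Asaoka}, whose integrals converge exponentially and uniformly in $\kappa$ because of the uniform bound \eqref{eq. final negative 0}. From there, continuity of the distribution functions $y\mapsto\nu_{x,\kappa}([x,y])$ follows from non-atomicity together with uniform H\"older bounds (item 3 of Theorem \ref{thm Asaoka 0}), and this gives continuity of $\check{H}_\kappa$ and $\check{a}_\kappa$.

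Continuity of $\check{Y}_\kappa$ in $C^{1+}$ is the most delicate point. It requires that the foliation $\check{H}_\kappa(\cO_\Phi)$ vary $C^{1+}$-continuously. That in turn depends on $\check{H}_\kappa$ restricted to $\cF^s$-leaves varying $C^{1+}$-continuously, which I expect to extract from the flow-direction smoothness of the sliding function.
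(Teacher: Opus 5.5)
\textbf{Gap 1: the deformation slides in the wrong direction.} In Step 2 you let $\check H_\kappa$ slide points along $\cO_\Phi$, and this cannot work. An orbit-preserving homeomorphism maps every leaf of $\cF^s_\Phi$ and of $\cF^u_\Phi$ onto itself, since both foliations are saturated by orbits. Hence $\cF^{s/u}_{\check\Phi_\kappa}=\cF^{s/u}_\Phi$, and $\check\Phi_\kappa$ is only a reparametrization of $\Phi$. At a periodic point $p$, the value $\alpha(p,\tau(p,\check\Phi_\kappa),\cF^s_{\check\Phi_\kappa},\check a_\kappa)$ is the logarithm of the derivative of the holonomy of $\cF^s_\Phi$ around the closed loop $\cO_\Phi(p)$. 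This number does not depend on the metric, the transversal, or $\kappa$. Item 3 would then force $\int_0^{\tau(q,\Phi)}(f_\kappa-P_\kappa)\circ\Phi^t(q)\,dt$ to be independent of $\kappa$ for every periodic $q$, i.e.\ all $f_\kappa-P_\kappa$ would have to be cohomologous.

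\textbf{The metric cannot compensate.} Changes as in Lemma \ref{lem metric to change bundle} alter cocycles only by coboundaries. There is also no ``density of $\nu_\kappa$'' to rescale by: $\mu^\kappa_x$ has no continuous density with respect to arc length on $\cL(x)$ unless the target cocycle is already cohomologous to the current one. In general it is singular, and its distribution function $y\mapsto\mu^\kappa_x([x,y])$ is merely H\"older.

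\textbf{What the construction needs instead.} The deformation must be a homeomorphism sliding along the leaves of the fixed foliation $\cL$. To make it globally well defined you need two ingredients you left out:
\begin{itemize}
\item a fixed $s$-regular cover $\{V_i\}$ by $\cL$-foliation boxes whose top and bottom $V_i^\pm$ are local $\cF^s_\Phi$-leaves;
\item the H\"older metric $\tilde a_\kappa$ of Proposition \ref{prop adapted metric}, whose $\cL$-lengths between box boundaries equal the $\mu^\kappa$-masses.
\end{itemize}
One then sets $l_{\tilde a_\kappa}([x,\check H_\kappa(z)])=\mu^\kappa_x([x,z])$ for $x\in V_i^-$ and $z\in\cL(x,V_i)$. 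The adapted metric is exactly what makes these local definitions agree on overlaps (Claim \ref{claim H well-def}), and it gives $\check H_\kappa={\rm Id}$ on every $V_i^\pm$.

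That last property is what makes $\check H_\kappa(\cF^s_\Phi)$ a $C^{1+}$ foliation: its holonomy between $\cL$-leaves inherits H\"older derivatives from item 1 of Lemma \ref{lem measure L}. It is also what makes $\check H_\kappa$ $C^{1+}$ along $\cF^s_\Phi$, via \eqref{eq H along cs}, and your claim that the unstable cocycle is untouched relies on this. Continuity in $\kappa$ then comes from keeping $\cL$ and $\{V_i\}$ fixed while $\mu^\kappa$ and $\tilde a_\kappa$ vary continuously.

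\textbf{Gap 2: the vector field $\check Y_\kappa$ in Step 3.} Once $\check H_\kappa$ genuinely moves orbits, $\cO_{\check\Phi_\kappa}=\cF^s_{\check\Phi_\kappa}\cap\cF^u_\Phi$ is a $C^{1+}$ foliation. Its tangent line field $D\check H_\kappa|_{\cF^s_\Phi}(T\cO_\Phi)$, however, is in general only H\"older, so you cannot take $\check Y_\kappa$ to be its unit field. Item 4 requires a path of $C^{1+}$ diffeomorphisms $h_\kappa$ with $h_0={\rm Id}_M$ such that $Dh_\kappa(T\cO_{\check\Phi_\kappa})$ is a $C^{1+}$ bundle varying continuously in $\kappa$. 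This does not come from flow-direction smoothness of the slide. It needs a parametrized version of the straightening behind Proposition \ref{prop flow regularity}:
\begin{itemize}
\item build $C^{1+}$ foliation charts $\vphi^\kappa_i=\psi^\kappa_i\circ\vphi^0_i$ from the $\cO_{\check\Phi_\kappa}$-holonomies onto fixed transversals $\Sigma_i\subset V_i$;
\item approximate them by smooth adapted coordinates $\phi^\kappa_i$ with $\phi^0_i=\vphi^0_i$;
\item apply the bump-function construction of Lemma \ref{lem C1+ bundle} inductively, chart by chart.
\end{itemize}
Afterwards one replaces $(\check\Phi_\kappa,\check H_\kappa)$ by $(h_\kappa\circ\check\Phi_\kappa\circ h_\kappa^{-1},\,h_\kappa\circ\check H_\kappa)$ and readjusts the metric.

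\textbf{Step 1.} The normalization in Step 1 is unnecessary, since by Remark \ref{rmk function final negative} Lemma \ref{lem measure L} applies to $f_\kappa$ directly. It also works against item 1: replacing $f_0-P_0$ by a cohomologous $g_0$ multiplies the measure family at $\kappa=0$ by a nonconstant density.
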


\subsection{The Radon-Nikodym realization for HA-flows}

Before we prove Theorem \ref{thm HA-flow flexibility}, Proposition \ref{prop ami jacobian 1} and Proposition \ref{prop path-connect 1}, we give an adaption of Radon-Nikodym realization for  HA-flows. Moreover, we give a new metric on the manifold such that it ``discretely" matches  this family of  transverse  measures, see Proposition \ref{prop adapted metric}.  

Let $\Phi$ be an HA-flow on $3$-manifold $M$ conjugate to a $C^{1+}$-smooth Anosov flow $\Phi_0$ on $M$ via bi-H\"older homeomorphism $H$ of $M$. Firstly, by the H\"older conjugacy $H$, one can get the existence and uniqueness of equilibrium state for $\Phi$. 

\begin{lemma}\label{lem pressure new}
	Let $f:M\to \RR$ be a H\"older  function. Then there exists a unique $\mu_f\in \mathcal{M}_{\Phi}(M)$ such that 
	\begin{align}
		P_f(\Phi)=\sup_{\mu\in \mathcal{M}_{\Phi}(M)}\big(h_\mu(\Phi)+\int_M fd\mu   \big) =  h_{\mu_f}(\Phi)+\int_M fd\mu_f.    \label{eq. equilibrium}
	\end{align}
	We call $\mu_f$ the equilibrium of the potential $f$.   Particularly, $P_f(\Phi)=P_{f\circ H}(\Phi_0)$.
\end{lemma}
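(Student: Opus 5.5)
The plan is to transport everything through the conjugacy $H$, which satisfies $\Phi^t\circ H=H\circ\Phi_0^t$. The proof has three steps.

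\textbf{Step 1: entropy and pressure are preserved.} The map $\mu\mapsto H_*\mu$ is a bijection from $\mathcal{M}_{\Phi_0}(M)$ to $\mathcal{M}_{\Phi}(M)$. It is continuous in the weak-star topology, and so is its inverse, because $H$ is a homeomorphism. Since $H$ is a topological conjugacy, it is in particular a measure-theoretic isomorphism between $(\Phi_0,\mu)$ and $(\Phi,H_*\mu)$, so
\[
h_{H_*\mu}(\Phi)=h_\mu(\Phi_0).
\]
Moreover $\int_M f\,d(H_*\mu)=\int_M f\circ H\,d\mu$. Taking suprema, the variational expression for $\Phi$ with potential $f$ equals the variational expression for $\Phi_0$ with potential $f\circ H$. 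This gives
\[
P_f(\Phi)=P_{f\circ H}(\Phi_0).
\]

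\textbf{Step 2: existence and uniqueness of $\mu_f$.} Because $H$ is bi-H\"older, the function $f\circ H$ is H\"older continuous. Since $\Phi_0$ is a $C^{1+}$-smooth transitive Anosov flow, Proposition \ref{prop pressure} provides a unique equilibrium state $\mu_{f\circ H}$ for $\Phi_0$. Set $\mu_f:=H_*\mu_{f\circ H}$. By Step 1 it attains the supremum in \eqref{eq. equilibrium}. If $\mu'\in\mathcal{M}_\Phi(M)$ also attains it, then $H^{-1}_*\mu'$ is an equilibrium state for $(\Phi_0,f\circ H)$. Uniqueness for $\Phi_0$ then forces $H^{-1}_*\mu'=\mu_{f\circ H}$, hence $\mu'=\mu_f$.

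\textbf{Step 3: expected obstacle.} The only step needing care is the entropy invariance in Step 1. There is a mild technical point because $\Phi$ is merely a continuous flow. I would work with the time-one maps: $H$ conjugates $\Phi_0^1$ to $\Phi^1$, and for flows $h_\mu(\Phi)$ is defined as $h_\mu(\Phi^1)$. Entropy is invariant under measurable isomorphism, so the required identity follows at once. Nothing beyond the earlier results is needed.
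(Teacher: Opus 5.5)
Your proposal is correct and follows the paper's proof. Both pull back the potential to $f\circ H$, take the unique equilibrium state of $\Phi_0$ from Proposition~\ref{prop pressure}, and push it forward by $H_*$; this works because $H_*$ is a bijection $\mathcal{M}_{\Phi_0}(M)\to\mathcal{M}_\Phi(M)$ that preserves entropy and satisfies $\int f\,d(H_*\mu)=\int f\circ H\,d\mu$. Your extra details (using time-one maps for entropy invariance, and getting uniqueness by pulling back a competing maximizer) only spell out what the paper leaves implicit.
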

\begin{proof}
	Let $\psi=H^*(f)=f\circ H$ and apply Proposition \ref{prop pressure} to $\Phi_0$ and H\"older function $\psi$, one get $\mu_\psi$ the unique equilibrium of $\psi$ with respect to $\Phi_0$. Let 
	\[ \mu_f=H_*(\mu_\psi), \quad {\rm i.e.,}\quad \mu_f(A)=\mu_\psi(H^{-1}(A)),\ \ \text{for any measurable set}\  A.  \]
	It is clear that $H_*$ is a bijection between $\mathcal{M}_{\Phi_0}(M)$ and $\mathcal{M}_{\Phi}(M)$,  and for each $\mu\in \mathcal{M}_{\Phi_0}(M)$, one has 
	\[h_{\mu}(\Phi_0)=h_{H_*(\mu)}(\Phi) \quad {\rm and}\quad \int_M\psi d\mu=\int_M f dH_*(\mu). \]
	Hence  $\mu_f=H_*(\mu_\psi)$ is the unique measure satisfying \eqref{eq. equilibrium} and $P_f(\Phi)=P_\psi(\Phi_0)$.
\end{proof}

We will consider a smooth foliation $\cL$, instead of the H\"older foliation $\cF^{uu}_\Phi$.

\begin{lemma}\label{lem C1+ L}
	Let $\Phi$ be a HA-flow on\, $3$-manifold $M$. Then there exists a $C^{1+}$-smooth one-dimensional foliation $\cL$ subfoliating $\cF^u_\Phi$ and transversely intersecting with $\cF^s_\Phi$.
\end{lemma}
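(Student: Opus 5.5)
The plan is to construct $\cL$ directly from the smooth structure of $\cF^u_\Phi$, without using the H\"older foliation $\cF^{uu}_\Phi$ at all. By Definition \ref{def HA}, $\cF^u_\Phi$ and $\cF^s_\Phi$ are $C^{1+}$-smooth codimension-one foliations, so their intersection $\cO_\Phi$ is a $C^{1+}$-smooth one-dimensional foliation subfoliating both. By the second item of Proposition \ref{prop foliation regularity}, after a $C^{1+}$-diffeomorphism $C^{1+}$-close to ${\rm Id}_M$ we may assume that $T\cF^u_\Phi$ and $T\cF^s_\Phi$ are $C^{1+}$ plane fields. The transported foliations stay transverse and remain subfoliated in the same way, so it suffices to build $\cL$ for them and pull it back.

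The first step is to choose a $C^{1+}$-smooth one-dimensional line field $L\subset T\cF^u_\Phi$ that is transverse to $T\cO_\Phi=T\cF^u_\Phi\cap T\cF^s_\Phi$.
\begin{itemize}
\item Fix a smooth Riemannian metric and let $L_0$ be the orthogonal complement of $T\cO_\Phi$ inside $T\cF^u_\Phi$.
\item $L_0$ is a $C^{1+}$ line field, since it is obtained algebraically from the $C^{1+}$ plane fields $T\cF^u_\Phi$ and $T\cF^s_\Phi$.
\item Any line field inside a two-dimensional plane field transverse to a line subfield is automatically transverse to $T\cF^s_\Phi$. This is because $T\cF^u_\Phi\cap T\cF^s_\Phi=T\cO_\Phi$, so $L_0\cap T\cF^s_\Phi=0$.
\end{itemize}
A one-dimensional $C^{1+}$ line field is integrable by Peano/Picard existence and uniqueness, since it is at least Lipschitz. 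Its integral curves form a foliation $\cL$ whose leaves are tangent to $L_0\subset T\cF^u_\Phi$, so each leaf lies in a leaf of $\cF^u_\Phi$. By the first item of Proposition \ref{prop foliation regularity}, $\cL$ is a $C^{1+}$-smooth foliation.

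One subtlety needs care. The paper's convention for $C^{1+}$ is H\"older-continuous derivative, so the plane field $T\cF^u_\Phi$ is only H\"older, not necessarily $C^{1+}$ itself. The second item of Proposition \ref{prop foliation regularity} gives plane fields of the same class $C^{1+}$ as the foliation, which is what makes $L_0$ genuinely $C^{1+}$.

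If instead only H\"older plane fields were available, I would run the argument leafwise. I would straighten $\cF^u_\Phi$ by $C^{1+}$ foliation charts $\vphi_i:D^2\times D^1\to U_i$, in which each plaque is a horizontal $D^2\times\{y\}$ and $\cO_\Phi$ becomes a $C^{1+}$ foliation of the plaques. In each chart I would take the curves transverse to the orbit direction, for instance coordinate lines after also straightening $\cO_\Phi$ inside the chart using the Journ\'e-type regularity of Proposition \ref{prop foliation regularity}. These local choices then have to be glued with a partition of unity applied to $C^{1+}$ line fields, which stay nondegenerate because transversality to $T\cO_\Phi$ is an open convex condition within each plane. I expect this gluing to be the main technical point: one must check that the averaged line field is $C^{1+}$ as a field on $M$, so that its integral foliation is $C^{1+}$, and not merely $C^{1+}$ along plaques. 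The route through Proposition \ref{prop foliation regularity} avoids the gluing, so I will use that route.
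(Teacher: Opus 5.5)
\textbf{Gap: simultaneous straightening.} Your overall strategy matches the paper's: straighten via Proposition \ref{prop foliation regularity}, choose a line subfield, integrate, pull back. The problem is the step ``we may assume that $T\cF^u_\Phi$ and $T\cF^s_\Phi$ are $C^{1+}$ plane fields'', which is not justified. The second item of Proposition \ref{prop foliation regularity} straightens one foliation at a time, and it cannot simply be iterated. Suppose $H_1$ makes $DH_1(T\cF^u_\Phi)$ a $C^{1+}$ plane field, and $H_2$ then does the same for $H_1(\cF^s_\Phi)$. The plane field $DH_2\big(DH_1(T\cF^u_\Phi)\big)$ is in general only H\"older, because $DH_2$ is only H\"older. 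Straightening two transverse $C^{1+}$ foliations at once would need a pair version of Hart's theorem, which is neither cited nor proved. Without it, $T\cO_\Phi$ is only a H\"older line field inside $T\cF^u_\Phi$. So your $L_0$ is only H\"older, and integrating it does not give a $C^{1+}$ foliation. Your fallback has the same defect, as you partly note: line fields that are $C^{1+}$ in chart coordinates are only H\"older on $M$, and the gluing is left undone.

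\textbf{Repair.} The fix is short and is the paper's route. Straighten only $\cF^u_\Phi$, so that $P:=DH(T\cF^u_\Phi)$ is a $C^{1+}$ plane field. You never need $L$ to equal the orthogonal complement $L_0$ of $DH(T\cO_\Phi)$ inside $P$. Since $P\cap DH(T\cF^s_\Phi)=DH(T\cO_\Phi)$, a line subfield of $P$ is transverse to $H(\cF^s_\Phi)$ exactly when it avoids $DH(T\cO_\Phi)$, and this is a $C^0$-open condition. So any $C^{1+}$ line subfield of $P$ that is $C^0$-close to the H\"older field $L_0$ works. Such a field exists because $C^{1+}$ sections of the $C^{1+}$ circle bundle $\mathbb{P}(P)$ are $C^0$-dense among continuous sections; working in $\mathbb{P}(P)$ also handles a possibly non-orientable $L_0$. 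The paper simply asserts the existence of such a subbundle $L'$, and this approximation is what justifies it. Integrating $L'$ and pulling back by $H^{-1}$ then finishes exactly as in your proposal.
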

\begin{proof}
	Since $\cF^u_\Phi$ is $C^{1+}$-smooth,  Proposition \ref{prop foliation regularity} provides a $C^{1+}$-smooth diffeomorphism $H$ of $M$ such that  $TH(\cF^u_\Phi)$ is a $C^{1+}$-smooth bundle. Let  $L'$  be a $C^{1+}$-smooth one-dimensional subbundle  of $TH(\cF^u_\Phi)$, and  integrable to a $C^{1+}$-smooth foliation $\cL'$ subfoliating $H(\cF^u_\Phi)$ and transversely intersecting with $H(\cF^s_\Phi)$. Then, $\cL=H^{-1}(\cL')$ is  the $C^{1+}$-smooth foliation satisfying the lemma.
\end{proof}

Applying Theorem \ref{thm Asaoka 0}, we can get the  Radon-Nikodym realization to $\cL$.
\begin{lemma}\label{lem measure L}
	Let $\cL$ be a one-dimensional $C^{1+}$-smooth foliation  subfoliating $\cF^u_\Phi$   and transversally intersecting with $\cF^s_\Phi$. 	Let $f:M\to \RR$ be a H\"older  function with $P_f(\Phi)=P$.  Then there exists  a family of measures $\{\mu_x\}_{x\in M}$ subordinated to foliation $\cL$ such that 
	\begin{enumerate}
		\item 	For any $x, y\in M$ close enough and $x'\in \cL(x)$, the Radon-Nikodym derivative $\frac{d({\rm Hol}^{s,\cL}_{x,y})_*(\mu_y)}{d\mu_x}(x')$ is H\"older continuous with respect to $x, y$ and $x'$, where ${\rm Hol}^{s,\cL}_{x,y}:\cL(x)\to \cL(y)$ is the holonomy map induced by $\cF^s_\Phi$.
		\item For any $x\in M$ and $y=\Phi^t(x)$, the Radon-Nikodym derivative $\log \frac{d({\rm Hol}^{0}_{x,y})_*(\mu_y)}{d\mu_x}(x)=-\alpha_\Phi(x,t,f-P)$, where  ${\rm Hol}^{0}_{x,y}:\cL(x)\to \cL(y)$ is the holonomy map induced by the orbit foliation $\cO_{\Phi}$ inside $\cF^u_\Phi(x)$.
		\item For any $x\in M$ and $y\in \cL(x)$, the measure $\mu_x([x,y]^{\cL})$ is H\"older continuous with respect to $y$, where $[x,y]^{\cL}$ is the curve lying on $\cL(x)$ with endpoints $x$ and $y$.
		\item 	Let $\Sigma_1$ and $\Sigma_2$ be two $C^{1+}$-smooth $2$-dimensional submanifolds of $M$ intersecting transversally with  $\cL$ such that the holonomy map ${\rm Hol}^{\cL}:\Sigma_1\to \Sigma_2$  is well-defined. Then the function \[x\mapsto\mu_x\big([x,{\rm Hol}^{\cL}(x)]^{\cL}\big), \] is H\"older continuous with respect to $x\in \Sigma_1$.
	\end{enumerate}
\end{lemma}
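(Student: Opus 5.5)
The plan is to transport Asaoka's Radon-Nikodym realization from the Anosov flow $\Phi_0$ to the HA-flow $\Phi$ via the bi-H\"older conjugacy $H$ (with $\Phi^t\circ H=H\circ\Phi^t_0$), and then to slide the resulting measures from $\cF^{uu}_\Phi$ onto $\cL$ using the orbit holonomy inside $\cF^u_\Phi$.

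\textbf{Step 1: the function.} Set $\psi:=(f-P)\circ H$. By Lemma \ref{lem pressure new}, $P_{f\circ H}(\Phi_0)=P$, so $P_\psi(\Phi_0)=0$. The function $\psi$ is H\"older because $H$ is bi-H\"older. Moreover, $\alpha_{\Phi_0}(x,t,\psi)=\alpha_\Phi(H(x),t,f-P)$.

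\textbf{Step 2: the measures on strong unstable leaves.} Apply Theorem \ref{thm Asaoka 0} to $\Phi_0$ with $g=\psi$. This gives a family $\{\nu^{uu}_x\}$ subordinated to $\cF^{uu}_{\Phi_0}$. Push it forward to $\tilde\nu_x:=H_*\nu^{uu}_{H^{-1}(x)}$, a family subordinated to $\cF^{uu}_\Phi=H(\cF^{uu}_{\Phi_0})$. Items 1--3 of Theorem \ref{thm Asaoka 0} transfer directly:
\begin{itemize}
\item the flow Radon-Nikodym derivative becomes $-\alpha_\Phi(\cdot,t,f-P)$;
\item the $\cF^s_\Phi$-holonomy derivatives remain H\"older, since $H$ maps $\cF^s_{\Phi_0}$ to $\cF^s_\Phi$ and composing H\"older functions with a bi-H\"older map keeps them H\"older;
\item the map $y\mapsto\tilde\nu_x([x,y])$ remains H\"older.
\end{itemize}

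\textbf{Step 3: transfer to $\cL$.} Inside each leaf $\cF^u_\Phi(x)$, both $\cL$ and $\cF^{uu}_\Phi$ are transverse to the orbit foliation. Locally, let $\pi_x:\cL_{\rm loc}(x)\to\cF^{uu}_{\Phi,{\rm loc}}(x)$ be the holonomy along $\cO_\Phi$, a homeomorphism. Define $\mu_x$ on $\cL(x)$ locally by $\mu_x:=(\pi_x^{-1})_*\tilde\nu_x$. Consistency along a whole leaf $\cL(x)$ comes from item 1 of Theorem \ref{thm Asaoka 0}. Sliding along orbits changes $\tilde\nu$ by the flow derivative $e^{-\alpha_\Phi(\cdot,t,f-P)}$, so one must fix a normalization:
\begin{itemize}
\item Choose a $C^{1+}$ global transversal structure $\cL$, and on each $\cL(x)$ use the \emph{unweighted} orbit-holonomy identification to a strong unstable leaf.
\item The density relative to the chosen strong unstable leaf is a cocycle quantity, which can be absorbed using Theorem \ref{thm Asaoka 0}(1) applied with the varying time $t(z)$ needed to reach $\cF^{uu}_\Phi(x)$ from $z\in\cL(x)$.
\item Concretely, set $d\mu_x(z)=e^{\alpha_\Phi(z,t(z),f-P)}\,d\big((\pi_x^{-1})_*\tilde\nu_x\big)(z)$, or the equivalent expression with the sign fixed so that the family is independent of the base point chosen on $\cL(x)$.
\end{itemize}
Here $t(z)$ is H\"older in $z$, because $\cL$ is $C^{1+}$ and $\cF^{uu}_\Phi$ is H\"older within $C^{1+}$ leaves of $\cF^u_\Phi$. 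With this normalization, items 1--3 of the lemma follow:
\begin{itemize}
\item The holonomy ${\rm Hol}^0_{x,\Phi^t(x)}$ conjugates, through the $\pi$'s, to the flow map on strong unstable leaves, and the weights telescope to give item 2.
\item ${\rm Hol}^{s,\cL}_{x,y}$ is a composition of $\pi$-maps, the $\cF^s_\Phi$-holonomy between strong unstable leaves, and H\"older weights, which gives item 1.
\item Item 3 follows from Step 2's H\"older continuity of $\tilde\nu_x([x,\cdot])$ together with the H\"older weight and the bi-H\"older $\pi_x$.
\end{itemize}

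\textbf{Step 4 (item 4).} Cover $\Sigma_1$ by a foliation box in which the $\cF^s_\Phi$-holonomy moves the $\cL$-plaques. For $x,x'\in\Sigma_1$ close, compare $\mu_x([x,{\rm Hol}^{\cL}(x)]^{\cL})$ with $\mu_{x'}([x',{\rm Hol}^{\cL}(x')]^{\cL})$ in two moves:
\begin{itemize}
\item First decompose the displacement into an $\cF^s_\Phi$-move and an orbit/$\cL$-move inside $\cF^u_\Phi$, using local product structure.
\item Along the $\cF^s_\Phi$-move, the H\"older derivative from item 1 (close to $1$ at H\"older rate) controls the change of measure of the pulled-back segment.
\item The endpoint mismatch, between the $\cF^s_\Phi$-holonomy image of ${\rm Hol}^{\cL}(x)$ and ${\rm Hol}^{\cL}(x')$, is a point on $\cL(x')$ at H\"older-small distance, because $\Sigma_2$ is $C^{1+}$ and the holonomies are H\"older. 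Item 3 bounds the measure of the small leftover arc.
\end{itemize}

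\textbf{Main obstacle.} I expect the main obstacle to be Step 3. The difficulty is making the transfer from $\cF^{uu}_\Phi$ to $\cL$ well defined globally on each leaf, not just in local plaques, with exactly the flow-derivative $-\alpha_\Phi(x,t,f-P)$ of item 2, while keeping every density H\"older. I would first check that the weight $e^{\alpha(z,t(z))}$ is consistent when $t(z)$ is computed from two different base points, which reduces to the cocycle identity.
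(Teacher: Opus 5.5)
Your proposal is correct and is essentially the paper's proof. The paper likewise weights the orbit-holonomy pullback of Asaoka's measures by $e^{u(z,\mathrm{Hol}^s_x(z))}$, and Asaoka's $u$ restricted to orbit holonomy is exactly your weight $e^{\pm\alpha(z,t(z))}$; it checks base-point independence via the same cocycle identity and proves item 4 by the same three-term estimate (a stable-holonomy comparison plus two endpoint corrections via item 3). The only cosmetic difference is that the paper builds the family on the $\Phi_0$ side on the H\"older foliation $H^{-1}(\cL)$ and pushes it forward by $H$ at the end, whereas you push Asaoka's family forward first and then work on the $\Phi$ side.
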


\begin{proof}
	Let $\cF=H^{-1}(\cL)$. Then $\cF$ is a H\"older continuous foliation of $M$, subfoliates $\cF^u_{\Phi_0}$ and topologically transverse to $\cF^s_{\Phi_0}$. Considering flow $\Phi$ and function $\tilde{f}:=f\circ H$, we will construct  a family of measures $\{ \nu_x\}_{x\in M}$ subordinated to foliation $\cF$ satisfying analogous properties in the statement.   Then the measure $\mu_x:=H_*(\nu_{H^{-1}(x)})$ automatically satisfies this lemma.
	
By the proof of Lemma \ref{lem pressure new},  $P_{\tilde{f}}(\Phi_0)=P_{f}(\Phi)=P$. Let H\"older function $\tilde{g}:=\tilde{f}-P$. Then $P_{\tilde{g}}(\Phi_0)=0$, and hence $\tilde{g}$ satisfies Theorem \ref{thm Asaoka 0} and Lemma \ref{lem Asaoka}. Let $\{\nu^{uu}_x\}_{x\in M}$  be the  measures subordinated to $\cF^{uu}_{\Phi_0}$ given by Theorem \ref{thm Asaoka 0}. For $x\in M, y\in \cF^s_{\Phi_0,{\rm loc}}(x)$ close to $x$, let function $u(x,y)$ be given by Lemma \ref{lem Asaoka} with respect to $\Phi_0$ and $\tilde{g}$, namely,
	\[ u(x,y)=\int_{0}^{+\infty} \Big( \tilde{g}\circ \Phi_0^t(y)-\tilde{g}\circ \Phi_0^{\eta(x,y)+t}(x)  \Big)dt-\int_{0}^{\eta(x,y)}\tilde{g}\circ \Phi_0^t(x)dt.\]
	For a measurable set $A\subset \cF_{\rm loc}(x)$, let 
	\[ \nu_x(A):=\int_{y\in A} e^{u\big(y,  {\rm Hol}^{s}_{x} (y) \big)} d\nu^{uu}_x\big(  {\rm Hol}^{s}_{x}(y) \big),  \]
	where $ {\rm Hol}^{s}_{x}: \cF_{\rm loc}(x)\to \cF^{uu}_{\Phi_0,{\rm loc}}(x)$ is the holonomy map induced by foliation $\cF^s_{\Phi_0}$. Note that $\nu_x$ is well defined on each leaf $\cF(x)$ and is independent with the choice of base point $x$. Indeed, let $x'\in A$. By \eqref{eq. measure 1} and \eqref{eq. measure 2},
	\begin{align*}
		\nu_{x'}(A)&=\int_A e^{u\big(y,  {\rm Hol}^{s}_{x'} (y) \big) }d\nu^{uu}_{x'}\big(  {\rm Hol}^{s}_{x'}(y) \big), \\
		&=\int_A e^{u\big(y,  {\rm Hol}^{\Phi_0,s}_{x,x'} \circ {\rm Hol}^{s}_{x} (y) \big)   \big) }d\nu^{uu}_{x'}\big(  {\rm Hol}^{\Phi_0,s}_{x,x'} \circ {\rm Hol}^{s}_{x} (y) \big),\\
		&= \int_A e^{u\big(y,  {\rm Hol}^{\Phi_0,s}_{x,x'} \circ {\rm Hol}^{s}_{x} (y) \big)   \big) } \cdot e^{u\big(  {\rm Hol}^{\Phi_0,s}_{x,x'} \circ {\rm Hol}^{s}_{x} (y),  {\rm Hol}^{s}_{x} (y) \big)}d\nu^{uu}_x\big(  {\rm Hol}^{s}_{x} (y) \big),\\
		&=\int_A e^{u\big(q,  {\rm Hol}^{s}_{x} (y) \big)} d\nu^{uu}_x\big(  {\rm Hol}^{s}_{x}(y) \big)=v_x(A).
	\end{align*}
	Calculating in the same way, one can prove that for any $w\in M$,
	\begin{align}
		\nu_x(A)=\int_{y\in A}e^{u\big( y, {\rm Hol}^{s}_{x,w}(y) \big)} d\nu^{uu}_w\big(  {\rm Hol}^{s}_{x,w}(y) \big), \label{eq. measure 3}
	\end{align}
	where ${\rm Hol}^{s}_{x,w}:\cF_{\rm loc}(x)\to \cF^{uu}_{\Phi_0,{\rm loc}}(w)$ is the holonomy map induced by $\cF^{s}_{\Phi_0}$.  Hence, $\{v_x\}_{x\in M}$ is well defined on the whole manifold $M$ and it is subordinated to foliation $\cF$.
	
	Denote by ${\rm Hol}^{s,\cF}_{x,y}:\cF(x)\to \cF(y)$, the holonomy map from $\cF(x)$ to $\cF(y)$ induced by foliation $\cF^{s}_{\Phi_0}$.  Let $x,y\in M$ be close and $x'\in \cF(x)$.  For short, we denote 
	\[x'_\#={\rm Hol}^s_p(x')\in \cF^{uu}_{\Phi_0}(x),\quad y'={\rm Hol}^{s,\cF}_{x,y}(x')\in \cF(y)\quad {\rm and}\quad   y'_\#= {\rm Hol}^s_y(y')\in \cF^{uu}_{\Phi_0}(y). \] 
	Then by the construction of $\{\nu_x\}_{x\in M}$, 
	\begin{align*}
		\log	\frac{d({\rm Hol}^{s,\cF}_{x,y})_*(\nu_y)}{d\nu_x}(x')&= 
		\log \frac{ d \big( {\rm Hol}^s_y\circ {\rm Hol}^{s,\cF}_{x,y} \big)_* (\nu^{uu}_y) }{d \big( {\rm Hol}^s_x \big)_* (\nu^{uu}_x)}(x') +u(y', y'_\#)- u(x', x'_\#)\\
		& =u\big( y'_\#, x'_\#   \big) +u(y', y'_\#)- u(x', x'_\#)= u(y',x')\\
		&=u\big({\rm Hol}^{s,\cF}_{x,y}(x'),x'   \big)
	\end{align*}
	is H\"older continuous with respect to $x,y$ and $x'$. By push-forward via H\"older continuous homeomorphism $H$,  the measure $\mu_x:=H_*(\nu_{H^{-1}(x)})$ satisfies the first item.
	
	Let $y=\Phi_0^t(x)$ and ${\rm Hol}^{0,\cF}_{x,y}:\cF(x)\to \cF(y)$ be the holonomy map induced by the orbit foliation $\cO_{\Phi_0}$. It is clear that ${\rm Hol}^{0,\cF}_{x,y}={\rm Hol}^{s,\cF}_{x,y}$, and by the definition of function $u$, 
	\begin{align*}
		\log	\frac{d({\rm Hol}^{0,\cF}_{x,y})_*(\nu_y)}{d\nu_x}(x)&=\log	\frac{d({\rm Hol}^{s,\cF}_{x,y})_*(\nu_y)}{d\nu_x}(x)\\
		&=u\big({\rm Hol}^{s,\cF}_{x,y}(x),x   \big)
		=u(y,x)
		=\int_{0}^{t}\tilde{g}\circ \Phi^\tau(x)d\tau.
	\end{align*}
	Thus, the second item also holds for $\mu_x=H_*(\nu_{H^{-1}(x)})$.
	
	Let $x'\in \cF(x)$ and $[x,x']^{\cF}$ is the curve lying on $\cF(x)$ with endpoints $x,x'$. By definition, 
	\[v_x\big([x,x']^{\cF} \big)=\int_{x}^{x'} e^{u\big(y,  {\rm Hol}^{s}_{x} (y) \big)} d\nu^{uu}_x\big(  {\rm Hol}^{s}_{x}(y) \big).  \]
	Recall that by Theorem \ref{thm Asaoka 0}, for $x$ and $w\in \cF^{uu}_{\Phi_0}(x)$, the measure $\nu^{uu}_x\big([x,w]^{\cF^{uu}_{\Phi_0}}\big)$ is H\"older continuous with respect to $w$. Since the function $u(\cdot,\cdot)$ and holonomy map ${\rm Hol}^s_x$ are H\"older continuous, we get that $v_x\big([x,x']^{\cF} \big)$ is H\"older with respect to $x'$. Thus, $\mu_x=H_*(\nu_{H^{-1}(x)})$ satisfies the third item.
	
	Let $\Sigma_i^*=H^{-1}(\Sigma_i)\ (i=1,2)$, where $\Sigma_i$ is given by the forth item. We denote by ${\rm Hol}^{\cF}:\Sigma_1^*\to \Sigma_2^*$ be the holonomy map induced by foliation $\cF$. Let $x,y\in \Sigma_1^*$. Denote
	\[x'={\rm Hol}^{\cF}(x),\quad y'={\rm Hol}^{\cF}(y),\quad x_\#={\rm Hol}^{s,\cF}_{y,x}(y') \quad  {\rm and} \quad y_\#={\rm Hol}^{s,\cF}_{x,y}(x).\] 
	Since ${\rm Hol}^{s,\cF}_{x,y}\big( [x,x_\#]^\cF \big)=[y',y_\#]^\cF$, by \eqref{eq. measure 3}, there exist constants $C,\alpha>0$ such that  \[\Big| \nu_x\big( [x,x_\#]^{\cF}  \big)- \nu_y\big( [y',y_\#]^{\cF}  \big)    \Big|\leq Cd^\alpha(x,y).\]
	Since  the points $x_\#$ and $y_\#$ H\"older continuously vary  with respect to $x$ and $y$, by the third item we proved above, there exist constants (without losing of generality, we still use the same constants as above) $C, \alpha>0$ such that  
	\[\Big| \nu_x\big( [x,x']^{\cF}  \big)- \nu_x\big( [x,x_\#]^{\cF}  \big)    \Big| \leq Cd^\alpha(x,y)\quad {\rm and}\quad   \Big| \nu_y\big( [y,y']^{\cF}  \big)- \nu_y\big( [y',y_\#]^{\cF}  \big)    \Big|\leq  Cd^\alpha(x,y). \]
	Combining the last three formulas,  we get 
	\begin{align*}
		\Big| \nu_x\big( [x,x']^{\cF}  \big)- \nu_y\big( [y,y']^{\cF}  \big)    \Big|&\leq  \Big| \nu_x\big( [x,x_\#]^{\cF}  \big) - \nu_y\big( [y',y_\#]^{\cF}  \big)    \Big| \\ &\qquad \quad + \Big| \nu_x\big( [x,x']^{\cF}  \big)- \nu_x\big( [x,x_\#]^{\cF}  \big)    \Big|  + \Big| \nu_y\big( [y,y']^{\cF}  \big)- \nu_y\big( [y',y_\#]^{\cF}  \big)    \Big|\\
		&\leq 3Cd^{\alpha}(x,y). 
	\end{align*}
	Hence, the forth  item also holds for the measure $\mu_x=H_*(\nu_{H^{-1}(x)})$.
\end{proof}

\begin{remark}[\bf{[Continuous dependence of measure]}]\label{rmk measure path 1}
	By the proof of the above lemma and Remark \ref{rmk measure path}, it is clear that the family of measures $\{\mu_x\}_{x\in M}$ in Lemma \ref{lem measure L} varies continuously with respect to H\"older functions with $0$-pressures. 
\end{remark}

Let $\cL$ be given in Lemma \ref{lem measure L}, which is transverse to $\cF^s_\Phi$. Let $U$ be a $\cL$-foliation box. We denote the upper and lower boundaries of $U$ by $U^+$ and $U^-$ respectively, i.e.,
\[U^+:=\bigcup_{x\in \Sigma} \sup\cL(x, U)\quad {\rm and} \quad U^-:=\bigcup_{x\in \Sigma} \inf\cL(x, U), \]
for some transversal $\Sigma\subset U$ of foliation $\cL$, where the sign $+/-$ and supremum/infimum coincide with the local orientation of $\cL$ in $U$.

\begin{definition}\label{def regular box}
	A $\cL$-foliation box $V$ is called \emph{s-regular}, if 
		\begin{enumerate}
		\item $V$ is \emph{proper}, i.e., $V=\overline{{\rm int}(V)}$.
		\item $V^+$ and $V^-$ are local leaves of $\cF^s_\Phi$.
		\item Each local leaf $\cL(x,V)$ is intersecting with both $V^+$ and $V^-$, for all $x\in V$.
	\end{enumerate}
	A finite family of $\cL$-foliation boxes $\{V_i \}_{1\leq i\leq k}$ is called \emph{s-regular}, if each $V_i$ is an $s$-regular $\cL$-foliation box, for all $1\leq i\leq k$.
\end{definition}

Similarly, one can define \emph{u-regular} $\cL$-foliation box family, if $\cL$ is transverse to $\cF^u_\Phi$.

\begin{remark}
	It is clear that $M$ can be covered by an  $s$-regular family of finitely many $\cL$-foliation boxes, if $\cL$ is transverse to $\cF^s_\Phi$.  We note that by the coherence of the foliation $\cF^s_\Phi$,	for all $\sigma,\tau=+,-$ and $1\leq i\neq  j\leq k$, either $V_i^\tau\cap V_j^\sigma=\emptyset$, or $V_i^\tau\cup V_j^\sigma$ is contained in a local leaf of $\cF^s_\Phi$.
\end{remark}

\begin{proposition}\label{prop adapted metric}
	
	Let $\Phi$ be a HA-flow on $3$-manifold $(M,a)$. Let foliation $\cL$ and  the family measures  $\{\mu_p\}_{p\in M}$ subordinated to $\cL$ be given by Lemma \ref{lem measure L}. Let $(T\cL)_a^\perp$ be the  orthogonal complement of $T\cL$ with respect to the metric $a$.   For any $s$-regular family of $\cL$-foliation box $\{V_i\}_{1\leq i\leq k}$, 
	there exists a H\"older continuous metric $\tilde{a}(\cdot,\cdot)$ of $M$ such that $(T\cL)_{\tilde{a}}^\perp=(T\cL)_a^\perp$ and $\tilde{a}|_{(T\cL)_a^\perp}=a|_{(T\cL)_a^\perp}$, and
	for $1\leq i\leq j\leq k$ with $V_i\cap V_j\neq \emptyset$, 
	\[l_{\tilde{a}}( [x,y])=\mu_p( [x,y]),\quad \forall x\in V_i^\pm \ \ {\rm and}\ \ y\in V_j^\pm \cap \cL(x, V_i\cap V_j),\]  where $l_{\tilde{a}}$ is the length induced by $\tilde{a}$ and $[x,y]$ is the curve lying on $\cL(x)$ with endpoints $x$ and $y$.
\end{proposition}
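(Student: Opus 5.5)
Since $\tilde a$ is required to keep the orthogonal complement $(T\cL)_a^\perp$ and agree with $a$ on it, the only freedom is a positive H\"older density $\rho$ with $\tilde a|_{T\cL}=\rho^2\, a|_{T\cL}$. Then $l_{\tilde a}([x,y])=\int_{[x,y]}\rho\,dl_a$, and the requirement becomes a family of integral constraints on $\rho$ along $\cL$-segments joining the distinguished boundary leaves $V_i^\pm$, $V_j^\pm$. The plan is therefore to build $\rho$ in the form
\[\rho=\rho_0\cdot e^{\psi},\]
where $\rho_0$ is a fixed positive $C^{1+}$ density (for instance $\rho_0\equiv1$), and $\psi$ is a H\"older correction solving the constraints.

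First, organize the constraints. Consider all local leaves of $\cF^s_\Phi$ of the form $V_i^\pm$. By the coherence remark after Definition \ref{def regular box}, they form finitely many pairwise disjoint (or coinciding) $2$-dimensional $C^{1+}$ surfaces $S_1,\dots,S_m$ transverse to $\cL$. Each constraint concerns an $\cL$-segment $[x,y]$ with $x\in S_a$ and $y\in S_b\cap \cL(x,V_i\cap V_j)$. Subdividing at intermediate surfaces reduces everything to \emph{elementary} segments between two surfaces $S_a,S_b$ with no other $S_c$ crossing the segment, lying inside some box. Consistency is automatic, because both $l_{\tilde a}$ and $\mu_p$ are additive along $\cL$-curves. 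So it suffices to arrange
\[l_{\tilde a}([x,{\rm Hol}^{\cL}_{a,b}(x)])=\mu_x([x,{\rm Hol}^{\cL}_{a,b}(x)])\]
for each elementary pair. Here ${\rm Hol}^{\cL}_{a,b}$ is the holonomy between the corresponding pieces of $S_a$ and $S_b$. By Lemma \ref{lem measure L}(4), the target function
\[m_{ab}(x):=\mu_x([x,{\rm Hol}^{\cL}_{a,b}(x)])\]
is H\"older and positive in $x$; positivity comes from Lemma \ref{lem measure L} together with the subordination property.

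Second, construct $\rho$ region by region. The elementary segments sweep out regions $W_{ab}$ foliated by $\cL$-arcs from $S_a$ to $S_b$. Using the $C^{1+}$ foliation box structure (Proposition \ref{prop foliation regularity}), parametrize each $W_{ab}$ as $\Sigma\times[0,1]$, with $\cL$-arcs vertical and the $a$-length element $\ell(x,s)\,ds$. Fix once and for all a smooth bump $\chi:[0,1]\to[0,\infty)$ supported in $(1/4,3/4)$ with $\int\chi=1$, and set
\[\rho(x,s)=1+\chi(s)\,\frac{m_{ab}(x)-L_{ab}(x)}{\ell(x,s)},\qquad L_{ab}(x)=\int_0^1\ell(x,s)\,ds.\]
Outside the regions, and near their boundaries, take $\rho\equiv1$. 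This is H\"older, and it gives exactly the right lengths across each elementary segment. Near the surfaces $S_c$ it equals $1$, so pieces from different boxes glue H\"older continuously.

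Third, the main obstacle is positivity of $\rho$, since $m_{ab}$ may be much smaller than $L_{ab}$. I would handle it in one of two ways.

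\begin{itemize}
\item \textbf{Additive bump.} Replace the additive formula by $\rho=(1-\chi_1)+\chi_1\,c(x)$, where $\chi_1$ is a bump with $\chi_1\equiv1$ on most of $[0,1]$. Then $c(x)>0$ is solved from the linear equation
\[\int(1-\chi_1)\ell+c\int\chi_1\ell=m_{ab}.\]
This has a positive solution as long as $\int(1-\chi_1)\ell<m_{ab}$, which can be achieved by making the transition collars short.
\item \textbf{Multiplicative normalization.} Use $\rho=e^{\chi\,\theta(x)}$, where $\theta(x)$ is uniquely determined by monotonicity of
\[\theta\mapsto\int_0^1 e^{\chi(s)\theta}\,\ell(x,s)\,ds,\]
which ranges over $(\int_{\chi=0}\ell,\infty)$.
\end{itemize}

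In both cases the implicit-function dependence of the solution on $x$ is H\"older because $m_{ab}$ and $\ell$ are. The remaining care is the overlaps of different $W_{ab}$: since they come from a single family of disjoint surfaces $S_c$, choosing elementary regions via the global ordering along $\cL$ makes them have disjoint interiors, so no conflict arises.
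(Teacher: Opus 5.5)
Your reduction to a leafwise density $\rho$ with $\tilde a|_{T\cL}=\rho^2a|_{T\cL}$, and to elementary segments via additivity, is fine. The gap is that the region-by-region definition of $\rho$ is not continuous, so it does not give a H\"older metric.

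The elementary regions $W_{ab}$ are separated not only by the plaques $S_c$, across which your collars with $\rho\equiv1$ do glue. They are also separated by $\cL$-saturated ``vertical'' walls lying over the edges $\partial S_c$. On one side of such a wall an $\cL$-arc runs from $S_a$ to $S_b$ without meeting $S_c$; on the other side it is cut by $S_c$. Your formula puts one bump in the middle of the long arc on the first side, and two bumps with $\rho=1$ near the level of $S_c$ on the second. So the one-sided limits of $\rho$ at the wall disagree.

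Two laterally overlapping boxes with staggered plaques already show this: take $V_1^\pm$ at heights $0,1$ and $V_2^\pm$ at heights $\tfrac12,\tfrac32$. At height $\tfrac12$ just outside $V_2$ you get $\rho=1+\chi(s)\,(m_{ab}-L_{ab})/\ell$, which is $\neq1$ in general, while just inside you get $\rho=1$.

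You cannot take $\rho\equiv1$ near these walls either. The arcs there are themselves constrained: the constraints hold on the closed plaques, hence on the walls, and $l_a\neq\mu$ in general. The disjointness of the interiors of the $W_{ab}$ does not help, because the issue is continuity across their common lateral boundaries, not overlap. Also, the plaques $V_i^\pm$ need not be disjoint or coinciding. Two of them can overlap partially inside one local leaf of $\cF^s_\Phi$, and merging them produces exactly such edges.

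The missing idea is a transition mechanism between two subdivision patterns of the same leaf, and the paper's appendix is built around it:
\begin{itemize}
\item It refines the boxes into $s$-regular boxes $\{U_i\}$ with pairwise disjoint interiors (Claim \ref{claim partition}). The walls then become the lateral faces $\Sigma_{ij}$, whose $\cL$-edges are cut at all plaque levels into shortest curves $\cL^\sigma_{ij}(n)$.
\item It first fixes $\tilde a$ on thin $\cF^s_\Phi$-cylinders around these edges, so that every shortest curve gets the right length and $\tilde a=a$ near the plaques (Claim \ref{claim metric 1}).
\item It then fills the tubes $E_{ij}$ around $\Sigma_{ij}$, and finally the interiors of the $U_i$, by convex combinations of profiles transplanted from the two sides and rescaled by the ratio of $\mu$-masses. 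The weights are constant along each $\cL$-arc.
\end{itemize}
Since the constraint $\int\sqrt{\tilde a_{11}}\,dt=\mu_p(\cdot)$ is affine in $\sqrt{\tilde a_{11}}$, such combinations keep the exact lengths, positivity and H\"older continuity, while matching the data already fixed near the edges.

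Either variant of your bump/normalization device is a fine substitute for Claim \ref{claim metric 1} inside a single cell. The argument remains incomplete until an interpolation layer of this kind is added along the walls.
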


Notice that $i$ could equal to $j$ in the above proposition. The proof of this proposition is a bit lengthy and actually independent with the dynamics, for coherence, we leave it to the appendix. 

\begin{remark}[\bf{[Continuous dependence of metric]}]\label{rmk metric path}
Following the proof of Proposition \ref{prop adapted metric}, when the family $\{\mu_p\}_{p\in M}$ varies continuously with respect to the function, so does the metric $\tilde{a}(\cdot,\cdot)$. 
\end{remark}

\subsection{The deformation of HA-flows}\label{subsec deformation HA}

In this subsection, we prove Proposition \ref{prop ami jacobian 1} and Theorem \ref{thm HA-flow flexibility}. 

We first prove Proposition \ref{prop ami jacobian 1}.  Let $\Phi$ be an HA-flow on $3$-manifold $(M,a)$ and $f:M\to \RR$ be a H\"older  function  with topological pressure $P=P_f(\Phi)$.   By the previous subsection, we can assume that
\begin{itemize}
	\item $\cL$ is a one-dimensional $C^{1+}$-smooth foliation of $M$, $\cL$ subfoliates $\cF^u_\Phi$ and transversally intersects with $\cF^s_\Phi$, provided by Lemma \ref{lem C1+ L}.
	\item    $\{V_i\}_{1\leq i\leq k}$ is an $s$-regular  family of $\cL$-foliation boxes cover $M$.
	\item   $\{\mu_x\}_{x\in M}$ is the family of measures given by Lemma \ref{lem measure L} with respect to $f$ and $\cL$.
	\item $\tilde{a}$ is the H\"older  metric  provided by Proposition \ref{prop adapted metric} such that $(T\cL)_{\tilde{a}}^\perp=(T\cL)_a^\perp$ and
	$\tilde{a}|_{(T\cL)_a^\perp}=a|_{(T\cL)_a^\perp}$, and  $l_{\tilde{a}}([x,y])=\mu_x([x,y])$, for all  $x\in V_i^\pm$ and $y\in V_j^\pm \cap \cL(x, V_i\cap V_j)$, if $V_i\cap V_j\neq \emptyset$.
\end{itemize}   

\begin{proof}[Proof of Proposition \ref{prop ami jacobian 1}]
	For each $1\leq i\leq k$,	let $H_i:V_i \to V_i$ be a homeomorphism defined by 
	\begin{align}
			 H_i\big(\cL(x,V_i)\big)=\cL(x,V)\quad {\rm and}\quad  l_{\tilde{a}}\big([x,H_i(z)] \big)=\mu_x\big([x,z]\big),\label{eq. def H}
	\end{align}
	for every $x\in V_i^-$ and $z\in \cL(x,V_i)$.

		\begin{claim}\label{claim H well-def}
		For $z\in V_i\cap V_j$, one has that $H_i(z)=H_j(z)$. 	Hence, we actually get  a  homeomorphism $\check{H}:M\to M$ such that $\check{H}|_{V_i}=H_i$. 	
	\end{claim}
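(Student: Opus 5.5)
The plan is to show that the defining relations \eqref{eq. def H} for $H_i$ and $H_j$ give the same point on a common $\cL$-plaque. Then we check that the glued map is a homeomorphism.

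Fix $z\in V_i\cap V_j$. Let $x_i\in V_i^-$ and $x_j\in V_j^-$ be the lower endpoints of $\cL(z,V_i)$ and $\cL(z,V_j)$. Both plaques lie in the same leaf $\cL(z)$, and $H_i$, $H_j$ move points only along $\cL$-leaves. So both $H_i(z)$ and $H_j(z)$ lie in $\cL(z)$, and it suffices to compare arclengths. By $s$-regularity (Definition \ref{def regular box}), $V_i^-$ and $V_j^-$ are local $\cF^s_\Phi$-leaves, and each plaque meets both of them. Hence one of $x_i,x_j$ lies on the segment between the other and $z$. Say $x_j\in[x_i,z]$, so $x_j\in V_j^-\cap\cL(x_i,V_i\cap V_j)$; the other order is symmetric.

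The first key step uses Proposition \ref{prop adapted metric} with the pair $(V_i,V_j)$, which applies to points on the boundaries $V_i^\pm$, $V_j^\pm$ lying on a common plaque of the intersection. It gives $l_{\tilde a}([x_i,x_j])=\mu_{x_i}([x_i,x_j])$. Since $\mu$ is a family on leaves ($\mu_x=\mu_y$ on the same leaf) and is non-atomic,
\[\mu_{x_i}([x_i,z])=\mu_{x_i}([x_i,x_j])+\mu_{x_j}([x_j,z]).\]
By definition, $l_{\tilde a}([x_j,H_j(z)])=\mu_{x_j}([x_j,z])$. Therefore
\[l_{\tilde a}([x_i,H_j(z)])=l_{\tilde a}([x_i,x_j])+l_{\tilde a}([x_j,H_j(z)])=\mu_{x_i}([x_i,z]),\]
which is exactly the condition defining $H_i(z)$. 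Since $H_i(z)$ and $H_j(z)$ lie on the same side of $x_i$ on $\cL(z)$ at the same $\tilde a$-length, they coincide.

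The remaining points are routine:
\begin{itemize}
\item Each $H_i$ is a homeomorphism of $V_i$. On each plaque, $z\mapsto\mu_{x}([x,z])$ is continuous and strictly increasing, because the measures are non-atomic and positive on open sets. It is also continuous in the plaque by items 3 and 4 of Lemma \ref{lem measure L}.
\item $H_i$ fixes $V_i^\pm$ as sets. This is the case $i=j$ of Proposition \ref{prop adapted metric}, which gives $l_{\tilde a}$-length of a whole plaque equal to its $\mu$-mass. So $H_i$ maps the box $V_i$ onto itself.
\item The glued map $\check H$ is well defined and continuous because the $V_i$ are finitely many closed sets covering $M$. It is a bijection because the inverses $H_i^{-1}$ glue by the same argument.
\end{itemize}

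The main obstacle is the ordering step. It must be justified that $x_j$ really lies in $V_i\cap V_j$, on the plaque through $z$, and belongs to $V_j^-\cap\cL(x_i,V_i\cap V_j)$, so that Proposition \ref{prop adapted metric} is applicable. This uses the coherence remark: boundaries $V_i^\tau$, $V_j^\sigma$ either are disjoint or lie in one local $\cF^s_\Phi$-leaf. Together with transversality of $\cL$ to $\cF^s_\Phi$, this rules out a plaque of $V_i\cap V_j$ crossing a boundary leaf twice.
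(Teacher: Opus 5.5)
Your argument is correct and is essentially the paper's. The paper takes $x_i$ on $V_i^-$ and $x_j$ on $V_j^+$ instead, so that $l_{\tilde a}([x_i,H_i(z)])+l_{\tilde a}([x_j,H_j(z)])=\mu_z([x_i,x_j])=l_{\tilde a}([x_i,x_j])$ and no ordering case split is needed. Like you, it assumes that the local orientations of $\cL$ in $V_i$ and $V_j$ agree. This is harmless, because the $i=j$ case of Proposition~\ref{prop adapted metric} shows that $H_j$ is unchanged if measured from $V_j^+$, and that is exactly the computation the paper does.

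One correction to your last paragraph: $x_i$ generally does not lie in $V_j$, so the literal membership $x_j\in V_j^-\cap\cL(x_i,V_i\cap V_j)$ fails. What you (and the paper) actually use is the adapted-metric identity for the leaf segment joining $V_i^-$ to $V_j^-$ through the overlap, i.e.\ the curves $\cL^{\sigma,\tau}_{i,j}(p)$ of the appendix. No coherence or ``crossing twice'' argument is needed for this.
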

	\begin{proof}[Proof of Claim \ref{claim H well-def}]
		Let $x_i=\cL(z,V_i)\cap V_i^-$ and $x_j=\cL(z,V_j)\cap V_j^+$. Without loss of generality, we can assume that the local orientations of $\cL$ in $V_i$ and $V_j$ coincide. Then, $\cL(z,V_i\cup V_j)$ is the local leaf $\cL(z)$ with endpoints $x_i$ and $x_j$.
		By the definition of $H_i$, one has 
		\[l_{\tilde{a}}\big([x_i,H_i(z)]\big)=\mu_z([x_i,z]). \]
		On the other hand, one has 
		\[l_{\tilde{a}}\big([x_j,H_j(z)]\big)=\mu_z([x_j,z]). \]
		Indeed, let $x_j'=\cL(z,V_j)\cap V_j^-$. Then by the definition of $H_j$ and Proposition \ref{prop adapted metric}, 
		\begin{align*}
			l_{\tilde{a}}\big([x_j,H_j(z)]\big)&=l_{\tilde{a}}([x_j,x_j'])-l_{\tilde{a}}\big([x'_j,H_j(z)]\big)\\
			&=\mu_z([x_j,x_j'])-\mu_z([x_j',z])=\mu_z([x_j,z]).
		\end{align*} 
		Thus, $l_{\tilde{a}}\big([x_i,H_i(z)]\big)+l_{\tilde{a}}\big([x_j,H_j(z)]\big)=\mu_z([x_j,z])+\mu_z([x_i,z])=\mu_z([x_i,x_j])=l_{\tilde{a}}([x_i,x_j])$. Since $H_i(z)$ and $H_j(z)$ are in the curve $[x_i,x_j]$, we get that $H_i(z)=H_j(z)$.
	\end{proof}

	Let $\check{\Phi}$ be a continuous flow on $M$ given by $\check{\Phi}^t=\check{H}\circ \Phi^t\circ \check{H}^{-1}$.
	Let  $\cF^{s/u}_{\check{\Phi}}(x):=\check{H}\big(\cF^{s/u}_\Phi(\check{H}^{-1}(x))\big)$.

\begin{lemma}\label{lem ami jacobian 0}
	Let $V=V_i$, for any $1\leq i\leq k$. Then 
	\begin{enumerate}
		\item  $\check{H}$ is bi-H\"older continuous. 
		\item The foliations $\cF^{s/u}_{\check{\Phi}}\Big|_V$ are $C^{1+}$-smooth. Particularly, $\cF^{u}_{\check{\Phi}}(x,V)=\cF^u_\Phi(x,V)$, for all  $x\in V$.
		\item The restriction  $\check{H}:\cF^s_\Phi|_V\to \cF^s_{\check{\Phi}}|_V$ is $C^{1+}$-smooth.
	\end{enumerate}
In particular, $\check{\Phi}$ is a HA-flow, $\check{H}$ is bi-H\"older continuous on whole $M$ and $C^{1+}$-smooth along $\cF^s_\Phi$.
\end{lemma}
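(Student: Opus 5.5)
The plan is to work entirely in local coordinates adapted to the $C^{1+}$ foliations inside a single box $V=V_i$. There $\check H=H_i$ moves each point only along its own $\cL$-plaque.

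\textbf{Coordinates.} Since $\cF^u_\Phi$ and $\cF^s_\Phi$ are $C^{1+}$ (HA-flow) and $\cL$ is $C^{1+}$, subfoliates $\cF^u_\Phi$ and is transverse to $\cF^s_\Phi$, Proposition \ref{prop foliation regularity} and the Journ\'e Lemma \ref{lem journe} give $C^{1+}$ coordinates $(s_1,s_2,r)$ on $V$ with the following properties. The $\cL$-plaques are $\{(s_1,s_2)={\rm const}\}$, the $\cF^s_\Phi$-plaques are $\{r={\rm const}\}$ (with $V^-=\{r=0\}$), and the $\cF^u_\Phi$-plaques are $\{s_2={\rm const}\}$. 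Here $(s_1,s_2)$ parametrizes $\Sigma=V^-$.

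In these coordinates $H_i(s,r)=(s,\rho_s(r))$, where $\rho_s$ is determined by $l_{\tilde a}([x,(s,\rho_s(r))])=\mu_x([x,(s,r)])$ with $x=(s,0)$.

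\textbf{Step 1: $\check H$ is bi-H\"older.} Write $m(s,r)=\mu_x([x,(s,r)])$ and $\ell(s,r)=l_{\tilde a}([x,(s,r)])$.
\begin{itemize}
\item $m$ is H\"older in $r$ by item 3 of Lemma \ref{lem measure L}.
\item $m$ is H\"older in $s$ by item 4 applied to the transversals $V^-$ and $\{r={\rm const}\}$.
\item $\ell$ is H\"older, strictly increasing in $r$, with derivative in $r$ bounded above and below, since $\tilde a$ is a H\"older metric.
\end{itemize}
Then $\rho_s=\ell(s,\cdot)^{-1}\circ m(s,\cdot)$ is jointly H\"older. For the inverse, $\rho_s^{-1}=m(s,\cdot)^{-1}\circ\ell(s,\cdot)$, so I need a H\"older modulus for $m(s,\cdot)^{-1}$. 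I plan to get it from a lower bound $\mu_x([x,y])\ge c\,d(x,y)^{\beta}$. This bound should follow from the construction of $\mu$: via $H_0$ and the $u$-density formula, $\mu_x$ is comparable to the Asaoka measures $\nu^{uu}$, which are Gibbs-type and have H\"older inverse distribution functions. I would invoke the Gibbs property of the coding in Remark \ref{rmk measure path}. This is the delicate estimate of Step 1. Well-definedness globally is Claim \ref{claim H well-def}.

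\textbf{Step 2: foliations and regularity along $\cF^s$.} Because $\check H$ preserves each $\cL$-plaque and $\cL\subset\cF^u_\Phi$, it maps each $\cF^u_\Phi$-plaque $\{s_2={\rm const}\}$ into itself. Hence $\cF^u_{\check\Phi}(x,V)=\cF^u_\Phi(x,V)$ is $C^{1+}$.

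The image of the $\cF^s_\Phi$-plaque $\{r=r_0\}$ is the graph $r=\rho_s(r_0)$ over $s$. To show these graphs form a $C^{1+}$ foliation, I will use item 1 of Lemma \ref{lem measure L}. The $\cF^s_\Phi$-holonomy between $\cL$-plaques, which in coordinates is the identity in $r$, has H\"older Radon–Nikodym derivative for $\mu$. Hence $\partial_s m(s,r)$ exists and equals an integral of a H\"older density, so $m$ is $C^{1+}$ in $s$ uniformly in $r$.

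Since $\tilde a$ only controls $\ell$ H\"older-continuously, I need care here. Use the normalization of Proposition \ref{prop adapted metric}: $\tilde a$ agrees with $a$ on $(T\cL)^\perp_a$, and $\tilde a|_{T\cL}$ is a H\"older density $\theta(s,r)\,dr$ chosen (in the appendix) so that $\ell$ matches $m$ on the boundaries. I expect to show that $\theta$ can be taken $C^{1+}$ along $\cF^s_\Phi$-plaques, or to differentiate the implicit relation $\ell(s,\rho_s(r))=m(s,r)$ in $s$ directly. Either way, $s\mapsto\rho_s(r_0)$ is $C^{1+}$ uniformly in $r_0$.

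With that, each image leaf is a $C^{1+}$ graph. The holonomy of $\cF^s_{\check\Phi}$ between $\cL$-plaques is $\rho_{s'}\circ\rho_s^{-1}$, which I will show is $C^{1+}$ from the H\"older Radon–Nikodym derivative. By Proposition \ref{prop foliation regularity}, item 4, $\cF^s_{\check\Phi}|_V$ is then $C^{1+}$. Moreover $\check H|_{\{r=r_0\}}:s\mapsto(s,\rho_s(r_0))$ is $C^{1+}$, which gives item 3.

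\textbf{Globalization and main obstacle.} Since the $V_i$ cover $M$ and $\check H$ is globally defined, these local statements give the global ones. $\check\Phi$ is conjugate, via the bi-H\"older map $\check H\circ H_0$, to the $C^{1+}$ Anosov flow $\Phi_0$, so $\check\Phi$ is an HA-flow.

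I expect the main obstacle to be in Step 2: turning the merely H\"older metric $\tilde a$ together with H\"older Radon–Nikodym derivatives into $C^{1+}$ dependence of $\rho_s(r_0)$ on $s$. I would first try to exploit the explicit appendix construction of $\tilde a$, namely its interpolation between the $\mu$-data on $V_i^\pm$ along $\cF^s$-plaques.
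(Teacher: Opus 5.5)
Your Step 1 is a reasonable route that differs from the paper's. You get joint H\"older continuity of $\rho_s(r)$ directly from items 3 and 4 of Lemma~\ref{lem measure L}. Claim~\ref{claim H holder} instead takes the $\cF^s$-direction from Claim~\ref{claim H cs smooth}, and hence from the leaf regularity discussed below. You also rightly note that the inverse needs a lower bound $\mu_x([x,y])\ge c\,d(x,y)^{\beta}$, which the paper leaves implicit.

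For items 2 and 3 you follow the paper's route. $\cF^u$ is unchanged because $\check H$ slides points along $\cL\subset\cF^u_\Phi$. The $\cF^s_{\check\Phi}$-holonomy $\rho_{s'}\circ\rho_s^{-1}$ between $\cL$-plaques has H\"older derivative, because $\check H$ turns $\mu$ into $\tilde a$-length and item 1 of Lemma~\ref{lem measure L} gives H\"older Radon--Nikodym derivatives.

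The gap is the step both items rest on: that $s\mapsto\rho_s(r_0)$ is $C^{1+}$, i.e.\ that each image leaf $\check H(\{r=r_0\})$ is a $C^{1+}$ graph over $V^-$. Item 3 is this same statement read as a property of $\check H$. Holonomy regularity does not give it. For example, the foliation by graphs $\{r=r_0+\psi(s)\}$ with $\psi$ merely H\"older has translations as holonomies between vertical plaques, yet no leaf is $C^1$. Your proposal does not supply this step.

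The justification you offer is incorrect.
\begin{itemize}
\item Item 1 of Lemma~\ref{lem measure L} makes the Radon--Nikodym derivative $\rho(s,s',r)$ of the $\cF^s_\Phi$-holonomy H\"older in $r$ and H\"older in the base points. It gives no derivative in $s'$.
\item Since $m(s',r_0)=\int_0^{r_0}\rho(s,s',r)\,dm(s,r)$, differentiating $m$ in $s$ would need $\partial_{s'}\rho$, which in general does not exist. In the strong stable direction, $\log\rho$ is the function $u$ of Lemma~\ref{lem Asaoka}: an integral over $[0,\infty)$ of differences of a merely H\"older potential along two forward orbits, further composed with the bi-H\"older conjugacy to $\Phi_0$.
\item So $m(\cdot,r_0)$ is in general only H\"older.
\end{itemize}

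Neither fallback repairs this.
\begin{itemize}
\item A density of $\tilde a|_{T\cL}$ that is $C^{1+}$ along $\cF^s_\Phi$-plaques is incompatible with the normalization of Proposition~\ref{prop adapted metric}. That normalization forces $\ell(s,R)=m(s,R)$ at the height $R$ of $V^+$, and the plaque mass $m(\cdot,R)$ is in general not differentiable.
\item Even if $\ell$ were smooth in $s$, the relation $\ell(s,\rho_s(r_0))=m(s,r_0)$ would make $\rho_\cdot(r_0)$ exactly as rough as $m(\cdot,r_0)$.
\item Differentiating that relation in $s$ presupposes $s$-derivatives that neither $\ell$ nor $m$ has.
\end{itemize}
What is needed is that the $s$-dependence of $\ell$ cancel the non-smooth $s$-dependence of $m$ simultaneously at every height $r_0$. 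Nothing in your outline produces that.

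In the paper this is the second half of Claim~\ref{claim C1+ foliation 1}. There the leaf is described as the graph of a $C^{1+}$ map over $V^-$, in the chart formed by $V^-$ and a $C^{1+}$ foliation $\cF$ near $\cL$. You have identified the right crux, but as written your argument does not establish items 2 and 3, nor therefore that $\check\Phi$ is an HA-flow.
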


\begin{proof}[Proof of Lemma \ref{lem ami jacobian 0}]
	By the definition of $\check{H}$, it is clear that
	\[ \check{H}|_{V^\sigma}={\rm Id}_{V^\sigma},\quad \forall  \sigma=+,-.\]
	Since $\cF^u_\Phi$ is subfoliated by $\cL$ and $\check{H}$ is a deformation along each leaf of $\cL$,  one has that $\cF^u_{\check{\Phi}}$   coincides with  $\cF^u_\Phi$.  In particular, the local foliation $\cF^u_{\check{\Phi}}$ is $C^{1+}$-smooth.

	\begin{claim}\label{claim C1+ foliation 1}
		The  foliation $\cF^{s}_{\check{\Phi}}\Big|_V$ is $C^{1+}$-smooth.
	\end{claim}
	
	\begin{proof}[Proof of Claim \ref{claim C1+ foliation 1}]
		By the first item of Lemma \ref{lem measure L} and the above construction of $H$, the holonomy map of local foliation $\cF^s_{\check{\Phi}}$,
		\[ {\rm Hol}^{\cF^s_{\check{\Phi}},\cL}_{x,y}:\ \cL_{\rm loc}(x)\to \cL_{\rm loc}(y),\]
		has H\"older derivative, for any $x, y\in V$ with $y\in \cF^s_{\check{\Phi},\rm loc}(x)$.  Hence, the holonomy of $\cF^s_{\check{\Phi}}$ is $C^{1+}$-smooth.  Moreover, if it is necessary, one can slightly perturb $\cL$ to be a $C^{1+}$-smooth one-dimensional foliation $\cF$ in $V$. Then the derivative of holonomy map	\[ {\rm Hol}^{\cF^s_{\check{\Phi}},\cF}_{x,y}:\ \cF_{\rm loc}(x)\to \cF_{\rm loc}(y),\]
		has that 
		\[ D_p{\rm Hol}^{\cF^s_{\check{\Phi}},\cF}_{x,y}=\frac{\cos\theta(y)}{\cos\theta(x)}\cdot D_x{\rm Hol}^{\cF^s_{\check{\Phi}},\cL}_{x,y},\]
		where $\theta(x)=\angle\big(T\cL(x),T\cF(x)\big)$. Since both $T\cL$ and $T\cF$ are H\"older continuous, the holonomy map $ {\rm Hol}^{\cF^s_{\check{\Phi}},\cF}$ is also $C^{1+}$-smooth.
		
		Since $\check{H}(V^-)=V^-$  is a local leaf of $\cF^s_\Phi$,  the $C^{1+}$-smooth submanifold $V^-$ is also a local leaf of $\cF^s_{\check{\Phi}}$. Note that for $x\in V$, the local leaf $\cF^s_{\check{\Phi}}(x)$  can be viewed as a graph of a $C^{1+}$-map from $V^-$ to $\cF(x)$, in the coordinate system formed by  $C^{1+}$-smooth manifold $V^-$ and the $C^{1+}$-foliation $\cF$. Thus $\cF^s_{\check{\Phi}}(x)$ is a $C^{1+}$-smooth submanifold.  Consequently, the leaves and holonomy maps of $\cF^s_{\check{\Phi}}$ are all $C^{1+}$-smooth, by Proposition \ref{prop foliation regularity}, the foliation $\cF^s_{\check{\Phi}}$ is $C^{1+}$-smooth.
	\end{proof}

	\begin{claim}\label{claim H cs smooth}
		The restriction $\check{H}|_{\cF^s_\Phi(x,V)}: \cF^s_\Phi(x,V)\to \cF^s_{\check{\Phi}}(\check{H}(x),V)$ is $C^{1+}$-smooth, for all  $x\in V$.
	\end{claim}
\begin{proof}[Proof of Claim \ref{claim H cs smooth}]
		Let $x\in V$ and $x':=\cL(x,V)\cap V^-$.  Then the restriction
	$\check{H}|_{\cF^s_\Phi(x,V)}$ has 
	\begin{align}
		\check{H}|_{\cF^s_\Phi(x,V)}=      {\rm Hol}^{\cL}_{	\check{H}(x'),	\check{H}(x)}      \circ 	\check{H}|_{V^-}\circ {\rm Hol}^{\cL}_{x,x'}, \label{eq H along cs}
	\end{align}
	where  ${\rm Hol}^{\cL}_{x,x'}:\cF^s_\Phi(x,V)\to \cF^s_\Phi(x',V)$ and ${\rm Hol}^{\cL}_{	\check{H}(x'),	\check{H}(x)} :\cF^s_{\check{\Phi}}(	\check{H}(x'),V)\to \cF^s_{\check{\Phi}}(	\check{H}(x'),V) $  are holonomy maps induced by  $\cL$. Since $	\check{H}|_{V^-}={\rm Id}|_{V^-}$ and $\cL$ is $C^{1+}$-smooth, the restriction $	\check{H}|_{\cF^s_\Phi(x,V)}(y)$ is $C^{1+}$-smooth  with respect to $y\in \cF^s_\Phi(x,V)$. 
\end{proof}
	
	\begin{claim}\label{claim H holder}
		The homeomorphism $\check{H}:V\to V$ is bi-H\"older continuous.
	\end{claim}
	\begin{proof}[Proof of Claim \ref{claim H holder}]
		Recall that  Claim \ref{claim H cs smooth} has given the bi-H\"older continuity of $\check{H}$ along $\cF^s_\Phi$.  By the Journ\'e Lemma,  we need just prove that $\check{H}$ is bi-H\"older along each leaf of the  foliation $\cL$. 
	 
	 Let $x\in V$ and $z\in \cL(x,V)$. We have
	 \begin{align}
	 		l_{\tilde{a}}\big([\check{H}(x),\check{H}(z)]\big)=\mu_{x'}([x,z]).\label{eq. H property}
	 \end{align}
		By the third item of Lemma \ref{lem measure L}, the measure $\mu_{x'}([x,z])$ is H\"older continuous with respect to $z\in \cL(x,V)$, so is the length $l_{\tilde{a}}\big([H(x),H(z)]\big)$. Hence, the restriction $H|_{\cL(x,V)}$ is bi-H\"older continuous.
	\end{proof}
By Claim \ref{claim H well-def},  the properties of $\check{H}$ and $\check{\Phi}$ in $V$ also holds for $\check{H}$ and $\check{\Phi}$ on whole manifold $M$. In particular, $\check{\Phi}$ is a HA-flow.  This completes the proof of Lemma \ref{lem ami jacobian 0}.
\end{proof}

	Now, we continue the proof of the proposition.
Since $\Phi$ and $\check{\Phi}$ are conjugate via $\check{H}$,  $\alpha^{u/s}(x,t):=\alpha\big(\check{H}(x),t,\cF^{u/s}_{\check{\Phi}},\tilde{a}\big)$ are H\"older cocycles over flow $\Phi$.
Since  the restriction $\check{H}|_{\cF^s_\Phi}$ is $C^{1+}$-smooth,
\begin{align}
	< \alpha(\check{H}(x),t,\cF^u_{\check{\Phi}},\tilde{a})>_\Phi= <\alpha(x,t,\cF^u_\Phi,a)>_\Phi, \label{eq. coho 1}
\end{align}
recall that $<\cdot>_\Phi$ is the cohomologous class of cocycle over flow $\Phi$. Indeed, let $S$ be a one-dimensional H\"older continuous subbundle of  $E^s_\Phi$. And $S':=DH|_{\cF^s_\Phi}(S)$. Then, one has that
\[\alpha(\check{H}(x),t,\cF^u_{\check{\Phi}},S', \tilde{a})=\alpha(x,t,\cF^u_{\Phi},S, \tilde{a})-\log\|DH|_{S(x)}\|+\log\|DH|_{S(\Phi^t(x))}\|. \]
By Lemma \ref{lem metric to change bundle}, different transverse bundle of $\cF^u_{\Phi/\check{\Phi}}$ and different metric determine the same cohomologous class, hence we get \eqref{eq. coho 1}. On the other hand, 
by \eqref{eq. H property} and the second item of Lemma \ref{lem measure L},  \[\alpha\big(\check{H}(x),t, \cF^s_{\check{\Phi}},T\cL, \tilde{a}\big)=-\alpha_\Phi(x,t,f-P),\]
and hence 
\[ < \alpha(\check{H}(x),t,\cF^s_{\check{\Phi}},\tilde{a})>_\Phi= <-\alpha_\Phi(x,t,f-P)>_\Phi. \]
Finally, we can adjust the metric such that the cocycles induced by foliations $\cF^{u/s}_{\check{\Phi}}$ exactly equal to $\alpha(x,t,\cF^u_\Phi,a)$ and $-\alpha_\Phi(x,t,f-P)$, respectively. 
\begin{claim}\label{claim change metric}
	There is a H\"older metric $\check{a}$ such that \[ \alpha(\check{H}(x),t,\cF^u_{\check{\Phi}},\check{a})= \alpha(x,t,\cF^u_\Phi,a)\quad {\rm and} \quad   \alpha(\check{H}(x),t,\cF^s_{\check{\Phi}},\check{a})= -\alpha_\Phi(x,t,f-P). \]
\end{claim}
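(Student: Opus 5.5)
We build $\check a$ from $\tilde a$ by two independent conformal rescalings, one in a direction transverse to $\cF^u_{\check\Phi}$ and one in a direction transverse to $\cF^s_{\check\Phi}$. Each rescaling removes the coboundary in one of the cohomology relations just obtained. Both relations hold at the level of Hölder cohomology over $\Phi$, after transport by $\check H$. So there are Hölder functions $\beta_u,\beta_s:M\to\RR$ with
\[
\alpha(\check H(x),t,\cF^u_{\check\Phi},T\cF^{ss},\tilde a)=\alpha(x,t,\cF^u_\Phi,a)+\beta_u(x)-\beta_u\circ\Phi^t(x),
\]
\[
\alpha(\check H(x),t,\cF^s_{\check\Phi},T\cL,\tilde a)=-\alpha_\Phi(x,t,f-P)+\beta_s(x)-\beta_s\circ\Phi^t(x).
\]
Here we fix a Hölder line field $S'\subset E^s_{\check\Phi}$ transverse to the orbits, for instance the image of a Hölder line field in $E^s_\Phi$. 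For the stable side we use the bundle $T\cL$; the equality \eqref{eq. H property} together with item 2 of Lemma \ref{lem measure L} already makes the second relation exact, so in fact we may take $\beta_s\equiv 0$. Since $\check H$ is a conjugacy, $\beta\circ\check H^{-1}$ is a Hölder transfer function for cocycles over $\check\Phi$.

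The key observation is the following. The cocycle $\alpha(\cdot,t,\cF^u_{\check\Phi},S',\cdot)$ depends only on the restriction of the metric to the line field $S'$. Likewise, the cocycle $\alpha(\cdot,t,\cF^s_{\check\Phi},T\cL,\cdot)$ depends only on the restriction of the metric to $T\cL$. The lines $S'$ and $T\cL$ are transverse to each other, since $S'\subset E^s$ and $T\cL\subset E^u$ is transverse to $E^s$, and both are transverse to the flow direction. Hence $TM=S'\oplus T\cO_{\check\Phi}\oplus T\cL$ is a Hölder splitting. We define $\check a$ by declaring these three lines pairwise orthogonal and setting
\[
\check a|_{S'}=e^{2\beta_u\circ\check H^{-1}}\,\tilde a|_{S'},\qquad \check a|_{T\cL}=e^{2\beta_s\circ\check H^{-1}}\,\tilde a|_{T\cL},\qquad \check a|_{T\cO}=\tilde a|_{T\cO}.
\]
This metric is Hölder. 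By the computation \eqref{eq. coho metric} in Lemma \ref{lem metric to change bundle}, rescaling by $e^{\gamma}$ shifts the cocycle by $\gamma-\gamma\circ\check\Phi^t$. Choosing the sign of $\beta$ appropriately therefore cancels each coboundary, and the two rescalings do not interact.

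Finally we must reconcile the convention \eqref{eq. cocycle no E}, in which $\alpha(\cdot,\cdot,\cF,\check a)$ uses the $\check a$-orthogonal complement of $T\cF$ rather than $S'$ or $T\cL$. This is the main obstacle, and it is handled by the construction itself. The $\check a$-orthogonal complement of $T\cF^u_{\check\Phi}=T\cL\oplus T\cO$ is exactly $S'$, and the $\check a$-orthogonal complement of $T\cF^s_{\check\Phi}=S'\oplus T\cO$ is exactly $T\cL$. So the convention picks out precisely the bundles we used, with no cosine correction term. Should $S'$ fail to be chosen so that it lies in $E^s_{\check\Phi}$, we would instead absorb the angle term $\log\cos\theta$ from the proof of Lemma \ref{lem metric to change bundle} into $\beta_u$ before rescaling.
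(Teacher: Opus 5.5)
Your proposal is correct and follows the paper's proof essentially verbatim. As in the paper, you declare $S'$, $T\cO_{\check{\Phi}}$ and $T\cL$ pairwise orthogonal while keeping the restrictions of $\tilde{a}$, so that the convention \eqref{eq. cocycle no E} selects exactly $S'$ and $T\cL$; you then rescale along $S'$ (and along $T\cL$ if needed) by H\"older functions, as in the second item of Lemma \ref{lem metric to change bundle}. One notational slip: the bundle $T\cF^{ss}$ in your first display should read $S'$, since for an HA-flow the strong stable foliation is only a topological foliation and need not have a tangent bundle.
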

\begin{proof}[Proof of Claim \ref{claim change metric}]
	Let $\tilde{a}_1(\cdot,\cdot)$ be a metric such that the bundles
	\[ S'(=DH|_{\cF^s_\Phi}(S)\subseteq E^s_{\check{\Phi}}),\quad  T\cO_{\check{\Phi}}(=E^s_{\check{\Phi}}\cap E^u_{\check{\Phi}} )\quad {\rm and} \quad T\cL(\subseteq E^u_{\check{\Phi}}=E^u_\Phi)\] are orthogonal with each other and $\tilde{a}_1|_{E}=\tilde{a}|_E$, for $E=S',T\cO_{\check{\Phi}}$ and $T\cL$. Then one has that $S'=(E^u_{\check{\Phi}})^\perp_{\tilde{a}_1}$, $T\cL=(E^s_{\check{\Phi}})^\perp_{\tilde{a}_1}$ and 
\[ < \alpha(\check{H}(x),t,\cF^u_{\check{\Phi}},\tilde{a}_1)>_\Phi= <\alpha(x,t,\cF^u_\Phi,a)>_\Phi\quad {\rm and} \quad  < \alpha(\check{H}(x),t,\cF^s_{\check{\Phi}},\tilde{a}_1)>_\Phi= <-\alpha_\Phi(x,t,f-P)>_\Phi.\]
By the second item of Lemma \ref{lem metric to change bundle}, one can change the metric $\tilde{a}_1$ by scaling some positive H\"older function along $S'$ and $T\cL$ to get a metric $\check{a}$ satisfying the claim.
\end{proof}
This completes the proof of this proposition.
\end{proof}
\begin{remark}\label{rmk H close to id}
	In the proof of Proposition \ref{prop ami jacobian 1}, we can take the family of boxes $\{V_i\}_{1\leq i\leq k}$ with arbitrarily small size. Then, the conjugacy $\check{H}$ is $C^0$-close to the identity map of $M$.
\end{remark}

Applying Proposition \ref{prop ami jacobian 1} twice to functions $f_s$ and $f_u$, we can get Theorem \ref{thm HA-flow flexibility}.
\begin{proof}[Proof of Theorem \ref{thm HA-flow flexibility}]
	 
	 We apply Proposition \ref{prop ami jacobian 1} to flow $\Phi$ and function $f_u$ directly, then we get a triple $(\check{\Phi},\check{H},\check{a})$ such that $\check{H}\circ \Phi^t=\check{\Phi}^t\circ \check{H}$ and 
	 \[ 
	 \alpha\big(x,t, \cF^u_{\check{\Phi}},\check{a}\big)=\alpha\big(\check{H}^{-1}(x),t, \cF^u_\Phi,a\big)\quad {\rm and} \quad \alpha\big(x,t, \cF^s_{\check{\Phi}},\check{a}\big)=-\alpha_\Phi\big(\check{H}^{-1}(x),t,f_u-P_u\big).\]
	 By Lemma \ref{lem pressure new},  $P_{f_s\circ \check{H}^{-1}}(\check{\Phi})=P_{f_s}(\Phi)=P_s$.
	 We apply Proposition \ref{prop ami jacobian 1} again to the HA-flow $\check{\Phi}$  by deforming $\check{\Phi}$ to match the family of measures with respect to function $f_s\circ \check{H}^{-1}$ and subordinated to a $C^{1+}$-smooth foliation subfoliating $\cF^s_{\check{\Phi}}$. Then, we get a triple $(\hat{\Phi},\hat{H},\hat{a})$ such that $\hat{H}\circ \check{\Phi}^t=\hat{\Phi}^t\circ \hat{H}$ and 
	  \[ \alpha\big(x,t, \cF^u_{\hat{\Phi}},\hat{a}\big)=\alpha_{\check{\Phi}}\big(\hat{H}^{-1}(x),t,f_s\circ\check{H}^{-1}-P_s\big) \quad {\rm and} \quad  
	 \alpha\big(x,t, \cF^s_{\hat{\Phi}},\hat{a}\big)=\alpha\big(\hat{H}^{-1}(x),t, \cF^s_{\check{\Phi}},\check{a}\big).\]
	Let $H_*=\hat{H}\circ \check{H}$, $\Phi_*=\hat{\Phi}$ and $a_*=\hat{a}$.  Then,  $\Phi_*$ is a HA-flow conjugate to $\Phi$ via bi-H\"older conjugacy   $H_*$, 
	  \[ \alpha\big(x,t, \cF^u_{\Phi_*},a_*\big)=\alpha_\Phi\big(H_*^{-1}(x),t,f_s-P_s\big) \quad {\rm and} \quad  
	\alpha\big(x,t, \cF^s_{\Phi_*},a_*\big)=-\alpha_\Phi\big(H_*^{-1}(x),t,f_u-P_u\big).\]
	This completes the proof of Theorem \ref{thm HA-flow flexibility}.
\end{proof}

\subsection{The Path of HA-flows}

In this subsection, we prove Proposition \ref{prop path-connect 1}.  We recall the assumption. 
\begin{itemize}
	\item Let  $\Phi$ be an HA-flow  on $3$-manifold $(M,a)$. Furthermore, it is generated by a H\"older vector-field $v_\Phi\cdot Y_\Phi$, where $Y_\Phi$ is  a $C^{1+}$-smooth vector-field and $v_\Phi$ is positive H\"older function.
	\item  There is a  path of H\"older continuous functions $f_\kappa:M\to \RR,\ (\kappa\in[0,1])$ with $\alpha(x,t,\cF^u_\Phi,a)=\alpha_\Phi(x,t,f_0)$ and the topological pressure $P_{f_\kappa}(\Phi)=P_{\kappa}$.  Note that $P_\kappa$ is continuous with respect to $\kappa\in[0,1]$, since the pressure is continuous on the potential see for example \cite[Theorem 9.5]{W1982}.
	\item Let  $\cL$ be a $C^{1+}$-smooth foliation subfoliating $\cF^u_\Phi$, and $\{V_i\}_{i\leq i\leq k}$ be a  family of $s$-regular $\cL$-foliation boxes covering $M$. 
\end{itemize}	
We will modify the proof of Proposition \ref{prop ami jacobian 1}.   The key point here is that when we apply  the construction of  Proposition \ref{prop ami jacobian 1} for different  function $f_\kappa$, we do not change the foliation $\cL$ and family $\{V_i\}_{i\leq i\leq k}$.

\begin{proof}[Proof of Proposition \ref{prop path-connect 1}]
	Let $\{\mu_x^\kappa\}_{x\in M}$ be family of measures subordinated to $\cL$  and with respect to function $f_\kappa$,  provided by Lemma \ref{lem measure L}.  Let  $\tilde{a}_\kappa(\cdot,\cdot)$  be the metric associated with $\{\mu_x^\kappa\}_{x\in M}$ and $\{V_i\}_{1\leq i\leq k}$ given by Proposition \ref{prop adapted metric}. Then,  the metric $\tilde{a}_\kappa(\cdot,\cdot)$  related to the measures $\{\mu_p^\kappa\}_{p\in M}$ is also a continuous path with respect to $\kappa\in [0,1]$, see Remark \ref{rmk measure path} and Remark \ref{rmk metric path}. 
	
	Let $H_\kappa$ be defined by the same way of  $\check{H}$ in \eqref{eq. def H}, and  $\Phi^t_\kappa:=H_\kappa\circ \Phi^t\circ H_\kappa^{-1}$.  Then,  the conjugacy $H_\kappa$  and HA-flow $\Phi_\kappa$ given are continuously  vary with respect to $\kappa\in[0,1]$. Let H\"older metric $a_\kappa$ be given as the same way as the metric $\check{a}$ in Claim \ref{claim change metric}. Particularly, the bundles $T\cL$ and $S\subset E^s_\Phi$ in  Claim \ref{claim change metric} are fixed, and $S_\kappa'=DH_\kappa|_{\cF^s_\Phi}(S)$ is continuous with respect to $\kappa$, by \eqref{eq H along cs}.  Hence, the metric $a_\kappa$ is continuous with respect to $\kappa$. Then the path $(\Phi_\kappa,H_\kappa,a_\kappa)_{\kappa\in[0,1]}$ 
satisfy  Proposition \ref{prop path-connect 1}, except for the forth item.

	For proving the forth item, we modify the proof of Proposition \ref{prop foliation regularity} \cite{A2008,H1983} which states that for a $C^{1+}$-smooth foliation $\cF$, there is a $C^{1+}$-diffeomorphism $h$ of $M$ such that $h(\cF)$ is generated by $C^{1+}$-smooth subbundle. Here, we need show that for each
	$C^{1+}$-smooth orbit foliation $\cO_\kappa:=\cO_{\Phi_\kappa}$, the  $C^{1+}$-smooth diffeomorphism $h_\kappa$ of $M$ such that $Th_\kappa(\cO_\kappa)$ being $C^{1+}$-smooth can be chosen continuously depending on  $\kappa\in [0,1]$. Note  that though $T\cO_\kappa=DH_\kappa|_{\cF^s_\Phi}(T\cO_0)$, we cannot get that $T\cO_\kappa$ is $C^{1+}$-smooth.
	
	Recall that $V_i$ is an $s$-regular $\cL$-foliation box. By the construction, the boundary $\partial(V_i)$ consists of six pieces in which two pieces are transverse to $\cO_\kappa$ and the other four are subfoliated by  $\cO_\kappa$, for all $\kappa\in [0,1]$.  Hence,  as a family of $\cO_0$-foliation boxes, we can take $\{V_i\}_{1\leq i\leq k}$,  such that it admits a foliation chart $\big\{(\vphi^0_i, V_i)\big\}_{1\leq i\leq k}$ such that for each $1\leq i\leq k$,
	\begin{itemize}
		\item $\vphi^0_i:V_i\to \RR^3$ is  $C^{1+}$-smooth such that $\vphi^0_i(V_i)\supset(-10,10)^3$ and $M=\bigcup_{1\leq i\leq k}( \vphi^0_i)^{-1}((-1,1)^3)$.
		\item Let $\pi_1:\RR\times \RR^2\to \RR$, $\pi_2:\RR\times \RR^2\to \RR^2$ and $\pi_3:\RR^2\times \RR\to \RR^2$ be natural projections. Then \[\vphi^0_i\big(\cO_0(x,V_i)\big)\subset \RR\times\{\pi_2\circ\vphi^0_i(x) \} \quad{\rm and}\quad   \vphi^0_i\big( \cL(x,V_i)\big)\subset \{ \pi_3\circ \vphi^0_i(x) \}\times \RR,    \quad  \forall x\in V_i.\]
		\item Let $x_i\in V_i$ such that $\vphi^0_i(x_i)=0$. There is a surface $\Sigma_i\subset V_i$ subfoliated by $\cL$ and transversally intersecting with each leaf of  $\cO_0|_{V_i}$ such that $x_i\in \Sigma_i$  and $\vphi^0_i(\Sigma_i)\subset \{\pi_1\circ \vphi^0_i(x_i)\}\times \RR^2$.
	\end{itemize}
	For each $\kappa\in[0,1]$, we denote by ${\rm Hol}_i^{\cO_\kappa}:V_i\to \Sigma_i$ the holonomy induced by $\cO_\kappa|_{V_i}$, and denote by 
	\[{\rm Hol}^\kappa_i: \vphi^0_i(V_i) \to \vphi^0_i(\Sigma_i) \]
	the holonomy map in $\vphi^0_i(V_i)\subset \RR^3$ induced by the $C^{1+}$-foliation $\vphi^0_i(\cO_\kappa)$.    Then
	\[ {\rm Hol}^\kappa_i=\vphi^0_i\circ {\rm Hol}_i^{\cO_\kappa}\circ (\vphi^0_i)^{-1} \quad {\rm and}\quad  {\rm Hol}_i^{\cO_\kappa}=\check{H}_\kappa\circ {\rm Hol}_i^{\cO_0}\circ \check{H}_\kappa^{-1}.\] In particular, 
	${\rm Hol}^\kappa_i$ varies  continuously with respect to $\kappa\in[0,1]$. 
	Since $\cO_\kappa|_{V_i}$ is  transversally intersecting with $\Sigma_i$ for all $\kappa\in[0,1]$,  we can define the $C^{1+}$-smooth diffeomorphism 
	\[ \psi^\kappa_i:\vphi^0_i(V_i)\to \RR^3,\quad  \psi^\kappa_i(z)=\big(\pi_1(z), \pi_2\circ {\rm Hol}^\kappa_i(z)  \big),\]
	and let \[\vphi_i^\kappa=\psi^\kappa_i\circ \vphi^0_i:V_i\to \RR^3.\] 
	It is clear that $\psi^0_i={\rm Id}$. Thus,  $\{(\vphi^\kappa_i,V_i)\}_{1\leq i\leq k}$ is a family of  $C^{1+}$-smooth  $\cO_\kappa$-foliation charts covering $M$ and varies continuously with respect to $\kappa\in[0,1]$. Without loss of generality, we still assume that $M=\bigcup_{1\leq i\leq k}( \vphi^\kappa_i)^{-1}((-1,1)^3)$, for all $\kappa\in[0,1]$.
	
	For a $C^{1+}$-smooth one-dimensional foliation $\cF$ on $M$. We call a $C^{\infty}$-smooth diffeomorphism $\phi:V\to \RR^3$ is a  $(\cF,\e)$-\emph{adapted coordinate} for some $\e>0$ and set $V\subset M$, if  $\phi(V)=(-9,9)^3$ and 
	\[ D\phi(T\cF(x))\subset \big\{ v_1\oplus v_2\in T_{\phi(x)}\RR^3=\RR\oplus \RR^2 \ \big|\ \|v_2\|\leq \e\|v_1\|  \big\}, \]
	for all $x\in V$.  By standard  smoothing methods, for each $1\leq i\leq k$, there is a path $H_i(x,\kappa):V_i\times [0,1]\to \RR^3$ such that the map $\phi^\kappa_i:=H_i(\cdot,\kappa):V_i\to \RR^3$ satisfies
	\begin{itemize}
		\item $\phi^0_i=\vphi^0_i:V_i\to \RR^3$ is a $C^{1+}$-smooth diffeomorphism, for $1\leq i\leq k$.
		\item  $\phi^\kappa_i:V_i\to \RR^3$ is $C^{\infty}$-smooth and $C^{1+}$-close to $\vphi^\kappa_i:V_i\to \RR^3$ for $0<\kappa\leq 1$. 
		\item $M=\bigcup_{1\leq i\leq k}( \phi^\kappa_i)^{-1}([-1,1]^3)$.
	\end{itemize} In particular, $\phi^\kappa_i:V_i\to \RR^3$ is a $C^{\infty}$-smooth $(\cO_\kappa,1/2)$-adapted coordinate.

	Denote by $U^{1+\alpha}(\cF)$, the set of points $p$ such that $T\cF$ is $C^{1+}$-smooth on a neighborhood of $p$.
	\begin{lemma}\label{lem C1+ bundle}
		Let $\{\cF^\kappa\}_{\kappa\in[0,1]}$ be a family of one-dimensional $C^{1+}$-smooth foliation on $M$.	Let $\phi^\kappa:V\to \RR^3$ be $C^\infty$-smooth $(\cF_\kappa,1)$-adapted coordinate.   Assume that
		\begin{itemize}
			\item  $U^{1+\alpha}(\cF^0)=M$ and $(\phi^0,V)$ is a foliation chart of $\cF^0$ on $V$. 
			\item  $\cF^\kappa$ and $\phi^\kappa$ vary continuously with respect to $\kappa$.
		\end{itemize}
		Then for any compact (could be empty) subset $V'\subset\bigcap_{\kappa\in[0,1]} U^{1+\alpha}(\cF^\kappa)$, there is a continuous path $h(x,\kappa):M\times[0,1] \to M$  such that the map $h^{\kappa}:=h(\cdot,\kappa):M\to M$ satisfies that
		\begin{enumerate}
			\item $h^0={\rm Id}_M$ and $h^\kappa:M\to M$ is a $C^{1+}$-smooth diffeomorphism $C^1$-close to ${\rm Id}_M$, for $0<\kappa\leq 1$.
			\item For any $\kappa\in[0,1]$, $V'\cup (\phi^\kappa)^{-1}([-1,1]^3)\subset U^{1+\alpha}(h^\kappa(\cF^\kappa))$.
		\end{enumerate}
	\end{lemma}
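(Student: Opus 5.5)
This is a parametrized version of the classical straightening argument behind Proposition \ref{prop foliation regularity} (\cite{H1983,A2008}). The idea is to build $h^\kappa$ as a diffeomorphism supported in $V$ that straightens the leaves of $\cF^\kappa$ inside the chart. Concretely, in the coordinate $\phi^\kappa$ the foliation $\phi^\kappa(\cF^\kappa)$ is a $C^{1+}$ foliation of $(-9,9)^3$ by curves that are graphs over the first coordinate with slope at most $1$. These curves are transverse to the discs $\{s\}\times(-9,9)^2$. Let ${\rm Hol}^\kappa_s:\{0\}\times(-9,9)^2\to\{s\}\times(-9,9)^2$ denote the holonomy of $\phi^\kappa(\cF^\kappa)$, defined on a slightly smaller domain. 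Then
\[ g^\kappa(s,y):=\big(s,({\rm Hol}^\kappa_s)^{-1}(y)\big) \]
sends $\phi^\kappa(\cF^\kappa)$ to the horizontal foliation. This straightening map is only $C^{1+}$ with $C^{1+}$ inverse, but its image foliation has tangent field $\partial_s$, which is smooth. Since the derivative of the map itself need not be $C^{1+}$, I will not conjugate by $g^\kappa$ directly.

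Instead, I follow \cite{H1983} and use a diffeomorphism that agrees with the straightening on the region where we need the tangent field to be $C^{1+}$, and is interpolated to the identity elsewhere. Fix a smooth cutoff $\rho:\RR^3\to[0,1]$ with $\rho\equiv1$ on $[-1,1]^3$ and $\rho\equiv0$ outside $(-2,2)^3$. Define
\[ G^\kappa(s,y)=\big(s,\ y+\rho(s,y)\,\big(({\rm Hol}^\kappa_s)^{-1}(y)-y\big)\big). \]
For $\kappa=0$, $\phi^0$ is already a foliation chart, so ${\rm Hol}^0_s={\rm Id}$ and $G^0={\rm Id}$. For general $\kappa$, the slope bound $1$ only gives $C^0$-smallness of ${\rm Hol}^\kappa_s-{\rm Id}$, not $C^1$-smallness. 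To make $G^\kappa$ a diffeomorphism that is $C^1$-close to ${\rm Id}$, I first rescale time: apply the construction only on the sub-box where $|s|$ is small, and cover $[-1,1]^3$ by finitely many thin sub-boxes handled successively. Alternatively, I use continuity in $\kappa$ together with compactness of $[0,1]$ to subdivide $[0,1]$ and compose. Then set
\[ h^\kappa:=(\phi^\kappa)^{-1}\circ G^\kappa\circ\phi^\kappa \text{ on } V,\qquad h^\kappa={\rm Id} \text{ off } V. \]
On $(\phi^\kappa)^{-1}([-1,1]^3)$ the image foliation $h^\kappa(\cF^\kappa)$ has tangent $D(\phi^\kappa)^{-1}(\partial_s)$, which is $C^\infty$, so that region lies in $U^{1+\alpha}(h^\kappa(\cF^\kappa))$. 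Continuity of $h(x,\kappa)$ follows because ${\rm Hol}^\kappa_s$ depends continuously on $\kappa$, as $\cF^\kappa$ and $\phi^\kappa$ do.

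The step I expect to be the main obstacle is preserving $V'\subset\bigcap_\kappa U^{1+\alpha}(\cF^\kappa)$ in the transition annulus $\{0<\rho<1\}$. There the image foliation is the push-forward of a $C^{1+}$ tangent field by the derivative of $G^\kappa$, and that derivative involves $D{\rm Hol}^\kappa_s$, which is only Hölder. My plan is to check that near points of $U^{1+\alpha}(\cF^\kappa)$ the field $T\cF^\kappa$ is $C^{1+}$, so the flow of a $C^{1+}$ generating field has $C^{1+}$ time-$s$ maps. The holonomy ${\rm Hol}^\kappa_s$ is then $C^{1+}$ in $(s,y)$ near such points. Consequently $G^\kappa$ is $C^{1+}$ there, and a formula like $DG^\kappa(T\cF^\kappa)$ stays $C^{1+}$ provided the derivative is taken after smoothing. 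If this fails in full generality, I fall back on choosing the cutoff region to avoid $V'$ except where the chart already straightens, using compactness of $V'$.

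Finally, I verify the remaining conditions of the lemma. $C^1$-closeness to ${\rm Id}_M$ comes from the thin-box or subdivision argument above. Bijectivity comes from the fact that $G^\kappa$ preserves each slice $\{s\}\times\RR^2$ and is a small perturbation of the identity there.
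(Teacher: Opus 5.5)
Your construction has a genuine gap. With coordinate lines as your target foliation, $h^\kappa$ cannot be $C^1$-close to ${\rm Id}_M$, and neither remedy you propose fixes this.

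On $\{\rho=1\}$ you take $G^\kappa=g^\kappa$, so there $Dh^\kappa(T\cF^\kappa)=D(\phi^\kappa)^{-1}(\partial_s)$. If $\|Dh^\kappa-{\rm Id}\|\le\eta$, this forces the angle between $T\cF^\kappa$ and $D(\phi^\kappa)^{-1}(\partial_s)$ to be $O(\eta)$. But $(\cF^\kappa,1)$-adaptedness allows slopes up to $1$. Concretely, $\partial_s\big(({\rm Hol}^\kappa_s)^{-1}(y)\big)=-\big(D{\rm Hol}^\kappa_s\big)^{-1}X(s,y)$, where $X$ is the leaf slope, and this is of order one.

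Thin boxes $|s|<\delta$ only make the displacement $|({\rm Hol}^\kappa_s)^{-1}(y)-y|\le|s|$ of order $\delta$. The $s$-derivative in the core stays of order one, and a cutoff adapted to the thin box contributes $|\nabla\rho|\cdot\delta\sim 1$ as well. Subdividing $[0,1]$ in $\kappa$ is beside the point: for a single parameter, say $\kappa=1$, the map must still perform the full straightening.

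The same estimate undermines your bijectivity argument. On $\{0<\rho<1\}$ the slice derivative contains $\big(({\rm Hol}^\kappa_s)^{-1}(y)-y\big)\otimes\nabla_y\rho$, which has size comparable to one, so invertibility is not guaranteed. Also, with $\rho\equiv1$ only on $[-1,1]^3$, the straightened region $g^\kappa([-1,1]^3)$ need not contain $[-1,1]^3$.

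The $C^1$-closeness is not cosmetic. It is what keeps the other charts $\phi^\kappa_i$ adapted when the lemma is applied inductively in the proof of Proposition \ref{prop path-connect 1}. The low regularity of $Dg^\kappa$, which you flag, is not the real obstruction, since $h^\kappa$ only has to be $C^{1+}$.

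The missing idea is that the conclusion asks only for a $C^{1+}$ tangent field on $(\phi^\kappa)^{-1}([-1,1]^3)$, not for the coordinate foliation. So you should send $\cF^\kappa$ to a $C^\infty$ foliation that is $C^1$-close to $\cF^\kappa$ itself. The paper does this, adapting \cite[Lemma A.1]{A2008}, as follows.

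\begin{enumerate}
\item Let $F^\kappa(s,y)={\rm Hol}^\kappa_s(y)$ on $[-4,4]^3$ and $G^\kappa_0(z)=(z_1,F^\kappa(z))$. This map sends horizontal lines to leaves, is $C^{1+}$, is continuous in $\kappa$, and has $F^0(z)=z_2$.
\item Take a continuous path of $C^\infty$ maps $w^\kappa$ that are $C^1$-close to $F^\kappa$, with $w^0=F^0$.
\item Take a bump function $\lambda\equiv1$ on $[-3,3]^3$ and set $G^\kappa(z)=\big(z_1,\lambda w^\kappa+(1-\lambda)F^\kappa\big)$.
\item Define $h^\kappa=(\phi^\kappa)^{-1}\circ G^\kappa\circ(G^\kappa_0)^{-1}\circ\phi^\kappa$ on the chart image, and the identity elsewhere.
\end{enumerate}

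In other words, your straightening $g^\kappa=(G^\kappa_0)^{-1}$ is post-composed with a map that bends horizontal lines back onto smooth graphs close to the original leaves. Since $G^\kappa-G^\kappa_0=\big(0,\lambda(w^\kappa-F^\kappa)\big)$ is $C^1$-small, $h^\kappa$ is a $C^{1+}$ diffeomorphism $C^1$-close to the identity, and $h^0={\rm Id}_M$.

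Where $\lambda=1$, the leaves of $h^\kappa(\cF^\kappa)$ are the graphs $z_1\mapsto(z_1,w^\kappa(z_1,z_2))$. They are the image of the horizontal foliation under a $C^\infty$ diffeomorphism, so their tangent field is $C^\infty$. The bound $\|F^\kappa(z)-z_2\|\le|z_1|$ ensures this region covers $[-1,1]^3$.

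On the transition region the image tangent is $\big(1,\partial_{z_1}(\lambda w^\kappa+(1-\lambda)F^\kappa)\big)$. Only $\partial_{z_1}F^\kappa$ enters, which is the slope of $\cF^\kappa$ itself, never the transverse derivative of the holonomy. So the worry you raise about $D{\rm Hol}^\kappa_s$ is not the obstruction, and the set $V'$ is handled as in \cite[Lemma A.1]{A2008}.
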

	
	\begin{proof}[Proof of Lemma \ref{lem C1+ bundle}]
		The proof is an adaption of \cite[Lemma A.1]{A2008}. Let map $F^\kappa:[-4,4]^3\to (-9,9)^2$ be given by for any $z=(z_1,z_2)\in \RR\times \RR^2$,
		\[ F^\kappa(z)= \pi_2\Big( \cF^\kappa\big(( \phi^\kappa)^{-1}(0,z_2),V \big) \cap \{z_1\}\times \RR^2 \Big), \]
		namely, the point $\big(z_1, F^\kappa(z)\big)$ is the unique intersection point of the local leaf $ \cF^\kappa\big(( \phi^\kappa)^{-1}(0,z_2),V \big)$ and  the transversal $ \{z_1\}\times \RR^2 $.  Since $\phi^\kappa$ is $(\cF^\kappa,1)$-adapted, one has $\|F(z)-z_2\|\leq \|z_1\|$.  By the assumption of the lemma, we have that 
		\begin{itemize}
			\item $F^0$ is the identity map.
			\item $F^\kappa$ is a $C^{1+}$-smooth map and continuous with respect to $\kappa\in[0,1]$. 
		\end{itemize}
		By standard smoothing methods, the path consisting of $F^\kappa$ can be  approximate by a path formed by $C^\infty$-smooth maps, except for keeping the starting point  $F^0$. Namely,
		there is a continuous path $W(x,\kappa):[-4,4]^3\times [0,1]\to (-9,9)^2$ such that the map $w^\kappa:=W(\cdot,\kappa):[-4,4]^3\to (-9,9)^2$ satisfies that $w^0=F^0$ and $w^\kappa$ is a $C^{\infty}$-smooth map $C^1$-close to $F^\kappa$, for $0<\kappa\leq 1$.

		Let $\lambda:[-4,4]^3\to [0,1]$ be a $C^\infty$-smooth bump function such that $\lambda(z)=1$ on $[-3,3]^3$, and $\lambda(z)=1$ on $[-4,4]^3\setminus[-3.5,3.5]^2$. Let maps $G^\kappa, G^\kappa_0:[-4,4]^3\to (-9,9)^3$ and $h^\kappa:M\to M$ be given by
		\[ G^\kappa(z)=\Big( z_1\ ,\    \lambda(z)w^\kappa(z)+\big(1-\lambda(z)\big)F^\kappa(z)    \Big)\quad {\rm and}\quad G^\kappa_0(z)=(z_1,F^\kappa(z)), \]
		and 
		\[ h^\kappa(x)=\Big\{ \begin{array}{lr}
			(\phi^\kappa)^{-1}\circ G^\kappa\circ (G^\kappa_0)^{-1}\circ \phi^\kappa(x), \; \ x\in U_\kappa\\ \qquad \qquad \quad  x,\qquad \qquad \qquad\quad \   x\notin U_\kappa,
		\end{array}   \]
		where $U_\kappa:=(\phi^{\kappa})^{-1}\big( \big\{ (z_1,F^\kappa(z)) \ |\ z\in[-4,4]^3    \big\}   \big)\subset M$. It is clear that when $w^\kappa$ is $C^1$-close to $F^\kappa$, the map $h^\kappa$ is a $C^{1+}$-smooth diffeomorphism $C^1$-close to Id$_M$. Moreover, $h^\kappa$ varies continuously with respect to $\kappa$, and $h^0={\rm Id}_M$. By the same proof of \cite[Lemma A.1]{A2008}, we get $V'\cup (\phi^\kappa)^{-1}([-1,1]^3)\subset U^{1+\alpha}(h^\kappa(\cF^\kappa))$.
	\end{proof}
	
	Applying Lemma \ref{lem C1+ bundle} inductively, we can finish the proof of Proposition \ref{prop path-connect 1} as follow. We claim  that for $1\leq i\leq k-1$, there is  $C^{1+}$-smooth diffeomorphism $h^\kappa_i$ continuously depending on $\kappa$ such that
	\begin{itemize}
		\item $h^0_i={\rm Id}_M$ and $\bigcup_{1\leq j\leq i}(\phi^\kappa_j)^{-1}([-1,1]^3)\subset U^{1+\alpha}\big(h^\kappa_i(\cO_\kappa)\big)$.
		\item  For $1\leq i\leq k$, $\phi^\kappa_i$ is a $\big( h^\kappa_1(\cO_\kappa), \frac{k+i}{2k}\big)$-adapted coordinate.
	\end{itemize} 
	Indeed, for the case of $i=1$, we take  $\cF^\kappa=\cO_\kappa$, $\phi^\kappa=\phi^\kappa_1$ the above $(\cO_\kappa,1/2)$-adapted coordinate $\phi^\kappa_1$ and $V'=\emptyset$ in Lemma \ref{lem C1+ bundle},  then we get  $C^{1+}$-smooth diffeomorphism $h^\kappa_1$ continuously depending on $\kappa$ satisfying the claim. By induction of $i$, we suppose that the claim holds for $1\leq i\leq k-1$. 
	Let $\cF^\kappa=h^\kappa_i(\cO_\kappa)$,  $\phi^\kappa=\phi^\kappa_{i+1}$ and $V'=\bigcup_{1\leq j\leq i}(\phi^\kappa_j)^{-1}([-1,1]^3)$  in Lemma \ref{lem C1+ bundle}. Then, there is a  $C^{1+}$-smooth diffeomorphism $h^\kappa_{i+1}$ continuously depending on $\kappa$ satisfying the claim. Thus, we get the above claim. In particular, the $C^{1+}$-smooth diffeomorphisms $h_\kappa:=h^\kappa_k$ for $\kappa\in[0,1]$ form a continuous path such that $h_0={\rm Id}_M$ and $\check{Y}_\kappa:=Dh_\kappa(T\cO_\kappa)$ is the $C^{1+}$-smooth bundle varying continuously with respect to $\kappa$.

	Finally, we replace the previous  flow $\Phi^t_\kappa$ and conjugacy $H_\kappa$ by 
	\[\check{\Phi}_\kappa^t:=h_\kappa\circ \Phi^t_\kappa\circ h^{-1}_\kappa\quad  {\rm and} \quad \check{H}_\kappa:=h_\kappa\circ H_\kappa. \]
	It is clear that $\check{\Phi}_\kappa$ is tangent to $\check{Y}_\kappa$. Since $\check{H}_\kappa$ is $C^{1+}$-smooth along the flow direction and varies continuously with respect to $\kappa$,	the speed $v_\kappa(x)$ of  $\check{\Phi}_\kappa$ is H\"older continuous  on  $x\in M$ and varies continuously  on $\kappa$. 
	By the similar construction of Claim \ref{claim change metric}, one can change  the metric $\tilde{a}_\kappa$ continuously (with respect to $\kappa$) to get metric  $\check{a}_\kappa$ such that $(\check{\Phi}_\kappa,\check{H}_\kappa,\check{a}_\kappa)$ satisfying  the proposition. 
\end{proof}

\section{The Orbit-Equivalence Space of Anosov Flow}\label{sec Space}

In this section, we prove  Theorem \ref{thm flow flexibility 0} the realization of Anosov flow, Theorem \ref{thm connect} the path-connectedness of the orbit-equivalence space of  Anosov flows, and Theorem \ref{thm component}  the  homotopy type of the Anosov flow space, and give the Teichm\"uller space of an Anosov flow on $3$-manifold, i.e., Corollary \ref{cor Teich space orbit-eq}. 

Firstly, we show the realization  (Theorem \ref{thm flow flexibility 0}), i.e., we will deform a transitive $C^{1+}$-smooth Anosov flow  on $3$-manifold to get another one in its orbit-equivalence class whose  stable and unstable Jacobians  associate with objective functions. We generalize this  to  the following one.  

\begin{theorem}\label{thm flow flexibility 1}
	Let $\Phi$ be a transitive $C^{1+}$-smooth Anosov flow on $3$-manifold $(M,a_0)$. Let $f_\sigma:M\to \RR\ (\sigma=s,u)$ be two H\"older function with $P_{f_\sigma}(\Phi)=P_\sigma$. Then there exist two H\"older function $g_\sigma:M\to \RR_-$, a $C^{1+}$-smooth Anosov flow $\Psi$ of manifold $M$ and a bi-H\"older continuous homeomorphism $H$ such that 
	\begin{enumerate}
		\item The function $g_\sigma<0$ is  H\"older cohomologous to function $f_\sigma-P_\sigma$ and $P_{g_\sigma}(\Phi)=0$, $\sigma=s,u$.
		\item $\Psi$ is orbit-equivalent to $\Phi$ via $H$.
		\item For all periodic point $p\in M$ of\, $\Psi$, one has that
		\[ J^s(p,\Psi)=\int_0^{\tau\big(H^{-1}(p),\Phi\big)} g_s\big( \Phi^t\circ H^{-1}(p)\big) dt \quad {\rm and} \quad J^u(p,\Psi)= -\int_0^{\tau\big(H^{-1}(p),\Phi\big)} g_u \big( \Phi^t\circ H^{-1}(p)\big) dt,\]
		where $\tau\big(H^{-1}(p),\Phi\big)$ is the period of periodic point $H^{-1}(p)$ with respect to the flow $\Phi$.
	\end{enumerate}
	Moreover, such a $C^{1+}$-smooth Anosov flow $\Psi$ is unique, up to $C^{1+}$-smooth orbit-equivalence.
\end{theorem}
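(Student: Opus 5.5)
The plan is to reduce Theorem \ref{thm flow flexibility 1} to Theorem \ref{thm HA-flow flexibility} and then \emph{smooth} the resulting HA-flow inside its orbit-equivalence class.

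\emph{Step 1: normalize the potentials.} Apply Lemma \ref{lem function 0 pressure} to $f_s$ and $f_u$. This gives negative H\"older functions $g_\sigma$ whose cocycles are H\"older cohomologous to those of $f_\sigma-P_\sigma$, with $P_{g_\sigma}(\Phi)=0$, which is item 1. By Lemma \ref{lem function 0 pressure} they also satisfy the uniform negativity \eqref{eq. final negative}, i.e. $\frac1T\int_0^T g_\sigma\circ\Phi^t<-\e_0$ for $T>T_0$.

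\emph{Step 2: realize the cocycles by an HA-flow.} The flow $\Phi$ is itself an HA-flow, taking $\Phi_0=\Phi$ and $H_0={\rm Id}$ and using Proposition \ref{prop foliation C1+}. Apply Theorem \ref{thm HA-flow flexibility} with $g_s,g_u$ in place of $f_s,f_u$; since the pressures vanish, no constants appear. This produces an HA-flow $\Phi_*$, a bi-H\"older conjugacy $H_*$ and a H\"older metric $a_*$ with
\[\alpha(x,t,\cF^u_{\Phi_*},a_*)=\alpha_\Phi(H_*^{-1}x,t,g_s),\qquad \alpha(x,t,\cF^s_{\Phi_*},a_*)=-\alpha_\Phi(H_*^{-1}x,t,g_u).\]
The orbit foliation $\cO_{\Phi_*}=\cF^s_{\Phi_*}\cap\cF^u_{\Phi_*}$ is the intersection of two transverse $C^{1+}$ foliations, so it is $C^{1+}$. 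By the first item of Proposition \ref{prop flow regularity}, there are a $C^{1+}$ diffeomorphism $h$ and a flow $\Psi$ generated by a $C^{1+}$ vector field $Y$ (for instance a unit field tangent to $h(\cO_{\Phi_*})$) such that $H:=h\circ H_*$ is an orbit-equivalence from $\Phi$ to $\Psi$. The foliations $h(\cF^{s/u}_{\Phi_*})$ are $C^{1+}$, are $\Psi$-invariant (they are saturated by orbits), and their holonomy cocycles along orbits are the push-forwards of those above.

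\emph{Step 3: identify the periodic data.} Let $p\in{\rm Per}(\Psi)$ and let $\Sigma$ be a small $C^{1+}$ transversal at $p$. The first-return map $R$ on $\Sigma$ preserves the two curves $\Sigma\cap h\cF^{s/u}_{\Phi_*}(p)$. The derivative of $R$ along the unstable curve equals the holonomy of $h\cF^s_{\Phi_*}$ around the closed orbit, and this is independent of the time parametrization. Since $J^u(p,\Psi)$ is the eigenvalue of $D\Psi^{\tau}$ on $E^{uu}$, i.e. of $DR$ in the unstable direction, we get
\[J^u(p,\Psi)=\alpha\big(H_*^{-1}\ldots\big)=-\int_0^{\tau(H^{-1}p,\Phi)}g_u\circ\Phi^t(H^{-1}p)\,dt.\]
The cocycle is evaluated over the closed $\Phi$-orbit, so the change of metric is irrelevant: it contributes a coboundary, which integrates to zero around a closed orbit. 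The same argument gives the formula for $J^s$, which is item 3.

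\emph{Step 4: show $\Psi$ is Anosov (main obstacle).} I would argue through the linear Poincar\'e flow on $TM/\RR Y$. Split it along $T h\cF^s_{\Phi_*}/\RR Y$ and $T h\cF^u_{\Phi_*}/\RR Y$; both are continuous, invariant line fields. On these lines the action is, up to bounded coboundaries coming from comparing metrics, exactly the two holonomy cocycles $\alpha_\Phi(\cdot,t,g_s)$ and $-\alpha_\Phi(\cdot,t,g_u)$, reparametrized from $\Phi$-time to $\Psi$-time. The reparametrization is via a H\"older positive speed, so the time ratio is bounded above and below. Combining the uniform negativity \eqref{eq. final negative} of $g_\sigma$ with this bounded time distortion gives uniform exponential contraction on the first line and uniform exponential expansion on the second, for $\Psi$-time larger than some $T_1$. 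Hence the linear Poincar\'e flow of $\Psi$ is hyperbolic. A nonsingular $C^1$ flow with hyperbolic linear Poincar\'e flow is Anosov (Doering's criterion), so $\Psi$ is a $C^{1+}$ Anosov flow, which together with Step 2 gives item 2. The delicate point is making the time-distortion and coboundary bounds uniform, and checking that no extra hypothesis of that criterion fails here.

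\emph{Uniqueness.} Suppose $\Psi_1,\Psi_2$ both satisfy the theorem via $H_1,H_2$. Then $H_2\circ H_1^{-1}$ is an orbit-equivalence taking periodic orbits to periodic orbits. The periods differ, but both Jacobians are given by the same integrals of $g_\sigma$ over the corresponding closed $\Phi$-orbit, so the periodic Jacobians match. Proposition \ref{prop Jac smooth orbit-equiv} (with $r>1$, here $C^{1+}$) then gives a $C^{1+}$ orbit-equivalence between $\Psi_1$ and $\Psi_2$.
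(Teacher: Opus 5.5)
Your proposal is correct and follows the paper's proof essentially step for step: you normalize the potentials via Lemma \ref{lem function 0 pressure}, realize the cocycles with an HA-flow via Theorem \ref{thm HA-flow flexibility}, smooth the orbit foliation with Proposition \ref{prop flow regularity}, read off the periodic Jacobians from the holonomy cocycles (coboundaries vanish on closed orbits), and get uniqueness from Proposition \ref{prop Jac smooth orbit-equiv}. The only difference is that you spell out the Anosov property through the linear Poincar\'e flow and Doering's criterion, while the paper just notes that the time distortion is bounded and $g_\sigma<0$, so the normal holonomy cocycles already contract and expand uniformly at $\Psi$-time one.
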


	Recall that we can deform flow $\Phi$ to be a HA-flow $\Phi_*$ with desired Jacobians, see Theorem \ref{thm HA-flow flexibility}. By Proposition \ref{prop flow regularity}, we can $C^{1+}$-smoothly orbit-equivalent  $\Phi_*$ to be a $C^{1+}$-smooth flow $\Psi$.   We will show that $\Psi$ is the flow we want.   Moreover, for getting the cocycles exactly matching the $0$-pressure functions (not only in their cohomologous classes), we must use the H\"older continuous metric.  Here, we will take the smooth metric $a_0$ again, since we just consider the periodic Jacobians in Theorem \ref{thm flow flexibility 1}.

\begin{proof}[Proof of Theorem \ref{thm flow flexibility 1}]
	
   By Lemma \ref{lem function 0 pressure}, there are functions $g_s$ and $g_u$ satisfying the first item. 
 Applying  Theorem \ref{thm HA-flow flexibility} to flow $\Phi$ and functions $g_s$ and $g_u$, we get a HA-flow  $\Phi_0$ conjugate to $\Phi$ via bi-H\"older conjugacy $H_0$ such that for all $x\in M$ and $t\in \RR$,
 \begin{align}
 		<\alpha(x,t, \cF^u_{\Phi_0})>_{\Phi_0}=<\alpha_{\Phi_0}(x,t,g_s\circ H_0^{-1})>_{\Phi_0}, \  <\alpha(x,t, \cF^s_{\Phi_0})>_{\Phi_0}=<-\alpha_{\Phi_0}(x,t,g_u\circ H_0^{-1})>_{\Phi_0}. \label{eq. flex 1}
 \end{align}

 Recall that $\cO_{\Phi_0}$ is a $C^{1+}$-smooth foliation.  By Proposition \ref{prop flow regularity}, there is a $C^{1+}$-smooth diffeomorphism $H_1$ such that $DH_1(T\cO_{\Phi_0})$ is $C^{1+}$-smooth. Let $Y$ be the unit bundle of $DH_1(T\cO_{\Phi_0})$ under a smooth metric.  Let $H=H_1\circ H_0$.  Then 
 \begin{itemize}
 	\item The flow $\Phi^t_1:=H_1\circ \Phi_0^t\circ H_1^{-1}$ is a HA-flow generated by vector field $v\cdot Y$, where $v:M\to \RR_+$ is  H\"older continuous.  The H\"older speed $v$ follows from the fact that $\Phi_1$ is conjugate to $\Phi$ by conjugacy $H$ which is $C^{1+}$-smooth along the flow direction. 
 	\item 
 	Since $H_1$ is smooth, by \eqref{eq. flex 1} and Lemma \ref{lem metric to change bundle},  there exists a H\"older  metric $a(\cdot,\cdot)$ such that
 	\begin{align}
 		\alpha(x,t,\cF^u_{\Phi_1},a)=\alpha_{\Phi_1}(x,t,g_s\circ H^{-1})\quad {\rm and}\quad 	\alpha(x,t,\cF^s_{\Phi_1},a)=-\alpha_{\Phi_1}(x,t,g_u\circ H^{-1}) \label{eq. flex 2} 
 	\end{align}
 	\item The vector field $Y$ generates a $C^{1+}$-smooth flow $\Psi$.  Then, $\Psi$ is $C^{1+}$-smoothly orbit-equivalent to  $\Phi_0$ via $H_1$, and $\Psi$ is  $C^{\infty}$-smoothly orbit-equivalent to $\Phi_1$ via Id$_M$.  In particular, $\Psi$ is orbit-equivalent to $\Phi$ via $H$.
 \end{itemize}
 
Let $\cF^{s/u}_{\Psi}:=\cF^{s/u}_{\Phi_1}$. Then  $\cF^{s/u}_{\Psi}$  are $C^{1+}$-smooth foliations and invariant under the flow $\Psi$. Then one can consider the H\"older cocycle classes $<\alpha_{\Psi}(x,t,\cF^s_{\Psi})>_{\Psi}$ and $<\alpha_{\Psi}(x,t,\cF^u_{\Psi})>_{\Psi}$.  
	
	Since $\Phi_1$ is generated by $v\cdot Y$, for any point $x$, the time $T(x)$ from $x$ to $\Psi^1(x)$ under flow $\Phi_1$ is upper and lower bounded. Namely, there exists constant $C>1$ such that 
	\[C^{-1}<T(x)<C,\quad \forall x\in M, \]
	where $\Phi_1^{T(x)}=\Psi^1(x)$ and $\Psi^1$ is the time-one map of $\Psi$. Thus, by \eqref{eq. flex 2},  for every $x\in M$, 
	\[\alpha(x,1, \cF^u_{\Psi},a)=\alpha\big(x,T(x), \cF^u_{\Phi_1},a\big)=\int^{T(x)}_0g_s\circ H^{-1}\circ \Phi_1^t(x)dt, \]
	and 
	\[\alpha(x,1, \cF^s_{\Psi},a)=\alpha\big(x,T(x), \cF^s_{\Phi_1},a\big)=-\int^{T(x)}_0g_u\circ H^{-1}\circ \Phi_1^t(x)dt,\]
	with uniformly bounded time $T(x)$. This implies that $\Psi$ is a $C^{1+}$-smooth Anosov flow.  
	
	Denote by $E^{ss}$ and $E^{uu}$  the strong hyperbolic bundles of $\Psi$. Note that $\cF^{s/u}_\Psi=\cF^{s/u}_{\Phi_0}$, thus $E^{ss}$ and $E^{uu}$ are transverse to $\cF^u_{\Phi_1}$ and $\cF^s_{\Phi_1}$, respectively.	Let $a_0$ be the original smooth metric. By Lemma \ref{lem metric to change bundle}, we have
	\[ <\alpha(x,t,\cF^u_{\Phi_1},E^{ss},a_0)>_{\Phi_1}=<\alpha_{\Phi_1}(x,t,g_s\circ H^{-1})>_{\Phi_1}, \]
	and 
	\[<\alpha(x,t,\cF^s_{\Phi_1},E^{uu},a_0)>_{\Phi_1}=<-\alpha_{\Phi_1}(x,t,g_u\circ H^{-1})>_{\Phi_1}. \]
  In particular, for each periodic point $p$ of $\Phi_1$ with period $\tau(p,\Phi_1)$, one has that
  \begin{align}
  	\alpha\big(p,\tau(p,\Phi_1), \cF^u_{\Phi_1}, E^{ss},a_0\big)
  	=\int^{\tau(p,\Phi_1)}_0g_s\circ H^{-1}\circ \Phi_1^t (p)dt, \label{eq. flex 3}
  \end{align}
and 
\begin{align}
	\alpha\big(p,\tau(p,\Phi_1), \cF^s_{\Phi_1}, E^{uu},a_0\big)
	=-\int^{\tau(p,\Phi_1)}_0g_u\circ H^{-1}\circ \Phi_1^t (p)dt, \label{eq. flex 4}
\end{align}
Since the periodic orbit $\cO_{\Psi}(p)$ coincides with $\cO_{\Phi_1}(p)$ and by \eqref{eq. flex 3} and \eqref{eq. flex 4}, we have that for all periodic point $p$ of $\Psi$ with period $\tau(p,\Psi)$,
	\begin{align*}
		J^s(p,\Psi)&=\alpha\big(p,\tau(p,\Psi), \cF^u_{\Psi},E^{ss},a_0\big)\\ &=\alpha\big(p,\tau(p,\Phi_1), \cF^u_{\Phi_1}, E^{ss},a_0\big)
		=\int^{\tau(p,\Phi_1)}_0g_s\circ H^{-1}\circ \Phi_1^t (p)dt,
	\end{align*}
and 
	\begin{align*}
	J^u(p,\Psi)&=\alpha\big(p,\tau(p,\Psi), \cF^s_{\Psi},E^{uu},a_0\big)\\ &=\alpha\big(p,\tau(p,\Phi_1), \cF^s_{\Phi_1}, E^{uu},a_0\big)
	=-\int^{\tau(p,\Phi_1)}_0g_u\circ H^{-1}\circ \Phi_1^t (p)dt.
\end{align*}
Since $\Phi_1$ is conjugate to $\Phi$ via $H$,  we have 
	\[	J^s(p,\Psi)
	=\int^{\tau(p,\Phi_1)}_0g_s\circ H^{-1}\circ \Phi_1^t (p)dt=\int^{\tau(H^{-1}(p),\Phi)}_0g_s\circ \Phi^t\circ H^{-1}(p)dt, \]
	and
		\[	J^u(p,\Psi)
	=-\int^{\tau(p,\Phi_1)}_0g_u\circ H^{-1}\circ \Phi_1^t (p)dt=-\int^{\tau(H^{-1}(p),\Phi)}_0g_u\circ \Phi^t\circ H^{-1}(p)dt. \]
	Moreover, by Proposition \ref{prop Jac smooth orbit-equiv}, such a $C^{1+}$-smooth Anosov flow $\Psi$ is unique up to a $C^{1+}$-smooth orbit-equivalence. This completes the proof of the theorem.
\end{proof}
 
\begin{remark}\label{rmk HA-flow reason}
	We remark that by the same argument of Theorem \ref{thm flow flexibility 1}, one can just apply  Proposition \ref{prop ami jacobian 1} for deforming $\Phi$  to get a $C^{1+}$-smooth Anosov flow  $\Psi_1$ such that its stable Jacobian associated with $g_s$. However, we cannot apply Proposition \ref{prop ami jacobian 1} again to $\Psi_1$ to get a $C^{1+}$-smooth Anosov flow  such that its unstable Jacobian associated with $g_u$, since by the orbit-equivalence $H_1$ between $\Psi_1$ and $\Phi$, the pressure of $g_u\circ H_1^{-1}$ may be not zero with respect to flow $\Psi_1$.   Hence, we should first consider the HA-flow in the conjugacy class of $\Phi$ to change both stable and unstable Jacobians, then smoothing it.
\end{remark}

\begin{remark}\label{rmk HA-flow unique 2}
	Here we show that the uniqueness of the HA-flows in Theorem \ref{thm HA-flow flexibility} up to smooth orbit-equivalent, as mentioned in Remark \ref{rmk HA-flow unique 1}. Indeed, let $\Phi_1$ and $\Phi_2$ be two HA-flows deformed from $\Phi$ with same stable and unstable cocycles. Let $\Psi_1$ and $\Psi_2$ be the $C^{1+}$-smooth Anosov flows generated by the smooth vector-fields tangent to $\Phi_1$ and $\Phi_2$, respectively. Since $\Psi_1$ and $\Psi_2$ admit same stable and unstable Jacobians at corresponding periodic orbits, by Proposition \ref{prop Jac smooth orbit-equiv}, $\Psi_1$ is smoothly orbit-equivalent to $\Psi_2$. As the proof of  Theorem \ref{thm flow flexibility 1},   $\Phi_i$ is smoothly orbit-equivalent to $\Psi_i$, for $i=1,2$. Thus, $\Phi_1$ is smoothly orbit-equivalent to $\Phi_2$.
\end{remark}

\subsection{The Teichm\"uller space of Anosov flow}

In this subsection, we prove  Corollary \ref{cor Teich space orbit-eq}, namely, the orbit-equivalence space of a transitive Anosov flow on $3$-manifold module smooth orbit-equivalence classes can be represented by a product of two H\"older function spaces module cohomologous classes.

\begin{proof}[Proof of Corollary \ref{cor Teich space orbit-eq}]
	For $C^{1+}$-smooth transitive Anosov flow $\Phi$ on $3$-manifold $M$, let  map
	\begin{align*}
	 	\widetilde{\mathcal{K}}: \mathbb{F}^{\rm H}(M)\times   \mathbb{F}^{\rm H}(M),&\longrightarrow \mathcal{O}^{1+}(\Phi),\\
		(f_s,f_u)&\longmapsto \Psi,
	\end{align*}
	be defined by Theorem \ref{thm flow flexibility 1}, i.e., $\Psi$ is orbit-equivalent to $\Phi$ via $H$ such that for all $p\in {\rm Per}(\Phi)$,
	\begin{align}
		 J^s(H(p),\Psi)=\int_0^{\tau(p,\Phi)} (f_s-P_s)\circ  \Phi^t(p) dt \quad {\rm and} \quad J^u(H(p),\Psi)=-\int_0^{\tau(p,\Phi)} (f_u-P_u)\circ  \Phi^t(p) dt,\label{eq. teich space 1}
	\end{align}
	Similarly, for $f_\sigma'\ (\sigma=s,u)$ with pressure $P_\sigma'$, one gets flow $\Psi'$. 	Recall that $\Psi'$ is smoothly orbit-equivalent to $\Psi$, if and only if they admit same stable and unstable Jacobians of the return maps at corresponding periodic points, if and only if the functions $f_\sigma'-P_\sigma'$ and $f_\sigma-P_\sigma$ have same integrals along the periodic orbits by\eqref{eq. teich space 1}, if and only if $f_\sigma'-P_\sigma'$ and $f_\sigma-P_\sigma$ are cohomologous by the Livschitz Theorem.  Thus,
		\begin{align*}
		\mathcal{K}: \mathbb{F}^{\rm H}(M)/_{\sim_\Phi}\times   \mathbb{F}^{\rm H}(M)/_{\sim_\Phi},&\longrightarrow \mathcal{O}^{1+}(\Phi)/_{\sim^o},\\
		([f_s],[f_u])&\longmapsto [\Psi],
	\end{align*}
	 is well-defined and injective.
	
	Next, we consider the opposite, we first notice that one can define a map $\Psi\mapsto (J^s_\Psi\circ H, J^u_\Psi\circ H)$ for $\Psi\in \mathcal{O}^{1+}(\Phi)$, where $H$ is the orbit-equivalence from $\Phi$ to $\Psi$. However, this map is not well defined on the quotient spaces. Instead, we will define a map 
	\begin{align*}
		\mathcal{T}^s:\mathcal{O}^{1+}(\Phi)&\longrightarrow  \mathbb{F}^{\rm H}(M),
	\end{align*}
	as follows. We take the pair $(\Psi, H)$ such that the orbit-equivalence $H$ is smooth along the orbit. Then there is a H\"older cocycle $\gamma(\cdot,t)$ over flow $\Phi$ such that
	\[  H\circ \Phi^{t}(x)=\Psi^{\gamma(x,t)}\circ H(x).\]
	Equivalently, there is  a H\"older cocycle $\beta(\cdot,t)$ over flow $\Psi$ such that
	\[  H\circ \Phi^{\beta(x,t)}\circ H^{-1}(x)=\Psi^{t}(x),\]
	here $\beta\big(H(x),\gamma(x,t)\big)=t$ and $\gamma\big(x,\beta(H(x),t)\big)=t$. Moreover, since $H$ is smooth along the orbit, we get a H\"older function
	\[\gamma'(x):=\frac{d \gamma(x,t)}{dt}|_{t=0} \quad \text{such that}\quad \gamma(x,t)=\int^t_0\gamma'\circ \Phi^t(x)dt=\alpha_{\Phi}(x,t,\gamma').\]
	Particularly, $\partial_t \gamma(x,t)=\gamma'\circ \Phi^t(x)$.
	Then we consider the cocycle $J^s_{\Psi}(x,t)$, it has
	\begin{align*}
		J^s_{\Psi}(x,t)&=\int^t_0 J^s_\Psi\circ \Psi^\tau (x)d\tau\\
		&=\int^t_0  J^s_\Psi\circ H\circ \Phi^{\beta(x,\tau)}\circ H^{-1}(x)d\tau\\
		&= \int_0^{\beta(x,t)} J^s_\Psi\circ H\circ \Phi^\kappa\circ H^{-1}(x)d\gamma(H^{-1}(x),\kappa)\\
		&= \int_0^{\beta(x,t)} \big(J^s_\Psi\circ H\circ \Phi^\kappa\circ H^{-1}(x)\big)\cdot \big(\gamma'\circ \Phi^\kappa\circ H^{-1}(x)\big) d\kappa\\
		&=\int_0^{\beta(x,t)} \big(J^s_\Psi\circ H\cdot \gamma'\big)\circ \Phi^\kappa\circ H^{-1}(x) d\kappa.
	\end{align*}
	where  $\kappa=\beta(x,\tau)$ and $\tau=\gamma(H^{-1}(x),\kappa)$. Then  we define the map \[\mathcal{T}^s(\Psi)=J^s_\Psi\circ H\cdot \gamma'. \]
	Hence, $\alpha_{\Phi}\big(H^{-1}(x), \beta(x,t),\mathcal{T}^s(\Psi) \big)=J^s_\Psi(x,t)$. And for any periodic point $p\in {\rm Per}(\Psi)$ and $q=H^{-1}(p)$, one has that the period $\tau(q,\Phi)=\beta\big(p,\tau(p,\Psi)\big)$. This implies that 
	\begin{align}
		\alpha_{\Phi}\big(q, \tau(q,\Phi),\mathcal{T}^s(\Psi) \big)=J^s_\Psi\big(p,\tau(p,\Psi)\big)=J^s(p,\Psi). \label{eq. teich 1}
	\end{align}
	Thus, when we consider two flows $\Psi_1,\Psi_2\in \cO^{1+}(\Phi)$ with  $\Psi_1\sim^o\Psi_2$ via $H_0$, we have that 
	\[\alpha_{\Phi}\big(q, \tau(q,\Phi),\mathcal{T}^s(\Psi_1) \big)=J^s(p,\Psi_1)=J^s(H_0(p),\Psi_2)=\alpha_{\Phi}\big(q, \tau(q,\Phi),\mathcal{T}^s(\Psi_2) \big).  \]
	By Livschitz theorem, the cocycles over flow $\Phi$
	\[  \alpha_{\Phi}\big(x, t,\mathcal{T}^s(\Psi_1) \big) \quad {\rm and}  \quad \alpha_{\Phi}\big(x, t,\mathcal{T}^s(\Psi_2) \big) \]
	are H\"older-cohomologous, namely, $\mathcal{T}^s(\Psi_1)\sim_\Phi \mathcal{T}^s(\Psi_2)$.   Similarly, one can define the map 
	\[	\mathcal{T}^u:\mathcal{O}^{1+}(\Phi)\to  \mathbb{F}^{\rm H}(M),\] by $\mathcal{T}^u(\Psi)=J^u_\Psi\circ H\cdot \gamma'$. And if $\Psi_1\sim^o\Psi_2$, then $\mathcal{T}^u(\Psi_1)\sim_\Phi \mathcal{T}^u(\Psi_2)$. This shows that the map
	\begin{align*}
		\mathcal{T}:\mathcal{O}^{1+}(\Phi)/_{\sim^o}&\longrightarrow  \mathbb{F}^{\rm H}(M)/_{\sim_\Phi}\times  \mathbb{F}^{\rm H}(M)/_{\sim_\Phi},\\
		[\Psi]&\longmapsto \big( [\mathcal{T}^s(\Psi)],  [\mathcal{T}^s(\Psi)]\big),
	\end{align*}
	is well defined. And $	\mathcal{T}$ is also an injection. Indeed, if $\Psi_1,\Psi_2\in \cO^{1+}(\Phi)$ orbit-equivalent via $H_0$ with $\mathcal{T}^s(\Psi_1)\sim_\Phi \mathcal{T}^s(\Psi_2)$ and $\mathcal{T}^u(\Psi_1)\sim_\Phi \mathcal{T}^u(\Psi_2)$, by \eqref{eq. teich 1}, one has that 
	\[J^s(p,\Psi_1)=J^s(H_0(p),\Psi_2) \quad {\rm and} \quad J^u(p,\Psi_1)=J^u(H_0(p),\Psi_2), \]
	for all periodic point $p\in {\rm Per}(\Psi_1)$.  By Proposition \ref{prop Jac smooth orbit-equiv}, $\Psi_1\sim^o\Psi_2$.
	
	Since  the maps $\mathcal{K}$ and $\mathcal{T}$ are injective, by Cantor-Schr\"oder-Bernstein theorem, there is a bijection between  the spaces $\mathcal{O}^{1+}(\Phi)/_{\sim^o}$ and $ \mathbb{F}^{\rm H}(M)/_{\sim_\Phi}\times  \mathbb{F}^{\rm H}(M)/_{\sim_\Phi}$.
\end{proof}

\subsection{Path-connectedness of the space of Anosov flows}

In this subsection, we show Theorem \ref{thm connect}, i.e., two orbit-equivalent (via an orbit-equivalence homotopic to Id$_M$) transitive Anosov flows on $3$-manifold can be connected by a path of Anosov flows. Firstly, we give a version of Anosov vector fields.

\begin{theorem}\label{thm path-connect C1+vector}
	Let $\Phi$ and $\Psi$ be two   Anosov flows generated by $C^{1+}$-smooth  vector-fields on $3$-manifold $M$. Assume that $\Phi$ and $\Psi$ are orbit-equivalent by an orbit-equivalence $H$ homotopic to identity Id$_M$.  Then there is a path of  Anosov flows generated by $C^{1+}$-smooth  vector-fields on $M$ connecting $\Phi$ and $\Psi$.
\end{theorem}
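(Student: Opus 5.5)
The plan is to reduce to $\Phi$ alone. We connect $\Phi$ by a path of $C^{1+}$ Anosov vector fields to a flow $\Psi'$ that is $C^{1+}$-smoothly orbit-equivalent to $\Psi$ via a diffeomorphism homotopic to ${\rm Id}_M$. We then connect $\Psi'$ to $\Psi$ using Theorem \ref{thm Isotopy}.

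\textbf{Step 1: the targets and a path of potentials.} Set $f^s_0:=J^s_\Phi$ and $f^u_0:=-J^u_\Phi$; both have zero pressure. Set $f^s_1:=\mathcal{T}^s(\Psi)$ and $f^u_1:=-\mathcal{T}^u(\Psi)$, the functions from the proof of Corollary \ref{cor Teich space orbit-eq}, whose periodic integrals along $\Phi$ reproduce the periodic Jacobians of $\Psi$ by \eqref{eq. teich 1}. Take the linear paths $f^\sigma_\kappa=(1-\kappa)f^\sigma_0+\kappa f^\sigma_1$ for $\sigma=s,u$. Pressure is continuous, and we subtract $P_{f^\sigma_\kappa}(\Phi)$ from each; this is harmless since only cohomology classes matter at the endpoints.

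\textbf{Step 2: a path of HA-flows.} First regard $\Phi$ as an HA-flow with $Y_\Phi$ its generator and $v_\Phi\equiv1$. Choose the metric so that $\alpha(x,t,\cF^u_\Phi,a)=\alpha_\Phi(x,t,f^s_0)$, using Lemma \ref{lem metric to change bundle}. Apply Proposition \ref{prop path-connect 1} with the path $f^u_\kappa$; this gives a path $\check\Phi_\kappa=v_\kappa\check Y_\kappa$ starting at $\Phi$ and conjugate to $\Phi$ via $\check H_\kappa$, with the stable-side cocycle equal to $-\alpha_\Phi(\cdot,\cdot,f^u_\kappa-P_\kappa)$. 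Then apply the stable analogue of Proposition \ref{prop path-connect 1} (foliation $\cL$ inside $\cF^s$), parametrized over $\kappa$, to the family $\check\Phi_\kappa$ with potentials $f^s_\kappa\circ\check H_\kappa^{-1}$. The pressures are preserved by Lemma \ref{lem pressure new}.

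\textbf{This is the main obstacle.} Proposition \ref{prop path-connect 1} starts from a single HA-flow, not from a continuous family. I would rerun its proof with a fixed foliation $\cL$ and a fixed box family: both can be chosen uniformly for the continuous family $\cF^s_{\check\Phi_\kappa}$ because the $\check H_\kappa$ are $C^0$-close on a compact parameter set (Remark \ref{rmk H close to id}). The Radon--Nikodym measures and the adapted metrics then vary continuously by Remarks \ref{rmk measure path 1} and \ref{rmk metric path}, and the smoothing Lemma \ref{lem C1+ bundle} already handles continuous families of foliations.

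\textbf{Step 3: smoothing and the endpoints.} The outcome is a path $\Phi_\kappa=v_\kappa Y_\kappa$ with $Y_\kappa$ a continuous path of $C^{1+}$ vector fields and $Y_0=Y_\Phi$. Exactly as in the proof of Theorem \ref{thm flow flexibility 1}, the uniformly bounded return-time argument shows that each $Y_\kappa$ generates an Anosov flow. The periodic Jacobians of $Y_1$ match those of $\Psi$ under the orbit-equivalence $H_{Y_1}$ obtained by composing the homeomorphisms. So Proposition \ref{prop Jac smooth orbit-equiv} gives a $C^{1+}$ orbit-equivalence $G$ from $Y_1$ to $\Psi$.

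\textbf{Step 4: ensuring $G$ is homotopic to ${\rm Id}_M$.} I would ensure this by checking that all conjugacies along the path are homotopic to the identity (they are deformations along leaves) and that $G$ may be taken in the homotopy class of $H\circ H_{Y_1}^{-1}$. Then $G$ is isotopic to ${\rm Id}_M$ by Theorem \ref{thm Isotopy}, and $G_{*}Y_1$ rescaled to $\Psi$'s speed is reached from $Y_1$ along the isotopy path $(G_t)_*Y_1$. A final linear interpolation of positive speed functions on the fixed orbit foliation of $\Psi$ preserves the Anosov property, which completes the path.
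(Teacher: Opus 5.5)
Your overall strategy matches the paper's: you use the same target functions ($J^s_\Phi$, $J^s_\Psi\circ H\cdot\gamma'$, and so on), Proposition \ref{prop Jac smooth orbit-equiv} at the endpoint, and then Theorem \ref{thm Isotopy} plus a linear interpolation of speeds. The gap is in Step 2, exactly where you flagged it, and your proposed fix does not work. Because you move both Jacobians simultaneously, you need the stable analogue of Proposition \ref{prop path-connect 1} over the moving base $\check\Phi_\kappa$. For this you propose a single $C^{1+}$ foliation $\cL$ that subfoliates $\cF^s_{\check\Phi_\kappa}$ for every $\kappa$. But $\check H_\kappa$ slides points along leaves of a foliation inside $\cF^u_\Phi$. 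It therefore keeps $\cF^u$ fixed and genuinely moves $\cF^s_{\check\Phi_\kappa}=\check H_\kappa(\cF^s_\Phi)$. That is precisely how the $\cF^s$-holonomy cocycle gets changed, and the measures $\mu^\kappa$ differ for different $\kappa$. Different foliations cannot share a subfoliation, and the $C^0$-closeness of $\check H_\kappa$ to ${\rm Id}_M$ from Remark \ref{rmk H close to id} does not help. The natural candidate $\check H_\kappa(\cL_0)$, with $\cL_0\subset\cF^s_\Phi$, has $C^{1+}$ leaves. However, it has no transverse $C^{1+}$ control, since $\check H_\kappa$ is only H\"older in the direction transverse to $\cF^s_\Phi$. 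You would therefore need new parametrized versions of Lemma \ref{lem C1+ L}, Lemma \ref{lem measure L} and Proposition \ref{prop adapted metric} over a varying base flow, and none of these is supplied.

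The missing idea is that no diagonal deformation is needed. The paper concatenates two legs, each starting from a single fixed HA-flow, so that Proposition \ref{prop path-connect 1} applies verbatim. First it deforms $\Phi$ along the path $f^s_\kappa$, reaching $\check\Phi_1$ with generator $\check Y_1$. Then it deforms the single HA-flow $\check\Phi_1$ along the linear path from $-J^u_\Phi\circ\check H_1^{-1}$ to $-J^u_\Psi\circ\check H\cdot\gamma'\circ\check H_1^{-1}$, where $\check H=H\circ\check H_1^{-1}$. This gives $\hat Y_\kappa$ with $\hat Y_0=\check Y_1$, and the paper sets $Y_\kappa=\check Y_{2\kappa}$ for $\kappa\le 1/2$ and $Y_\kappa=\hat Y_{2\kappa-1}$ for $\kappa\ge 1/2$, so continuity at $\kappa=1/2$ is automatic. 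You already have all the ingredients in Step 1. With this reordering your Steps 3 and 4 go through as written. One small point: when you invoke the bounded return-time argument of Theorem \ref{thm flow flexibility 1}, the intermediate potentials are not pointwise negative, only eventually negative in the sense of Lemma \ref{lem function 0 pressure}, and that is what the argument actually needs.
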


\begin{proof}
	
	Without loss of generality, we assume that the orbit-equivalence $H$ is smooth along the flow direction. There is a H\"older cocycle $\gamma(\cdot,t)$ over flow $\Phi$ such that for all $x\in M$ and $t\in\RR$, $H\circ \Phi^{t}(x)=\Psi^{\gamma(x,t)}\circ H(x)$.  Let 
	\[ f^s_0=J^s_\Phi \quad {\rm and}\quad f^s_1=J^s_\Psi\circ H\cdot \gamma', \]
	where $\gamma'(x):=\frac{d \gamma(x,t)}{dt}|_{t=0}$.  As the proof of Corollary \ref{cor Teich space orbit-eq} (see \eqref{eq. teich 1}), for all $p\in {\rm Per}(\Phi)$, 
	\begin{align}
		\alpha_\Phi\big(p,\tau(p,\Phi),f^s_1\big)=J^s\big(H(p),\Psi\big). \label{eq. path 1}
	\end{align}
	Let $f^s_\kappa=(1-\kappa) f^s_0+\kappa f^s_1$, for $\kappa\in [0,1]$. It is clear that $P_{f^{s}_0}(\Phi)=P_{f^{s}_1}(\Phi)=0$ by SRB property.  
	\begin{figure}[htbp]
		\centering
		\includegraphics[width=8cm]{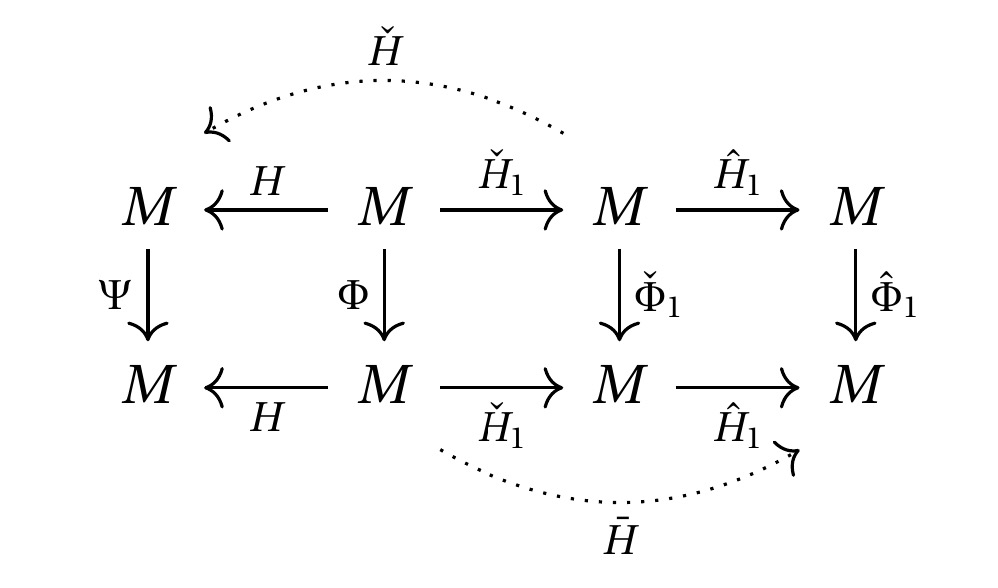}
		\caption{Commutative Diagram}	
	\end{figure}
	
	We apply Proposition \ref{prop path-connect 1} to flow $\Phi$ and potential $f^s_\kappa$. Let $g_\kappa^{s}:=f^{s}_\kappa-P_{f^{s}_\kappa}(\Phi)$.Then we get 
	\begin{itemize}
		\item a path of HA-flow $\check{\Phi}_\kappa$ whose orbit foliation tangent to $C^{1+}$-smooth vector-field $\check{Y}_\kappa$ such that $\check{\Phi}_\kappa$ is conjugate to $\check{\Phi}_0=\Phi$,
		\item a path of H\"older conjugacy $\check{H}_\kappa$ smooth along the flow direction,
		\item and a path of H\"older continuous metric $\check{a}_{\kappa}$,
	\end{itemize}
	such that
	\[	\alpha\big(\check{H}_1(x),t, \cF^s_{\check{\Phi}_1},\check{a}_1\big)=\alpha\big(x,t, \cF^s_\Phi,\check{a}_0\big)\quad {\rm and} \quad \alpha\big(\check{H}_1(x),t, \cF^u_{\check{\Phi}_1},\check{a}_1\big)=\alpha_\Phi(x,t,g^s_1)=\alpha_\Phi(x,t,f^s_1). \]

	Let $\check{H}:=H\circ \check{H}_1^{-1}$. Since $\check{H}_1$ is a conjugacy, we have  $\check{H}\circ \check{\Phi}^t_1(x)=\Psi^{\gamma(\check{H}_1^{-1}(x),t)}\circ \check{H}(x)$. Let 
	\[f^u_0(x)=-\frac{d}{dt}\Big|_{t=0}\alpha\big(x,t, \cF^s_{\check{\Phi}_1},\check{a}_1\big)=-\frac{d}{dt}\Big|_{t=0}\alpha\big(\check{H}_1^{-1}(x),t, \cF^s_\Phi,\check{a}_1\big)=-J^u_\Phi\circ \check{H}^{-1}_1(x)\]
	and \[f^u_1= -J^u_\Psi\circ \check{H}\cdot \gamma'\circ \check{H}^{-1}_1. \]
	Then we have for all $p\in {\rm Per}(\check{\Phi}_1)$,
	\begin{align}
		\alpha_{\check{\Phi}_1}\big(p,\tau(p,\check{\Phi}_1), f^u_1\big)=-J^u\big(\check{H}(p),\Psi \big).\label{eq. path 2}
	\end{align}
	As the above proof, let $f^u_\kappa=(1-\kappa) f^u_0+\kappa f^u_1$ and  $g_\kappa^u=f^u_\kappa-P_{f^u_\kappa}(\check{\Phi}_1)$.  Recall that $g_1^u=f^u_1$ and $g_0^u=f_0^u$. Applying Proposition \ref{prop path-connect 1} to $\check{\Phi}_1$ and $f^u_\kappa$, we get 
	\begin{itemize}
		\item a path of HA-flow $\hat{\Phi}_\kappa$ whose orbit foliation tangent to $C^{1+}$-smooth vector-field $\hat{Y}_\kappa$ such that $\hat{\Phi}_\kappa$ is conjugate to $\hat{\Phi}_0=\check{\Phi}_1$,
		\item a path of conjugacy $\hat{H}_\kappa$ smooth along the flow direction,
		\item  and a path of H\"older continuous metric $\hat{a}_{\kappa}$,
	\end{itemize}
	such that
	\begin{align}
		\alpha(x,t, \cF^u_{\hat{\Phi}_1},\hat{a}_1)=\alpha\big(\hat{H}^{-1}_1(x),t, \cF^u_{\check{\Phi}_1},\hat{a}_0\big)=\alpha_\Phi\big(\bar{H}^{-1}(x),t,f^s_1\big),\label{eq. path 3}
	\end{align}
	where $\bar{H}=\hat{H}_1\circ \check{H}_1$,
	and
	\begin{align}
		\alpha(x,t, \cF^s_{\hat{\Phi}_1},\hat{a}_1)=-\alpha_{\check{\Phi}_1}\big(\hat{H}^{-1}_1(x),t, g^u_1\big)=-\alpha_{\check{\Phi}_1}\big(\hat{H}^{-1}_1(x),t, f^u_1\big). \label{eq. path 4} 
	\end{align}
	Combining \eqref{eq. path 1} with \eqref{eq. path 3} and \eqref{eq. path 2} with \eqref{eq. path 4} respectively, we have that for all $p\in{\rm Per}(\hat{\Phi}_1)$,
	\begin{align}
		\alpha\big(p,\tau(p,\hat{\Phi}_1),\cF^u_{\hat{\Phi}_1},\hat{a}_1  \big)=J^s\big(H\circ \bar{H}^{-1}(p),\Psi \big)  \quad {\rm and}\quad \alpha\big(p,\tau(p,\hat{\Phi}_1),\cF^s_{\hat{\Phi}_1},\hat{a}_1  \big)= J^u\big(H\circ \bar{H}^{-1}(p),\Psi \big).  \label{eq. path 5}
	\end{align}

	Now, we consider a family of $C^{1+}$-smooth vector-field $Y_\kappa$ given by $Y_\kappa=\check{Y}_{2\kappa}$ for $\kappa\in[0,1/2]$, and $Y_\kappa=\hat{Y}_{2\kappa-1}$. It is clear that $Y_\kappa$ is a continuous path with respect to $\kappa$, since $\check{Y}_1=\hat{Y}_0$. Let $\Phi_\kappa$ be the $C^{1+}$-smooth Anosov flow generated by $Y_\kappa$. Then one has that $\Phi_0$ is $C^{1+}$-smoothly orbit-equivalent to $\Phi$. By \eqref{eq. path 5} and Proposition \ref{prop Jac smooth orbit-equiv}, $\Phi_1$ is $C^{1+}$-smooth orbit-equivalent to $\Psi$. Thus, the last thing we need to prove  is that there are two paths consisting of Anosov flows generated by $C^{1+}$-smooth vector-field  from $\Phi_0$ to $\Phi$, and from $\Phi_1$ to $\Psi$, respectively.  We prove the case of $\Phi_1$ and $\Psi$, the other case follows from a same proof. Let $H_1^*$ be a $C^{1+}$-smooth orbit-equivalence from $\Phi_1$ to $\Psi$. Since $H$ is homotopic to identity, so is $H_1^*$.  By the Isotopy Theorem (see Theorem \ref{thm Isotopy}), there is a path $H^*(x,\kappa):M\times [0,1]\to M$ such that $H^*(\cdot,0)={\rm Id}_M$, $H^*(\cdot,1)=H_1^*$ and for $\kappa\in(0,1)$ the  map $H^*_\kappa:=H^*(\cdot,\kappa)$ is a $C^{1+}$-smooth diffeomorphism. Moreover, up to using the standard smoothing methods, we can further assume that  $H^*_\kappa:=H^*(\cdot,\kappa)$ is  $C^{\infty}$-smooth, for  all $\kappa\in(0,1)$. Let $\Psi^t_\kappa:= H^*_\kappa\circ \Phi^t_1\circ (H^*_\kappa)^{-1}$. Then we have that  $\Psi_0=\Phi_1$, and $\Psi_\kappa$ is Anosov flow generated by $C^{1+}$-smooth vector field $DH^*_\kappa\circ Y_1\circ D(H^*_\kappa)^{-1}$. Moreover, $\Psi_1$ is a smooth time change of $\Psi$. Equivalently, there are $C^{1+}$-smooth functions $v_0, v_1:M\to \RR_+$ such that  $\Psi_1$ is generated by $v_1\cdot Y_\Psi$ and $\Psi$ is generated by $v_0\cdot Y_\Psi$, where $Y_\Psi$ is the $C^{1+}$-smooth unit bundle tangent to $\cO_\Psi$.  Let $v_\kappa=\kappa v_1+(1-\kappa) v_0$, for $\kappa\in[0,1]$. Then the the vector-field $v_\kappa\cdot Y_\Psi$ gives a path from $\Psi$ to $\Psi_1$. This finish the proof of Theorem \ref{thm path-connect C1+vector}. 
\end{proof}

\begin{corollary}\label{cor path-connect C1+flow}
	Let $\Phi$ and $\Psi$ be two $C^{1+}$-smooth  Anosov flows on $3$-manifold $M$. Assume that $\Phi$ and $\Psi$ are orbit-equivalent by an orbit-equivalence $H$ homotopic to identity Id$_M$.  Then there is a path of $C^{1+}$-smooth Anosov flows connecting $\Phi$ and $\Psi$.
\end{corollary}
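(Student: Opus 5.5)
The corollary should follow from Theorem \ref{thm path-connect C1+vector} once we connect each $C^{1+}$-smooth flow to one generated by a $C^{1+}$-smooth vector field through $C^{1+}$ Anosov flows. The second item of Proposition \ref{prop flow regularity} provides a $C^{1+}$-diffeomorphism $h_\Phi$, arbitrarily $C^{1+}$-close to ${\rm Id}_M$, such that $\Phi'^t:=h_\Phi\circ\Phi^t\circ h_\Phi^{-1}$ is generated by a $C^{1+}$ vector field. The same proposition gives $h_\Psi$ and $\Psi'$ for $\Psi$. Then $\Phi'$ and $\Psi'$ are orbit-equivalent via $H':=h_\Psi\circ H\circ h_\Phi^{-1}$. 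Since $h_\Phi$ and $h_\Psi$ are $C^0$-close to the identity, they are homotopic to ${\rm Id}_M$, and therefore $H'$ is homotopic to ${\rm Id}_M$. Theorem \ref{thm path-connect C1+vector} then yields a path of Anosov flows generated by $C^{1+}$ vector fields, hence of $C^{1+}$ Anosov flows, from $\Phi'$ to $\Psi'$.

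It remains to join $\Phi$ to $\Phi'$, and likewise $\Psi$ to $\Psi'$. Because $h_\Phi$ is $C^{1+}$-close to the identity, there is a path $h_\kappa$ of $C^{1+}$-diffeomorphisms with $h_0={\rm Id}_M$ and $h_1=h_\Phi$. In a small $C^1$-neighbourhood of the identity one can take the straight-line homotopy in exponential charts, $h_\kappa(x)=\exp_x\big(\kappa\exp_x^{-1}h_\Phi(x)\big)$, which stays a diffeomorphism for small perturbations. Alternatively, Theorem \ref{thm Isotopy} applies together with smoothing. Setting $\Phi_\kappa^t:=h_\kappa\circ\Phi^t\circ h_\kappa^{-1}$ gives flows that are $C^{1+}$-smoothly conjugate to $\Phi$, so each is a $C^{1+}$ Anosov flow. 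The family depends continuously on $\kappa$ in the $C^{1+}$ topology because composition is continuous. Concatenating the three paths proves the corollary.

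The main obstacle is this continuity of $\kappa\mapsto h_\kappa\circ\Phi\circ h_\kappa^{-1}$ in the space of $C^{1+}$ flows, which requires a fixed Hölder exponent. Composition and inversion are continuous in $C^{1+\alpha}$ only with a possible loss of exponent. I would handle this by working in $C^{1+\alpha'}$ for a slightly smaller $\alpha'$, which is harmless because $C^{1+}$ allows any Hölder exponent. The homotopy statement for $H'$ is routine.
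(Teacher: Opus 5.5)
Your proposal is correct and matches the paper's proof. Both conjugate $\Phi$ and $\Psi$ by $C^{1+}$ diffeomorphisms close to ${\rm Id}_M$ (Proposition \ref{prop flow regularity}) to flows generated by $C^{1+}$ vector fields, connect those by Theorem \ref{thm path-connect C1+vector}, and join each flow to its conjugate along an isotopy $h_\kappa$ from ${\rm Id}_M$. The paper gets that isotopy from the Isotopy Theorem, whereas your exponential-chart homotopy is a more elementary route; your explicit check that $H'$ is homotopic to the identity and your remark on H\"older-exponent loss fill in details the paper leaves implicit.
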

\begin{proof}
	By Proposition \ref{prop flow regularity}, there are  Anosov flows $\Phi_*$ and $\Psi_*$ generated by $C^{1+}$-smooth vector-fields on $M$ and conjugate to $\Phi$ and $\Psi$ via $C^{1+}$-smooth conjugacies $H_\Phi$ and $H_\Psi$ which are $C^1$-close to Id$_M$, respectively.  By Theorem \ref{thm path-connect C1+vector}, there is a path consisting of $C^{1+}$-smooth Anosov flows connecting $\Phi_*$ and $\Psi_*$. Since the $C^{1+}$-smooth diffeomorphism $H_\Phi$ is homotopic to Id$_M$, by the Isotopy Theorem, there is a path of $C^{1+}$-smooth diffeomorphisms $H_\Phi^\kappa\ (\kappa\in[0,1])$ with $H_\Phi^1=H_\Phi$ and $H_\Phi^0={\rm Id}_M$. Then the $C^{1+}$-smooth Anosov flow $\Phi^t_\kappa:= H_\Phi^\kappa\circ \Phi^t\circ (H_\Phi^\kappa)^{-1}$  forms a path from $\Phi$ to $\Phi_*$. Similarly, there is a   path  of $C^{1+}$-smooth Anosov flows connection  $\Psi_*$ and $\Psi$. This completes the proof of corollary.
\end{proof}

Since the Anosov flows and Anosov vector fields are $C^1$-open, one can strengthen Theorem \ref{thm path-connect C1+vector} and Corollary \ref{cor path-connect C1+flow} to  $C^r $-regularity case.
\begin{corollary}
	Let $r\geq1$. Let $\Phi$ and $\Psi$ be two $C^r$-smooth (or generated by $C^{r}$-smooth vector-fields)  Anosov flows  on $3$-manifold $M$. Assume that $\Phi$ and $\Psi$ are orbit-equivalent by an orbit-equivalence $H$ homotopic to identity Id$_M$.  Then there is a path of $C^r$-smooth (or generated by $C^{r}$-smooth vector-fields)  Anosov flows  on $M$ connecting $\Phi$ and $\Psi$.
	In particular, we get Theorem \ref{thm connect}. 
\end{corollary}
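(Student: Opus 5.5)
The plan is to reduce to the $C^{1+}$ case already settled in Theorem \ref{thm path-connect C1+vector} and Corollary \ref{cor path-connect C1+flow}. The two tools are $C^r$-approximation and the $C^1$-openness of the Anosov condition.

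\emph{Case $r>1$.} Here $C^r$ is already $C^{1+}$, so Corollary \ref{cor path-connect C1+flow} (respectively Theorem \ref{thm path-connect C1+vector}) gives a path $\{\Phi_\kappa\}_{\kappa\in[0,1]}$ of $C^{1+}$ Anosov flows, or of flows generated by $C^{1+}$ vector fields, from $\Phi$ to $\Psi$. The path is continuous only in the $C^{1+}$ (at least $C^1$) topology, and its intermediate members need not be $C^r$. To fix this, I regard it as a compact path $\kappa\mapsto Y_\kappa$ in $\mathfrak{X}^{1}(M)$ lying inside the $C^1$-open set $\mathcal{A}^1(M)$. By compactness there is $\e>0$ such that every vector field $C^1$-$\e$-close to some $Y_\kappa$ is Anosov. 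I then replace the path by a $C^\infty$ path $\tilde Y_\kappa$ that is uniformly $\e$-close to $Y_\kappa$ in $C^1$. This can be done by mollifying in charts with a partition of unity, which depends continuously on $\kappa$, or by a piecewise-linear path through smooth approximants at finitely many parameters. The new path lies in $\mathcal{A}^r(M)$ and is continuous in the $C^r$ topology. Its endpoints $\tilde Y_0,\tilde Y_1$ are close to $Y_0,Y_1$ but not equal to them. I therefore join $Y_0$ to $\tilde Y_0$ by the straight segment $(1-s)Y_0+s\tilde Y_0$. Since $Y_0$ is $C^r$, I choose the approximation $\tilde Y_0$ to be $C^r$-close to $Y_0$, so the whole segment is $C^r$ and stays $C^1$-$\e$-close to $Y_0$, hence Anosov. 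The same is done at $\kappa=1$. When $\Phi,\Psi$ are merely $C^r$ flows rather than generated by $C^r$ fields, I first apply Proposition \ref{prop flow regularity} and the Isotopy Theorem, exactly as in Corollary \ref{cor path-connect C1+flow}, to pass to generating vector fields.

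\emph{Case $r=1$.} The given flows are only $C^1$, while Corollary \ref{cor path-connect C1+flow} needs $C^{1+}$. I $C^1$-approximate $\Phi$ and $\Psi$ by $C^\infty$ Anosov flows $\Phi',\Psi'$. The segments $\Phi\to\Phi'$ and $\Psi\to\Psi'$ remain Anosov by openness. By structural stability, $\Phi'$ is orbit-equivalent to $\Phi$ by an orbit-equivalence $C^0$-close to $\mathrm{Id}_M$, hence homotopic to it, and similarly for $\Psi'$. Thus $\Phi'$ and $\Psi'$ are orbit-equivalent via a map homotopic to $\mathrm{Id}_M$. Transitivity also passes to $\Phi'$ and $\Psi'$, being preserved under orbit-equivalence. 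The previous case, with $r=2$, then connects $\Phi'$ to $\Psi'$ through smooth Anosov flows, and concatenating the three paths gives the required $C^1$ path.

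Theorem \ref{thm connect} follows directly. The main point needing care is the $\kappa$-continuous smoothing. The path from Theorem \ref{thm path-connect C1+vector} is continuous only in $C^1$ (or $C^{1+}$), so I must check that a fixed linear smoothing operator with small parameter, applied to $Y_\kappa$, gives a path that is continuous in $C^r$ and uniformly $C^1$-close. For a fixed mollifier this holds, since it maps $C^0$-continuous families to $C^\infty$-continuous families.
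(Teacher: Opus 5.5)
Your proposal is correct and is essentially the paper's argument: the paper proves this corollary by the single remark that the Anosov condition is $C^1$-open, so the $C^{1+}$ paths of Theorem \ref{thm path-connect C1+vector} and Corollary \ref{cor path-connect C1+flow} can be upgraded to $C^r$ ones. Your proposal supplies the details that remark leaves implicit: the smoothing of the path inside a uniform $C^1$-neighbourhood (where only $C^1$-closeness of the approximants is actually needed, not $C^r$-closeness), and the $r=1$ reduction via smooth approximation plus structural stability, which keeps the orbit-equivalence homotopic to $\mathrm{Id}_M$.
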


\subsection{The homotopy type of the  space of Anosov flows}
In this subsection, we prove Theorem \ref{thm component}. Recall that  for a transitive Anosov flow $\Phi$ on $3$-manifold $M$,
we denote by $\mathcal{A}^r(\Phi)$  the path component of the space of $C^r$-smooth Anosov vector fields on $M$ containing $\Phi$.   We will show that 
 \[ \mathcal{A}^r(\Phi)\simeq {\rm Diff}_0^r(M). \] 
 The sketch of our proof is similar to  \cite{FaG2014} which consider the homotopy type of the space of Anosov diffeomorphisms homotopic to an Anosov automorphism on $\TT^2$. Here, we heavily rely on  $3$-dimensional manifolds. In particular, for $3$-dimensional manifold $M$, one has that  ${\rm Homeo}(M)\simeq {\rm Diff}(M)\simeq {\rm Isom}(M)$. 

\begin{proof}[Proof of Theorem \ref{thm component}]
	
	 Recall that $\cO_0^r(\Phi)$ is the identity component of  the space of $C^r$-smooth Anosov flows which are orbit-equivalent to $\Phi$, namely,
	\[\cO_0^r(\Phi):=\big\{ \Psi\ |\ \Psi:  C^r\ \text{Anosov flow orbit-equivalent to}\ \Phi\ \text{via orbit-equivalence homotopic to}\ {\rm Id}_M\big\}.\]
	In order to avoid abusing symbols, let $\cO_0^r(\Phi)$ consist of $C^r$-smooth vector fields in this proof. Then by the path-connectedness (Theorem \ref{thm connect}) and the structural stability,  $\mathcal{A}^r(\Phi) = \cO_0^r(\Phi)$.

Recall that for $C^1$-generic transitive $C^r$-smooth Anosov flow $\Phi_*$, its centralizer is trivial, see for example \cite{O2021,S1979}. Namely,  
\[C^r(\Phi_*):=\big\{ H\in {\rm Diff}^r(M)\ |\ H\circ \Phi^t_*\circ H^{-1}= \Phi^t_*, \ t\in \RR   \big\},\]
 the centralizer of $\Phi_*$, is the set of  time-$\tau$ maps of $\Phi_*$, where $r> 1$ and $\tau\in\RR$. Hence, up to consider a $C^1$-perturbation $\Phi_*$ of $\Phi$, we can assume that $\Phi$ has trivial centralizer, since $\cO_0^r(\Phi_*)=\cO_0^r(\Phi)$ by the structural stability.  Let 
	\[\mathcal{C}_0^r(\Phi):=\big\{ \Psi\in \cO_0^r(\Phi)  \ |\ \Psi^t:=H\circ \Phi^t\circ H^{-1},\ \text{for some}\  H\in {\rm Diff}^r_0(M) \big\}.\]
	Since the centralizer  $C^r(\Phi)$ is trivial,  $C^r(\Phi)\subset {\rm Diff}_0^r(M)$ and $C^r(\Phi)$ is homeomorphic to $\RR$. Then, by the fact that $\mathcal{C}_0^r(\Phi)\cong{\rm Diff}_0^r(M)/C^r(\Phi)$, one has that ${\rm Diff}^r(M)$ is a fibration over  $\mathcal{C}^r(\Phi)$  with contractible fiber $\RR$. Thus,  
	\[ \mathcal{C}_0^r(\Phi)\simeq {\rm Diff}_0^r(M).  \]
	In the following, we are going to show that $\cO_0^r(\Phi)$ is homotopy equivalent to $\mathcal{C}_0^r(\Phi)$.
	
	Note that both $\cO_0^r(\Phi)$ and $\mathcal{C}_0^r(\Phi)$ have the homotopy type of CW complexes. Indeed, recall that both ${\rm Diff}_0^r(M)$ and  $\mathfrak{X}^r(M)$ the set of $C^r$-smooth vector fields on $M$ are separable absolute neighborhood retracts, so are $\cO_0^r(\Phi)$ and $\mathcal{C}_0^r(\Phi)$, since     $\cO_0^r(\Phi)$ is an open subset of  $\mathfrak{X}^r(M)$ by the structural stability and $\mathcal{C}_0^r(\Phi)$ is homotopy equivalent to ${\rm Diff}_0^r(M)$.  Thus by the Whitehead's Theorem (e.g. see \cite[Theorem 1.13.22 and Theorem 1.13.30]{S2020}), $\cO_0^r(\Phi)$ and $\mathcal{C}_0^r(\Phi)$ have the homotopy type of CW complexes, and they are homotopy equivalent if and only if
	\begin{align}
		\pi_k\big( \cO_0^r(\Phi)\big)\cong \pi_k\big( \mathcal{C}_0^r(\Phi)\big),\quad \forall k\in\NN. \label{eq. whitehead}
	\end{align}

	By the proof of Theorem \ref{thm connect}, we actually get the following lemma.
	\begin{lemma}\label{lem homotopy group}
		The inclusion $\mathcal{C}_0^r(\Phi) \hookrightarrow \cO_0^r(\Phi)$ induces an  epimorphism on their homotopy groups.
	\end{lemma}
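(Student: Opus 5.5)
Fix $k\geq 0$ and a class in $\pi_k(\cO_0^r(\Phi),\Phi)$, represented by a continuous family $\{\Psi_\theta\}_{\theta\in\mathbb{S}^k}$ of $C^r$ Anosov vector fields with $\Psi_{\theta_0}=\Phi$. We must deform this sphere inside $\cO_0^r(\Phi)$, relative to the base point, into $\mathcal{C}_0^r(\Phi)$. The idea is to run the proof of Theorem \ref{thm path-connect C1+vector} with parameters. There the two ingredients were the interpolated potentials $f_\kappa=(1-\kappa)f_0+\kappa f_1$ and Proposition \ref{prop path-connect 1}, which already handles continuous families of Hölder potentials. So we replace the interval of potentials by a sphere-parametrized family.

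\textbf{Step 1: parametrized orbit equivalences.} By structural stability, the conjugacy given by the Anosov shadowing construction depends continuously on the flow. Together with path-lifting along the sphere (one may first contract via cones on cells), this lets us choose orbit-equivalences $H_\theta$ from $\Phi$ to $\Psi_\theta$. They are Hölder, smooth along orbits, depend continuously on $\theta$, satisfy $H_{\theta_0}={\rm Id}_M$, and are homotopic to ${\rm Id}_M$. As in the proof of Corollary \ref{cor Teich space orbit-eq}, we get time-change derivatives $\gamma'_\theta$ and potentials
\[
f^s_\theta=J^s_{\Psi_\theta}\circ H_\theta\cdot\gamma'_\theta,\qquad f^u_\theta,
\]
all of pressure zero and continuous in $\theta$ in the Hölder topology. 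These potentials realize the periodic Jacobians of $\Psi_\theta$, by \eqref{eq. teich 1}.

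\textbf{Step 2: parametrized deformation.} Apply Proposition \ref{prop path-connect 1} twice, exactly as in the proof of Theorem \ref{thm path-connect C1+vector}. Use the two-parameter family $(1-\kappa)J^s_\Phi+\kappa f^s_\theta$, and then the analogous unstable family. This produces $C^{1+}$ vector fields $Y_{\theta,\kappa}$ with $Y_{\theta,0}$ corresponding to $\Phi$. The key point is that the construction in Proposition \ref{prop path-connect 1} uses a fixed foliation $\cL$ and fixed boxes $\{V_i\}$. Moreover, the measures, metrics (Remarks \ref{rmk measure path 1}, \ref{rmk metric path}) and the smoothing diffeomorphisms $h_\kappa$ of Lemma \ref{lem C1+ bundle} vary continuously in any compact parameter space. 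So the same proof works with parameter space $\mathbb{S}^k\times[0,1]$ in place of $[0,1]$. At $\theta=\theta_0$ the potentials are constant in $\kappa$ (they equal the Jacobians of $\Phi$). By uniqueness in the construction, we can arrange the base-point family to stay trivial.

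\textbf{Step 3: relating to $\mathcal{C}_0^r(\Phi)$.} At $\kappa=1$, each $Y_{\theta,1}$ has the same periodic Jacobians as $\Psi_\theta$. By Proposition \ref{prop Jac smooth orbit-equiv}, it is $C^{r_*}$ orbit-equivalent to $\Psi_\theta$. Conversely, $Y_{\theta,0}$ is a smooth time change of a $C^{1+}$-diffeomorphic copy of $\Phi$. Passing to $C^r$ by $C^1$-openness and smoothing, the time-change interpolation $v_\kappa=\kappa v_1+(1-\kappa)v_0$ in the proof of Theorem \ref{thm path-connect C1+vector} is convex in $v$, hence automatically continuous in $\theta$. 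Concatenating these pieces gives a homotopy of spheres from the family $\{\Psi_\theta\}$ to a family of the form $\{G_\theta\cdot\Phi\}$, with $G_\theta\in{\rm Diff}_0^r(M)$ continuous in $\theta$.

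\textbf{Main obstacle.} The hardest step is making the smooth orbit-equivalence given by Proposition \ref{prop Jac smooth orbit-equiv}, and the isotopy from Theorem \ref{thm Isotopy}, depend continuously on $\theta$. Both are stated only pointwise. First I would try to avoid them: keep track of the composed conjugacies $H_\theta\circ\bar H_\theta^{-1}$, which already vary continuously. The de la Llave argument shows that these are smooth up to a Hölder time shift, and that shift is solved by a Livschitz equation. Livschitz solutions depend continuously on the cocycle once normalized at a point, so the resulting smooth orbit-equivalences depend continuously on $\theta$. This avoids choosing isotopies, and it yields the surjectivity on $\pi_k$.
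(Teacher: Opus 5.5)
Your route is the same as the paper's, whose proof of this lemma is essentially a one-line instruction to run the proof of Theorem~\ref{thm connect} with parameters. Your Step 3 device of conjugating the entire $\kappa$-path by the smooth orbit-equivalences, rather than choosing isotopies pointwise, is cleaner than what the paper indicates. However, Step 1 has a genuine gap when $k=1$, which is exactly the case the paper relies on afterwards.

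Continue the structural-stability orbit-equivalences around a based loop $\theta\mapsto\Psi_\theta$, starting from $H_{\theta_0}={\rm Id}_M$. You return to $\Phi$ with some self-orbit-equivalence $g$ of $\Phi$ that is homotopic to ${\rm Id}_M$. Two continuations from the same start differ only by sliding along orbits. So a continuous family over $\mathbb{S}^1$ with $H_{\theta_0}={\rm Id}_M$ exists only if $g$ maps every orbit of $\Phi$ to itself. ``Contracting via cones on cells'' is not available for a loop. For $k\geq 2$ your lifting argument is fine, because $\mathbb{S}^k$ is simply connected. When $g$ moves orbits, the potentials $f^s_\theta, f^u_\theta$ need not close up at $\theta_0$: at the end of the loop their periodic integrals are $J^s(g(p),\Phi)$ and $-J^u(g(p),\Phi)$ rather than the values at $p$. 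Then the construction of Steps 2--3 does not produce a loop in $\mathcal{C}_0^r(\Phi)$.

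This is not a technicality that more care in Steps 2--3 could remove. Consider pairs $(\Psi,H)$, with $H$ an orbit-equivalence from $\Phi$ to $\Psi$ homotopic to ${\rm Id}_M$, and identify $H,H'$ when $H^{-1}H'$ preserves every orbit. By structural stability these classes form a covering space of $\cO_0^r(\Phi)$, and the class of $g$ is the monodromy of the loop, which is a homotopy invariant of based loops.

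For a loop in $\mathcal{C}_0^r(\Phi)$, lift through ${\rm Diff}^r_0(M)\to\mathcal{C}_0^r(\Phi)$. This shows the monodromy lies in the centralizer of $\Phi$, which the paper has arranged to consist of time-$\tau$ maps, so it is trivial. Hence a loop with nontrivial monodromy cannot be homotoped into $\mathcal{C}_0^r(\Phi)$ by any argument.

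Self-orbit-equivalences homotopic to ${\rm Id}_M$ that move orbits can exist; by Barthelm\'e--Gogolev, for skewed $\mathbb{R}$-covered Anosov flows they may realize the one-step-up map. Running the proof of Theorem~\ref{thm connect} with $\Psi=\Phi$ and $H=g$ would plausibly produce exactly such a loop. So for $k=1$ you need an extra input, namely that every loop in $\cO_0^r(\Phi)$ has trivial monodromy. Neither your proposal nor the paper's sketch provides this, and it may fail in general.

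Two smaller points. First, the second application of Proposition~\ref{prop path-connect 1} is over the $\theta$-dependent base flow $\check\Phi_{\theta,1}$. So a single fixed $C^{1+}$ foliation inside its stable foliation is not available as you claim. Second, the $C^r$-continuity in $\theta$ of de la Llave's smooth orbit-equivalences is asserted, not shown.
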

	\begin{proof}[Proof of Lemma \ref{lem homotopy group}]
		Let $\mathbb{D}^k$ be a $k$-disk and $\alpha:(\mathbb{D}^k,\partial)\to \big( \cO_0^r(\Phi),\Phi \big)$ be a continuous map with $[\alpha]\in \pi_k\big( \cO_0^r(\Phi)\big)$.  By the path-connectedness of $ \cO_0^r(\Phi)$, for each $x\in\mathbb{D}^k$, there is a path of  $C^r$-smooth Anosov flows from  $\alpha(x)$ to $\mathcal{C}_0^r(\Phi)$. Using the same way of the proof of Theorem \ref{thm connect}, one can check that for all $x\in\mathbb{D}^k$, $\alpha(x)$ can be simultaneously homotopic  to flows in $\mathcal{C}_0^r(\Phi)$.  Since each flow $\alpha(x)$ has a path to $\Phi$ exactly, we can further let  the whole boundary $\alpha|_{\partial\mathbb{D}^k}$ be homotopic to $\Phi$, just by making the path from $\alpha(x)$ to $\Phi$ being continuous with respect to $x\in \partial\mathbb{D}^k$.  Hence, $\alpha$ is homotoped to $\hat{\alpha}:(\mathbb{D}^k,\partial)\to \big( \mathcal{C}_0^r(\Phi), \Phi\big)$, and $ \pi_k\big( \mathcal{C}_0^r(\Phi)\big)\to \pi_k\big( \cO_0^r(\Phi)\big)$ is epic.
	\end{proof}
	
	Recall that  ${\rm Diff}_0^r(M)\simeq {\rm Isom}_0(M)\cong \{{\rm Id}_M\} 
	\ \text{or}\  \mathbb{S}^1$, for 3-manifold admitting Anosov flows, see Remark \ref{rmk isom group}. We get that $\cO_0^r(\Phi)\simeq {\rm Diff}_0^r(M)$ is aspherical, i.e., $ \pi_k\big( \cO_0^r(\Phi)\big)=0$ for $k\geq 2$. Hence, for $k\geq 2$,  the epimorphism  $\pi_k\big( \mathcal{C}_0^r(\Phi)\big)\to \pi_k\big( \cO_0^r(\Phi)\big)$ is a trivial isomorphism.  For $k=1$, we consider the sequence \[\pi_1\big( \mathcal{C}_0^r(\Phi)\big)\to \pi_1\big(\cO_0^r(\Phi) \big)\to \pi_1\big( {\rm Homeo}_0(M)\big).\]
	The first map is induced by the inclusion and the second map is  given by the structural stability. Since $\mathcal{C}_0^r(\Phi)\simeq {\rm Diff}_0^r(M)$ and $\pi_1\big( {\rm Diff}^r_0(M)\big)\cong\pi_1\big( {\rm Homeo}_0(M)\big)$, the composition of above two maps is monic. Thus, the  epimorphism   $\pi_1\big( \mathcal{C}_0^r(\Phi)\big)\to \pi_1\big(\cO_0^r(\Phi) \big)$ is also an monomorphism.  As a result, we get \eqref{eq. whitehead}.  Hence, $\cO_0^r(\Phi)\simeq  \mathcal{C}_0^r(\Phi)\simeq {\rm Diff}^r_0(M)$ and we get Theorem \ref{thm component}.
\end{proof}

\section{Anosov Flows with $C^1$-Smooth Foliations}\label{sec C1 Foliation}

In this section, we focus on the Anosov flows admitting $C^1$-smooth strong hyperbolic foliations on $3$-manifolds. Firstly, we prove the rigidity on the conjugacy preserving $C^1$-strong hyperbolic foliations, i.e., Theorem \ref{thm flow Phi sC1 rigid}. Then we show the conjugacy classification on them, i.e., Theorem \ref{thm flow Phi sC1 flex}. Finally,  we get the representation of  their Teichm\"uller spaces, i.e., Corollary \ref{cor Techmuller space suC1}.

\subsection{Rigidity of Conjugacy Preserving Smooth Foliations}\label{subsec rigidity}

Beyond  Theorem \ref{thm flow Phi sC1 rigid} the 
rigidity of conjugacy preserving $C^1$-smooth strong hyperbolic foliations, we actually consider the following two more general cases in the sense of the rigidity of conjugacy preserving smooth foliations which may be not invariant under the flows. 

\begin{theorem}\label{thm C1 rigid 1}
	Let $\Phi, \Psi$ be two $C^{r}$-smooth $(r>1)$ transitive Anosov flows on $3$-manifold $M$ conjugate via $H$. Assume that there is a  $C^{1+}$-smooth one-dimensional foliation $\cL$ satisfying 
	\begin{itemize}
		\item $\cL$ transversally intersects with $\cF^s_\Phi$,
		\item  $T\cL$ and $E^{ss}_\Phi$ are not jointly integrable.
	\end{itemize}
	If $H(\cL)$ is  a $C^{1+}$-smooth foliation, then the conjugacy $H$ is $C^{r_*}$-smooth  along each leaf of $\cF^s_\Phi$.
\end{theorem}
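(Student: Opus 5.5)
The plan is to reduce the statement to matching stable periodic data and then invoke Theorem \ref{thm delaLlave}. By the remark following Theorem \ref{thm delaLlave}, it suffices to prove
\[ J^s(p,\Phi)=J^s(H(p),\Psi),\qquad \forall p\in {\rm Per}(\Phi), \]
and for this it is enough to show that, for every $x$, the restriction of $H$ to a local strong stable leaf $\cF^{ss}_{\Phi,{\rm loc}}(x)\to \cF^{ss}_{\Psi,{\rm loc}}(H(x))$ is $C^1$ with derivative uniformly bounded away from $0$ and $\infty$. Indeed, for a periodic $p$ we could then compare $D\Phi^{n\tau}|_{E^{ss}}$ with $D\Psi^{n\tau}|_{E^{ss}}$ through $D(H|_{\cF^{ss}})$ at $p$ and at $\Phi^{n\tau}(p)=p$, and the two Jacobians over one period would coincide. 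Since $H$ is a conjugacy (time preserving), it is automatically smooth along orbits, so all the work is along $\cF^{ss}$.

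To get regularity along $\cF^{ss}$ I would use the non-joint integrability of $T\cL$ and $E^{ss}_\Phi$ through a \emph{temporal function}. Fix $x$, a local stable leaf $W=\cF^s_{\Phi,{\rm loc}}(x)$, and a nearby stable leaf $W'=\cF^s_{\Phi,{\rm loc}}(x')$ with $x'\in\cL(x)$. Let ${\rm Hol}^{\cL}:W\to W'$ be the $C^{1+}$ holonomy of $\cL$. For $y\in\cF^{ss}_{\Phi,{\rm loc}}(x)$, the point ${\rm Hol}^{\cL}(y)$ lies in $W'$, so there is a unique small time $\tau_\Phi(y)$ with
\[ \Phi^{\tau_\Phi(y)}\big({\rm Hol}^{\cL}(y)\big)\in \cF^{ss}_{\Phi,{\rm loc}}(x'). \]
Because $\cL$, the orbit foliation and $\cF^{ss}_\Phi$ restricted to $W'$ are $C^{1+}$ (Proposition \ref{prop foliation C1+}, plus the $C^{1+}$ holonomy of $\cL$), the map $y\mapsto\tau_\Phi(y)$ is $C^{1+}$ along $\cF^{ss}_{\Phi,{\rm loc}}(x)$. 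Non-joint integrability means exactly that $\tau_\Phi\not\equiv 0$ for suitable choices of $x,x'$. I would then argue, via a standard analyticity-type or transitivity argument, that its derivative along $\cF^{ss}$ is nonzero on an open set of base points.

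The same construction on the $\Psi$ side, using the $C^{1+}$ foliation $H(\cL)$ and the $C^{1+}$ foliations of $\Psi$, gives a $C^{1+}$ function $\tau_\Psi$ on $\cF^{ss}_{\Psi,{\rm loc}}(H(x))$. Because $H$ maps $\cL,\cF^s_\Phi,\cF^{ss}_\Phi$ to $H(\cL),\cF^s_\Psi,\cF^{ss}_\Psi$ and preserves time along orbits, we get
\[ \tau_\Psi\circ H=\tau_\Phi\quad \text{on } \cF^{ss}_{\Phi,{\rm loc}}(x). \]
Where $\tau_\Phi'\neq 0$, the function $\tau_\Phi$ is a $C^{1+}$ local coordinate on the strong stable curve. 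Then $\tau_\Psi$ is monotone, since $H$ is a homeomorphism, and I would show that $\tau_\Psi'\neq 0$ there too. If this holds, $H|_{\cF^{ss}}=\tau_\Psi^{-1}\circ\tau_\Phi$ is a $C^{1+}$ diffeomorphism near such points. In particular it follows, a posteriori, that $H(\cL)$ and $E^{ss}_\Psi$ are not jointly integrable.

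Finally, I would propagate this local regularity to everything. The points where $H|_{\cF^{ss}}$ is $C^1$ with nonzero derivative form an open, nonempty, flow-invariant set, because $H\circ\Phi^t=\Psi^t\circ H$ and the flows are smooth. By transitivity this set contains a dense orbit. To obtain uniform bounds on all of $M$, I would use a Livschitz-type argument: the relation $D(H|_{\cF^{ss}})$ intertwines the stable derivative cocycles wherever it is defined, which gives equality of $J^s$ on periodic orbits through the good open set. These orbits are dense, and I would then upgrade to all periodic orbits by Livschitz and shadowing. The step I expect to be the main obstacle is the nondegeneracy: showing that $\tau_\Phi'\neq 0$ somewhere, and that $\tau_\Psi'$ cannot vanish where $\tau_\Phi'$ does not, given that $\tau_\Psi$ is only known to be $C^{1+}$ and monotone. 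To exclude the degenerate case, I would first try comparing how $\tau$ rescales under the flow. Pushing by $\Phi^{-t}$ expands strong stable distances at a rate given by $J^s$ while preserving temporal displacement, and a vanishing derivative on the $\Psi$ side would then be incompatible with the bounded distortion of the $C^{1+}$ functions.
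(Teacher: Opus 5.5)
Your architecture (temporal functions on strong stable arcs, the identity $\tau_\Psi\circ H=\tau_\Phi$, local smoothness, propagation, then Theorem \ref{thm delaLlave}) is the paper's. The gap is the step you flag as the main obstacle: you leave it open, and the fix you suggest does not work as stated.

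You try to write $H|_{\cF^{ss}}=\tau_\Psi^{-1}\circ\tau_\Phi$. That needs $\tau_\Psi'\neq 0$ at the \emph{same} points where $\tau_\Phi'\neq 0$, which is only known a posteriori. Your rescaling/bounded-distortion sketch is not carried out, and it is unclear how it could be: $\cL$ is not $\Phi$-invariant, so $\Phi^{-t}$ does not carry your temporal function to one of the same type with controlled $C^{1+}$ norms.

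The missing observation is to solve for $H^{-1}$ instead.
\begin{itemize}
\item Getting $\tau_\Phi'\neq 0$ somewhere needs no analyticity or transitivity. It follows from the mean value theorem ($\tau_\Phi(x)=0$, $\tau_\Phi\not\equiv 0$) and continuity of $\tau_\Phi'$.
\item Then $\tau_\Phi$ is a $C^{1+}$ chart on a subarc. By the implicit function theorem, $H^{-1}=\tau_\Phi^{-1}\circ\tau_\Psi$ is $C^{1+}$ on the corresponding arc of $\cF^{ss}_\Psi$, with no hypothesis on $\tau_\Psi'$.
\item The derivative of $H^{-1}$ is continuous. It cannot vanish on any subarc, because $H^{-1}$ is injective. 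So it is nonzero on an open subarc, and there $H$ is a $C^{1+}$ diffeomorphism along $\cF^{ss}$.
\end{itemize}
Equivalently, $\tau_\Psi=\tau_\Phi\circ H^{-1}$ is $C^1$ and strictly monotone, so $\tau_\Psi'\neq 0$ on an open dense part of the arc. You only need nondegeneracy nearby, not at the same points. This is exactly how the paper argues.

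For propagation, the paper uses Lemma \ref{lem trans derivative}. Holonomies of the $C^{1+}$ weak unstable foliations between strong stable arcs are $C^{1+}$ and are intertwined by $H$. So existence, H\"older continuity and non-vanishing of the derivative on one arc pass to every strong stable arc, by minimality of $\cF^u_\Psi$. Equality of periodic stable Jacobians and Theorem \ref{thm delaLlave} then give $C^{r_*}$ along $\cF^s_\Phi$.

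Your alternative route (flow-invariance, transitivity, then Livschitz) can also be completed, but two points are missing:
\begin{itemize}
\item The flow-saturation of a good arc is only two-dimensional, so it is not open in $M$. Openness of your good set needs either $C^1$-continuity of the temporal functions in the base point, or the $\cF^u$-holonomy transport above.
\item Density of the good periodic orbits is not by itself what makes Livschitz apply. You need the shadowing argument: take periodic orbits that wind $n$ times around a given periodic orbit and then pass through the open invariant good set. Their vanishing periodic data forces $n$ times the defect of the given orbit to stay bounded, so the defect is zero.
\end{itemize}
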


Note that Theorem \ref{thm flow Phi sC1 rigid}, Corollary \ref{cor rigid C1su} and Corollary \ref{cor dicho} are just  corollaries of the above one.
\begin{proof}[Proof of Theorem \ref{thm flow Phi sC1 rigid}, Corollary \ref{cor rigid C1su} and Corollary \ref{cor dicho}]
	Recall that $\Phi$ is either a suspension with constant roof or $E_\Phi^{ss}\oplus E_\Phi^{uu}$ is not jointly integrable \cite{Pl1972}. Now, we can just assume the second case.
	By Proposition \ref{prop foliation C1 to C1+}, the $C^1$-smooth foliations $\cF^{ss}_\Phi$ and $\cF^{ss}_\Psi$ are actually $C^{1+}$-smooth.  Then Theorem \ref{thm flow Phi sC1 rigid} follows from applying Theorem \ref{thm C1 rigid 1} to $\cL=\cF^{uu}_\Phi$ and $H(\cL)=\cF^{uu}_\Psi$.  Further applying Proposition \ref{prop foliation C1 to C1+} and Theorem \ref{thm flow Phi sC1 rigid}  to both $C^{1+}$-smooth foliations $\cF^{uu/ss}_\Phi$, we get Corollary \ref{cor rigid C1su} and Corollary \ref{cor dicho}, by the Journ\'e Lemma.
\end{proof}

Theorem  \ref{thm C1 rigid 1} considers  the rigidity of conjugacy along the transversal of  preserved $C^{1+}$-smooth foliation. The following one can be viewed as the opposite of the above one, which shows the rigidity on the direction along the foliation itself. Differing from Theorem  \ref{thm C1 rigid 1} , the foliation cannot be invariant.

\begin{theorem}\label{thm C1 rigid 2}
	Let $\Phi, \Psi$ be two $C^{r}$-smooth $(r>1)$ transitive Anosov flows on $3$-manifold $M$ conjugate via $H$. Assume that there is a $C^{1+}$-smooth one-dimensional foliation $\cL$ satisfying
	\begin{itemize}
		\item $\cL$ transversally intersects with $\cF^s_\Phi$ and $\cL\neq \cF^{uu}_\Phi$,
		\item $T\cL\oplus T\cO_\Phi$ is jointly integrable. 
	\end{itemize}
	If $H(\cL)$ is a $C^{1+}$-smooth foliation, then the conjugacy $H$ is $C^{r_*}$-smooth  along each leaf of $\cF^u_\Phi$.
\end{theorem}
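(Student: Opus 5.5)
The strategy is to reduce the statement to Theorem \ref{thm delaLlave}. It suffices to prove
\[J^u(p,\Phi)=J^u(H(p),\Psi),\qquad \forall p\in{\rm Per}(\Phi),\]
since Theorem \ref{thm delaLlave} then gives $C^{r_*}$-smoothness of $H$ along each leaf of $\cF^u_\Phi$. To detect $J^u$ I would use the cocycle induced by the stable foliation with $T\cL$ as transversal. Since $\cL$ is transverse to $\cF^s_\Phi$, Lemma \ref{lem metric to change bundle} and Proposition \ref{prop foliation C1+} show that $\alpha(x,t,\cF^s_\Phi,T\cL,a)$ is H\"older cohomologous to $J^u_\Phi(x,t)$. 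In particular its value at $(p,\tau(p,\Phi))$ equals $J^u(p,\Phi)$. The same holds for $\Psi$ with the $C^{1+}$ transversal $H(\cL)$. So the target reduces to showing that $H$ intertwines these two holonomy cocycles at periodic orbits.

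The first step uses joint integrability. Let $\mathcal W$ be the $C^{1+}$ two-dimensional foliation tangent to $T\cL\oplus T\cO_\Phi$. Inside a leaf $\mathcal W(x)$, the holonomy of $\cF^s_\Phi$ from $\cL_{\rm loc}(x)$ to $\cL_{\rm loc}(\Phi^t(x))$ coincides locally with the holonomy of the orbit foliation inside $\mathcal W(x)$. That orbit holonomy is $C^{1+}$ and realizes the cocycle above. Its image under $H$ is the orbit holonomy of $\Psi$ inside $H(\mathcal W(x))$. Because $H$ maps orbits to orbits preserving time, $H(\mathcal W)$ is the foliation generated by $H(\cL)$ and $\cO_\Psi$, and I expect it to be $C^{1+}$ by the Journ\'e Lemma \ref{lem journe} applied to $H(\cL)$ and the orbit foliation. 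At a periodic point $p$ this gives a $C^{1+}$ return map $R_\Phi$ on $\cL_{\rm loc}(p)$ with $\log|R_\Phi'(p)|=J^u(p,\Phi)$, up to a coboundary that vanishes on the period. We also get a $C^{1+}$ return map $R_\Psi$ on $H(\cL)_{\rm loc}(H(p))$, and the two are conjugate by $h:=H|_{\cL_{\rm loc}(p)}$. Both maps are expanding one-dimensional germs at their fixed points.

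The main step is to show that the conjugacy $h$ has nonzero derivative at $p$, or at least that $|R_\Phi'(p)|=|R_\Psi'(H(p))|$. A topological conjugacy between expanding $C^{1+}$ germs does not by itself force equal multipliers, so extra input is needed. I plan to get it from the hypothesis $\cL\neq\cF^{uu}_\Phi$, which I expect to be the hardest point. By transversality to $\cF^s_\Phi$, forward iterates $\Phi^{n\tau}(\cL_{\rm loc}(p))$, renormalized along the stable holonomy, converge in $C^{1+}$ to $\cF^{uu}_{\rm loc}(p)$ by a graph-transform argument. Meanwhile, the stable holonomy between two distinct $\cL$-curves in different $\mathcal W$-leaves is a $C^{1+}$ map that $H$ conjugates to the corresponding $C^{1+}$ holonomy for $\Psi$. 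I would try to exploit that this holonomy is not the identity near some point where $T\cL\neq E^{uu}_\Phi$, and combine it with transitivity and Livschitz-type bounded distortion. The aim is to show that the distortion of $h$ on small scales is uniformly bounded, that is, that $h$ is bi-Lipschitz on $\cL$-leaves. For a conjugacy between $C^{1+}$ expanding germs, bi-Lipschitz forces equal multipliers, since $|R^n|$ grows like $e^{nJ^u}$ on both sides. If this bounded-distortion route fails, my fallback is to compare, via $H$, the Radon–Nikodym families of Lemma \ref{lem measure L} attached to the potentials $-J^u_\Phi$ and $-J^u_\Psi\circ H$. This would show that the SRB conditionals along $\cL$ and along $H(\cL)$ correspond, and hence that the corresponding equilibrium states coincide.

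Once $J^u(p,\Phi)=J^u(H(p),\Psi)$ holds for all periodic $p$, Theorem \ref{thm delaLlave} concludes that $H$ is $C^{r_*}$ along each leaf of $\cF^u_\Phi$.
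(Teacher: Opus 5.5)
\paragraph{There is a genuine gap at the central step.}
Your outer frame is the same as the paper's endpoint. Once $H$ is known to be $C^{1+}$ along $\cL$ with nonvanishing derivative, the cocycles $\alpha(x,t,\cF^s_\Phi,T\cL,a)$ and $\alpha(H(x),t,\cF^s_\Psi,TH(\cL),a)$ are cohomologous. Lemma \ref{lem metric to change bundle} then gives equal periodic $J^u$, and Theorem \ref{thm delaLlave} finishes. Your use of the Journ\'e Lemma for $H(\mathcal W)$ also matches the paper. The gap is the step you flag as hardest, namely why the multipliers of $R_\Phi$ and $R_\Psi$ agree. You correctly note that a topological conjugacy of expanding $C^{1+}$ germs does not force this, but neither remedy supplies the missing input:
\begin{itemize}
\item The graph-transform convergence of $\Phi^{n\tau}(\cL_{\rm loc}(p))$ towards $\cF^{uu}_\Phi$ says nothing about how $H$ distorts scales on $\cL$. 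So it gives no mechanism for bi-Lipschitz bounds on $h$.
\item The Radon--Nikodym fallback is circular. The families along $H(\cL)$ for the potentials $-J^u_\Phi\circ H^{-1}$ and $-J^u_\Psi$ coincide exactly when those potentials are cohomologous, which is what you are trying to prove.
\end{itemize}

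\paragraph{The missing idea is to use that $H$ preserves time exactly.}
\begin{itemize}
\item Since $\cL$ is transverse to $\cF^s_\Phi$ and $\cL\neq\cF^{uu}_\Phi$, $\cL$ is not $\Phi^t$-invariant, because an invariant line field transverse to $E^s_\Phi$ must be $E^{uu}_\Phi$.
\item Hence there are $x_0$ and $y_0=\Phi^{t_0}(x_0)$ in the same $\mathcal W$-leaf with the following property. The first hitting time $\tau_\Phi$ from $\cL_{\rm loc}(x_0)$ to $\cL_{\rm loc}(y_0)$ is $C^{1+}$ by Lemma \ref{lem first time}, and $\tau_\Phi'(x_0)\neq 0$.
\item The hitting time $\tau_\Psi$ between the images $H(\cL_{\rm loc}(x_0))$ and $H(\cL_{\rm loc}(y_0))$ is $C^{1+}$ for the same reason. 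This uses that $H(\cL)$ and $H(\mathcal W)$ are $C^{1+}$.
\item Because $H\circ\Phi^t=\Psi^t\circ H$, one has $\tau_\Phi\circ H^{-1}=\tau_\Psi$. So $H^{-1}=\tau_\Phi^{-1}\circ\tau_\Psi$ is $C^{1+}$ near $H(x_0)$ along $H(\cL)$.
\item This local regularity then spreads to all of $M$ exactly as in the proof of Theorem \ref{thm C1 rigid 1}. The $\cF^s$-holonomies between $\cL$-plaques and between $H(\cL)$-plaques are $C^{1+}$, and $H$ intertwines them. Minimality of the weak stable foliation then propagates differentiability and a non-vanishing derivative everywhere.
\end{itemize}
Without this time-function argument, or some genuine substitute that controls $h$ at small scales, your argument does not reach the equality $J^u(p,\Phi)=J^u(H(p),\Psi)$.
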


\begin{remark}
	By Lemma \ref{lem C1+ L} and Proposition \ref{prop foliation C1+}, for above flow $\Phi$, there exists a foliation $\cL$ satisfying the assumption in Theorem \ref{thm C1 rigid 2}. And we  note that Theorem \ref{thm C1 rigid 2} also holds for suspension case.
\end{remark}

\begin{remark}
	In the proof of Theorem \ref{thm C1 rigid 2}, we can weaken the hypotheses of the regularity of the foliations. Indeed, we just need $\cL$ and $H(\cL)$ be  $C^{1+}$-smooth restricted on each leaf of $\cL\oplus\cO_\Phi$ and $H(\cL\oplus\cO_\Phi)$, respectively. For simplicity, we just state the version of Theorem  \ref{thm C1 rigid 2}.
\end{remark} 

\begin{remark}\label{rmk no flex}
	Theorem \ref{thm C1 rigid 2} implies that there is no unstable Jacobian flexibility, if we deform an Anosov flow along a foliation rather than its strong unstable foliation, see Remark \ref{rmk Asaoka 2}.
\end{remark}

Both Theorem \ref{thm C1 rigid 1} and Theorem \ref{thm C1 rigid 2} will use a fact which is stated as the following lemma.
\begin{lemma}\label{lem first time}
	Let $\Sigma$ be a $C^{1+}$-smooth two-dimensional submanifold subfoliated by orbit foliation $\cO_\Phi$. Let $x,x' \in \Sigma$ such that $x'\in\cO_\Phi(x)$. Let $L_1$ and $L_2$ be two $C^{1+}$-smooth curves transverse to $\cO_\Phi$ and crossing points $x$ and $x'$, respectively.  If $x'$ is on the positive flow of $x$, then there is a neighborhood $L'_1$ of $x$ in $L_1$ such that for each point $y\in L'_1$, $\tau_\Phi^+(y)$ the first positive reaching time from $y$ to $L_2$, i.e.,
	\[\tau^+_\Phi(y):=\inf\big\{ t\geq 0\ |\ \Phi^t(y)\in L_2  \big\}, \]
	is $C^{1+}$-smooth with respect to $y$.  Similarly, if   $x'$ is on the negative flow of $x$, then there is a neighborhood $L'_1$ of $x$ in $L_1$ such that for each point $y\in L'_1$, $\tau_\Phi^-(y)$ the first negative reaching time from $y$ to $L_2$, i.e.,
	\[\tau^-_\Phi(y):=\sup\big\{ t\leq 0\ |\ \Phi^t(y)\in L_2  \big\}, \]
	is $C^{1+}$-smooth with respect to $y$. 	Moreover, when $x=x'$, there is a neighborhood $L'_1$ of $x$ in $L_1$ such that for each point $y\in L'_1$, $\tau_\Phi(y)$ the first reaching time from $y$ to $L_2$, 
	\[ \tau_\Phi(y)=\Big\{ \begin{array}{lr}
		\tau^+_\Phi(y),\quad   \text{if}\ \   \tau^+_\Phi(y)\leq -\tau^-_\Phi(y),\\
		\tau^-_\Phi(y), \quad  \text{if}\ \   \tau^+_\Phi(y)\geq -\tau^-_\Phi(y),
	\end{array}  \]
	is $C^{1+}$-smooth with respect to $y$.
\end{lemma}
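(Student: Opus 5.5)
The plan is to reduce the statement to the implicit function theorem in a $C^{1+}$-smooth chart of the surface $\Sigma$ that straightens the orbit foliation. Since $\Sigma$ is a $C^{1+}$-smooth surface subfoliated by $\cO_\Phi$, and $\cO_\Phi$ is a $C^{1+}$-smooth foliation (for the Anosov flow it is even as smooth as $\Phi$), Proposition \ref{prop foliation regularity} applied to the restricted one-dimensional foliation $\cO_\Phi|_\Sigma$ gives a $C^{1+}$-smooth foliation chart. I will use this chart, after pulling back the flow, to describe the relevant piece of $\Sigma$ near the orbit arc from $x$ to $x'$.

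Concretely, fix the compact orbit segment $\gamma=\{\Phi^t(x)\}_{0\le t\le T}$ with $\Phi^T(x)=x'$, where $T>0$ in the positive case. Consider the map $F:L_1\times\RR\to\Sigma$, $F(y,t)=\Phi^t(y)$. Since $\Phi$ is at least $C^{1+}$ and $L_1$ is a $C^{1+}$ curve, $F$ is $C^{1+}$-smooth. Pick a $C^{1+}$ function $\rho$ defined near $x'$ in $\Sigma$ with $L_2=\{\rho=0\}$ and $d\rho\neq 0$; this is possible because $L_2$ is a $C^{1+}$ curve in the surface. Then $G(y,t)=\rho(\Phi^t(y))$ satisfies $G(x,T)=0$ and $\partial_tG(x,T)=d\rho(X(x'))\neq0$, because $L_2$ is transverse to the flow direction $X$. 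The implicit function theorem in the $C^{1+}$ category (the $C^1$ version plus the Hölder continuity of $\partial_yG/\partial_tG$) yields a $C^{1+}$ function $y\mapsto t(y)$ near $x$ with $t(x)=T$ and $\Phi^{t(y)}(y)\in L_2$.

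The main obstacle is to check that $t(y)$ is really the \emph{first} positive hitting time $\tau^+_\Phi(y)$, and not some later return. The first step is to shrink $L_2$ to a small arc around $x'$ (it suffices to treat a neighbourhood, since only local behaviour matters). I will then argue by compactness and transversality. First, the arc $\Phi^{[0,T)}(x)$ meets the small arc $L_2$ only at time $T$; in the periodic case I take $T$ minimal, so there is no earlier hit, and the arc is at positive distance from $L_2$ away from a neighbourhood of the endpoint. Second, by transversality, the flow box around $x'$ crosses $L_2$ exactly once in a time window $(T-\delta,T+\delta)$. By continuity of $\Phi$, for $y$ close to $x$ the orbit on $[0,T-\delta]$ stays away from $L_2$, and the only crossing in $[T-\delta,T+\delta]$ is at $t(y)$. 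Hence $\tau^+_\Phi(y)=t(y)$ is $C^{1+}$. The negative case follows by the same argument with $T<0$, or equivalently by applying the positive case to the reversed flow.

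For $x=x'$, I would apply the same argument with $T=0$. Here $G(y,t)=\rho(\Phi^t(y))$ has a unique zero $t(y)$ near $0$, and it is $C^{1+}$. That zero is either $\geq0$ or $\leq0$, and it is the hitting time of smallest absolute value, because other hits require a time bounded below (by the flow-box argument, after shrinking $L_2$). So $\tau_\Phi(y)$, defined as the one of $\tau^\pm_\Phi(y)$ with smaller absolute value, equals $t(y)$, hence is $C^{1+}$. This also handles the apparent discontinuity in the piecewise definition: both branches agree with the single implicit function.
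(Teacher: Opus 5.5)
Your proposal is correct and follows essentially the paper's route. The paper also reduces the lemma to the implicit function theorem for the hitting equation: it writes the flow in a $C^{1+}$ flow-box chart as $\widetilde{\Phi}^t(y)=(y,l(y,t))$ and $L_2$ as the graph of a function $g$, then solves $l(y,t)=g(y)$ using $\partial_t l\neq 0$. This is your equation $\rho(\Phi^t(y))=0$ with $\partial_t G=d\rho(X)\neq 0$, written in different coordinates. Your explicit check that the implicit solution is the \emph{first} hitting time (after shrinking $L_2$), and your handling of the case $x=x'$, fill in points the paper leaves implicit in its choice of the flow box $U$.
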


\begin{proof}
	We just prove this for $\tau_\Phi^+$, the other cases' proofs are similar. Denote by $U$ the submanifold $\bigcup_{y\in L_1}\cO_{\Phi,{\rm loc}}(y)$, where $\cO_{\Phi,{\rm loc}}(y)$ is orbit from $y\in L_1$ to the first positive reach point on $L_2$, and we can take $L_1$ small such that $U$ is a flow box.  Locally, we can take a $C^{1+}$-smooth chart $\phi: U\to \RR^2$ such that $\phi(x)=0$, $\phi(L_1)$ is parallel to  $e_1$  and the flow $\widetilde{\Phi}^t:=\phi\circ \Phi^t\circ\phi^{-1}$ is parallel to $e_2$, where $\{e_1,e_2\}$ is a orthogonal  basis of $\RR^2$. Then the flow can be represent by 
	\[ \widetilde{\Phi}^t(y)=\big(y, l(y,t)\big),\quad \forall y\in \phi(L_1),\]
	and $l(y,t)$ is $C^{1+}$-smooth with respect to $y$ and $t$.  We assume that the $C^{1+}$-smooth curve $\phi(L_2)$ is a graph of $C^{1+}$-function $g$ defined on $\phi(L_1)$. 
	Since $t=\tau^+_\Phi(\phi^{-1}(y))$ is the solve of the equation
	\[ l(y,t)=g(y), \]
	and $\partial_tl(y,t)$ is the speed at the point $\widetilde{\Phi}^t(y)$ with $| \partial_tl(y,t) |\neq 0$, by the Implicit Function Theorem, we have that $\tau^+_\Phi(y)$ is $C^{1+}$-smooth in a neighborhood $L_1'$ of $x$ in $L_1$.
\end{proof}

Now, we prove Theorem \ref{thm C1 rigid 1} and Theorem \ref{thm C1 rigid 2}. 
Let $\Phi, \Psi$ be two $C^{r}$-smooth $(r>1)$ Anosov flows on $3$-manifold $M$ conjugate via  $H$ such that \[H\circ \Phi^t=\Psi^t\circ H.\] For convenience, we denote the foliation $\cL$ in both theorems by $\cL_\Phi$. Let $\cL_\Psi=H(\cL_\Phi)$.

\begin{proof}[Proof of Theorem \ref{thm C1 rigid 1}]
	
	By the assumption of $\cL_\Phi$, there are points $x,y\in M$ with $y\in \cL_\Phi(x)$ such that the holonomy map induced by $\cL_\Phi$
	\[ {\rm Hol}^{\cL_\Phi}_{x,y}:\cF^s_{\Phi,{\rm loc}}(x) \to \cF^s_{\Phi,{\rm loc}}(y),\]
	maps the local strong stable leaf $I(x):=\cF^{ss}_{\Phi,{\rm loc}}(x)$ to a $C^{1+}$-smooth curve $J(y):={\rm Hol}^{\cL}_{x,y}\big(I(x)\big)$ and $J(y)$ transversally (restricted on $\cF^u_\Phi(y)$) intersects with the local strong stable leaf $I(y):=\cF^{ss}_{\Phi,{\rm loc}}(x)$ at the unique intersection $y$, namely,
	\[J(y)\pitchfork I(y)=\{y\}.\]
	Since $I(y), J(y)\subset \cF^s_{\Phi,{\rm loc}}(y)$,  one can define the  first time $\tau_\Phi$ from $I(y)$ to $J(y)$ as in Lemma \ref{lem first time}, and  $\tau_\Phi(z)$ is $C^{1+}$-smooth with respect to $z\in I(y)$.
	
	By conjugacy $H$, we get two topological curves
	\[J(H(y)):=H(J(y)) \quad {\rm and} \quad I(H(y)):=H(I(y)).  \]
	We claim that both $J(H(y))$ and $I(H(y))$ are $C^{1+}$-smooth curve on $\cF^s_\Psi(H(y))$. Indeed,
	note that $I(H(y))$ is a local leaf of $\cF^{ss}_\Psi$, hence it  is a $C^{1+}$-smooth curve on $\cF^s_\Psi(H(y))$. Considering holonomy map  induced by $\cL_\Psi$
	\[ {\rm Hol}^{\cL_\Psi}_{H(x),H(y)}:\cF^u_{\Psi,{\rm loc}}(H(x)) \to \cF^s_{\Psi,{\rm loc}}(H(y)),\]
	one has that  ${\rm Hol}^{\cL_\Psi}_{H(x),H(y)}\circ H= H\circ {\rm Hol}^{\cL_\Phi}_{x,y}$, and  $J(H(y))={\rm Hol}^{\cL_\Psi}_{H(x),H(y)}\big(H(I(x))\big)$. Since $H(I(x))$ is a local leaf of $\cF^{ss}_\Psi$ and $\cL_\Psi$ is $C^{1+}$-smooth, we get that $J(H(x))$ is also a $C^{1+}$-smooth curve on $\cF^s_\Psi(H(y))$.
	Hence, by Lemma \ref{lem first time}, we can also get the  $C^{1+}$-smooth first time $\tau_\Psi$ from $I(H(y))$ to $J(H(y))$.
	
	By conjugacy $H$ again, one has that 
	\[\tau_\Phi(H^{-1}(w))=\tau_\Psi(w)\quad \forall w\in I(H(y)). \]
	Namely, the restriction of $H^{-1}$ on $I(H(y))$ is the solution of the equation
	\[ \tau_\Phi(z)=\tau_\Psi(w).\]
	Since $I(y)$ is transverse to $J(y)$ at $y$, one has that the derivative $\tau'_\Phi(y)\neq 0$.  By the Implicit Function Theorem, there is a neighborhood of $H(y)$ in $\cF^{ss}_\Psi(H(y))$, denoted by $I'(H(y))=\cF^{ss}_{\Psi, {\rm loc}}(H(y)) \subset I(H(y))$ of point $H(y)$ such that restricted on it, the conjugacy $H^{-1}|_{I'(H(y))}$ has H\"older continuous derivative, namely, $DH^{-1}|_{E^{ss}_\Psi}(w)$ is H\"older continuous with respect to $w\in I'(H(y))$.
	In the following, we will show that $\|DH^{-1}|_{E^{ss}_\Psi}(w)\|$ exists, non-zero and H\"older continuously varies with respect to every point $w\in M$. Then we conclude that $H$ is a $C^{1+}$-smooth along the strong stable leaves.
	
	\begin{lemma}\label{lem trans derivative}
		Let $w\in \cF^u_\Psi(w_0)$. If  $DH^{-1}|_{E^{ss}_\Psi}(w_0)$ exists at point $w_0$, then the derivative $DH^{-1}|_{E^{ss}_\Psi}(w)$ exists at $w$. And $\|DH^{-1}|_{E^{ss}_\Psi}(w)\|=0$ if and only if $\|DH^{-1}|_{E^{ss}_\Psi}(w_0)\|=0$. Moreover, if $DH^{-1}|_{E^{ss}_\Psi}$  is H\"older continuous  on a local leaf $\cF^{ss}_{\Psi,{\rm loc}}(w_0)$, then so is it on a local leaf $\cF^{ss}_{\Psi,{\rm loc}}(w)$.
	\end{lemma}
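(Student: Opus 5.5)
The plan is to transport the derivative information from $\cF^{ss}_{\Psi,{\rm loc}}(w_0)$ to $\cF^{ss}_{\Psi,{\rm loc}}(w)$ along the unstable leaf. The tools are the two holonomies that $H$ intertwines: the flow holonomy and the strong unstable holonomy inside $\cF^u$. By Proposition~\ref{prop foliation C1+}, item 2, both are $C^{1+}$ for the $C^r$ flows $\Phi,\Psi$. We treat the cases $w\in\cO_\Psi(w_0)$ and $w\in\cF^{uu}_\Psi(w_0)$ separately. Since $\cF^u_\Psi$ is locally subfoliated by $\cO_\Psi$ and $\cF^{uu}_\Psi$, and any two points of a leaf of $\cF^u_\Psi$ are joined by a finite chain of such moves, the general statement follows by composition.

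\emph{Flow direction.} Let $w=\Psi^t(w_0)$. Since $H\circ\Phi^t=\Psi^t\circ H$, on the strong stable leaf we have
\[
H^{-1}|_{\cF^{ss}_\Psi(w)}=\Phi^t\circ H^{-1}|_{\cF^{ss}_\Psi(w_0)}\circ \Psi^{-t}|_{\cF^{ss}_\Psi(w)},
\]
and $\Psi^{-t}$ and $\Phi^t$ are $C^r$ diffeomorphisms between strong stable leaves. The chain rule then gives existence of the derivative at $w$, the equivalence of vanishing (the outer factors have nonzero derivative), and transfer of H\"older continuity of $DH^{-1}|_{E^{ss}_\Psi}$ along a local leaf.

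\emph{Strong unstable direction.} Let $w\in\cF^{uu}_{\Psi,{\rm loc}}(w_0)$. The holonomy ${\rm Hol}^{uu}_\Psi:\cF^{ss}_{\Psi,{\rm loc}}(w_0)\to\cF^{s}_{\Psi,{\rm loc}}(w)$ along $\cF^{uu}_\Psi$ inside the leaf $\cF^u_\Psi(w_0)$ lands in the weak stable leaf, not the strong one. So we compose it with the local flow-time projection $\pi^0_\Psi$ onto $\cF^{ss}_{\Psi,{\rm loc}}(w)$, which is possible because $\cF^s$ is subfoliated by $\cO$ and $\cF^{ss}$. The combined map $h_\Psi=\pi^0_\Psi\circ{\rm Hol}^{uu}_\Psi$ is the holonomy of the weak unstable foliation $\cF^u_\Psi$ between strong stable leaves. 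By Proposition~\ref{prop foliation C1+}, item 1, $\cF^u_\Psi$ is $C^{1+}$, so $h_\Psi$ is $C^{1+}$ with nonvanishing derivative; the same holds for $h_\Phi$. Because $H$ maps $\cF^{u}_\Phi$, $\cF^{ss}_\Phi$ and $\cO_\Phi$ to the corresponding foliations of $\Psi$, we get
\[
H^{-1}\circ h_\Psi=h_\Phi\circ H^{-1} \quad\text{on } \cF^{ss}_{\Psi,{\rm loc}}(w_0).
\]
Hence $H^{-1}|_{\cF^{ss}_\Psi(w)}=h_\Phi\circ H^{-1}|_{\cF^{ss}_\Psi(w_0)}\circ h_\Psi^{-1}$, and the chain rule again transfers existence, (non)vanishing, and H\"older continuity of the derivative. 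The H\"older constants are uniform for $w$ in a local leaf, with a loss of exponent only from composing with $C^{1+}$ maps.

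The main obstacle is the strong unstable step: one must make sure the intertwining holds exactly. The stable holonomies are defined via the weak foliations, which $H$ preserves, whereas $H$ need not be smooth along $\cO$-to-$\cF^{ss}$ projections. This is resolved because $\pi^0$ is defined purely via the foliations $\cO$ and $\cF^{ss}$, which $H$ preserves as sets. The final point is non-local: for $w$ far along $\cF^u_\Psi(w_0)$, we connect $w_0$ to $w$ by finitely many short flow and strong-unstable segments and compose the identities above. H\"older continuity persists on a local leaf around $w$, with constants depending on the chain.
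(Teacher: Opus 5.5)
Your proof is correct and is essentially the paper's argument. The paper writes $H^{-1}|_{\cF^{ss}_{\Psi,{\rm loc}}(w)}={\rm Hol}^{\cF^u_\Phi}_{H^{-1}(w_0),H^{-1}(w)}\circ H^{-1}|_{\cF^{ss}_{\Psi,{\rm loc}}(w_0)}\circ{\rm Hol}^{\cF^u_\Psi}_{w,w_0}$, where the weak-unstable holonomies between strong stable leaves (your $h_\Phi$ and $h_\Psi^{-1}$) are $C^{1+}$, and then applies the chain rule. The only difference is that this holonomy works directly for every $w\in\cF^u_\Psi(w_0)$, so your separate flow-direction case and chaining are unnecessary; also, your opening appeal to item 2 of Proposition~\ref{prop foliation C1+} is not what you actually use, since item 1 is what makes $h_\Psi$ smooth.
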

	\begin{proof}[Proof of Lemma \ref{lem trans derivative}]
		Note that for any $w\in \cF^u_\Psi(w_0)$, there are  local leaves $\cF^{ss}_{\Psi,{\rm loc}}(w_0)$ and $\cF^{ss}_{\Psi,{\rm loc}}(w)$   such that for every point $z\in \cF^{ss}_{\Psi,{\rm loc}}(w)$,
		\[ H^{-1}|_{\cF^{ss}_{\Psi,{\rm loc}}(w)}=     {\rm Hol}^{\cF^u_\Phi}_{H^{-1}(w_0), H^{-1}(w)}  \circ H^{-1}|_{\cF^{ss}_{\Psi,{\rm loc}}(w_0) }\circ {\rm Hol}^{\cF^u_\Psi}_{w,w_0}.\]
		Recall that the above holonomy maps \[{\rm Hol}^{\cF^u_\Psi}_{w,w_0}:\cF^{ss}_{\Psi,{\rm loc}}(w)\to \cF^{ss}_{\Psi,{\rm loc}}(w_0) \quad {\rm and} \quad {\rm Hol}^{\cF^u_\Phi}_{H^{-1}(w_0),H^{-1}(w)}:\cF^{ss}_{\Phi,{\rm loc}}(H^{-1}(w_0))\to \cF^{ss}_{\Phi,{\rm loc}}(H^{-1}(w))\] are $C^{1+}$-smooth. Hence, we get the lemma directly.
	\end{proof}
	
	By Lemma \ref{lem trans derivative}, the minimal  foliation $\cF^{u}_\Psi$ sends the differentiability of $H$ (along the strong stable direction) on $I'(H(y))$ to every point $w\in M$.  By the same reason, one has that there is a point $w_0\in I'(H(y))$ with $\|DH^{-1}|_{E^{ss}_\Psi}(w_0)\|\neq 0$. Otherwise, $\|DH^{-1}|_{E^{ss}_\Psi}(w)\|\equiv 0$ for all $w\in M$. It is a contradiction. Moreover, by the continuous of $DH^{-1}|_{E^{ss}_\Psi}$ on $I'(H(y))$, there is a neighborhood of $w_0$ in $I'(H(y))$ such that $\|DH^{-1}|_{E^{ss}_\Psi}\|\neq 0$. Again by Lemma \ref{lem trans derivative} and the minimality of $\cF^u_\Psi$, we get that for all $w\in M$, $\|DH^{-1}|_{E^{ss}_\Psi}(w)\|$ exists, non-zero and H\"older continuously varies with respect to $w$.
\end{proof}

The proof of Theorem \ref{thm C1 rigid 2} follows the same spirit as Theorem \ref{thm C1 rigid 1}.

\begin{proof}[Proof of  Theorem \ref{thm C1 rigid 2}]
	Since $T\cL\oplus T\cO_\Phi$ is jointly integrable, we denote the integral foliation by $\widetilde{\cL}_\Phi$. Let $\widetilde{\cL}_\Psi=H(\widetilde{\cL}_\Phi)$. Since $\cO_\Psi, \cL_\Psi \subset \widetilde{\cL}_\Psi$ are both $C^{1+}$-smooth foliations, one has that $\widetilde{\cL}_\Psi$ is tangent to the H\"older bundle $T\cO_\Psi\oplus T\cL_\Psi$ and by the Journ\'e Lemma $\widetilde{\cL}_\Psi$ is  also a $C^{1+}$-smooth foliation.
	
	Moreover, since $\cL_\Phi\pitchfork\cF^s_\Phi$ and $\cL_\Phi\neq \cF^{uu}_\Phi$, the foliation $\cL_\Phi$ is not $\Phi^t$-invariant, namely, there is $x_0\in M$ and $t_0>0$ such that $\Phi^{t_0}(\cL_\Phi(x_0))\neq \cL_\Phi(\Phi^{t_0}(x_0))$. Let $y_0=\Phi^{t_0}(x_0)$. We can choose $I(x_0):=\cL_{\Phi,{\rm loc}}(x_0)$ and $I(y_0):=\cL_{\Phi,{\rm loc}}(y_0)$ such that  the first positive reaching time $\tau_\Phi$ from $I(x_0)$ to $I(y_0)$, i.e., 
	\[\tau_\Phi(x):=\inf\big\{t>0\ |\  \Phi^t(x)\in I(y_0) \big\},\]
	is $C^{1+}$-smooth along $I(x_0)$.
	Since  $\cL_\Phi$ is not $\Phi^t$-invariant, we can further assume that $\tau'_\Phi(x_0)\neq 0$. Similarly, one can get a $C^{1+}$-smooth  first positive reaching time $\tau_\Psi$ from $I(H(x_0))\subset \cL_\Psi(H(x_0))$ to $I(H(y_0))\subset \cL_\Psi(H(y_0))$.  By the conjugacy $H$, one has that for each $z\in I(H(x_0))$,
	\[\tau_\Phi(H^{-1}(z))=\tau_\Psi(z). \]
	Particularly, $x=H^{-1}(z)$ restricted on $ I(H(x_0))$ is the solution of 
	\[\tau_\Phi(x)=\tau_\Psi(z). \]
	Since $\tau_\Phi$ and $\tau_\Psi$ are $C^{1+}$-smooth and $\tau'_\Phi(x_0)\neq 0$, there is a neighborhood $I'(H(x_0))\subset I(H(x_0))$ of $H(x_0)$ such that  restricted on the leaves of $\cL_\Psi$, $H^{-1}$ is differentiable in $I'(H(x_0))$ with H\"older continuous derivative. Recall that $\cL_\Phi$ and $\cL_\Psi$ are transverse to $C^{1+}$-smooth foliations $\cF^s_\Phi$ and $\cF^s_\Psi$, respectively. One can apply the same method in the proof of Theorem \ref{thm C1 rigid 1} to show that restricted on each leaf of $\cL_\Psi$, $\|DH^{-1}|_{T\cL_\Psi}\|$ exists, and is nonzero  and H\"older continuous everywhere on the whole manifold. Hence, $H$ is $C^{1+}$-smooth along $\cL_\Phi$. In particular, the cocycles over flow $\Phi$
	\[\alpha_{\Phi}(x,t,\cF^s_\Phi,T\cL_\Phi,a) \quad {\rm and} \quad \alpha_{\Psi}(H^{-1}(x),t,\cF^s_\Psi,T\cL_\Psi,a), \]
	are H\"older-cohomologous for a given metric $a$. By lemma \ref{lem metric to change bundle}, one has that 
	\[J_\Phi^u(x,t)=\alpha_{\Phi}(x,t,\cF^s_\Phi,E^{uu}_\Phi,a) \quad {\rm and} \quad J^u_{\Psi}(H^{-1}(x),t)=\alpha_{\Psi}(H^{-1}(x),t,\cF^s_\Psi,E^{uu}_\Psi,a)\] are cohomologous. Hence by Theorem \ref{thm delaLlave}, $H$ is smooth along the unstable foliation.
\end{proof}

\subsection{The conjugacy class of Anosov flow with $C^1$ foliation}
In this subsection, we prove Theorem \ref{thm flow Phi sC1 flex} and Corollary \ref{cor Techmuller space suC1}.  It is clear that   Theorem \ref{thm flow Phi sC1 flex} follows from the next general case.

\begin{theorem}\label{thm flow Phi sC1 flex 2}
	Let $\Phi$ be a transitive $C^{1+}$-smooth Anosov flow on $3$-manifold $(M,a)$ with $C^1$-smooth strong unstable foliation. Let $f:M\to \RR $ be a H\"older continuous function with topological pressures $P_{f}(\Phi)=P$.  Then there exist  a $C^{1+}$-smooth Anosov flow $\Psi$ of manifold $M$, a bi-H\"older continuous homeomorphism $H$ and a H\"older continuous metric $a_*(\cdot,\cdot)$ such that 
	\begin{enumerate}
		\item $\Psi$ is conjugate to $\Phi$ via $H$.
		\item The strong stable foliation of\, $\Psi$ is $C^{1+}$-smooth.
		\item The stable and unstable Jacobians with respect to metric $a_*(\cdot,\cdot)$ satisfy that
		\begin{align}
				J_\Psi^u(x,t)=-\alpha_\Psi\big(x,t, f\circ H^{-1}-P\big)\quad {\rm and}\quad J_\Psi^s(x,t)=J^s_\Phi\big(H^{-1}(x),t\big), \quad \forall x\in M,\ t\in \RR. \label{eq. Jacobiansu}
		\end{align}
	
	\end{enumerate}
	Moreover, such a $C^{1+}$-smooth Anosov flow $\Psi$ is unique, up to $C^{1+}$-smooth conjugacy.
\end{theorem}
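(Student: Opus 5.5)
The plan is to obtain $\Psi$ from the one-sided deformation of Proposition \ref{prop ami jacobian 1}, applied with the foliation $\cL:=\cF^{uu}_\Phi$ and the potential $f$, and then to smooth the orbit foliation while keeping a conjugacy. The step I have not closed is the $C^{1+}$-regularity of $\cF^{ss}_\Psi$ in the transverse (strong unstable) direction; I flag below that it may actually conflict with Theorem \ref{thm flow Phi sC1 rigid}.

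\textbf{Construction of $\Psi$.} By Proposition \ref{prop foliation C1 to C1+}, with the roles of stable and unstable exchanged, $\cF^{uu}_\Phi$ is $C^{1+}$. It subfoliates $\cF^u_\Phi$, is transverse to $\cF^s_\Phi$, and is $\Phi$-invariant. Since $\Phi$ is a $C^{1+}$ Anosov flow, it is in particular an HA-flow (take $\Phi_0=\Phi$, $H_0={\rm Id}_M$). Proposition \ref{prop ami jacobian 1} then gives $(\check\Phi,\check H,\check a)$ with the following properties:
\begin{itemize}
\item $\check H$ is bi-H\"older, moves points only along $\cF^{uu}_\Phi$-leaves, and is $C^{1+}$ along every leaf of $\cF^s_\Phi$;
\item $\cF^u_{\check\Phi}=\cF^u_\Phi$ and $\cF^{uu}_{\check\Phi}=\cF^{uu}_\Phi$ as sets;
\item the $\cF^u$-cocycle equals $\alpha(\cdot,t,\cF^u_\Phi,a)$, i.e.\ $J^s_\Phi$;
\item the $\cF^s$-cocycle equals $-\alpha_\Phi(\check H^{-1}(\cdot),t,f-P)$.
\end{itemize}

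Because $\cF^{uu}_\Phi$ is flow-invariant and the measures of Lemma \ref{lem measure L} then reduce to Asaoka's family $\{\nu^{uu}_x\}$ (up to the H\"older density), I expect $\check H$ to send $\Phi$-orbits to $\check\Phi$-orbits with H\"older speed. I then want a $C^{1+}$ \emph{time-preserving} model, not just an orbit equivalent one. For this I apply the second item of Proposition \ref{prop flow regularity} to $\check\Phi$: since $\check H$ is $C^{1+}$ along $\cF^s_\Phi\supset\cO_\Phi$, the flow $\check\Phi$ is $C^{1+}$ along orbits. I still have to verify that $\check\Phi$ is a $C^{1+}$ flow, not merely that its orbit foliation is $C^{1+}$; this is the point where conjugacy, rather than orbit equivalence, is gained. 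Call the result $\Psi$ and set $H:=h\circ\check H$ with $h$ smooth. The Anosov property and the Jacobian identities \eqref{eq. Jacobiansu} follow as in the proof of Theorem \ref{thm flow flexibility 1}, with the H\"older metric $a_*:=h_*\check a$ adjusted by Claim \ref{claim change metric}. Uniqueness up to $C^{1+}$ conjugacy follows from Theorem \ref{thm delaLlave} and its stable analogue combined with the Journ\'e Lemma: two such flows have equal periodic $J^s$ and $J^u$.

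\textbf{Item 2: regularity of $\cF^{ss}_\Psi$.} Since $H$ conjugates, $\cF^{ss}_\Psi=H(\cF^{ss}_\Phi)$. Each leaf lies in a leaf of $\cF^s_\Psi=H(\cF^s_\Phi)$, and $H$ is $C^{1+}$ along $\cF^s_\Phi$. So the leaves of $\cF^{ss}_\Psi$ are uniformly $C^{1+}$ curves, and by Proposition \ref{prop foliation regularity} it remains to show that the $\cF^{ss}_\Psi$-holonomy between local $\cF^u_\Psi$-leaves is $C^{1+}$. Write the $\cF^{ss}_\Phi$-holonomy as
\[
{\rm Hol}^{ss}_\Phi(z)=\Phi^{\theta(z)}\circ{\rm Hol}^{s}_\Phi(z),
\]
where ${\rm Hol}^s_\Phi$ is the $\cF^s$-holonomy between $\cF^{uu}$-leaves and $\theta$ is the temporal function. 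Conjugating by $H$ gives ${\rm Hol}^{ss}_\Psi=\Psi^{\theta\circ H^{-1}}\circ{\rm Hol}^{s}_\Psi$, whose pieces behave as follows:
\begin{itemize}
\item ${\rm Hol}^{s}_\Psi$ restricted to $\cF^{uu}_\Psi$-leaves is $C^{1+}$ by Claim \ref{claim C1+ foliation 1};
\item the flow direction is handled by flow-equivariance;
\item the Journ\'e Lemma then reduces everything to the $C^{1+}$-regularity of $w\mapsto\theta(H^{-1}(w))$ along $\cF^{uu}_\Psi$-leaves.
\end{itemize}

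\textbf{Main obstacle.} The last reduction is the hard step. The function $\theta$ is $C^{1+}$ along $\cF^{uu}_\Phi$, but $H^{-1}$ along these leaves is only H\"older: it converts the $\check a$-length into the measure $\nu^{uu}$. My first attempt would use the invariance $\theta\circ\Phi^{t}=\theta$ on corresponding points together with a Lemma \ref{lem first time} type implicit-function argument. The aim is to show that the derivative of $\theta\circ H^{-1}$ at base points is a H\"older function transported by $\Psi$-holonomies, exactly as in the proof of Lemma \ref{lem trans derivative}.

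I must also check this against Theorem \ref{thm flow Phi sC1 rigid}, and that check already looks bad. Both $\Phi$ and $\Psi$ are conjugate $C^{1+}$ flows, they have equal $J^s$, and their $J^u$ genuinely differ. If $\cF^{ss}_\Psi$ were $C^1$, then $\theta\circ H^{-1}$ would be differentiable along $\cF^{uu}_\Psi$. Flow invariance of $\theta$ then forces its derivative to rescale by $e^{J^u_\Psi}$ under $\Psi$ and by $e^{J^u_\Phi}$ under $\Phi$, which suggests that $J^u_\Psi$ and $J^u_\Phi\circ H^{-1}$ would be cohomologous. That is a contradiction unless $f-P$ is cohomologous to $-J^u_\Phi$ or $\Phi$ is a constant-roof suspension. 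So the derivative computation may well fail rather than close. If it does, I would conclude that item 2 as worded should read ``strong \emph{unstable}''. That version is immediate from the construction, since $\cF^{uu}_\Psi=h(\cF^{uu}_\Phi)$ is $C^{1+}$, and it matches item 1 of Theorem \ref{thm flow Phi sC1 flex}.
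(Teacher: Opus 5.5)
Your reading of item 2 is right: ``strong stable'' is a slip for ``strong unstable''. The paper's proof only shows that $\cF^{uu}_{\check\Phi}=\check H(\cF^{uu}_\Phi)=\cF^{uu}_\Phi$ is $C^{1+}$, matching item 1 of Theorem \ref{thm flow Phi sC1 flex}. The literal version is false in general.

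Your temporal-function heuristic for this is not sound as written. By the decomposition you wrote, $\theta$ is $C^{1+}$ along $\cF^{uu}_\Phi$ exactly when $\cF^{ss}_\Phi$ is $C^{1+}$, and that is not assumed. A cleaner argument runs as follows. Matching periodic $J^s$ makes $H$ smooth along $\cF^s_\Phi$ (stable version of Theorem \ref{thm delaLlave}). Hence $\cF^{uu}_\Psi=H(\cF^{uu}_\Phi)$ has $C^{1+}$ holonomies and is $C^{1+}$. If $\cF^{ss}_\Psi$ were also $C^1$ and $\Phi$ were, say, the geodesic flow of a hyperbolic surface, Corollary \ref{cor dicho} would make $H$ smooth. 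That forces $f-P$ to be cohomologous to $-J^u_\Phi$, which fails for generic $f$.

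The genuine gap is the step you leave open: showing that $\check\Phi$ is itself a $C^{1+}$ flow. This is exactly what the theorem adds to Theorem \ref{thm flow flexibility 1}. Without it, the second item of Proposition \ref{prop flow regularity} cannot be applied, and the first item only gives an orbit equivalence. Regularity along orbits is the easy part, and no orbit ``speed'' issue arises, since $\check H$ is a conjugacy. What is needed is regularity of $\check\Phi^t$ in the $\cF^{uu}$-direction, and that is where the invariance of $\cL=\cF^{uu}_\Phi$ must be used. The missing argument is:
\begin{enumerate}
\item Because $\cL$ is $\Phi$-invariant, the $\cO_\Phi$-holonomy from $\cL(x)$ to $\cL(\Phi^t(x))$ in Lemma \ref{lem measure L}(2) is $\Phi^t$ itself. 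So $\Phi^t$ transforms $\{\mu_x\}$ with H\"older Radon--Nikodym derivative $e^{-\alpha_\Phi(\cdot,t,f-P)}$.
\item By \eqref{eq. def H}, $\check H$ carries $\mu_x$ to $\tilde a$-arclength on these leaves. Hence $\check\Phi^t=\check H\circ\Phi^t\circ\check H^{-1}$ has H\"older $\tilde a$-derivative $e^{-\alpha_\Phi(\check H^{-1}(\cdot),t,f-P)}$ along the $C^{1+}$ leaves of $\cF^{uu}_{\check\Phi}=\cF^{uu}_\Phi$.
\item $\check\Phi^t$ is also $C^{1+}$ along $\cF^s_{\check\Phi}$, because $\check H|_{\cF^s_\Phi}$ is $C^{1+}$. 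The Journ\'e Lemma \ref{lem journe}, applied to the complementary pair $\cF^s_{\check\Phi},\cF^{uu}_{\check\Phi}$, then gives that $\check\Phi^t$ is $C^{1+}$.
\item The $\check\Phi$-invariant $C^{1+}$ foliations $\cF^u_{\check\Phi}$ and $\cF^s_{\check\Phi}$ carry the cocycles $J^s_\Phi(\check H^{-1}(\cdot),t)$, which is contracting, and $-\alpha_\Phi(\check H^{-1}(\cdot),t,f-P)$, which is uniformly expanding by Lemma \ref{lem function 0 pressure}. So $\check\Phi$ is Anosov.
\end{enumerate}
Only after this do Proposition \ref{prop flow regularity} and Claim \ref{claim change metric} finish the existence part; your uniqueness argument is fine. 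With a non-invariant $\cL$ this route fails, cf.\ Theorem \ref{thm C1 rigid 2}.
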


To show the flexibility of unstable Jacobian for 3-dimensional Anosov flow $\Phi$ with $C^1$-smooth strong unstable foliation, we can replace  the $C^{1+}$-foliation $\cL$ in the proof of Proposition \ref{prop ami jacobian 1} by $\cF^{uu}_\Phi$.

\begin{proof}[Proof of Theorem \ref{thm flow Phi sC1 flex 2}]
	Let the $C^{1+}$-foliation $\cL$ in the proof of Proposition \ref{prop ami jacobian 1} to be $\cF^{uu}_\Phi$. Then we already get the bi-H\"older homeomorphism $\check{H}$, HA-flow $\check{\Phi}$ and H\"older metric $\check{a}$ such that
	 \begin{enumerate}
		\item The flow $\check{\Phi}$ is conjugate to $\Phi$ via $\check{H}$, i.e., $\check{H}\circ \Phi^t=\check{\Phi}^t\circ \check{H}$.
		\item The cocycles induced by foliations $\cF^{u/s}_\Phi$ satisfy that for all  $x\in M$ and $t\in \RR$,
		\[ 
	\alpha\big(x,t, \cF^u_{\check{\Phi}},\check{a}\big)=\alpha\big(\check{H}^{-1}(x),t, \cF^u_\Phi,a\big)\quad {\rm and} \quad \alpha\big(x,t, \cF^s_{\check{\Phi}},\check{a}\big)=-\alpha_\Phi\big(\check{H}^{-1}(x),t,f-P\big).\]
	\item The restriction $\check{H}|_{\cF^s_\Phi(x)}:\cF^s_\Phi(x)\to \cF^s_{\check{\Phi}}(\check{H}(x))$ is $C^{1+}$-smooth, for every $x\in M$. Since $\Phi$ is $C^{1+}$-smooth and $\check{H}|_{\cF^s_\Phi}$ is $C^{1+}$-smooth, we get that $\check{\Phi}$ is $C^{1+}$-smooth along each leaf of $\cF^s_{\check{\Phi}}$.
	\item 	$\check{H}$ is just a deviation along the each leaf of $\cL=\cF^{uu}_\Phi$. Thus,	$\cF^{uu}_{\check{\Phi}}:=\check{H}(\cF^{uu}_\Phi)=\cF^{uu}_\Phi$ 
	is still a $C^{1+}$-smooth foliation and is invariant under the flow $\check{\Phi}$.  Recall that $\cO_{\check{\Phi}}$ is a $C^{1+}$-smooth foliation. Hence $\check{\Phi}$ is $C^{1+}$-smooth restricted on the unstable foliation $\cF^u_{\check{\Phi}}$. 
	\end{enumerate}
 Thus $\check{\Phi}$ is a $C^{1+}$-smooth flow with cocycle induced by $\cF^s_{\check{\Phi}}$ H\"older cohomologous to function $(f-P)\circ \check{H}$ satisfying \eqref{eq. final negative}, and cocycle induced by $\cF^u_{\check{\Phi}}$ H\"older cohomologous to $J^s_\Phi\circ \check{H}^{-1}$, namely, the cocycles over $\check{\Phi}$ induced by $\cF^{s/u}_{\check{\Phi}}$ are uniformly expanding and contracting, respectively.   In particular,  $\check{\Phi}$ is a $C^{1+}$-smooth Anosov flow  satisfying this theorem. If it is necessary, one can apply Proposition \ref{prop flow regularity} to $\check{\Phi}$ such that $\check{\Phi}$ is $C^{1+}$-smoothly conjugate to an Anosov flow $\Psi$ generated by $C^{1+}$-smooth vector-field via smooth conjugacy $H_0$. Let  $H=H_0\circ \check{H}$. Then $(\Psi, H,a_*)$ satisfies this theorem , up to change the metric as Claim \ref{claim change metric} such that $a_*$ makes the cohomology relation being exact equations. It follows from the result of de la Llave (Theorem \ref{thm delaLlave}) that up to $C^{1+}$-conjugacy, $\Psi$ is unique.
\end{proof}

Before we prove Corollary \ref{cor Techmuller space suC1}, we give the following Teichm\"uller space of an Anosov $3$-flow in the category of Anosov flows admitting $C^1$-smooth unstable foliations, as a summary of Theorem \ref{thm flow Phi sC1 flex} and Theorem \ref{thm flow Phi sC1 rigid}.	For an Anosov flow $\Phi$ on $3$-manifold $M$, one can  define 
\begin{align*}
		\mathcal{H}^{1+}_u(\Phi):=\Big\{ \Psi\ | \ C^{1+}\ \text{Anosov flow on}\  M\  & \text{conjugate}\  \text{to  }\Phi \  \\ & \text{with}\ C^1\  \text{strong unstable foliation}  \Big\},  
	\end{align*}
	and $\Psi_1\sim_u \Psi_2$, if  $\Psi_1$ being conjugate to $\Psi_2$ via a conjugacy smooth along each unstable manifold.

\begin{corollary}\label{cor teichmulle uC1}
		Let $\Phi$ be a $C^{1+}$-smooth transitive Anosov flow on $3$-manifold $M$ with $C^1$-smooth strong unstable foliation. There is a  nature bijection,
		\[\mathcal{H}^{1+}_u(\Phi)/_{\sim_u} \to \mathbb{F}^{\rm H}(M)/_{\sim_\Phi}. \]
		  Moreover, if  $\Phi$ is not a suspension with constant roof, then there is a nature bijection \[\mathcal{H}^{1+}_u(\Phi)/_{\sim}\to  \mathbb{F}^{\rm H}(M)/_{\sim_\Phi}.\]   
\end{corollary}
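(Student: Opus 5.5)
I would model the proof on that of Corollary \ref{cor Teich space orbit-eq}: build two injective maps between $\mathcal{H}^{1+}_u(\Phi)/_{\sim_u}$ and $\mathbb{F}^{\rm H}(M)/_{\sim_\Phi}$, and check that they are mutually inverse, or at least apply Cantor--Schr\"oder--Bernstein.

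\textbf{The map $\mathcal{K}$.} Given $[f]$, Theorem \ref{thm flow Phi sC1 flex 2} produces $\Psi\in\mathcal{H}^{1+}_u(\Phi)$ conjugate to $\Phi$ via $H$, with
\[J^u_\Psi(x,t)=-\alpha_\Psi\big(x,t,f\circ H^{-1}-P_f(\Phi)\big).\]
Suppose $f\sim_\Phi f'$. Then $f-P_f$ and $f'-P_{f'}$ are H\"older cohomologous: the constant in the definition of $\sim_\Phi$ must equal $P_f-P_{f'}$, since pressure is invariant under adding coboundaries. So the corresponding flows $\Psi,\Psi'$ have equal periodic unstable Jacobians under the composed conjugacy $H'\circ H^{-1}$. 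By Theorem \ref{thm delaLlave}, this conjugacy is $C^{1+}$ along unstable leaves, i.e.\ $\Psi\sim_u\Psi'$, and $\mathcal{K}$ is well defined. Injectivity runs the same chain backwards. If $\Psi\sim_u\Psi'$ via a conjugacy $C^{1+}$ along $\cF^u$, then the cocycles $\alpha(\cdot,t,\cF^s_\Psi,E^{uu})$ agree up to a coboundary, exactly as in \eqref{eq. coho 1}, using Lemma \ref{lem metric to change bundle}. Hence the periodic integrals of $f-P_f$ and $f'-P_{f'}$ coincide, and the Livschitz Theorem gives $f\sim_\Phi f'$.

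\textbf{The map $\mathcal{T}$.} Given $\Psi\in\mathcal{H}^{1+}_u(\Phi)$ with conjugacy $H$, set $\mathcal{T}(\Psi)=-J^u_\Psi\circ H$. This needs no $\gamma'$ factor, since $H$ is a true conjugacy. The choice of $H$ is irrelevant: two conjugacies differ by an element of the centralizer of $\Phi$ in homeomorphisms, and this changes $J^u_\Psi\circ H$ only by composition with a self-conjugacy of $\Phi$. Such a composition preserves periodic data up to relabelling periodic orbits, and this is the delicate point. I would either fix $H$ within the class (homotopic to the identity, close to the structurally stable one) or argue that relabelling preserves $\sim_\Phi$-classes once we restrict to the homotopy class. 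The Livschitz argument above shows that $\mathcal{T}$ is well defined on $\sim_u$-classes and injective. Pressure $0$ by the SRB property makes $\mathcal{K}\circ\mathcal{T}=\mathrm{id}$, so the natural map $\mathcal{T}$ is the bijection.

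\textbf{The second statement.} Assume $\Phi$ is not a constant-roof suspension. Then neither is any conjugate $\Psi$, because joint non-integrability of $E^{ss}\oplus E^{uu}$ is a conjugacy invariant via periodic data and temporal distance \cite{Pl1972}. Each $\Psi\in\mathcal{H}^{1+}_u(\Phi)$ has $C^1$ strong unstable foliation, so Theorem \ref{thm flow Phi sC1 rigid} applies to any conjugacy between two such flows: it is automatically $C^{r_*}$ along stable leaves. Combined with smoothness along unstable leaves when $\Psi\sim_u\Psi'$, the Journ\'e Lemma \ref{lem journe} upgrades $\sim_u$ to $\sim$. The reverse inclusion $\sim\subset\sim_u$ is trivial, so the two quotients coincide.

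\textbf{Main obstacle.} I expect the hardest step to be the well-definedness of $\mathcal{T}$ with respect to the choice of conjugacy, i.e.\ centralizer issues. I would handle it by fixing conjugacies homotopic to the identity, using the uniqueness of such conjugacies up to flowing along orbits. Flowing along orbits changes $J^u_\Psi\circ H$ only by a coboundary.
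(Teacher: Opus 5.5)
Your argument is the paper's. The map $\Psi\mapsto[J^u_\Psi\circ H]$ (the sign is immaterial) is well defined and injective on $\sim_u$-classes by Theorem \ref{thm delaLlave} plus the Livschitz theorem. It is onto by Theorem \ref{thm flow Phi sC1 flex 2}, which is your $\mathcal{K}$ with $\mathcal{K}\circ\mathcal{T}={\rm id}$ via the SRB property. Finally, $\sim_u$ upgrades to $\sim$ by Theorem \ref{thm flow Phi sC1 rigid} and the Journ\'e Lemma. The one step where you go beyond the paper is your fix for the dependence on the chosen conjugacy, and that fix is incorrect.

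\textbf{Why the fix fails.} Self-conjugacies of $\Phi$ that are homotopic to the identity need not be time-$s$ maps. So neither normalizing $H$ by homotopy class nor restricting relabellings to that class makes $[J^u_\Psi\circ H]$ independent of $H$.

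\textbf{A counterexample.} Let $\Phi^t(\tilde\Gamma g)=\tilde\Gamma g a_t$ with $a_t={\rm diag}(e^{t/2},e^{-t/2})$ on $M=\tilde\Gamma\backslash{\rm SL}(2,\RR)$, where $\tilde\Gamma$ is a lift of a closed hyperbolic surface group.
\begin{itemize}
\item This is the lift of the geodesic flow to a fibrewise double cover of the unit tangent bundle. It is a transitive contact Anosov flow with smooth $\cF^{uu}_\Phi$, and it is not a suspension.
\item The map $\sigma(\tilde\Gamma g)=\tilde\Gamma g(-I)$ commutes with $\Phi^t$, since $-I$ is central.
\item $\sigma$ is isotopic to ${\rm Id}_M$ through right multiplication by rotations.
\item $\sigma$ is not a time-$s$ map, since $-I\notin\tilde\Gamma$. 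It exchanges the two lifts of every periodic geodesic-flow orbit that lifts to two closed orbits, and such orbits exist.
\end{itemize}
Take $f$ to be a bump function near one such lift and vanishing near the other. Let $(\Psi,H)$ be given by Theorem \ref{thm flow Phi sC1 flex 2}; by construction $H$ is homotopic to the identity. Then $H$ and $H\circ\sigma$ are both conjugacies from $\Phi$ to $\Psi$ homotopic to the identity. Yet $J^u_\Psi\circ H\not\sim_\Phi J^u_\Psi\circ H\circ\sigma$: testing the would-be relation on the two exchanged orbits, which have equal periods, forces the constant in $\sim_\Phi$ to be both positive and negative.

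The same issue breaks your injectivity argument for $\mathcal{K}$. The conjugacy realizing $\Psi\sim_u\Psi'$ need not equal $H'\circ H^{-1}$ up to the flow.

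\textbf{How to repair it.} The paper does not address this point either; it tacitly fixes \emph{the} conjugacy $H_i$ from $\Phi$ to $\Psi_i$. The correct repair is to work with marked pairs $(\Psi,H)$. In $\sim_u$ (resp.\ $\sim$), require that $H_2\circ H_1^{-1}$ itself be smooth along unstable leaves (resp.\ smooth). As you note, time shifts of $H$ change $J^u_\Psi\circ H$ only by a coboundary. With this convention your argument is complete. Without markings, the natural target is the quotient of $\mathbb{F}^{\rm H}(M)/_{\sim_\Phi}$ by the action of the centralizer of $\Phi$.
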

\begin{proof}
	Let us consider the map
	$\mathcal{T}:\mathcal{H}^{1+}_u(\Phi)\longrightarrow  \mathbb{F}^{\rm H}(M),$ by $\Psi\mapsto J^{u}_\Psi\circ H$.
	Let $\Psi_i\in \mathcal{H}^{1+}_u(\Phi)$, for $i=1,2$.  By Theorem \ref{thm delaLlave}, $\Psi_1\sim_u\Psi_2$, if and only if $J^{u}_{\Psi_1}\circ H_1$ is cohomologous to $J^{u}_{\Psi_2}\circ H_2$ where $H_i$ is the conjugacy from $\Phi$ to $\Psi_i$. Thus,  the map
	\begin{align*}
		[\mathcal{T}]:\mathcal{H}^{1+}_u(\Phi)/_{\sim_u}&\longrightarrow  \mathbb{F}^{\rm H}(M)/_{\sim_\Phi}, \\
		[\Psi]&\longmapsto [J^{u}_\Psi\circ H],
	\end{align*}
	is well-defined and a injection. By Theorem \ref{thm flow Phi sC1 flex 2}, $[\mathcal{T}]$  is also a surjection, thus a bijection.
	
	If $\Phi$ is not a suspension with constant roof, by Theorem \ref{thm flow Phi sC1 rigid} and the Journ\'e Lemma, $\Psi_1\sim_u\Psi_2$ if and only if $\Psi_1\sim \Psi_2$, namely $\Psi_1$ is smoothly conjugate to $\Psi_2$. This completes the proof .
\end{proof}

Combining  Cawley's work \cite{C1993} (Theorem \ref{thm Cawley}) and the above corollary, we can get Corollary \ref{cor Techmuller space suC1}.

\begin{proof}[Proof of Corollary \ref{cor Techmuller space suC1}]
	
	Let $\Phi$ be a $C^{1+}$-smooth Anosov flow on $3$-manifold $M$ with both $C^{1+}$-smooth strong stable and strong unstable foliations.  Let $\Psi\in \mathcal{H}^{1+}_{s,u}(\Phi)$. By the conjugacy,  we can just consider the following two cases:
	\begin{itemize}
		\item $\Phi$ and $\Psi$ are  suspension flows over  Anosov diffeomorphisms $A=A_\Phi, A_\Psi:\TT^2\to\TT^2$ with  constant roof-functions $\tau_\Phi, \tau_\Psi:\TT^2\to \RR^+$, without loss of generality, we assume that $\tau_\Phi\equiv 1\equiv\tau_\Psi$. Moreover, $A$ and $A_\Psi$ are conjugate via homeomorphism $h_\Psi:\TT^2\to \TT^2$.
		\item Neither $\Phi$ nor $\Psi$  is a suspension with constant roof-function.
	\end{itemize}
	For the second case, by Corollary \ref{cor teichmulle uC1} or Theorem \ref{thm flow Phi sC1 rigid},  the space $\mathcal{H}^{1+}_{s,u}(\Phi)/_\sim=[\Phi]$.

	For the first case, by the work of de la Llave \cite{dL1992} again, the map
	\begin{align*}
		[\mathcal{T}]:\mathcal{H}^{1+}_{s,u}(\Phi)/_{\sim}&\longrightarrow  \mathbb{F}^{\rm H}(\TT^2)/_{\sim_A}\times \mathbb{F}^{\rm H}(\TT^2)/_{\sim_A}, \\
		[\Psi]&\longmapsto \big( [J^s_{A_\Psi}\circ h_\Psi],  [J^u_{A_\Psi}\circ h_\Psi] \big), 
	\end{align*}
	is well-defined and injective.  On the other hand, by the work of Cawley (Theorem \ref{thm Cawley}), for any $\big([\phi_1], [\phi_2] \big)\in \mathbb{F}^{\rm H}(\TT^2)/_{\sim_A}\times \mathbb{F}^{\rm H}(\TT^2)/_{\sim_A}$, there is a $C^{1+}$-Anosov diffeomorphism $A'$ on $\TT^2$  conjugate to $A$ via $h$ such that $J^s_{A'}\circ h \sim_A \phi_1$ and $-J^u_{A'}\circ h \sim_A \phi_2$. Let $\Psi$ be suspension of $A'$ with $1$-roof. Then $\Psi$ is conjugate to $\Phi$ and has $C^{1+}$-smooth strong hyperbolic foliation.
\end{proof}

\appendix

\section{Appendix\quad  Metrics adapted to measures}

In appendix, we prove Proposition \ref{prop adapted metric} which claims that there is a metric ``discretely" adapting the transverse  measures. Recall that  $M$ is a  $3$-manifold with $C^\alpha\ (0<\alpha<1)$ Riemannian metric $a(\cdot,\cdot)$, $\Phi$ is an Anosov flow on $M$, 
 $\cL$ is a  one-dimensional $C^{1+\alpha}$-smooth foliation transverse to $\cF^s_\Phi$ and $\{V_i\}_{1\leq i\leq k}$ is a family of $s$-regular $\cL$-foliation boxes covering $M$.  
 	Firstly, we refine $\{V_i\}_{1\leq i\leq k}$ as follow.
 \begin{claim} \label{claim partition}
 There is a cover $\{U_i\}_{1\leq i\leq k_0}$ such that 
 	\begin{enumerate}
 		\item  For each $1\leq i\leq k_0$, $U_i$ is an $s$-regular $\cL$-foliation box. Namely, $U_i$ is proper,  the upper and lower boundaries $U_i^+$ and $U_i^-$ are local leaves of $\cF^s_\Phi$,  and  each local leaf $\cL(x,U_i)$ is intersecting with both $U_i^+$ and $U_i^-$, for all $x\in U_i$.
 				\item $\{U_i\}_{1\leq i\leq k_0}$ is a refined cover of $\{V_i\}_{1\leq i\leq k}$. Particularly,    
 			\begin{itemize}
 				\item For every $1\leq i\leq k_0$, there exists $1\leq j\leq k$ such that $U_i\subseteq V_j$.
 				\item For every $1\leq j\leq k$, $V_j^{\pm}\subseteq\bigcup_{i=1}^{k_0}U_i^{\pm}$.
 		\end{itemize}   
 		\item ${\rm int}(U_i)\cap {\rm int}(U_j)=\emptyset$, for $1\leq i\neq j\leq k_0$.
 	\end{enumerate}
 \end{claim}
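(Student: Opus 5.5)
The plan is to cut every box $V_j$ along the $\cF^s_\Phi$-plaques that form the tops and bottoms of the boxes meeting it, and then to cut the pieces sideways along surfaces that are saturated by $\cL$. Because $\cL$ is transverse to $\cF^s_\Phi$, inside each $V_j$ one has a product structure $V_j\cong \Sigma_j\times[0,1]$. Here $\Sigma_j$ is a transversal lying in a plaque of $\cF^s_\Phi$, the $\cL$-plaques are the vertical segments, and the $\cF^s_\Phi$-plaques are graphs over $\Sigma_j$.

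The first step is horizontal cutting. By the coherence remark after Definition \ref{def regular box}, for $i\neq j$ the sets $V_i^\pm\cap V_j$ lie in local leaves of $\cF^s_\Phi$. I extend each such piece, by moving along $\cF^s_\Phi$ inside $V_j$, to a full $\cF^s_\Phi$-plaque of $V_j$, i.e.\ a graph over all of $\Sigma_j$. This requires the boxes to be small compared with the leaf structure, which I arrange by shrinking them first. It also requires that a local $\cF^s_\Phi$-leaf meeting $V_j$ crosses $V_j$ completely from side to side, which follows from the product structure. There are finitely many such plaques, and they cut $V_j$ into finitely many slabs. Each slab has top and bottom in $\cF^s_\Phi$-leaves, every $\cL$-plaque of the slab runs from bottom to top, and so each slab is $s$-regular. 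After this step, every $V_i^\pm$ is a union of tops and bottoms of slabs.

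The second step removes overlaps between slabs coming from different $V_j$ by vertical cutting. For slabs $S\subset V_j$ and $S'\subset V_{j'}$ whose interiors intersect, I take the boundary of the projection of $S\cap S'$ onto $\Sigma_j$ along $\cL$. This boundary is a curve, and its $\cL$-saturation inside $S$ gives an $\cL$-saturated surface. Cutting $S$ along finitely many such surfaces produces pieces $U$ whose vertical sides are $\cL$-saturated. Consequently each piece is still $s$-regular: its top and bottom are sub-plaques of $\cF^s_\Phi$, and every $\cL$-plaque runs from the bottom to the top. I then choose an ordering of the $V_j$ and keep only $S\setminus\bigcup_{j'<j}{\rm int}(V_{j'})$, cut into such pieces, taking closures so that each piece is proper. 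This gives pairwise disjoint interiors. Every piece lies in some $V_j$, and the tops and bottoms of the $V_j$ are covered by the tops and bottoms of the pieces.

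I expect the main obstacle to be the exact bookkeeping in the second step. Vertical cuts made for one pair of slabs must not destroy the $s$-regularity forced by other pairs. Also, the pieces of $V_j^\pm$ that get removed because $V_j$ overlaps some earlier box must still be covered by tops or bottoms of pieces coming from that earlier box. For the second issue I would rely on the horizontal cutting already having been done along all the $V_i^\pm$ inside every box, so that the relevant plaques are boundaries of slabs in the earlier box as well. If the projected boundary curves are not tame enough to be used as cut lines, I would first perturb the $\Sigma_j$, or replace the cuts by a fine grid of $\cL$-saturated surfaces. Such a grid only refines the cover further and leaves the three properties of the claim intact.
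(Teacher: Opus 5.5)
Your construction is sound, but it gets disjoint interiors by a different mechanism than the paper.

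The paper follows Bowen's Markov-partition refinement. For each overlapping pair it splits $V_i$ into the four sets $V^n_{i,j}$, according to whether the $\cF^s_\Phi$-plaque and the $\cL$-plaque through $x$ in $V_i$ meet $V_j$. It then sets $R(x)=\bigcap{\rm int}(V^n_{i,j})$ over all boxes containing $x$, and takes the $U_i$ to be the finitely many closures $\overline{R(x)}$ for $x$ in an open dense set; finiteness and the covering property are quoted from Bowen.

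The boundaries of the $V^n_{i,j}$ are exactly your cuts: the $\cF^s_\Phi$-plaques through the $V_j^\pm$ and the $\cL$-saturations of the sides of $V_j$. So the two routes differ only in the last step. The paper takes the common refinement of all boxes at once, which needs no ordering and makes every piece compatible with every box. You disjointify by an ordering, which avoids the open-dense-set argument but depends on the ordering and forces you to check the second bullet by hand.

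That check is cleanest done locally. Every box containing a point $x\in V_j^\pm$ has been cut along the $\cF^s_\Phi$-plaque through $x$, and the pieces cover $M$. Hence any piece containing $x$ has $x$ on its top or bottom. This also handles the case where the relevant part of the earlier box was itself removed by a still earlier box.

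Your worry that vertical cuts might spoil $s$-regularity is unfounded, since the cuts are $\cL$-saturated.

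Two points need stating precisely:
\begin{itemize}
\item The preliminary shrinking must be a subdivision of each $V_j$ along $\cF^s_\Phi$-plaques and $\cL$-saturated surfaces. Otherwise the original $V_j^\pm$ need not remain unions of tops and bottoms. The paper tacitly needs small boxes as well.
\item Properness should come from keeping the whole cells of $S$ that are not contained in an earlier box. The set $S\setminus\bigcup_{j'<j}{\rm int}(V_{j'})$ is already closed and may carry lower-dimensional remnants; these lie in earlier boxes, so they are covered anyway.
\end{itemize}
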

 \begin{proof}[Proof of Claim \ref{claim partition}]
 	The proof follows the construction of Markov Partition. Here we do not need the Markov property,  thus we just need the local product structure of $\cL$ and $\cF^s_\Phi$.
 	For the $s$-regular family of $\cL$-boxes $\{V_i\}_{1\leq i\leq k}$, if $V_i\cap V_j\neq \emptyset$, let 
 	\begin{align*}
 		&V_{i,j}^1=\big\{ x\in T_i\ |\ \cF^s_\Phi(x,V_i)\cap V_j \neq \emptyset, \ \cL(x,V_i)\cap V_j\neq \emptyset  \big\},\\
 		&V_{i,j}^2=\big\{ x\in T_i\ |\ \cF^s_\Phi(x,V_i)\cap V_j \neq \emptyset, \ \cL(x,V_i)\cap V_j= \emptyset  \big\},\\
 		&V_{i,j}^3=\big\{ x\in T_i\ |\ \cF^s_\Phi(x,V_i)\cap V_j =\emptyset, \ \cL(x,V_i)\cap V_j\neq \emptyset  \big\},\\
 		&V_{i,j}^4=\big\{ x\in T_i\ |\ \cF^s_\Phi(x,V_i)\cap V_j= \emptyset, \ \cL(x,V_i)\cap V_j= \emptyset  \big\}.
 	\end{align*}
 	And let 
 	\[ R(x)=\bigcap\big\{  {\rm int}(V_{i,j}^n)\ |\ x\in V_i, V_i\cap V_j\neq \emptyset \ {\rm and}\ x\in V_{i,j}^n   \big\}.  \]
 	By the same proof of \cite[Theorem 3.12]{B1975}, there is an open and dense subset $Z\subset M$  such that  for each $x\in Z$, $R(x)$ is an open set, and $\big\{ \overline{R(x)}\big\}_{x\in Z}$ is actually a finite cover of $M$. Let  $\{U_i\}_{1\leq i\leq k_0}$ be this  refinement.  By the construction, all $U_i$ are still  $s$-regular $\cL$-foliation boxes without joint interiors and  $V_j^{\pm}\subseteq\bigcup_{i=1}^{k_0}U_i^{\pm}$ holds for each $1\leq j\leq k$.
 \end{proof}

 For each $p\in M$, $1\leq i\leq j\leq k_0$ and $\sigma,\tau=\pm$, denote  by $\cL^{\sigma,\tau}_{i,j}(p)$, the curve containing $p$ lying on $\cL(p)$ with two endpoints at $ U_i^\sigma$ and $U^\tau_j$ respectively, when the curve exists. Let $\{\mu_p\}_{p\in M}$ be a family of measures  subordinated to $\cL$.  Assume that for each $1\leq i\leq j\leq k_0$ and $\sigma,\tau=+$ or $-$,  $\mu_p\big(\cL^{\sigma,\tau}_{i,j}(p)\big)$ is H\"older continuous on $p\in U_i$ with H\"older exponent $\alpha$. 

\begin{proposition}\label{prop metric}
	There exists a $C^{\alpha}$ metric $\tilde{a}(\cdot,\cdot)$ of $M$ such that $(T\cL)_{\tilde{a}}^\perp=(T\cL)_a^\perp$, $\tilde{a}|_{(T\cL)_a^\perp}=a|_{(T\cL)_a^\perp}$ and  \[l_{\tilde{a}}\big( \cL^{\sigma,\tau}_{i,j}(p)\big)=\mu_p\big(\cL^{\sigma,\tau}_{i,j}(p)\big),\quad \forall 1\leq i\leq j\leq k_0, \ \  \sigma, \tau=\pm\ \   {\rm and}\ \  \forall p\in M,\]  where  $l_{\tilde{a}}$ is the length induced by metric $\tilde{a}$.
\end{proposition}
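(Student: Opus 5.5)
Since $\tilde a$ must agree with $a$ on $(T\cL)^\perp_a$ and keep that complement orthogonal, the only freedom is a positive H\"older density along $T\cL$. We therefore look for $\tilde a = a$ off $T\cL$ and $\tilde a|_{T\cL} = \rho^2\, a|_{T\cL}$, with $\rho:M\to\RR_{>0}$ of class $C^\alpha$. Then $l_{\tilde a}(\gamma)=\int_\gamma \rho\, dl_a$ for any arc $\gamma$ in a leaf of $\cL$. The task is to prescribe the integrals of $\rho$ over the finitely many families of arcs $\cL^{\sigma,\tau}_{i,j}(p)$. Since the $U_i$ have disjoint interiors and their tops and bottoms are $\cF^s_\Phi$-plaques, every arc $\cL^{\sigma,\tau}_{i,j}(p)$ is a concatenation of full vertical fibres $\cL(q,U_m)$ of the boxes crossed, together with sub-arcs between boundary plaques. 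The plan is to reduce everything to prescribing the $\tilde a$-length of each full fibre $\cL(q,U_m)$ (for $q\in U_m$) to equal $\mu_q(\cL(q,U_m))=\mu_q(\cL^{-,+}_{m,m}(q))$. By additivity of $\mu_p$ along the leaf and of $l_{\tilde a}$, the arcs spanning several consecutive boxes then automatically match, because such an arc decomposes exactly into full fibres of the intermediate boxes. This uses item 2 and the disjointness in Claim \ref{claim partition}: the endpoints of every $\cL^{\sigma,\tau}_{i,j}$ lie on the plaques $U_i^\pm$, $U_j^\pm$, which are unions of pieces of the $U_m^\pm$.

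Fix one box $U=U_m$ with $C^{1+\alpha}$ foliation coordinates $(s,y)\in[0,1]\times\Sigma$ adapted to $\cL$, so that $y$ labels the fibre, $\{0\}\times\Sigma=U^-$ and $\{1\}\times\Sigma=U^+$. Let $\ell(y)=l_a(\cL(y,U))$ and $m(y)=\mu_y(\cL(y,U))$. By hypothesis $m$ is $C^\alpha$ and positive, and $\ell$ is $C^\alpha$ since $\cL$ is $C^{1+\alpha}$ and $a$ is $C^\alpha$. A naive choice such as $\rho=m/\ell$, constant along fibres, produces discontinuities across $\partial U$ between neighbouring boxes. Instead, write $\rho = 1+\sum_m \lambda_m$, where $\lambda_m$ is supported in ${\rm int}(U_m)$ and has the form $\lambda_m(s,y)=c_m(y)\,\beta(s)$, with $\beta$ a fixed smooth bump on $(0,1)$ of $l_a$-mass one along each fibre. (More precisely, $\beta$ is normalised against the $a$-arclength density along fibres, which is $C^\alpha$ in $y$.) We choose $c_m(y)=m(y)-\ell(y)$. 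Then $\rho=1$ near every $\partial U_m$, so $\rho$ is globally $C^\alpha$, and each full fibre has $\tilde a$-length exactly $m(y)$.

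The one real issue is positivity of $\rho$, since $m-\ell$ may be very negative. This can be handled in two ways. First, choose the bump profile fibrewise: put $\lambda_m(s,y)=(m(y)/\ell(y)-1)\,\beta_\epsilon(s)$ with $\beta_\epsilon$ equal to $1$ on $[\epsilon,1-\epsilon]$ and tapering to $0$ near the ends. Then $\rho = 1+(m/\ell-1)\beta_\epsilon$ is a convex combination of $1$ and $m/\ell>0$, hence positive. The mismatch created in the $\epsilon$-collars is then corrected by a further small multiplicative renormalisation $\rho\mapsto \rho\cdot(1+\kappa(y)\beta(s))$, with $\kappa$ small and $C^\alpha$, which again preserves positivity. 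Alternatively, replace the additive correction by a multiplicative one from the start, $\rho = \exp\!\big(\sum_m \kappa_m(y)\beta(s)\big)$, and solve for $\kappa_m(y)$ in $\int_0^1 e^{\kappa\beta(s)}\,dl_a = m(y)$. This function of $\kappa$ is strictly increasing and ranges over $(0,\infty)$ after the collar contribution is subtracted, provided the collars are thin relative to $\min m$. By the implicit function theorem, $\kappa_m$ is $C^\alpha$ in $y$. The multiplicative version is the one I would carry out, as it gives positivity and continuity simultaneously.

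Finally, the claimed identity for pairs $i\neq j$ follows from the additivity argument above: any $\cL^{\sigma,\tau}_{i,j}(p)$ is the union of full fibres of boxes with disjoint interiors, and both $\mu_p$ and $l_{\tilde a}$ are additive and agree on each piece. Proposition \ref{prop adapted metric} then follows from the refinement property $V_j^\pm\subseteq\bigcup U_i^\pm$. The expected main obstacle is the bookkeeping here: showing that every arc between boundary plaques of the original $V$'s is an exact union of full $U$-fibres. This requires care where plaques of different boxes meet, and the coherence remark after Definition \ref{def regular box} ensures that boundary plaques that meet lie in a single $\cF^s_\Phi$-leaf.
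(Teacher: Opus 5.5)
You claim that positivity is the only real issue. It is not: the construction fails at the lateral faces of the boxes. The correction $\lambda_m(s,y)=c_m(y)\beta(s)$ is cut off only in the $\cL$-direction, near $U_m^\pm$. It is not supported in ${\rm int}(U_m)$, because on the $\cL$-saturated part $\Sigma_m$ of $\partial U_m$ (the fibres over $y\in\partial\Sigma$ in your coordinates) one has $c_m(y)=m(y)-\ell(y)\neq 0$ in general.

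Suppose $U_m$ and $U_{m'}$ share part of their lateral faces. The leaf arcs there are fibres of both boxes, but with different endpoints, since $U_m^\pm$ and $U_{m'}^\pm$ generally cross the leaf at different places. So $\rho$ equals $1+\lambda_m$ on one side and $1+\lambda_{m'}$ on the other, and these do not agree. Hence $\rho$ jumps across the face, and the claim ``$\rho=1$ near every $\partial U_m$'' is false. The multiplicative variant $\exp(\sum_m\kappa_m(y)\beta(s))$ has the same defect.

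You also cannot repair this by cutting $c_m$ off near $\partial\Sigma$. The lateral fibres are full fibres, and their $\tilde a$-length must equal $m(y)$. By continuity, $\rho$ on a shared face has to satisfy the fibre constraints of all adjacent boxes at once, on leaf arcs that carry the cut points of all of them.

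The same issue undermines your reduction to full fibres. At face points an arc $\cL^{\sigma,\tau}_{i,j}(p)$ need not be a union of full fibres. For example, for two side-by-side boxes with staggered plaques, the arc from $U_i^-$ to $U_j^+$ exists only for $p\in\Sigma_i\cap\Sigma_j$. It is not a union of full fibres, and its length is not determined by the full-fibre constraints.

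The missing step is exactly what the paper's proof is organised around.
\begin{enumerate}
\item Decompose $\Sigma_i$ into the rectangles $\Sigma_{ij}=\Sigma_i\cap\Sigma_j$. Cut their $\cL$-edges $\cL^\sigma_{ij}$ into the shortest pieces $\cL^\sigma_{ij}(n)$ determined by all the plaques meeting them.
\item Define $\tilde a$ on thin cylinders $B^s_\e(\cL^\sigma_{ij}(n))$ using Claim \ref{claim metric 1}. Keep $\tilde a=a$ in $\e$-collars of the plaques, so that overlapping cylinders agree.
\item Interpolate across the tubular neighbourhood $E_{ij}=B^s_\e(\Sigma_{ij})$ of each rectangle, rescaled so that every $\Sigma_{ij}$-arc receives its $\mu$-measure.
\end{enumerate}
After step 3, every sub-arc between consecutive cut points on a face is matched, which yields all the required identities there, including the face-only arcs. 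Only then is each box interior filled, by interpolating between this collar and a core box. Your multiplicative fibrewise correction is a reasonable alternative for that last step. It works only once a face density satisfying these constraints is in place, and the correction factor must equal $1$ on the lateral faces as well as near $U_m^\pm$.
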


\begin{remark}
	Recall that $\{U_i\}_{1\leq i\leq k_0}$ is a refinement of $\{V_i\}_{1\leq i\leq k}$, particularly each $V_j^\pm$ is contained in   $\bigcup_{i=1}^{k_0}U_i^{\pm}$. Thus,
	Proposition \ref{prop adapted metric} follows from the above one, directly.
\end{remark}

\begin{proof}[Proof of Proposition \ref{prop metric}]
Since the bundle $T\cL$ is $C^{\alpha}$, we can assume that the metric $a=a_{i,j}dx^i\otimes dx^j$ such that $\frac{\partial}{\partial x^1}$ coincides with $T\cL$ and the functions $a_{i,j}$ are $C^{\alpha}$. For $\e>0$ and a set $A\subset M$, let  \[B^\cL_\e(A):=\big\{ p\in M\ |\ q\in A, \ p\in \cL(q) \ {\rm and}\  l_a([p,q])\leq \e \big\}.\]
	 Similarly, we denote the set 
	\[B^s_\e(A):=\big\{ p\in M\ |\ q\in A, \ p\in \cF^s_\Phi(q) \ {\rm and}\  l_a([p,q]^s)\leq \e \big\},\]
	where $[p,q]^s$ is the geodesic curve lying on $\cF^s_\Phi(q)$ with endpoints $p$ and $q$.
	
		\begin{figure}[htbp]
		\centering
		\includegraphics[width=8cm]{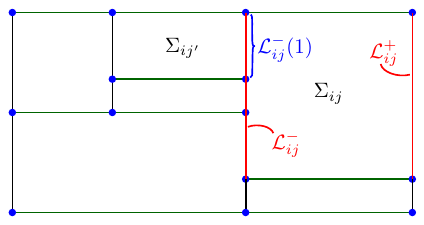}
		\caption{The set $\Sigma_i$, i.e., the piece of the boundary $\partial U_i$ foliated by $\cL$, is divided into rectangles $\Sigma_{ij}$. The curve $\cL^-_{ij}$ is cut by other boundaries of $\Sigma_{ij'}$ into shorter curves. }	
	\end{figure}

	For $1\leq i\leq k_0$, the boundary $\partial U_i$ is the union of close local stable leaves $U_i^\pm$ and a close topological manifold $\Sigma_i$ subfoliated by $\cL$. Since $\cL$ is $C^{1+\alpha}$-smooth, we can assume that $\partial(U_i^\pm)$ are piecewise $C^{1+\alpha}$-smooth curves and $\Sigma_i$ is piecewise $C^{1+\alpha}$-smooth.  For some $1\leq i\neq j\leq k_0$, if \[\Sigma_{ij}:=\Sigma_i\cap \Sigma_j\neq \emptyset,\]
 then it is a rectangle subfoliated by foliations $\cL$ and $\cF^s_\Phi\cap \Sigma_{ij}$.  Even if the case that ${\rm int}_{\Sigma}(\Sigma_{ij})=\emptyset$, where the interior ${\rm int}_{\Sigma}$  is with respect to the topological of surface $\Sigma_i$, it is a set of  finitely many points and curves lying on $\cF^s_\Phi$ or $\cL$, thus it also can be seen as  a union of  degenerate rectangles.    Since ${\rm int}(U_i)\cap {\rm int}(U_j)=\emptyset$, one has that for $j\neq j'$, \[{\rm int}_{\Sigma}(\Sigma_{ij})\cap  {\rm int}_{\Sigma}(\Sigma_{ij'})=\emptyset.\]
It is clear that $\Sigma_i=\bigcup_{1\leq j\leq k_0}\Sigma_{ij}$, and 
 the  boundary of rectangle $\Sigma_{ij}$ is the union of two leaves of $\cL$ denoted by $\cL_{ij}^+$ and $\cL_{ij}^-$ ($\cL_{ij}^+$ and $\cL_{ij}^-$ could degenerate to be one curve),  and two curves (could degenerate to be two points) lying on  $V_i^\sigma$ and $V_j^\tau$ for some $\sigma,\tau=\pm$.  For a curve $\cL_{ij}^\sigma$, it is possible that there exist other $\cL$-boundaries of rectangles, e.g.  $\cL_{ij'}^\tau$,  such that  ${\rm int}_{\cL}(\cL_{ij}^\sigma)\cap {\rm int}_{\cL}(\cL_{ij'}^\tau)\neq \emptyset$. In this case, we cut $\cL_{ij}^\sigma$ into shortest curves given by the intersection of $\cL_{ij}^\sigma$ with some $\cL_{ij'}^\tau$. We number these shortest curves by $\cL_{ij}^\sigma(n)$, $1\leq n\leq n(i,j,\sigma)$, here ${\rm int}_{\cL}(\cL_{ij}^\sigma(n_1))\cap {\rm int}_{\cL}(\cL_{ij}^\sigma(n_2))\neq \emptyset$, for $n_1\neq n_2$, and $\cL_{ij}^\sigma=\bigcup_{1\leq n\leq n(i,j,\sigma)}\cL_{ij}^\sigma(n)$.  See Figure 2.

		Let $\e$ be small enough such that for all $i,i',j,j'\in[1,k_0]$, $\sigma,\tau=+,-$, $n\in[1,n(i,j,\sigma)]$ and  $n'\in[1,n(i',j',\tau)]$ the following two items hold:
	\begin{itemize}
		 \item If $\cL_{ij}^\sigma(n)\cap \cL_{i'j'}^\tau(n')=\emptyset$, then ${\rm int}\big(B^s_\e(\cL_{ij}^\sigma(n))\big)\cap {\rm int}\big(B^s_\e(\cL_{i'j'}^\tau(n'))\big)=\emptyset$.
		\item   $\e\leq  \frac{l_a\big( \cL^{\sigma}_{ij}(n)\big)}{100}$ and $\e\leq \frac{\mu_p\big( \cL^{\sigma}_{ij}(n)\big)}{100}$, where $p\in \cL^{\sigma}_{ij}(n)$. 
	\end{itemize}

	First, we change the metric on each cylinder  $B^s_\e(\cL_{ij}^\sigma(n))$, $1\leq i\neq j\leq k_0$, $\tau=\pm$ and $1\leq n\leq n(i,j,\sigma)$.
		\begin{claim}\label{claim metric 1}
		Let $V\subset M$ be an $s$-regular $\cL$-foliation box.  For any $\e>0$ small enough, there exists  a $C^\alpha$ metric $\tilde{a}(\cdot,\cdot)$ of $V$ such that $(T\cL)_{\tilde{a}}^\perp=(T\cL)_a^\perp$, $\tilde{a}|_{(T\cL)_a^\perp}=a|_{(T\cL)_a^\perp}$,  $\tilde{a}|_{B_\e^\cL(V^\pm)}=a|_{B_\e^\cL(V^\pm)}$ and
		\[l_{\tilde{a}}([p^+,p^-])=\mu_p([p^+,p^-]),\quad \forall p\in V, \]
		where $p^\pm=\cL(p,V)\cap V^\pm$.
	\end{claim}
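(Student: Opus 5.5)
The plan is to rescale the metric only in the $T\cL$ direction, by a positive factor that depends on the point and is chosen separately on each $\cL$-plaque of $V$. Using the coordinates fixed above, $\frac{\partial}{\partial x^1}$ spans $T\cL$ and $a$ is $C^\alpha$. Keep $(T\cL)^\perp_a$ and $a$ restricted to it unchanged, and set
\[\tilde{a}(v,v)=\rho(q)^2\,a(v,v),\qquad v\in T_q\cL,\]
with $T\cL$ and $(T\cL)^\perp_a$ kept $\tilde{a}$-orthogonal. This gives the first two requirements automatically. It remains to choose a positive $C^\alpha$ function $\rho$ with $\rho\equiv 1$ on $B^\cL_\e(V^\pm)$ and
\[\int_{[p^+,p^-]}\rho\, dl_a=\mu_p([p^+,p^-])\quad\text{for every } p\in V.\]

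To build $\rho$, fix a $C^{1+}$ transversal and parametrize each plaque $\cL(p,V)$ by $a$-arclength $s\in[0,L(p)]$, with $s=0$ at $p^-$. The length $L(p)$ is $C^\alpha$ in $p$ because $\cL$ is $C^{1+}$ and the metric is $C^\alpha$. Choose once and for all a smooth bump $\psi\geq 0$ on $[0,1]$, supported in $[1/4,3/4]$ with $\int\psi=1$. Then set
\[\rho(p,s)=1+c(p)\,\psi\Big(\frac{s-\e}{L(p)-2\e}\Big),\qquad c(p)=\frac{\mu_p([p^+,p^-])-L(p)}{L(p)-2\e}.\]
This is $1$ near both ends of the plaque, so $\rho\equiv1$ on $B^\cL_\e(V^\pm)$. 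A direct computation gives $\int\rho\,dl_a=L(p)+c(p)(L(p)-2\e)=\mu_p([p^+,p^-])$.

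The function $c$ is constant along plaques and $C^\alpha$ in $p$. This uses the standing assumption that $\mu_p(\cL^{\sigma,\tau}_{i,j}(p))$ is $C^\alpha$, applied with the endpoints on $V^\pm$ via Lemma \ref{lem measure L}(4). It follows that $\rho$ is $C^\alpha$ on $V$.

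The main obstacle is positivity of $\rho$ when $\mu_p([p^+,p^-])\ll L(p)$, because then $c(p)$ is very negative. I would first shape the bump so that its mass sits where it is needed. Concretely, replace the additive form by a convex combination
\[\rho=(1-\theta)\cdot 1+\theta\, \frac{\mu_p([p^+,p^-])-2\e}{\int\psi_*}\,\psi_*\]
with a plateau bump $\psi_*$, which vanishes near the ends and equals $1$ on the middle. The parameter $\theta(p,s)\in[0,1]$ is a fixed smooth cutoff equal to $1$ away from the $\e$-ends. I then re-solve for the single constant so that the integral matches. Since $\e\le\mu_p/100$ and $\e\le l_a/100$, the end pieces contribute at most $2\e$ of length. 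The target for the middle portion is therefore at least $\mu_p-2\e>0$, and the coefficient is positive. The smallness conditions on $\e$ imposed before the claim are exactly what make this work.

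Finally, I check that the transition region (where $\theta$ varies) keeps $\rho$ between positive bounds, and that all the ingredients are $C^\alpha$: the plaque length, the arclength parametrization through the $C^{1+}$ chart, and $\mu$. Once this is verified, the claim follows.
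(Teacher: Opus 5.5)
Your final construction is correct and is essentially the paper's argument: away from the $\e$-ends you damp the original $T\cL$-density (your factor $1-\theta$, the paper's $(2t-1)^{2K_0}$), and then you add a positive bump (your $\theta\psi_*$, the paper's $6t(1-t)$) whose coefficient is solved so that the total length equals $\mu_p([p^+,p^-])$.

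When writing it out, make sure of three things:
\begin{itemize}
\item $\theta$ vanishes identically on $B^\cL_\e(V^\pm)$;
\item its transition zone has width at most $\e$, so that zone contributes at most $2\e$ extra length and the solved coefficient stays positive, because $\mu_p\ge 100\e$;
\item $\psi_*>0$ wherever $\theta=1$.
\end{itemize}
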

	
	\begin{proof}[Proof of Claim \ref{claim metric 1}]
	Recall that $\e$ is small such that $B^\cL_\e(V^+)\cap B^\cL_\e(V^-)=\emptyset$ and $\e \leq \frac{\mu_p([p^+,p^-])}{100}$, for all $p\in V$. Let $p^+_\e=\inf \big\{B^\cL_\e(V^+)\cap [p^+,p^-] \big\}$ and $p^-_\e=\sup \big\{B^\cL_\e(V^-)\cap [p^+,p^-] \big\}$. Note that $l_a([p^+,p^+_\e])=\e$ and $l_a([p^-,p^-_\e])=\e$.
		For $p\in V$, we denote $u(p)=\mu_p([p^+,p^-])-2\e$. Let $\gamma_p:[0,1]\to \cL(p)$ be a family of parameterizations of $[p_\e^+,p_\e^-]$. Let $s(p):=\int_0^1(2t-1)^{2K_0}\sqrt{a_{1,1}(\gamma_p(t))}dt$, where $K_0\in \NN$ is big enough such that $s(p)<u(p)$ for all $p\in V$.  Indeed, by $s(p)=\int_0^1(2t-1)^{2K_0}\sqrt{a_{1,1}(\gamma_p(t))}dt\leq (\int_0^1(2t-1)^{4K_0}dt)^{\frac{1}{2}}\cdot (\int_0^1a_{1,1}(\gamma_p(t))dt)^{\frac{1}{2}}$, we can choose such a $K_0\in\NN$.   For $i\neq 1$ or $j\neq 1$, let $\tilde{a}_{i,j}=a_{i,j}$. 
		Let 
		\[ \sqrt{{\tilde{a}}_{1,1}(\gamma_p(t)) }=(2t-1)^{2K_0}\sqrt{a_{1,1}(\gamma_p(t))}+6t(1-t)\big(u(p)-s(p)\big)>0, \]
		for $\gamma_p(t)\in [p^+_\e,p^-_\e]$. For $q\in [p^+,p^-]\setminus [p^+_\e,p^-_\e]$, let $\tilde{a}_{1,1}(q)=a_{1,1}(q)$. Then 
		\[l_{\tilde{a}}([p^+_\e,p^-_\e])=\int^1_0 \sqrt{\tilde{a}_{1,1}(\gamma_p(t)) } dt=u(p),\]
		and hence
		\[ l_{\tilde{a}}([p^+,p^-])=l_{\tilde{a}}\big([p^+,p^-]\setminus [p^+_\e,p^-_\e] \big) +l_{\tilde{a}}([p^+_\e,p^-_\e])=2\e+u(p)=\mu_p([p^+,p^-]).\]
		By the formula of $\tilde{a}_{1,1}$, it is clear that the metric $\tilde{a}$ is $C^{\alpha}$.
	\end{proof}

	 Without loss of generality,  we can assume $B^s_\e(\cL_{ij}^\sigma(n))$  an $s$-regular $\cL$-foliation box. Then applying Claim \ref{claim metric 1} to each cylinder  $B^s_\e(\cL_{ij}^\sigma(n))$,  we get the desired metric $\tilde{a}(\cdot,\cdot)$  in $B^s_\e(\cL_{ij}^\sigma(n))$. Notice that such two cylinders  are either disjoint or the intersection is a local stable manifold. Thus, by Claim \ref{claim metric 1}, the metric  $\tilde{a}(\cdot,\cdot)$ on the intersection of different cylinders are coherent, and actually equals to the original metric $a(\cdot,\cdot)$. Thus, $\tilde{a}(\cdot,\cdot)$ is well-defined in the union of cylinders  $B^s_\e(\cL_{ij}^\sigma)$, for all $1\leq i\neq j\leq k_0$ and  $\sigma=\pm$. In the following, we extend the metric on these cylinders to each $U_i$ and hence to $M$.  
	
	\begin{figure}[htbp]
		\centering
		\includegraphics[width=11cm]{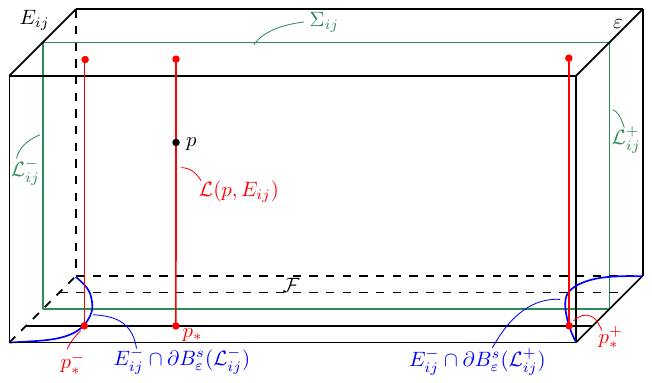}
		\caption{The metric of point $p$ is given by the metric of $p_*^-$ and $p_*^+$.}	
	\end{figure}
	
	We first extend $\tilde{a}(\cdot,\cdot)$ to the tubular neighborhood $B_\e^s(\Sigma_i)$ of  $\Sigma_i$. For $\Sigma_{ij}$, we consider  its $\e$-tubular neighborhood $E_{ij}:= B^s_\e(\Sigma_{ij})$. Recall $B^s_\e(\cL_{ij}^\pm)\subset E_{ij}$.  Without loss of generality,  we  see $E_{ij}$ as a $s$-regular $\cL$-foliation box. Let $E_{ij}^\pm$ be two pieces of the boundary $\partial E_{ij}$ transverse to $\cL$, which are  local stable manifold. And we can choose $E_{ij}^\pm$ such that their boundaries $\partial E_{ij}^\pm$ are piecewise smooth curves.  For $p\in E_{ij}\setminus \big( B^s_\e(\cL_{ij}^+)\cup B^s_\e(\cL_{ij}^-)\big)$, we project point $p$ to $p_*\in E_{ij}^-$ via the holonomy maps induced by  foliation $\cL$, namely, $p_*=E_{ij}^-\cap \cL_{\rm loc}(p)$. Let $\cF$ be a smooth one-dimensional foliation of $E_{ij}^- $ such that one of its leaves coincides with  $\Sigma_{ij}\cap E_{ij}^-$. Let $p^+_*=\inf\{ \cF(p_*)\cap B^s_\e(\cL_{ij}^+)   \}$ and $p^-_*=\sup\{ \cF(p_*)\cap B^s_\e(\cL_{ij}^-)   \}$, where the signs $+, -$  and the infimum, supermum are coincide with the orientation of $\cF$,  see Figure 3.
	Let $\gamma_p:[0,1]\to \cL(p,E_{ij})$. For $p=\gamma_p(t)$ and $p_*=\gamma_p(0)$, let $\tilde{a}_{i',j'}(p)=a_{i',j'}(p)$ for $i'\neq 1$ or $j'\neq 1$, and
	\begin{align}
		\sqrt{\tilde{a}_{1,1}\big( \gamma_p(t) \big)}= \frac{d_{\cF}(p_*,p_*^-)}{d_{\cF}(p_*^-,p_*^+)}\cdot\frac{\mu_p\big(\cL(p,E_{ij})\big)}{\mu_{p^-_*}\big(\cL(p^-_*,E_{ij})\big)}&\sqrt{ \tilde{a}_{1,1}\big(\gamma_{p^-_*}(t)  \big)} \notag\\ 
		 &+ \frac{d_{\cF}(p_*,p_*^+)}{d_{\cF}(p_*^-,p_*^+)}\cdot\frac{\mu_p\big(\cL(p,E_{ij})\big)}{\mu_{p^+_*}\big(\cL(p^+_*,E_{ij})\big)} \sqrt{\tilde{a}_{1,1}\big(\gamma_{p^+_*}(t)  \big)}, \label{eq. metric on E}
	\end{align}
	where $d_\cF(\cdot,\cdot)$ is the distance along $\cF$ induced by $a(\cdot,\cdot)$, and $a_{1,1}\big(\gamma_{p^\pm_*}(t)  \big)$ has already been defined, since $\gamma_{p^\pm_*}(t)  \in B^s_\e(\cL_{ij}^\pm)$. It is cleat that 
	\begin{align}
		\int_0^1\sqrt{\tilde{a}_{1,1}\big( \gamma_p(t) \big)}dt=\mu_p\big(\cL(p,E_{ij})\big), \label{eq. metric 1}
	\end{align}
	and $a_{1,1}$ is $C^\alpha$.   We notice that up to adjust \eqref{eq. metric on E}  by using similar functions $s(p)$ and $u(p)$ in Claim \ref{claim metric 1},  one can reconstruct  the $C^{\alpha}$ metric $\tilde{a}(\cdot,\cdot)$ in $E_{ij}$ such that it further has $\tilde{a}_{1,1}(\gamma_p(t))=a_{1,1}(\gamma_p(t))$ for $t\in [0,\e]\cup [1-\e,1]$. This construction admits that for different $\Sigma_{ij}$ and $\Sigma_{i'j'}$ with $E_{ij}\cap E_{i'j'}\neq \emptyset$, the metric $\tilde{a}(\cdot,\cdot)$ are cohenrece in the intersection. 
	Hence, we actually define the desired metric $\tilde{a}(\cdot,\cdot)$ in a $\e$-tubular neighborhood $B^s_\e(\Sigma_i)$ of each $\Sigma_i$, for $1\leq i\leq k_0$.
	
		\begin{figure}[htbp]
		\centering
		\includegraphics[width=9cm]{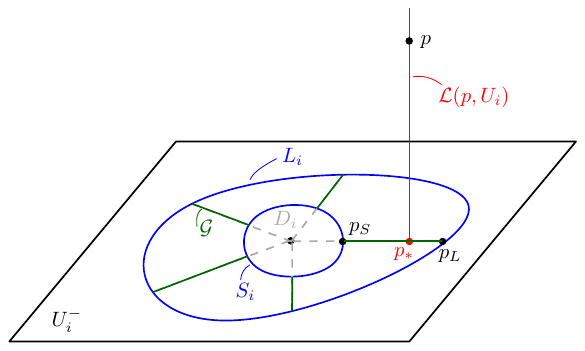}
		\caption{The metric of point $p$ is given by the metric of $p_S$ nad $p_L$.}	
	\end{figure}
	The rest part of the proof is, in each $U_i$,  extending the metric $\tilde{a}|_{B^s_\e(\Sigma_i)}$  to $U_i$. By slightly perturbing the tubular neighborhood of  $\Sigma_i$, we can assume that ${\rm int} (U_i^-)\cap \partial B^s_\e(\Sigma_i)=L_i$ is a simple closed smooth curve.	We take a disk $D_i\subset U_i^-$ inside another disk decided by $L_i$ such that $D_i\cap L_i=\emptyset$, denote $S_i=\partial D_i$. We denote by $T_i$,  the  annulus whose boundaries are $S_i$ and $L_i$. Note that the radials crossing the center point of $D_i$ give us a smooth foliation, denoted by $\mathcal{G}$, of the annulus $T_i$.	Let $E_i:=\bigcup_{p\in D_i}\cL(p,U_i)$, namely, $E_i$ is an $s$-regular $\cL$-foliation box of $D_i$ in $U_i$. By Claim \ref{claim metric 1}, we can define the metric $\tilde{a}$ in $E_i$.  In particular, we have defined the metric in $E_i\cup B^s_\e(\Sigma_i)$.
	Let $\gamma_p:[0,1]\to \cL(p,U_i)$ be  a family of parameterizations.  For $p=\gamma_p(t)$, let $p^*=\gamma_p(0):=\cL(p,U_i)\cap U_i^-$, $p_L=L_i\cap \mathcal{G}(p^*)$ and $p_S=S_i\cap \mathcal{G}(p^*)$, see Figure 4.
   Just like \eqref{eq. metric on E},	for  $p=\gamma_p(t)\in U_i\setminus \big( E_i\cup B^s_\e(\Sigma_i)  \big)$, let $\tilde{a}_{i',j'}(p)=a_{i',j'}(p)$ for $i'\neq 1$ or $j'\neq 1$, 
	\[ \sqrt{\tilde{a}_{1,1}\big( \gamma_p(t) \big)}= \frac{d_{\mathcal{G}}(p^*,p_S)}{d_{\mathcal{G}}(p_L,p_S)}\cdot\frac{\mu_p\big(\cL(p,U_i)\big)}{\mu_{p_S}\big(\cL(p_S,U_i)\big)}\sqrt{ \tilde{a}_{1,1}\big(\gamma_{p_S}(t)  \big)}  + \frac{d_{\mathcal{G}}(p^*,p_L)}{d_{\mathcal{G}}(p_L,p_S)}\cdot\frac{\mu_p\big(\cL(p,U_i)\big)}{\mu_{p_L}\big(\cL(p_L,U_i)\big)} \sqrt{\tilde{a}_{1,1}\big(\gamma_{p_L}(t)  \big)},\]
	where $\tilde{a}_{1,1}\big(\gamma_{p_S}(t)  \big)$ and $\tilde{a}_{1,1}\big(\gamma_{p_L}(t)  \big)$ have been defined in $E_i$ and $B^s_\e(\Sigma_i)$, respectively. Then,	\[ 	\int_0^1\sqrt{\tilde{a}_{1,1}\big( \gamma_p(t) \big)}dt=\mu_p\big(\cL(p,U_i)\big). \]
	Again, combining this construction and the method of Claim \ref{claim metric 1}, we can adjust $a(\cdot,\cdot)$ in $U_i\setminus \big( E_i\cup B^s_\e(\Sigma_i)  \big)$ such that it further satisfies that  $\tilde{a}_{1,1}(\gamma_p(t))=a_{1,1}(\gamma_p(t))$ for $t\in [0,\e]\cup [1-\e,1]$.
	The metric is coherent on the intersection of $U_i$ and $U_j$. Hence, we get the desired metric $\tilde{a}(\cdot,\cdot)$ on whole $M$.
\end{proof}

\bibliographystyle{plain}

\bibliography{ref}

	\flushleft{\bf Ruihao Gu} \\
School of Mathematics, Tongji University, 
Shanghai, 200092, China \\
\textit{E-mail:} \texttt{rhgu@tongji.edu.cn}\\

\flushleft{\bf Yi Shi} \\
School of Mathematics, Sichuan University, Chengdu, 610065, China\\
\textit{E-mail:} \texttt{shiyi@scu.edu.cn}\\

\end{document}